\definecolor{marin}  {rgb}{0.,   0.,  0.6}
\definecolor{rouge}  {rgb}{0.8,  0.,  0.}
\newtheorem{theorem}{Theorem}[section]
\newtheorem{lemma}[theorem]{Lemma}
\newtheorem{proposition}[theorem]{Proposition}
\renewcommand\thetheorem{\thesection.\@Alph\c@theorem}
\theoremstyle{definition}
\newtheorem{definition}{Definition}[section]
\newtheorem{notation}[definition]{Notation}
\newtheorem{assumption}[definition]{Assumption}
\theoremstyle{remark}
\newtheorem{remark}[definition]{Remark}
\newtheorem{example}[definition]{Example}
\numberwithin{equation}{section}
\newcommand{\ee}{\hskip0.15ex}
\newcommand{\pv}{\ee;\ee}
\newcommand{\me}{\hskip-0.15ex}
\newcommand{\on}[1]{\raise-.5ex\hbox{\big|}_{#1}}
\newcommand\resid{\underline{\varepsilon}}
\renewcommand{\Re}{\operatorname{\mathrm{Re}}}
\renewcommand{\Im}{\operatorname{\mathrm{Im}}}
\newcommand{\artanh}{\operatorname{\mathrm{artanh}}}
\renewcommand{\div}{\operatorname{\rm div}}
\newcommand{\comp}{\mathrm{comp}}
\newcommand{\supp}{\mathrm{supp}}
\newcommand{\loc}{\mathsf{loc}}
\newcommand\C{{\mathbb C}}
\newcommand\R{{\mathbb R}}
\newcommand\N{{\mathbb N}}
\renewcommand\P{{\mathbb P}\ee}
\newcommand\Z{{\mathbb Z}}
\newcommand{\Ac}{\mathcal{A}}
\newcommand{\Cc}{\mathcal{C}}
\newcommand{\Ec}{\mathcal{E}}
\newcommand{\Hc}{\mathcal{H}}
\newcommand{\Lc}{\mathcal{L}}
\newcommand{\Nc}{\mathcal{N}}
\newcommand{\Oc}{\mathcal{O}}
\newcommand{\Rc}{\mathcal{R}}
\newcommand{\Sc}{\mathcal{S}}
\newcommand{\Zc}{\mathcal{Z}}
\newcommand{\as}{\mathsf{a}}
\newcommand{\As}{\mathsf{A}}
\newcommand{\Hs}{\mathsf{H}}
\newcommand{\Js}{\mathsf{J}}
\newcommand{\bA}{\boldsymbol{\mathsf{A}}}
\newcommand{\bR}{\boldsymbol{\mathsf{R}}}
\newcommand{\Hr}{\mathrm{H}}
\newcommand{\Lr}{\mathrm{L}\me}
\newcommand{\Fg}{{\mathfrak F}}
\newcommand{\Hg}{{\mathfrak H}}
\newcommand{\Kg}{{\mathfrak K}}
\newcommand{\Ug}{{\mathfrak U}}
\newcommand{\Xg}{{\mathfrak X}}
\newcommand{\wt}{\widetilde}
\newcommand{\wh}{\widehat}
\newcommand{\dps}{\displaystyle}
\newcommand{\te}{\textstyle}
\newcommand{\texte}[1]{\hbox{\ #1\ }}
\newcommand{\e}{\ensuremath{\mathrm{e}}}
\newcommand{\ic}{\ensuremath{\mathrm{i}}\xspace} 
\newcommand{\dd}{\mathrm{\, d}}
\newcommand{\lp}{\left(}
\newcommand{\rp}{\right)}
\newcommand{\lc}{\left\lbrack}
\newcommand{\rc}{\right\rbrack}
\newcommand{\lo}{\left\lVert}
\newcommand{\ro}{\right\rVert}
\newcommand{\lv}{\left\lvert}
\newcommand{\rv}{\right\rvert}
\DeclareMathOperator{\Oo}{\mathcal{O}}
\newcommand{\ve}{\varepsilon}
\newcommand{\vp}{\varphi}
\newcommand{\ku}{\underline{k}}
\newcommand{\uu}{\underline{u}}
\newcommand{\vu}{\underline{v}}
\newcommand{\wu}{\underline{w}}
\newcommand{\lu}{\underline{\lambda}}
\newcommand{\Lu}{\underline{\Lambda}}
\newcommand{\Vu}{\underline{V\!}\,}
\newcommand{\Lcu}{\underline{\Lc\!}\,}
\newcommand{\II}{\mathds{1}}
\newcommand{\Vs}{\textsc w}
\newcommand{\kb}{\breve\kappa}
\newcommand{\mb}{\breve\mu}
\newcommand{\kr}{\mathrm k}
\newcommand{\Kr}{\mathrm K}
\newcommand{\TM}{\scriptscriptstyle\sf TM}
\newcommand{\TE}{\scriptscriptstyle\sf TE}
\newcommand{\GH}{\scriptscriptstyle\sf GH}
\newcommand{\no}{n}
\newcommand{\nx}{\tilde{n}}
\newcommand{\pj}{\relax}
\DeclareMathOperator{\Vect}{span}
\newcommand{\wb}{\overline}
\DeclareMathOperator{\Card}{Card}
\begin{document}

\title[Asymptotics for whispering gallery modes in optical micro-disks]
{Asymptotics for 2D whispering gallery modes in optical micro-disks with radially varying index}

\author{St\'ephane Balac}
\address{Univ. Rennes, CNRS, IRMAR - UMR 6625, F-35000 Rennes, France}
\email{stephane.balac@univ-rennes1.fr}

\author{Monique Dauge}
\address{Univ. Rennes, CNRS, IRMAR - UMR 6625, F-35000 Rennes, France}
\email{monique.dauge@univ-rennes1.fr}

\author{Zo\"{i}s Moitier}
\address{Karlsruhe Institute of Technology, Institute for Analysis, Englerstraße 2, D-76131 Karlsruhe, Germany}
\email{zois.moitier@kit.edu}

\thanks{The authors acknowledge support of the Centre Henri Lebesgue ANR-11-LABX-0020-01.}

\keywords{Whispering gallery modes, Optical micro-disks, Scattering resonances, Asymptotic expansions, Axisymmetry, Schr\"odinger analogy, Quasi-modes}

\subjclass[2010]{35P25, 34E05, 78A45}

\begin{abstract}
    Whispering gallery modes [WGM] are resonant modes displaying special features: They concentrate along the boundary of the optical cavity at high polar frequencies and they are associated with complex scattering resonances very close to the real axis.
    As a classical simplification of the full Maxwell system, we consider two-dimensional Helmholtz equations governing transverse electric [TE] or magnetic [TM] modes. Even in this 2D framework, very few results provide asymptotic expansion of WGM resonances at high polar frequency $m\to\infty$ for cavities with radially varying optical index.
    
    In this work, using a direct Schr\"odinger analogy we highlight three typical behaviors in such optical micro-disks, depending on the sign of an \emph{effective curvature} that takes into account the radius of the disk and the values of the optical index and its derivative.  Accordingly, this corresponds to abruptly varying effective potentials (step linear or step harmonic) or more classical harmonic potentials, leading  to three distinct asymptotic expansions for ground state energies.
     Using multiscale expansions, we design a unified procedure to construct families of quasi-resonances and associate quasi-modes that have the WGM structure and satisfy eigenequations modulo a super-algebraically small residual $\Oc(m^{-\infty})$.
     We show using the black box scattering approach that quasi-resonances are $\Oc(m^{-\infty})$ close to true resonances.
\end{abstract}
\maketitle

\noindent\rule{\linewidth}{0.5pt}
\par
{\small
    \setcounter{tocdepth}{1}
    \thispagestyle{empty}
    \tableofcontents
}
\vskip-3em
\noindent\rule{\linewidth}{0.5pt}

\section*{Introduction}

\subsection*{Motivation}
Our work is motivated by the study of light-wave propagation in optical micro-resonators and has its origin in a long-run collaboration with a physics laboratory specialized in optics \cite{BalEtAl20}.
These optical devices, with micrometric size, came to be important components in the photonic toolbox. They are basically composed of a dielectric cavity coupled to waveguides or fibers for light input and output \cite{HeeGro08}.

The knowledge of the {\em scattering resonances} of the dielectric cavity alone, considered as an open system, is a corner stone to find adequate conditions for the confinement of light-waves in the cavity when the full resonator is operated. This allows to access a wide range of optical phenomena. In order to dissipate possible misunderstanding due to various uses of the same words in different scientific groups, let us clarify the following terms and notations:
\begin{enumerate}
\item A scattering resonance $k$ (or simply resonance for short) of the cavity is an exact eigen-frequency of the open system formed by the cavity alone with outgoing radiation condition. With the $e^{-ikt}$ convention for time harmonic fields, this implies that $k$ has a negative imaginary part: $\Im k<0$, {\em cf.} \cite{PopVod99a}. Corresponding eigenvectors $u$ are called modes.
\item A quasi-resonance $\ku$ in association with a quasi-mode $\uu$, is an approximate eigen-pair of the same system, with a small residue. The term {\em quasi-mode} with this meaning is widely used in literature, {\em cf.} \cite{Ste99,TanZwo98} in our context. The pair $(\ku,\uu)$ will be called a quasi-pair.
\end{enumerate}
The study of scattering resonances is the subject of this paper. More specifically, we focus on ``\emph{Whispering Gallery Modes}'' for which modes are essentially localized inside the cavity and concentrated in a (boundary) layer. Such modes $u$ are associated with resonances $k$ close to the real axis ($|\Im k|$ is small).

\medskip

This work is motivated by the following observation found in the literature in optics:
Whispering Gallery Mode (WGM) resonators are in most cases formed of dielectric materials with constant optical index $n$ but this constitutes a potential limit in their performance and in the range of their applications. Resonators with spatially varying optical index, that fall under the category of ``graded index''
structures \cite{GomPer02}, offer new opportunities to improve and enlarge the field of applications of these devices and start to be investigated in optics.

Among properties that can be sensitive to a modification of the optical index $n$, let us mention
\begin{itemize}
\item[a)] The Q factor. This factor is the radiation quality factor of a mode, defined as the quotient $\Re k\,|\Im k|^{-1}$ of the real and imaginary parts of the corresponding resonance $k$. This factor accounts for radiative losses.
Larger Q factors correspond to longer life times.
\item[b)] The localization of modes. One is looking for modes that are concentrated inside the cavity, close to its boundary, with sufficient radiation outside to transmit light in another part of the device. 
\item[c)] The lattice structure of resonances (repartition of modes). In the WGM regime, modes can be classified by integer indices (e.g. the polar mode index $m$ and the radial mode index $j$ in two dimensions, characterizing the resonance $k_{m,j}$). A regular lattice structure will be obtained if differences between adjacent mode resonances $k_{m+1,j}-k_{m,j}$ and $k_{m,j+1}-k_{m,j}$ are almost constant in some regions. 
Such a lattice structure of resonances is sought, for instance, in the design of frequency comb generators. 
\end{itemize}

\medskip
The examples of graded index structures that we found in the literature in optics all relate to \emph{radially symmetric cavities}: In 3 dimensions of space, cylinders and spheres are concerned, whereas in 2 dimensions one considers disks,  for which the 2-dimensional model can be obtained as an approximation of the 3-dimensional one by using an approach referred as the effective index method \cite{SmoNos05}.

    A first theoretical example is provided by \cite{Ilc03} in which the radial profile of the optical index depends on a  parameter $\delta$
\begin{equation}
\label{eq:Ilc}
   n(r) = n_0 \sqrt{1+\delta R\Big(1-\frac{r}{R}\Big)}\quad\mbox{with}\quad n_0>1 \ \ \mbox{and}\ \ \delta \ge0.
\end{equation}
Here $R$ denote the cavity radius and it is understood that $n(r)$ is $1$ outside the cavity.
As shown in \cite{Ilc03}, depending on the values of the parameter $\delta$, the dispersion of modes changes, becoming more regular when $\delta$ tends to $\frac{2}{R}$ from below.

Among other examples of graded index structures 
we can quote a modified form of the ``Maxwell's fish eye'', that can be implemented using dielectric material, where the optical index varies with the radial position $r$ in the micro-disk resonator as \cite{DadKur14,NajVah16}
\begin{equation*}
    n(r) = \alpha \lp 1+\frac{r^2}{R^2}\rp^{-1}
\end{equation*}
where $\alpha>2$ and $R$ is the disk radius.
Another relevant work is \cite{StrCon14} where is considered a micro-cavity made of a quadratic-index glass doped with dye molecules and the refractive index is written as
$
    n(r) = \alpha -  \frac{1}{2} \beta r^2 ,
$
where $\alpha, \beta >0$.
In \cite{ZhuZho12}, an analysis of hollow cylindrical whispering
gallery mode resonator is carried out where the refractive index of the cladding varies according to $n(r) = \beta/r$, $\beta>R$.

\subsection*{Investigation framework}
Motivated by the above mentioned examples,
we found interesting to investigate in this paper the case of micro-disk cavities with radially varying optical index. The case of micro-spheres can be partly deduced from the investigation of micro-disks, see \cite[Appendix C]{BalEtAl20}.

In this configuration of micro-disks, Maxwell equations reduce to two scalar Helmholtz equations, corresponding to  \emph{Transverse Magnetic} (TM) or \emph{Transverse Electric} (TE) modes, for which eigen-equations are
\begin{equation}
\label{eq:TETM}
   \Delta u + k^2n^2u = 0 \quad\mbox{[TM]}\quad
   \qquad\mbox{or}\qquad
   \div \left( n^{-2}\,\nabla u\right) +  k^2 u = 0 \quad\mbox{[TE]}
\end{equation}
These Helmholtz equations are posed in the whole plane and completed by a suitable outgoing radiation condition at infinity. The resonances $k$ have a negative imaginary part.

In this preliminary discussion, let us concentrate on the TM formulation. By separation of variables using polar coordinates, the Helmholtz equation becomes a family of scalar equations indexed by the (integer) polar mode index $m$
\begin{equation}
\label{eq:TMm}
   u'' + \frac{1}{r} u' - \frac{m^2}{r^2} u + k^2n^2 u = 0.
\end{equation} 
Here appears the optical potential, sum of the centrifugal part $\frac{m^2}{r^2}$ and an attractive part driven by $n^2$. Setting 
\begin{equation}
\label{eq:}
   h = \frac{1}{m}\,,\quad  W(r) = \frac{1}{r^2n^2(r)}\,, \quad\mbox{and}\quad E = \frac{k^2}{m^2}
\end{equation}
the analogy with the (time-independent) Schr\"{o}dinger equation becomes clear, yielding the equation
\begin{equation}
\label{eq:Schr}
   - h^2 \, \frac{1}{n^2}\lp u'' + \frac{1}{r} u'\rp + Wu = E u .
\end{equation}
The behavior of the \emph{effective potential} $W$ near its minimum is the key for the structure of WGM resonances when $m$ tends to infinity. As an illustrative example, let us observe the optical index $n$ and the potential $W$ in example \eqref{eq:Ilc} for the values $0$, $\frac2R$, and $\frac4R$ of parameter $\delta$, see Fig. \ref{fig:Ilc}.

\begin{figure}[!htbp]
    \centering
    \begin{subfigure}[h]{0.33\linewidth}
        \hglue-1.4em
        \includegraphics[width=1.12\linewidth]{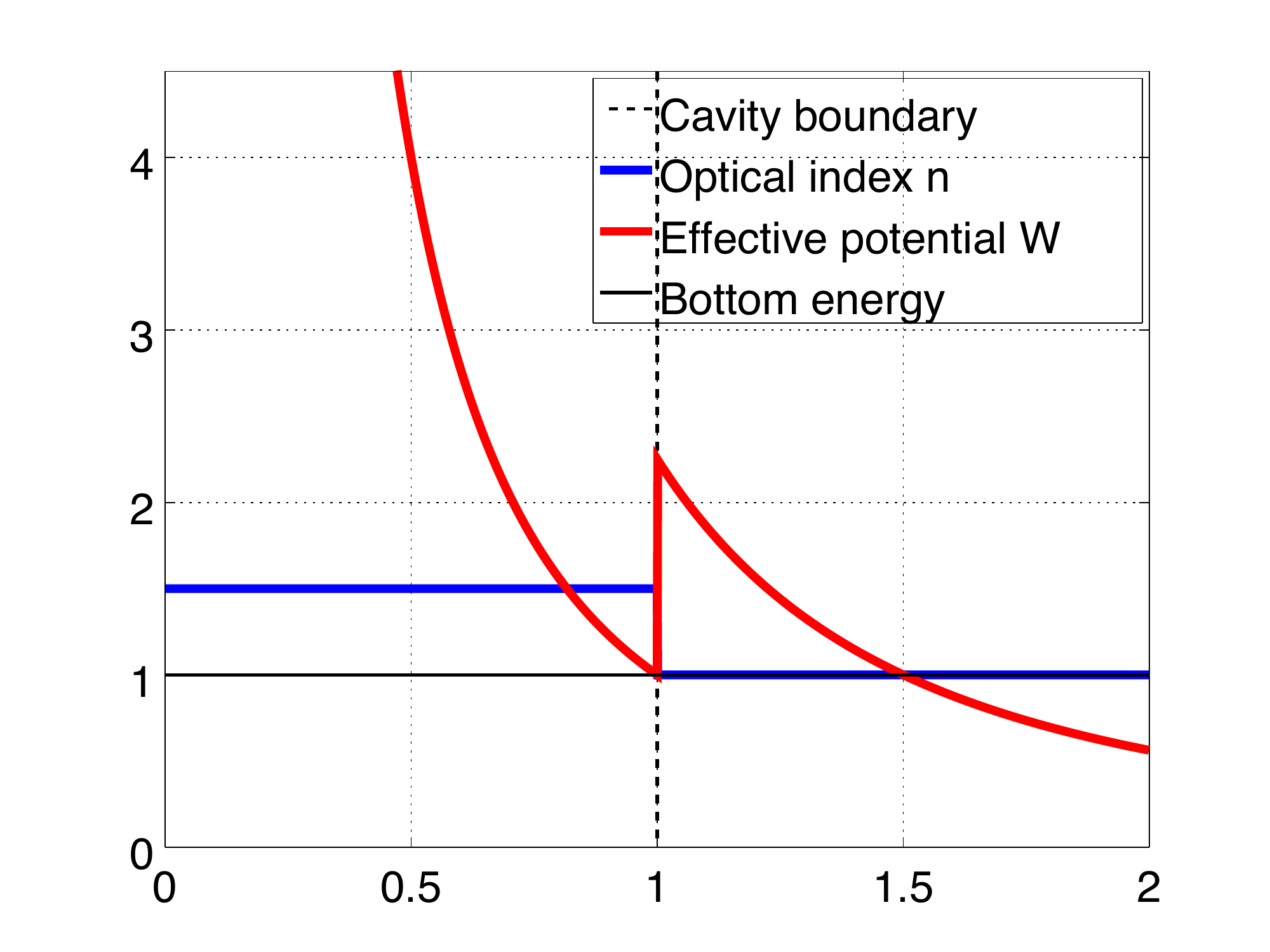}
        \caption{\ $\delta=0$}
    \end{subfigure}%
    \begin{subfigure}[h]{0.33\linewidth}
        \hglue-0.7em
        \includegraphics[width=1.12\linewidth]{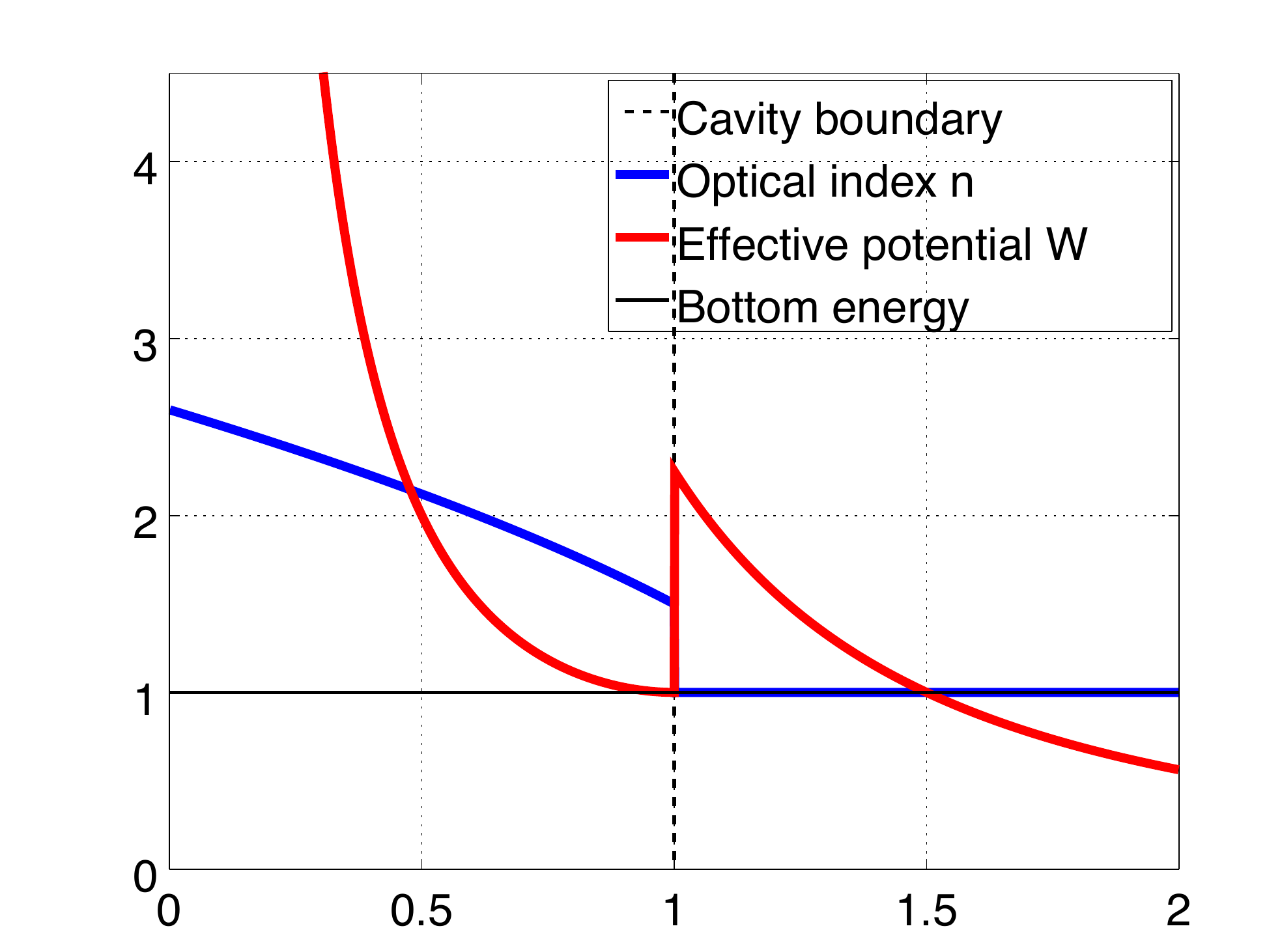}
        \caption{\ $\delta=2$}
    \end{subfigure}%
    \begin{subfigure}[h]{0.33\linewidth}
        \centering
        \includegraphics[width=1.12\linewidth]{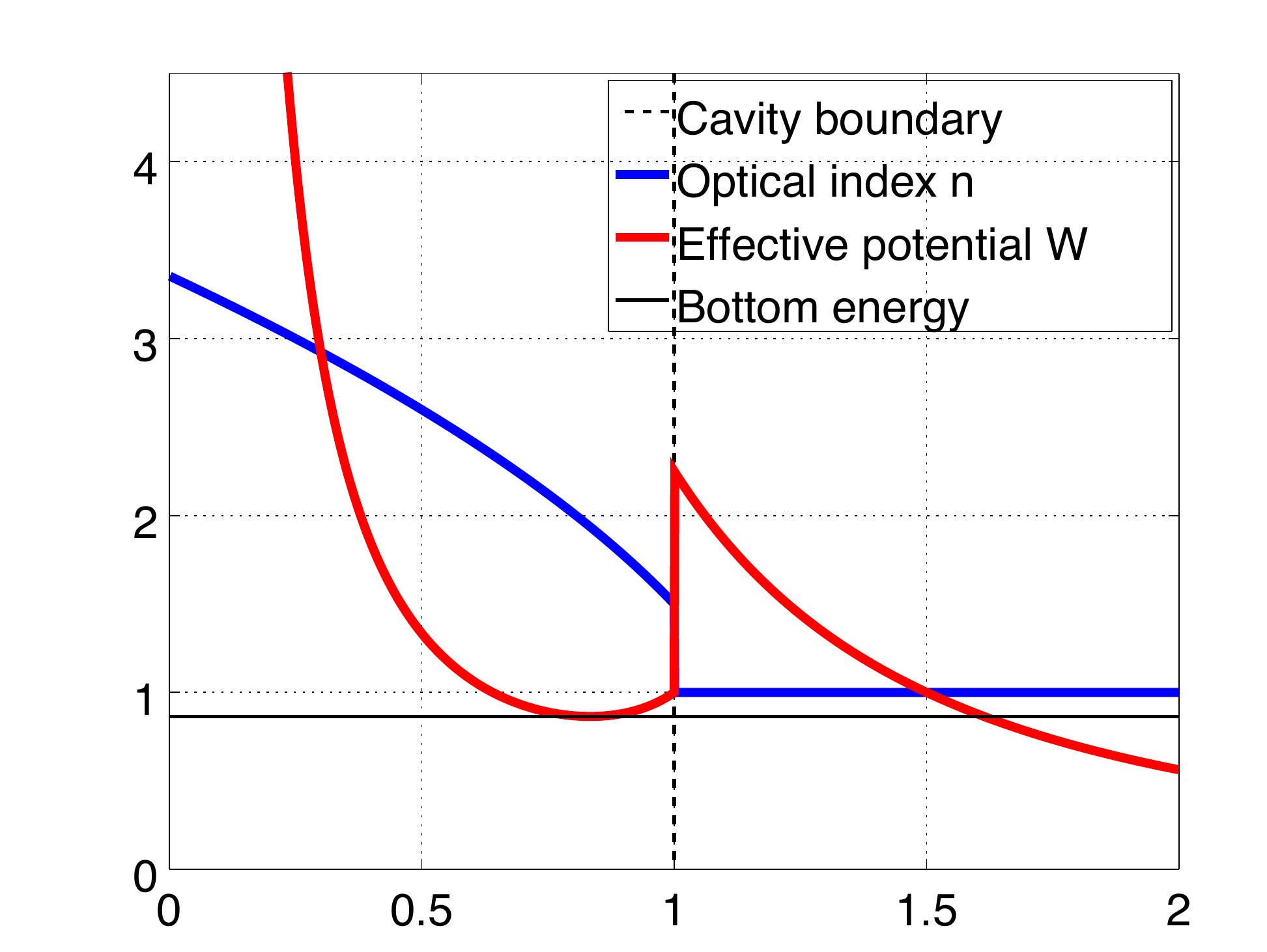}
        \caption{\ $\delta=4$}
    \end{subfigure}

    \caption{Optical index $n$ and potential $W$ in example \eqref{eq:Ilc} with $R=1$, $n_0=1.5$ ($W$ in units $R^{-2}n_0^{-2}$).}
    \label{fig:Ilc}
\end{figure}

Cases {\textsc{(a)}}, \textsc{(b)}, and \textsc{(c)}, display well bottoms with distinct shapes, driven by the values of the potential $W$ and its derivatives. The formula
\[
   W'(r) = -\frac{2}{r^2n^2(r)}\, \lp \frac{1}{r} + \frac{n'(r)}{n(r)} \rp,\quad 0<r\le R,
\]
leads to the introduction of the \emph{effective curvature} $\kappa_{\sf eff}$
\begin{equation}
\label{eq:keff}
   \kappa_{\sf eff} = \frac{1}{R} + \frac{n'(R)}{n(R)}
\end{equation}
the sign of which discriminates these three situations. Note that in example \eqref{eq:Ilc} we simply have
\begin{equation}
\label{eq:Ilck}
   \kappa_{\sf eff} = \frac{1}{R} - \frac{\delta}{2}\,.
\end{equation}
This leads to the classification of the three cases that we investigate in detail in this paper:
\begin{itemize}
\item[\textsc{(a)}] $\kappa_{\sf eff}>0$: The potential $W$ has a well bottom with half-triangular (aka step-linear) shape at $r=R$.
\item[\textsc{(b)}] $\kappa_{\sf eff}=0$: The potential $W$ has a well bottom with half-quadratic (aka step-harmonic) shape at $r=R$.
\item[\textsc{(c)}] $\kappa_{\sf eff}<0$: There exist $R_0<R$ at which the potential $W$ has a well bottom with quadratic shape at $r=R$.
\end{itemize}
Going from \textsc{(a)} to \textsc{(c)} we observe that WGM spread more and more inside the cavity, and that the corresponding resonances have smaller and smaller imaginary parts, see Fig. \ref{fig:IlcWGM1}.
\begin{figure}[!htbp]
    \centering
    \begin{subfigure}[h]{0.33\linewidth}
        \hglue-1.4em
        \includegraphics[width=1.12\linewidth]{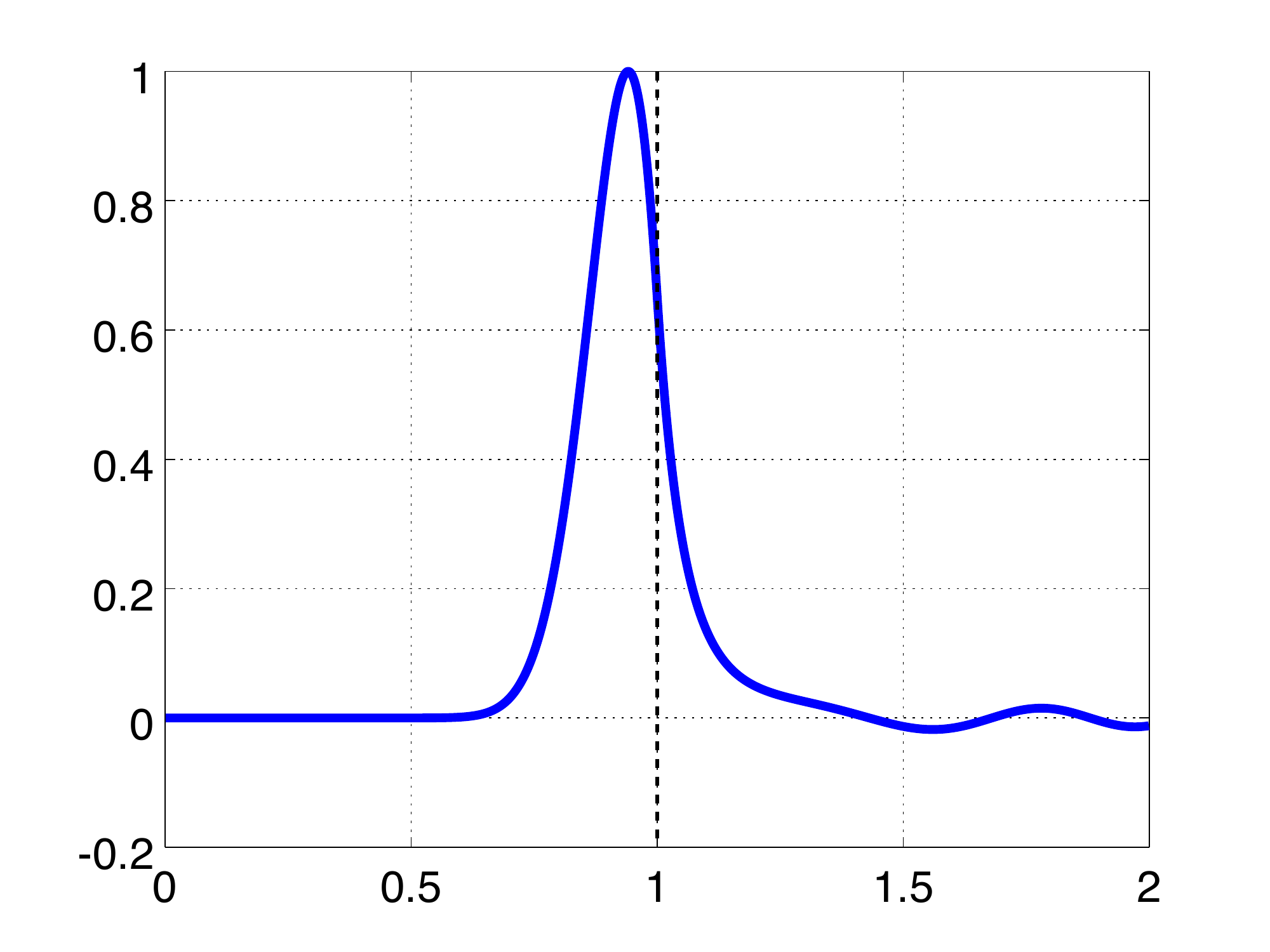}\\
        \centerline{\small$k = 23.04 - 5.2\,10^{-4}\ee\ic$}
        \caption{\ $\delta=0$}
    \end{subfigure}%
    \begin{subfigure}[h]{0.33\linewidth}
        \hglue-0.7em
        \includegraphics[width=1.12\linewidth]{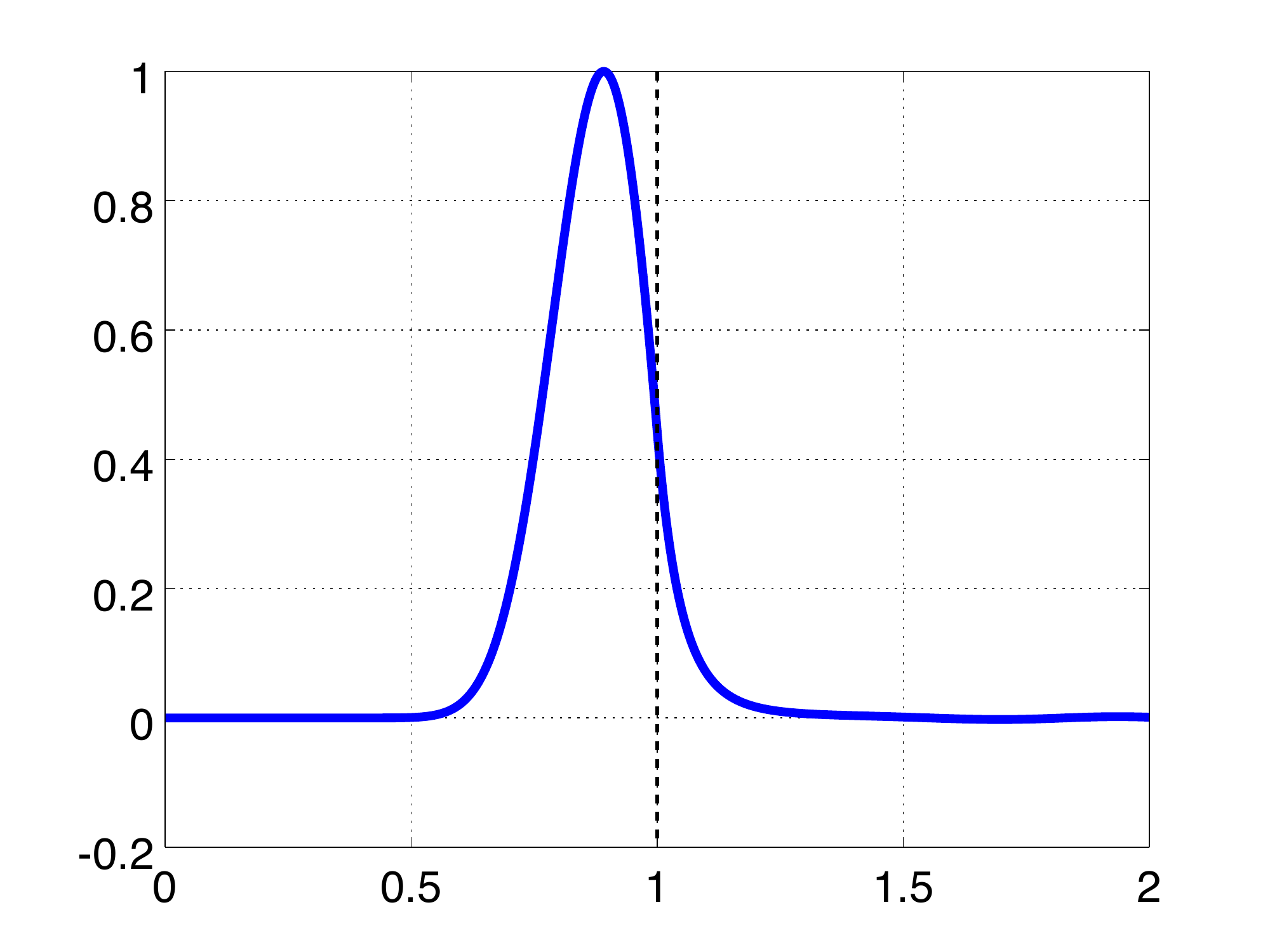}\\
        \centerline{\small$k = 21.20 - 6.4\,10^{-6}\,\ic$}
        \caption{\ $\delta=2$}
    \end{subfigure}%
    \begin{subfigure}[h]{0.33\linewidth}
        \centering
        \includegraphics[width=1.12\linewidth]{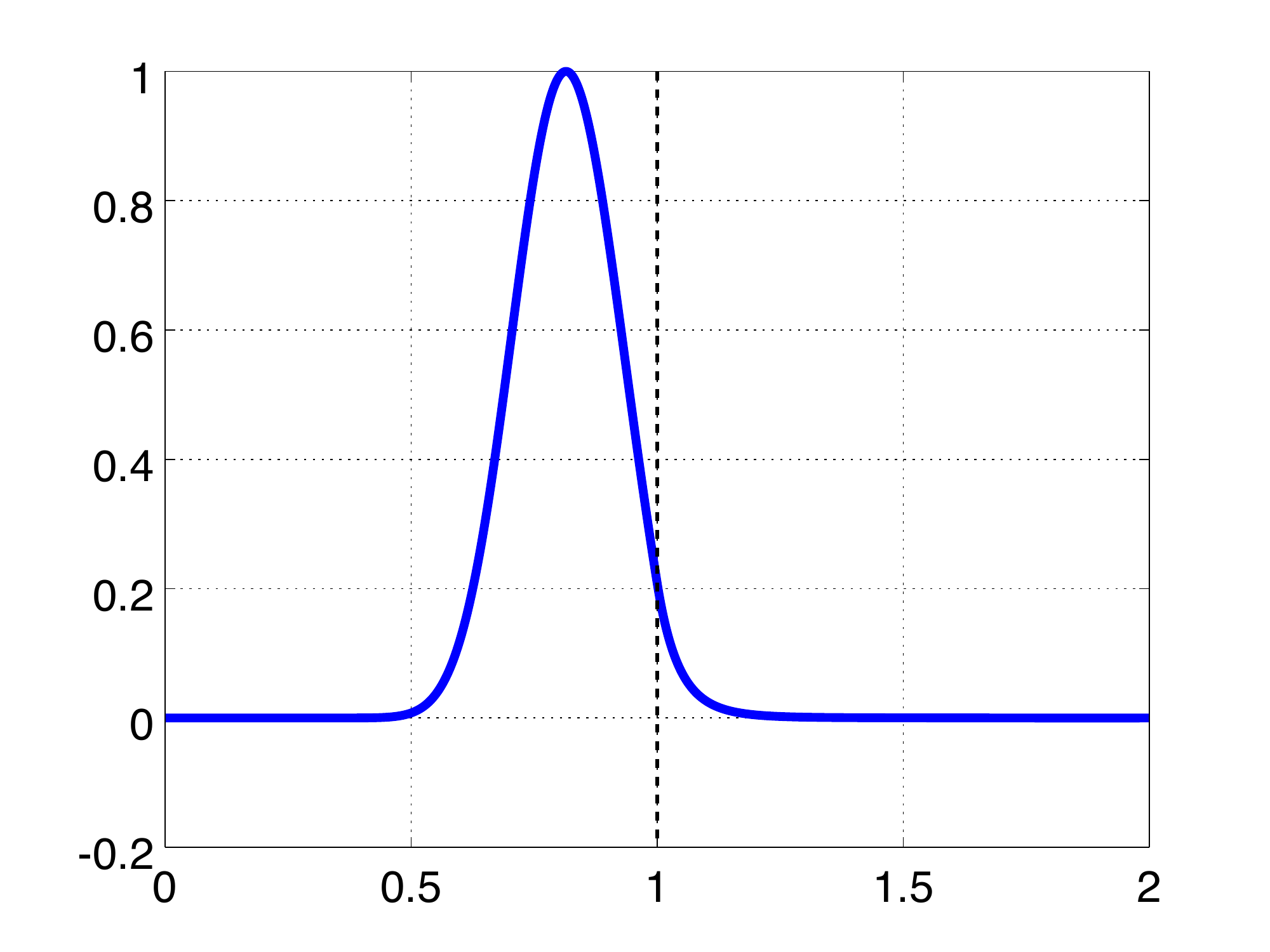}\\
        \centerline{\small$k = 19.16 - 1.2\,10^{-8}\,\ic$}
        \caption{\ $\delta=4$}
    \end{subfigure}

    \caption{Real part of radial profiles of first WGM in example \eqref{eq:Ilc} with $R=1$, $n_0=1.5$, and $m=30$. Computed by solving \eqref{eq:TETM} by finite differences with Perfectly Matched Layers.}
    \label{fig:IlcWGM1}
\end{figure}

In this paper we provide asymptotic expansions for WGM resonances as the polar mode index $m$ tends to infinity. This means two things:
\begin{enumerate}
\item Construct sequences of quasi-resonances and quasi-modes $m\mapsto (\ku(m),\uu(m))$ that satisfy approximately equation \eqref{eq:TMm} or its TE counterpart with super-algebraically small residues in $\Oc(m^{-\infty})$. 
\item Apply the theory of black box scattering \cite{TanZwo98,Ste99} to deduce the presence of true resonances $k(m)$ super-algebraically close to quasi-resonances $\ku(m)$.
\end{enumerate}
This will be done in a general framework of variable optical indices entering in either case \textsc{(a)}, \textsc{(b)} or \textsc{(c)}, see Theorems \ref{th:A}--\ref{th:C} later on. The construction in case \textsc{(a)} and case \textsc{(c)} leads to a computable algorithm, i.e.\ involving at each level matrix products and matrix inversions in finite dimension.

\subsection*{Connection with literature}
Our results in case \textsc{(a)} are reminiscent of those by \textsc{Babi\v{c}} and \textsc{Buldyrev} \cite[Sec.\ 7.4]{BabBul61}, where the Helmholtz equation 
$
   \Delta u + \frac{k^2}{c^2}u = 0
$
is considered in a closed cavity with Dirichlet boundary conditions. 
Asymptotic expansions of eigen-frequencies corresponding to whispering gallery mode are determined there. The
same condition on the effective curvature appears in \cite[eq.~(7.2.1)]{BabBul61}. 
As a matter of fact, both problems are close: Asymptotics of \cite{BabBul61} and ours start with the same two terms and differ by the third one. Nevertheless there are also striking differences: In contrast with \cite{BabBul61}, we have to couple the interior and the exterior of the cavity and need distinct scales for that. Moreover, the scattering resonances appear as eigen-frequencies of a \emph{non selfadjoint problem}, which prevents from using the spectral theorem as in \cite[Thm.~7.1]{BabBul61}.

Let us also mention that our results in case \textsc{(a)} generalize to \emph{variable optical indices} the expansion obtained by \textsc{Lam} et al. \cite{LamLeu92} for constant indices by a quite specific method (expansion of Bessel functions entering in the so called modal equation \eqref{eq:modal} see later on).

Our case \textsc{(c)} appears to essentially enter the framework of the ``puits ponctuel dans une isle'' of  \textsc{Helffer} and \textsc{Sj\"ostrand} \cite[Chap. 10]{HelSjo86}.
Namely, asymptotic expansions of resonances and modes are given there in Theorem 10.7 and  Theorem 10.8, respectively.  These very sharp results are proved for analytic potentials and make use of semi-classical analytic pseudo-differential calculus. Stricto sensu, our effective potentials do not satisfy the assumptions of \cite[Chap. 10]{HelSjo86}. Our (one-dimensional) multi-scale expansions are more explicit and use simpler tools, which is our motivation to include them in our analysis.

\subsection*{Real vs imaginary parts}    
In the three cases \textsc{(a)}, \textsc{(b)}, \textsc{(c)}, the quasi-resonances $\ku(m)$ that we construct  are real. The associated quasi-modes $\uu(m)$ are localized in the radial variable (close to the interface $r=R$ in the first two cases and close to the internal circle $r=R_0$ in the third one).
    The couples $(\ku(m), \uu(m))$ are in fact quasi-pairs for a transmission problem in a larger bounded domain containing the disk of radius $R$. This latter problem can be viewed as self-adjoint.
    This explains why the quasi-resonances $\ku(m)$ are real. This also explains why our analysis does not access the imaginary part of resonances apart from the rough information that $|\Im k(m)|=\Oc(m^{-\infty})$ for resonances $k(m)$ that are close to quasi-resonances $\ku(m)$.
    
The issue of imaginary parts calls for a couple of comments.
Information about the imaginary parts of resonances needs a WKB approximation based on solution of eikonal and transport equations. The imaginary part comes from the tunneling effect in the classically forbidden region where $W>E$. In the semi-classical analytic framework of \cite[Chap. 10]{HelSjo86}, an expression is proven (Theorem 10.12), displaying the exponentially decreasing behavior of $|\Im k|$ as $h$ tends to $0$ (i.e.\ $m$ tends to $\infty$ for us). The WKB approximation can be conducted in a more or less formal way to obtain the first terms of such an expansion, see \cite[Chap.~15]{Hall2013} for confining potentials and \cite{FernandezGarcia2011} for radial barriers. The investigation of discontinuous potentials of the same kind as ours in \cite{Amthong2014} shows that the matter is far from being obvious. Calculations done in the same spirit as \cite{Hall2013,FernandezGarcia2011,Amthong2014} in cases \textsc{(a)} and \textsc{(b)} lead to the plausible estimate $|\Im k(m)| = \Oo ( \e^{- 2 S_0 m} )$ at the ground state energy $E=\frac{1}{R^2\, n(R)^2}$ with $S_0$ the integral of the {\em kinetic parameter} $\sqrt{W(r) - E}$ between the point of discontinuity $r=R$ and the turning point $r=Rn(R)$.
We compute:
\begin{equation}
\label{eq:WKB2}
   S_0 = \int_{R}^{R\, n(R)} \sqrt{W(r) - E} \dd{r} = \artanh(T) - T \quad\mbox{with}\quad T = \sqrt{1-\tfrac{1}{n(R)^2}} .
\end{equation}
Even if correct, these formulas have to be taken with care. One can notice that the term $\Oo$ hides a power of $m$ and the dependency of resonances on the radius $R$.
When the potential is analytic, the $\Oo$ is encoded precisely in an analytic symbol. But in our case, we could simply learn that we have an exponential decay of the imaginary part. 

We have lead numerical computations combining perfectly matched layers (PML), finite differences and extrapolation, that provided an evaluation of the imaginary part, up to $10^{-10}$ relative precision at least. The exponential decay of $|\Im k(m)|$ as $m\to\infty$ is clearly identifiable, but the decay rate does not converge rapidly in $m$.

By contrast, the expansions of the real parts involves integer powers of $m^{-\frac13}$ in case \textsc{(a)} and of $m^{-\frac12}$ in cases \textsc{(b)}-\textsc{(c)}. Their coefficients depend explicitly on TE/TM type and on another index $j$ that is, roughly the number of oscillations of modes inside the cavity. We provide explicit formulas for the first 5 terms of expansions in case \textsc{(a)} (and 3 in cases \textsc{(b)}-\textsc{(c)}). This displays the influence of the first derivatives of the optical index on these various terms. According to what was first guessed in \cite{Ilc03}, one can also observe the regular asymptotic repartition of resonances in cases \textsc{(b)}-\textsc{(c)}.

\subsection*{Intertwining of asymptotics, simulations, and physics}
The values of the polar frequency $m$ that are interesting in practice for micro-resonators correspond to micrometric wave length on the boundary of the cavity. According to the size of the cavity (with $R$ ranging from {\em ca}.\ $10\mu$m to a few hundreds $\mu$m) this corresponds to values of $m$ that go from {\em ca}.\  $50$ to $1000$. This means that we are either in a middle or in a high frequency regime. In any case, asymptotic formulas give insight into the repartition of WGM resonances and the localization of modes. So, prior to any experimentation, one can forecast what can be expected if a certain variable index is used.

In the middle/high frequency regime, asymptotic formulas provide a priori repartition of WGM resonances with at most $1\%$ relative error. This information can be combined with great advantage to numerical simulations including PML's. Since the computed spectrum spreads in the complex plane and contains non physical parts due to the PML angle and the cut-off, an a priori localization is of utmost importance to dig out the correct eigen-frequencies of discretized matrices. In this range of frequencies, as already mentioned, it is possible to obtain an evaluation of the radiation Q-factor, up to $10^{10}$ at least.

In very high frequency regime, numerical computations loose precision due to high stiffness and, by contrast, asymptotics become very precise with at most {\em ca}.\ $10^{-6}$ relative error. The imaginary part of resonance tends exponentially rapidly to $0$ with larger $m$, becoming undetectable in double precision arithmetics. This means that at such frequencies, $\Im k$ is not calculable. Nevertheless, it is important to note that, in this situation, the radiation $Q$ factor becomes irrevelant from a physical point of view because the radiation losses are much lower than other loss sources (such as edge roughness, material absorption or material defects) that contribute the more, {\em cf.} \cite{borselli2005}.

To conclude, we can say that asymptotics and numerical computations help each other to gain a very precise insight into WGM. The building of a finite difference code with perfectly matched layers (PML) is the object of a work in progress by the authors. Let us finally mention that the third author \cite{MoitierPhD} handles two-dimensional non-axisymmetric cavities, theoretically by a semi-WKB method, and numerically by finite element method via the library \href{https://uma.ensta-paris.fr/soft/XLiFE++/}{XLiFE++}.

\subsection*{Organization of the paper}
Section~\ref{sec:main} presents the mathematical setting of the problem and the main results. We also revisit there the case of constant optical index to illustrate what are the WGM resonances in the general landscape of complex resonances.
In section~\ref{sec:quasi}, we make precise the notion of quasi-pairs, quasi-resonances, and quasi-modes.
In section~\ref{sec:3types}, we come back to the Schr\"odinger analogy and the classification \textsc{(a)}\textsc{(b)}\textsc{(c)} of our cases of study.
In sections~\ref{sec:(a)}--\ref{sec:(c)}, we construct quasi-pairs associated with localized resonances in these three cases.
Finally, in section~\ref{sec:proxy}, we show that the quasi-resonances just constructed are asymptotically close to true resonances of the cavity, hence ending the proof of  Theorems~\ref{th:A}--\ref{th:C}. 

\subsection*{Notation}
The set of non-negative integers is denoted by $\N$ and the set of positive integer by $\N^*$.
We denote by $\Lr^2(\Omega)$ the space of square-integrable functions on the open set $\Omega$, and by $\Hr^\ell(\Omega)$ the Sobolev space of functions in $\Lr^2(\Omega)$ such that their derivatives up to order $\ell$ belong to $\Lr^2(\Omega)$. Finally, $\Sc(I)$ denotes the space of Schwartz functions on the unbounded interval $I$.

\section{Problem setting and main results}
\label{sec:main}

\subsection{Helmholtz equations for optical micro-disks}

In the 2-dimensional model that we investigate from now on, the optical cavity is a disk of radius $R$ centered at the origin, that we denote by $\Omega$. This cavity is associated with an optical index $n>1$ which is a regular radial function of the position on the closed interval $[0,R]$. Outside $\overline\Omega$, i.e. for $r>R$, the index $n$ is equal to $1$. There are two relevant scalar Helmholtz equations associated with such a configuration, corresponding to \emph{Transverse Electric} (TE) modes or \emph{Transverse Magnetic} (TM) modes. In order to have a common formulation, we introduce a parameter $p\in\{\pm1\}$ defined as
\begin{equation}
\label{eq:p}
   p=1 \quad\mbox{in TM case}\quad\mbox{and}\quad p=-1  \quad\mbox{in TE case}.
\end{equation}
With this convention, the resonance problems are: \\[1ex]
{\sl Find $k\in\C$ and nonzero $u$ with $u\on{\Omega}\in \Hr^2(\Omega)$ and
     $u\on{\R^2\setminus\Omega} \in \Hr^2_\loc(\R^2\setminus\Omega)$ such that}
\begin{subequations}
    \label{eq:Pp}
\begin{empheq}[left={\hspace*{-3ex}\empheqlbrace\quad}]{align}
 &- \div \left( n^{p-1}\,\nabla u\right) -  k^2 n^{p+1} \,  u = 0,
                                   &&\texte{in }\Omega \texte{ and }\R^2\setminus\overline\Omega
   \label{eq:Ppa} \\
 & [u] = 0 \quad\mbox{and}\quad [n^{p-1} \,\partial_\nu u] = 0  &&\texte{across } \partial\Omega 
   \label{eq:Ppb}\\
 & \mbox{Outgoing radiation condition} &&\texte{as }r\to+\infty  
   \label{eq:Ppd}
\end{empheq}
\end{subequations}
Here $[f]$ denotes the jump of $f$ across $\partial\Omega$.
The outgoing radiation condition \eqref{eq:Ppd} is distinct from the usual
Sommerfeld condition that is valid for real k's. It can be referred as the Siegert condition in the literature. 
In our framework, \eqref{eq:Ppd} imposes that 
outside $\overline\Omega$, $u$ has an expansion
in terms of Hankel functions of the first kind $\Hs^{(1)}_m$ 
in polar coordinates (compare with \cite{SteUng17} for instance):
\begin{align}\label{eq:cr}
    u(x,y) = \sum_{m\in\Z} C_m \,\e^{\ic m \theta} \ \Hs_m^{(1)}(k r) 
    && \forall \theta\in[0,2\pi] ,\ r>R_\Omega.
\end{align}
Owing to Rellich theorem (see \cite{McL00} for instance) the radiation condition \eqref{eq:Ppd} implies that the imaginary part of $k$ is negative and the modes, exponentially increasing at infinity.
Such wave-numbers $k$ are the poles of the extension of the resolvent of underlying Helmholtz operators when coming from the upper half complex plane.

We note that, in the case when $n$ is constant inside $\Omega$, problem \eqref{eq:Pp} appears also as a modeling of scattering by a transparent obstacle (see \textsc{Moiola} and \textsc{Spence} \cite[Remark 2.1]{MoiSpe19} for a discussion of the models in acoustics and electromagnetics).
In contrast with impenetrable obstacles (see \textsc{Sj\"ostrand} and \textsc{Zworski} \cite{SjoZwo99}), transparent obstacles or dielectric cavities may have resonances super-algebraically close to the real axis due to almost total internal reflection on a convex boundary (see \textsc{Popov} and \textsc{Vodev} \cite{PopVod99a}, and also \textsc{Galkowski} \cite{Gal19}).
These resonances correspond to the whispering gallery modes we are interested in.


In polar coordinates $(r,\theta)$ centered at the disk center, $n=n(r)$.
Taking advantage of the invariance by rotation of equations \eqref{eq:Pp}, it is easily proved by separation of variables that any solution $u$ associated with a $p\in\{\pm1\}$ and a resonance $k\in\C$ can be expanded as a Fourier sum
\[
    u (x,y) = \sum_{m\in\Z} w_m(r)\,\e^{\ic m \theta}
\]
and that each term $u_m(x,y) \coloneqq w_m(r)\,\e^{\ic m \theta}$ is a solution of problem \eqref{eq:Pp} associated with the same $p$ and the same $k$. Hence it is sufficient to solve, for any $m\in\Z$, problem \eqref{eq:Pp} with $u$ of the form $w_m(r)\,\e^{\ic m \theta}$. Here $m\in\Z$ is referred as the \emph{polar mode index}.
The radial problem satisfied by $w:r\mapsto w(r)$ when $u=w(r)\,\e^{\ic m \theta}$  is plugged into problem \eqref{eq:Pp} is the following parametric radial problem depending on the integer $m$: \\[1ex]
{\sl Find $k\in\C,\;\; w\on{(0,R)}\!\in \Hr^2((0,R),r\dd r),\;\;
            w\on{(R,\infty)} \!\in \Hr^2_\loc([R,\infty),r\dd r) \texte{ such that}$}
\begin{subequations}
    \label{eq:Pprad}
\begin{empheq}[left={\hspace*{-3ex}\empheqlbrace\quad}]{align}
    &-\frac{1}{r}\partial_r(n^{p-1}r\partial_r w) +
    n^{p-1} \lp \frac{m^2}{r^2} - k^2\, n^2 \rp w = 0 
     && \texte{in } (0,R) \texte{and} (R,+\infty) 
     \label{eq:Pprada}\\[0.5ex]
    &\lc w \rc = 0 \quad\mbox{and}\quad \lc n^{p-1}w' \rc= 0
     && \texte{for } r=R                          
      \label{eq:Ppradb}\\[0.5ex]
    &w(r) = \Hs_m^{(1)}(k r) 
     && \texte{for } r>R
      \label{eq:Ppradd}
\end{empheq}
\end{subequations}
Note that the radiation condition \eqref{eq:Ppradd} implies that $w$ is solution of \eqref{eq:Pprada} in $(R,\infty)$. When $m=0$ a Neumann condition at the origin should be added. When $m$ is nonzero, we have a natural zero Dirichlet condition at $0$ due to the blow up of the centrifugal potential.

\subsection{Circular cavity with constant optical index}
\label{sec:0939}

As a fundamental illustrative example, let us consider a circular cavity with \emph{constant} optical index $n\equiv n_0 >1$ in $\overline\Omega$. Though apparently simple, this case is indeed already very rich. The use of partly analytic formulas provides a lot of information on the resonance set and the associated modes. Let us sketch this now. For both TM and TE modes, solutions $w$ of \eqref{eq:Pprad} have the form
\begin{equation}
    \label{eq:cmt:2754a}
    w(r) = \left\{
    \begin{array}[h]{ll}
        \Js_{m}(n_0 k r)    & \texte{if} r\leq R \\[2mm]
        \dps  \frac{\Js_{m}(n_0 k R )}{\Hs^{(1)}_{m}( k R )}\,
        \Hs^{(1)}_{m}( k r) & \texte{if} r>R.    \\
    \end{array}\right.
\end{equation}
Here  $\mathsf{J}_{m}$ refers to the order $m$ Bessel function of the first kind
and $\mathsf{H}^{(1)}_{m}$  refers to the order $m$  Hankel function of the first kind.
In both TE and TM cases, the resonance $k$ is obtained as a solution to the following non-linear equation, termed \emph{modal equation} (recall that $p=1$ for TM modes and $p=-1$ for TE modes)
\begin{align}
    \label{eq:modal}
    n_0^p\, \Js^{\ee\prime}_{m}(n_0 R k)\, \Hs^{(1)}_{m}(R k) - \Js_{m}(n_0 R k)\, {\Hs^{(1)}_{m}}'(R k) = 0 .
\end{align}
For each value of the polar mode index $m$, the modal equation \eqref{eq:modal} has infinitely many solutions $k\in\C$.
We denote by $\Rc_p[n_0,R](m)$ this set.
Because $\Js_{-m}(\rho)=(-1)^m\Js_{m}(\rho)$ and the same for $\Hs^{(1)}_{m}$, the two integers $\pm m$ provide the same resonance values,
which reflects a degeneracy of resonances for disk-cavities. Thus, we can restrict the discussion to nonnegative integer values of $m$.

\begin{figure}[!htbp]
    \centering
    \begin{subfigure}[h]{0.49\linewidth}
        \centering
        \includegraphics{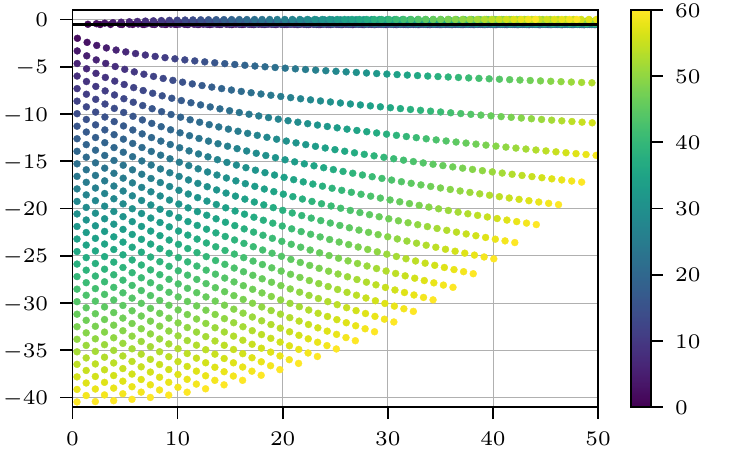}
        \caption{$p = +1$}
    \end{subfigure}
    \begin{subfigure}[h]{0.49\linewidth}
        \centering
        \includegraphics{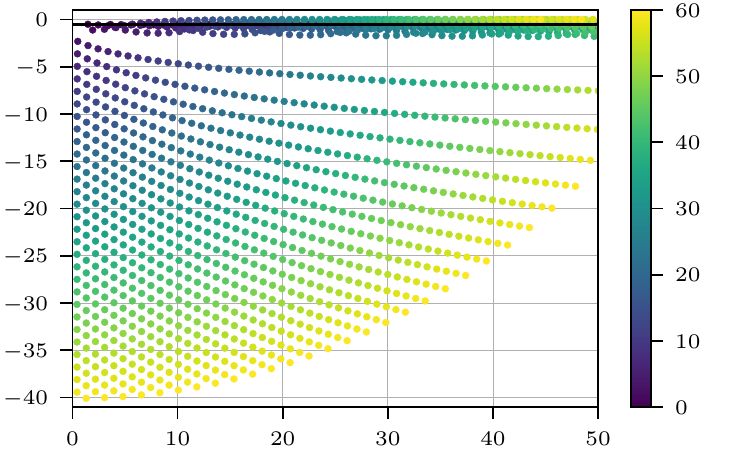}
        \caption{$p = -1$}
    \end{subfigure}

    \bigskip\medskip

    \begin{subfigure}[h]{0.49\linewidth}
        \centering
        \includegraphics{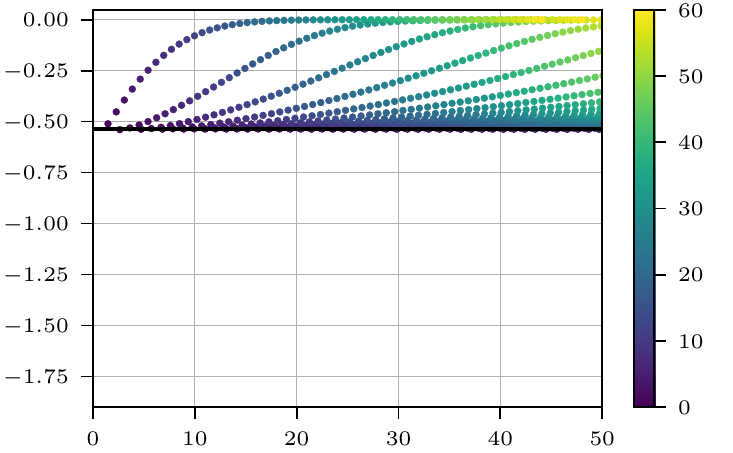}
        \caption{$p = +1$}
    \end{subfigure}
    \begin{subfigure}[h]{0.49\linewidth}
        \centering
        \includegraphics{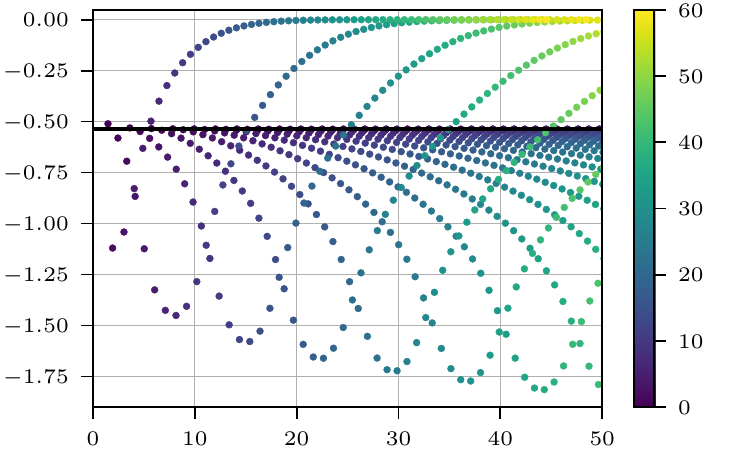}
        \caption{$p = -1$}
    \end{subfigure}
    \caption{Roots of \eqref{eq:modal} when $p\in\{\pm 1\}$, $n_0 = 1.5$, $R=1$, and $0\le m \le 60$.
        The first row gives a global view, whereas the second row provides a zoom on inner resonances.
            The color of dots displays the value of index $m$ according to the color bar.}
    \label{fig:Bes_all_m}
\end{figure}

In Figure \ref{fig:Bes_all_m}, we show the complex roots of equation \eqref{eq:modal} when $n_0=1.5$, $R=1$, and $0\le m \le 60$, the values of $m$ being distinguished by a color scale. This figure displays clearly the general features of the set of resonances. For each $m$ the set of resonances $\Rc_p[n_0,R](m)$ can be split into two parts, see \cite{ChoKim10},
\begin{itemize}
    \item an infinite part $\Rc_{p,\, \sf inner}[n_0,R](m)$ made of \emph{inner resonances} for which the modes are essentially supported inside the disk $\Omega$
    \item a finite part $\Rc_{p,\, \sf outer}[n_0,R](m)$ made of \emph{outer resonances} for which the modes are essentially supported outside the disk $\Omega$.
\end{itemize}
The sets of inner and outer resonances are given by
\begin{align*}
    \Rc_{p,\,\mathsf{inner}}[n_0,R] = \bigcup_{m\in\N} \Rc_{p,\,\mathsf{inner}}[n_0,R](m) &&
    \text{and}                                                                            &&
    \Rc_{p,\,\mathsf{outer}}[n_0,R] = \bigcup_{m\in\N} \Rc_{p,\,\mathsf{outer}}[n_0,R](m)
\end{align*}
respectively.
It appears that for TM modes ($p=1$) there exists a negative threshold $\tau$ such that the outer resonances satisfy $\Im k < \tau$, and the inner ones, $\Im k \ge \tau$. We can clearly see on  Fig\ \ref{fig:Bes_all_m} some organization in sub-families, not indexed by $m$ (i.e., $m$ varies along these families). Observation of the associated modes shows that these families depend on another parameter, $j$, which can be called a \emph{radial mode index}: This is the number of sign changes (or nodal points) of the real part of an associated mode. For inner resonances, the sign changes occur inside the disk.
For outer resonances, there is no such interpretation in term of sign changes but they can be linked to the resonances of the exterior Dirichlet problem: One can see in \cite[eq.\ (49)]{BogDub08E} that when $m \to +\infty$, outer resonances tend to the zeros of  Hankel function $k \mapsto \Hs^{(1)}_{m}(R k)$.

Inner resonances will be denoted by $k_{p\pv j}(m)$, with $p=1$ and $p=-1$ according to the TM and TE cases respectively, and with $m$ and $j$ the polar and radial mode indices, respectively. Then there exist two distinct asymptotics for $k_{p\pv j}(m)$ according to $j\to\infty$ or $m\to\infty$.

On the one hand, direct calculations yield, \cite[Section 3.3.1]{MoitierPhD},
when $p=\pm1$ and $j \to +\infty$
\begin{align*}
    k_{p\pv j}(m) & \sim \frac{j\pi}{R n_0} + \frac{(2m+2-p)\pi}{4R n_0} + \frac{\ic}{2Rn_0}\ln\lp\frac{n_0-1}{n_0+1}\rp.
\end{align*}
Thus, as $j\to\infty$, the imaginary part of $k_{p\pv j}(m)$ tends to the negative value $\frac{1}{2n_0}\ln(\frac{n_0-1}{n_0+1})$.
For the example displayed in Figure \ref{fig:Bes_all_m}, this value is $-0.53648$.
This same value can also be found in the physical literature, see \cite[eq.\ (13)]{BogDub08A} for example.

On the other hand, using asymptotic expansions of Bessel's functions involved in the modal equation \eqref{eq:modal},
one obtains that resonances $k_{p\pv j}(m)$ for a given radial index $j$ satisfy the following asymptotic expansion when $m \to +\infty$:
\begin{multline}\label{eq:1421}
    k_{p\pv j}(m) =
    {\frac{m}{R{n_0}}} \Bigg[ 1 + \frac{\as_j}{2} {\lp\frac{2}{m}\rp^{\frac{2}{3}}} -
    \frac{{n_0^p}}{2(n_0^2-1)^{\frac{1}{2}}} {\lp\frac{2}{m}\rp} +
    \frac{3{\as_j^2}}{40} {\lp\frac{2}{m}\rp^{\frac{4}{3}}}
    \\
    -\as_j\frac{{n_0^p} {(3n_0^2 - 2n_0^{2p})}}{12(n_0^2-1)^{\frac{3}{2}}}
    {\lp\frac{2}{m}\rp^{\frac{5}{3}}} + \Oo\lp {m^{-2}} \rp \Bigg].
\end{multline}
For details, we refer to \cite{LamLeu92} where computations were carried out for a sphere and therefore spherical Bessel's functions appears in this latter case in the modal equation instead of cylindrical Bessel's functions.
In the asymptotic expansion \eqref{eq:1421}, $0<\as_0<\as_1<\as_2<\cdots$ are the successive roots of the flipped Airy function $\As : z\in\C\mapsto \mathsf{Ai}(-z)$ where $\mathsf{Ai}$ denotes the Airy function.
It is important to note that the terms in the asymptotic expansions are \emph{real}: Hence the imaginary part of $k_{p\pv j}(m)$ is contained in the remainder. This part of the resonance set correspond to typical whispering gallery modes.

\begin{figure}[!htbp]
    \begin{tabular}{cccc}
        $m=5$                                                                &
        $m=10$                                                               &
        $m=20$                                                               &
        $m=40$                                                                 \\
        \includegraphics[width=0.2\linewidth]{./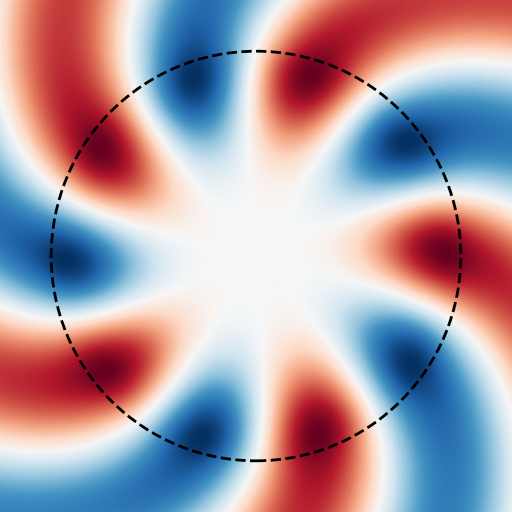} &
        \includegraphics[width=0.2\linewidth]{./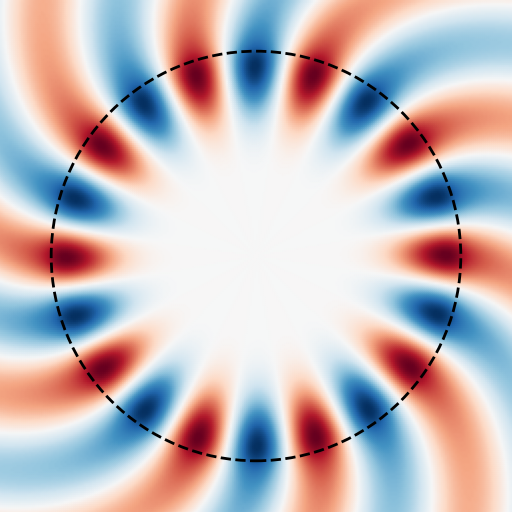} &
        \includegraphics[width=0.2\linewidth]{./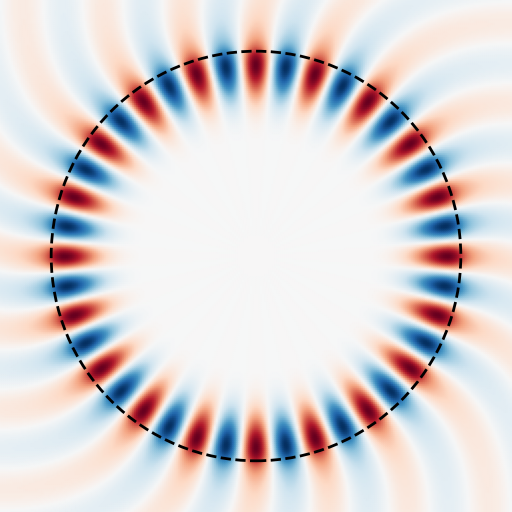} &
        \includegraphics[width=0.2\linewidth]{./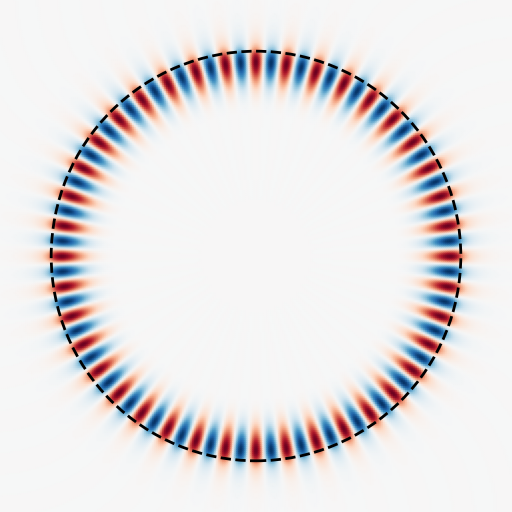}   \\[-1ex]
        \footnotesize $4.64 - \ic\, 10^{-0.54}$                              &
        \footnotesize $8.46 - \ic\, 10^{-0.92}$                              &
        \footnotesize $15.9 - \ic\, 10^{-1.96}$                              &
        \footnotesize $30.1 - \ic\, 10^{-4.74}$                                \\[1ex]
        \includegraphics[width=0.2\linewidth]{./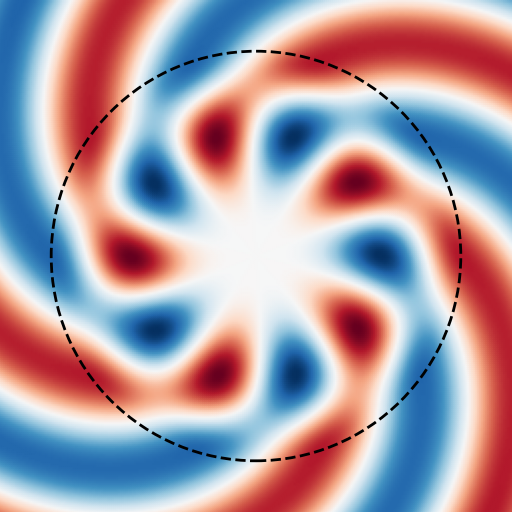} &
        \includegraphics[width=0.2\linewidth]{./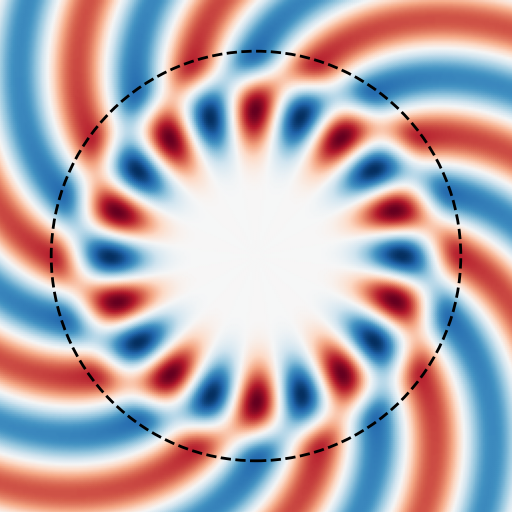} &
        \includegraphics[width=0.2\linewidth]{./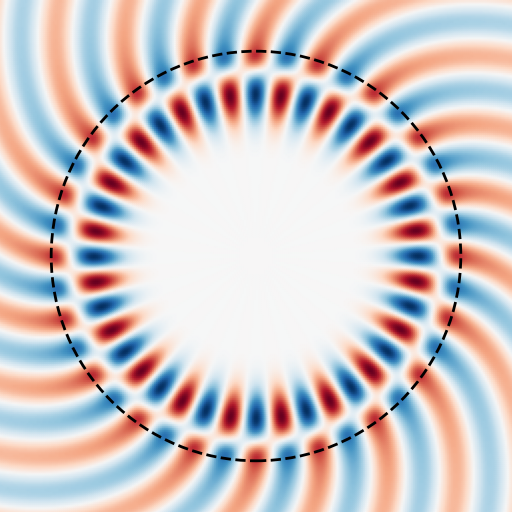} &
        \includegraphics[width=0.2\linewidth]{./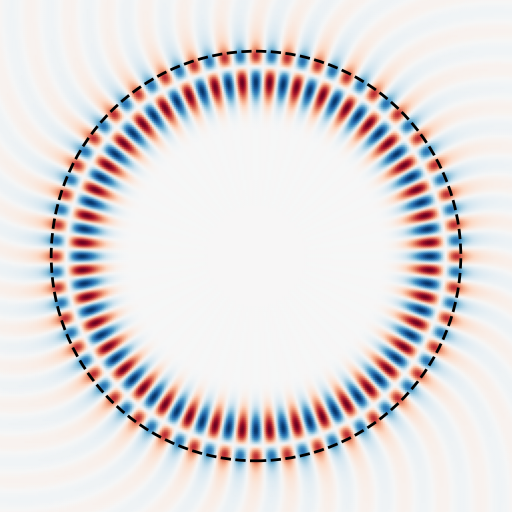}   \\[-1ex]
        \footnotesize $7.08 - \ic\, 10^{-0.34}$                              &
        \footnotesize $11.1 - \ic\, 10^{-0.45}$                              &
        \footnotesize $18.7 - \ic\, 10^{-0.86}$                              &
        \footnotesize $33.6 - \ic\, 10^{-2.59}$                                \\[1ex]
        \includegraphics[width=0.2\linewidth]{./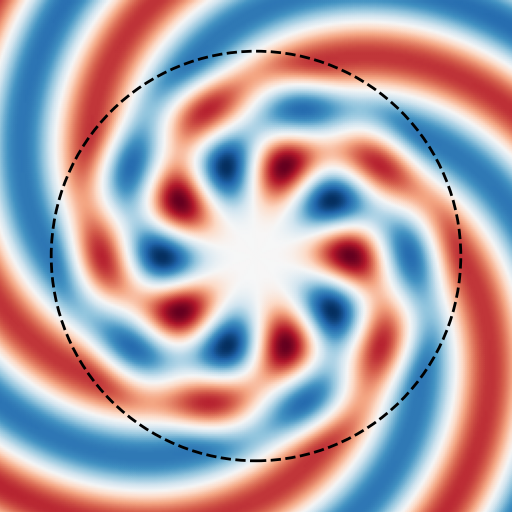} &
        \includegraphics[width=0.2\linewidth]{./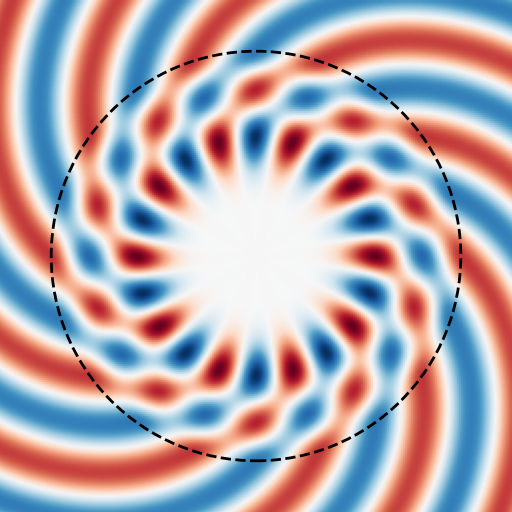} &
        \includegraphics[width=0.2\linewidth]{./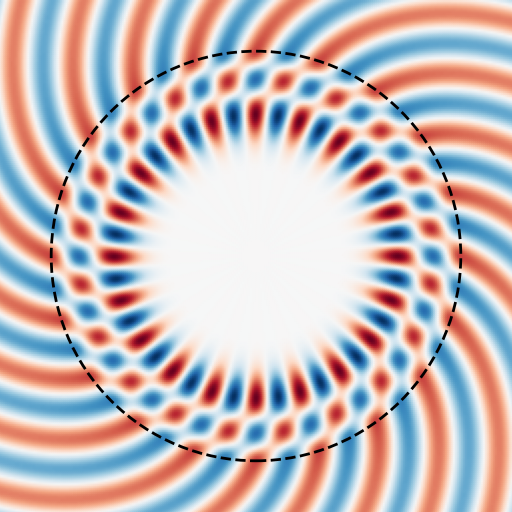} &
        \includegraphics[width=0.2\linewidth]{./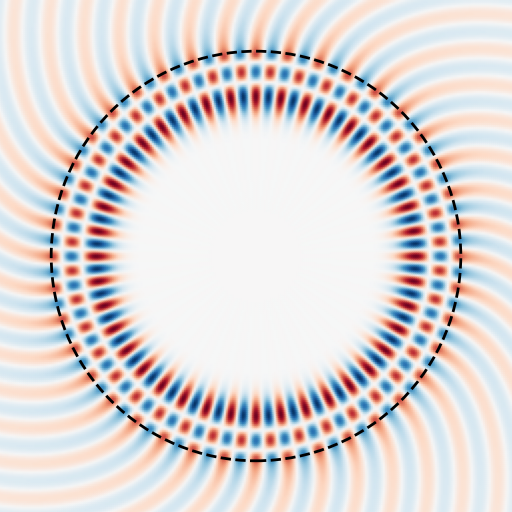}   \\[-1ex]
        \footnotesize $9.36 - \ic\, 10^{-0.30}$                              &
        \footnotesize $13.5 - \ic\, 10^{-0.35}$                              &
        \footnotesize $21.4 - \ic\, 10^{-0.52}$                              &
        \footnotesize $36.6 - \ic\, 10^{-1.39}$
    \end{tabular}
    \caption{Plots of real parts of TM modes $u$ in a circular cavity of radius $R=1$ and index $n=1.5$ (computed by solving the modal equation \eqref{eq:modal} using complex integration \cite{cxroots}).
        Below each plot, we give values of computed resonances $k\in\C$. Each row corresponds to a distinct value of $j$, from $0$ to $2$.}
    \label{fig:modeRes}
\end{figure}

We observe the following whispering gallery mode features when $j$ is chosen and $m$ gets large, see Fig.\ \ref{fig:modeRes}:
\begin{enumerate}
    \item The resonances obtained by solving the modal equation \eqref{eq:modal} have a \emph{negative imaginary part} which tends to zero rapidly (exponentially) when $m\to\infty$.
    \item The modes $u=w_m(r)\,\e^{\ic m \theta}$ with $w_m$ given by \eqref{eq:cmt:2754a} for $k$ solution of  \eqref{eq:modal} concentrate around the interface between the disk and the exterior medium.
\end{enumerate}

\subsection{Radially varying index: Main results}

The proof of the formula \eqref{eq:1421} given in \cite{LamLeu92} relies on the modal equation \eqref{eq:modal}
and makes use of asymptotic formulas for Bessel functions \cite{Olv97, AbrSte64}.
Such an approach is specific to disks with constant optical index, and the number of terms in the expansion is limited by the asymptotics as $m\to\infty$ of Bessel functions available in the literature.

In this paper we develop a more versatile approach, based on multiscale expansions and semiclassical analysis.
The idea is to consider $h=\frac{1}{m}$ as small parameter and to take advantage of the factor $m^2$ in front of $\frac{1}{r^2}$ in equation \eqref{eq:Pprada} to transform this equation into a semiclassical 1-dimensional Schr\"odinger operator with a singular potential $W$. Generically, $W$ will have a potential well at $r=R$. We perform an asymptotic construction of quasi-resonances and quasi-modes in the vicinity of $r=R$, in such a way that we can rely on the general arguments of the black box scattering to deduce the existence of true resonances close to quasi-resonances modulo $\Oc(m^{-\infty})$, i.e., more rapidly than any polynomial in $\frac{1}{m}$. Unless explicitly mentioned, we suppose the following.

\begin{assumption}
    \label{as:n}
    The radial function $n:r\mapsto n(r)$ satisfies the following properties
    \begin{enumerate}
        \item $n(r)=1$ if $r>R$;
        \item The function $r\mapsto n(r)$ belongs to $\Cc^\infty([0,R])$ and $n(r)>1$ for all $r\le R$.
    \end{enumerate}
\end{assumption}

This assumption motivates the following notations.

\begin{notation}
    \label{no:n}
We denote by $n_i(R)$ the limit values of $\partial^i_r n(r)$ as $r$ tends to $R$ from inside the cavity, namely 
\begin{equation}
    \label{eq:n0}
    n_0 = \lim_{r\nearrow R} n(r), \quad n_1 = \lim_{r\nearrow R} n'(r),\quad n_2 = \lim_{r\nearrow R} n''(r).
\end{equation}
In the sequel we also handle the \emph{effective adimensional curvature} $\kb=R\kappa_{\sf eff}$
\begin{equation}
    \label{eq:kappa}
    \kb \coloneqq  R\lp\frac{1}{R} + \frac{n_1}{n_0}\rp .
\end{equation}
The next quantity arising in asymptotics is the \emph{adimensional hessian}
\begin{equation}
    \label{eq:mu}
    \mb \coloneqq  R^2\lp \frac{2}{R^2} - \frac{n_2}{n_0} \rp .
\end{equation}
\end{notation}

In this paper we prove expansions of resonances as $m\to\infty$ in three distinct cases discriminated by the sign of $\kb$. Such expansions are ``modulo $\Oc(m^{-\infty})$'' in a sense defined below.

\begin{notation}
    \label{Om-infty}
    Let $(a_m)_{m\in\N}$ be a sequence of numbers:
    \begin{align*}
        a_m = \Oc(m^{-\infty}) &  & \text{means that } \forall N\in\N,\ \exists C_N \text{ such that } |a_m| \le C_N\, m^{-N}, &  & \forall m\in\N.
    \end{align*}
\end{notation}

\begin{theorem}
    \label{th:A}
    Assume that the radial function $n$ satisfies Assumption \emph{\ref{as:n}} and
    \begin{equation}
        \label{eq:A}
        \kb >0.
    \end{equation}
    Choose $p\in\{\pm1\}$ and denote by $\Rc_{p}[n,R]$ the resonance set solution to problem \eqref{eq:Pp}.
    Then for any $j\in\N$, there exists a smooth real function $\Kr_{p\pv j}\in \Cc^\infty([0,1]):t\mapsto \Kr_{p\pv j}(t)$ defining distinct sequences
    \begin{equation}
        \label{eq:contkA}
        \ku_{p\pv j}(m) = m \,\Kr_{p\pv j}\lp m^{-\frac{1}{3}}\rp,\quad \forall m\ge1
    \end{equation}
    that are close modulo $\Oc(m^{-\infty})$ to the resonance set $\Rc_{p}[n,R]$, i.e.\ for each $m$, there exists $k_m\in \Rc_{p}[n,R]$ such that $\ku_{p\pv j}(m)-k_m = \Oc(m^{-\infty})$. Let $\Kr^\ell_{p\pv j}$ be the coefficients of the Taylor expansion of $\Kr_{p\pv j}$ at $t=0$. We have, with numbers $\as_j$ being the successive roots of the flipped Airy function,
    \begin{equation}
        \label{eq:thAKl}
        \Kr^0_{p\pv j} = \frac{1}{Rn_0} \,,\quad \Kr^1_{p\pv j} = 0,\quad
        \Kr^2_{p\pv j} = \frac{1}{Rn_0} \,\frac{\as_j}{2} (2\kb)^{\frac{2}{3}} .
    \end{equation}
    All coefficients $\Kr^\ell_{p\pv j}$ are calculable, being the solution of an explicit algorithm involving matrix products and matrix inversions in finite dimensions.
\end{theorem}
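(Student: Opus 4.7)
The plan is to exploit the Schr\"odinger analogy \eqref{eq:Schr} with small parameter $h = 1/m$ and energy $E = k^2/m^2$, constructing $\ku_{p\pv j}(m)$ as $m$ times the square root of an asymptotic eigenvalue series. Under hypothesis $\kb > 0$, the effective potential $W(r) = 1/(r^2 n(r)^2)$ attains its minimum $E_0 = 1/(R^2 n_0^2)$ at $r = R$ from the interior side, with slope $W'(R^-) = -2\kb/(R^3 n_0^2) < 0$, and jumps up to $W(R^+) = 1/R^2 > E_0$ outside. The energy level $E_0$ is thus classically allowed only just inside the interface and exponentially forbidden outside: this is the step-linear ``Airy'' well shape of case \textsc{(a)}, which selects the $h^{2/3}$ boundary-layer scale.

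First I would introduce the stretched variable $X = h^{-2/3}(R - r)$ and seek a formal power series
\begin{equation*}
    E(h) \sim E_0 + \sum_{\ell \geq 2} h^{\ell/3}\, E^{(\ell)}, \qquad
    \psi(X; h) \sim \sum_{\ell \geq 0} h^{\ell/3}\, \psi^{(\ell)}(X),
\end{equation*}
with $\psi^{(\ell)} \in \Sc([0, \infty))$ and $u(r) = \psi(X; h)$ on the interior side of $R$. Substituting into \eqref{eq:Pprada}, using the Taylor expansions of $n$ and $1/r$ at $r = R$, and collecting powers of $h^{1/3}$ produces at lowest nontrivial order the half-line eigenproblem
\begin{equation*}
    - \psi^{(0)\prime\prime}(X) + \frac{2\kb}{R^3}\, X\, \psi^{(0)}(X) \;=\; n_0^2\, E^{(2)}\, \psi^{(0)}(X), \qquad X \in (0, \infty),
\end{equation*}
supplemented by the Dirichlet condition $\psi^{(0)}(0) = 0$ forced by matching with the exterior. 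Its $j$-th eigenvalue is $n_0^2 E^{(2)}_j = \as_j \, (2\kb/R^3)^{2/3}$, with eigenvector proportional to $\As\big((2\kb/R^3)^{1/3} X - \as_j\big)$. Propagating through $K = \sqrt{E}$ yields the values of $\Kr^0_{p\pv j}, \Kr^1_{p\pv j}, \Kr^2_{p\pv j}$ announced in \eqref{eq:thAKl}.

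The exterior is where this analysis departs from the closed-cavity treatment of \textsc{Babi\v c--Buldyrev}: the outgoing condition \eqref{eq:Ppradd} forces $u = \Hs_m^{(1)}(kr)$ for $r > R$, and in the regime $kr < m$ with $m \to \infty$, Olver's uniform asymptotics show that $\Hs_m^{(1)}(kR)/(\Hs_m^{(1)})'(kR)$ is $\Oc(h^\infty)$ times an explicit prefactor. Plugged into \eqref{eq:Ppradb}, this produces, order by order in $h^{1/3}$, an effective Dirichlet boundary datum at $X = 0$ for the interior expansion, up to $p$-dependent corrections (entering through the weight $n^{p-1}$ in the jump) that feed as inhomogeneities into the recursion. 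At step $\ell \geq 3$ one thus faces an inhomogeneous Airy problem $(-\partial_X^2 + (2\kb/R^3) X - n_0^2 E^{(2)}_j) \psi^{(\ell-2)} = F_\ell$ on $(0, \infty)$; the Fredholm alternative against $\psi^{(0)}_j$ determines the next coefficient $E^{(\ell)}$, and the remaining component of $\psi^{(\ell-2)}$ is fixed uniquely by demanding $\psi^{(\ell-2)} \perp \psi^{(0)}_j$. Because each level reduces to integrating polynomials against $\As\big((2\kb/R^3)^{1/3}X - \as_j\big)$ and manipulating Taylor coefficients of $n$ at $R$, the $\Kr^\ell_{p\pv j}$ are calculable by an explicit matrix recursion.

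Borel summation of the resulting Taylor series at $t = 0$ then defines a genuine function $\Kr_{p\pv j} \in \Cc^\infty([0,1])$; truncation at order $N$, patched via a smooth cut-off near $r = R$ with the Hankel tail outside, produces a quasi-pair $(\ku_{p\pv j}(m), \uu_{p\pv j}(m))$ whose residual in \eqref{eq:Pprad} is $\Oc(m^{-N})$ for every $N$. The passage from this quasi-pair to a true resonance $k_m \in \Rc_{p}[n,R]$ within $\Oc(m^{-\infty})$ is then supplied by the black-box scattering argument of Section~\ref{sec:proxy}. The main obstacles I anticipate are, first, the uniform control of $\Hs_m^{(1)}(kr)$ in the transition region $kr \simeq m$ in a form compatible with the interior Airy scale, which is what compels the introduction of ``distinct scales'' highlighted in the introduction; and second, the bookkeeping of the recursion to ensure that all $\Kr^\ell_{p\pv j}$ are real (reflecting the underlying selfadjoint transmission model on a large bounded domain) and that the Borel-summed $\Kr_{p\pv j}$ is smooth at $t = 0$ rather than merely asymptotic.
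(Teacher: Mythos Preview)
Your overall architecture matches the paper's---Schr\"odinger analogy, $h^{2/3}$ Airy layer on the interior side, formal series in $h^{1/3}$, Borel summation to a smooth $\Kr_{p\pv j}$, then black-box scattering---and your leading Airy problem yielding \eqref{eq:thAKl} is correct.

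Where you diverge is the exterior. The paper does \emph{not} invoke the exact Hankel function or its Olver asymptotics. Instead it introduces a \emph{second} stretched variable $\rho = h^{-1}(r/R-1)$ for $r>R$ (so scales $h^{2/3}$ inside, $h$ outside), and the exterior equation becomes at each order the constant-coefficient problem $-n_0^2\psi_q'' + (n_0^2-1)\psi_q = S_q^\psi$ on $\R_+$, solved by polynomials times $\exp(-\rho\sqrt{1-n_0^{-2}})$. The jump conditions \eqref{eq:Ppradb} in scaled form read $\vp_q(0)=\psi_q(0)$ and $\psi_q'(0)=n_0^{p-1}\vp_{q-1}'(0)$, the index shift coming from the scale mismatch $h/h^{2/3}=h^{1/3}$. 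This produces a purely algebraic recursion (Lemmas~\ref{lem:exp} and~\ref{lem:airy}) in finite-dimensional polynomial spaces---which is precisely what makes the ``matrix products and matrix inversions in finite dimension'' claim immediate---and a quasi-mode that decays exponentially on both sides, so the cut-off costs $\Oc(m^{-\infty})$.

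Your Hankel route could in principle be pushed through, but it contains an error as stated: the ratio $\Hs_m^{(1)}(kR)/(\Hs_m^{(1)})'(kR)$ is \emph{not} $\Oc(h^\infty)$ when $kR\approx m/n_0$; the Debye asymptotics give it as $-(n_0^2-1)^{-1/2}+\Oc(h)$, an $O(1)$ number. After rescaling to your variable $X$ this does force $\psi^{(0)}(0)=0$, but it contributes an explicit nonzero datum at order $h^{1/3}$---this is exactly the origin of the $p$-dependent third coefficient in \eqref{eq:0915a}. To run your recursion you would need the full asymptotic expansion of that Hankel ratio to every order in $h$, and you would still have to replace the non-$\Lr^2$ Hankel tail by a decaying profile before cutting off. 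At that point you have essentially reconstructed the paper's exterior layer by a longer road.
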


We refer to Section \ref{ss:statA} for  expressions of quasi-modes and more terms in the resonance expansions.
As a consequence of Theorem \ref{th:A}, for each chosen $p$ and $j$, there exists a sequence of true resonances $m\mapsto k_{p\pv j}(m)\in \Rc_{p}[n,R]$ such that
\begin{equation}
    \label{eq:th1asy}
    k_{p\pv j}(m) = m \lc\sum_{\ell=0}^{N-1} \Kr^\ell_{p\pv j} \lp\frac{1}{m}\rp^{\frac{\ell}{3}}
    + \Oc\lp\frac{1}{m}\rp^{\frac{N}{3}} \rc \quad \forall N\ge1.
\end{equation}
This clearly generalizes \eqref{eq:1421}.

\medskip

When $\kb$ is zero and if the hessian $\mb$ is positive, we have a similar statement, the powers of $m^{-\frac{1}{3}}$ being replaced by powers of $m^{-\frac{1}{2}}$:

\begin{theorem}
    \label{th:B}
    Assume that the radial function $n$ satisfies Assumption \emph{\ref{as:n}} and 
    \begin{equation}\label{eq:B}
        \kb = 0 \quad \text{with} \quad \mb >0.
    \end{equation}
    Then for any $j\in\N$, there exists a smooth real function $\Kr_{p\pv j}\in \Cc^\infty([0,1]):t\mapsto \Kr_{p\pv j}(t)$ defining distinct sequences
    \begin{equation}
        \label{eq:contkB}
        \ku_{p\pv j}(m) = m \,\Kr_{p\pv j}\lp m^{-\frac{1}{2}}\rp ,\quad \forall m\ge1
    \end{equation}
    that are close modulo $\Oc(m^{-\infty})$ to the resonance set $\Rc_{p}[n,R]$. The first coefficients of the Taylor expansion of $\Kr_{p\pv j}$ at $t=0$ are
    \begin{equation}
        \label{eq:thBKl}
        \Kr^0_{p\pv j} = \frac{1}{Rn_0} \,,\quad \Kr^1_{p\pv j} = 0,\quad
        \Kr^2_{p\pv j} = \frac{1}{Rn_0} \,\frac{(4j+3)\sqrt{\mb}}{2}.
    \end{equation}
\end{theorem}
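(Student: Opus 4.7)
The proof follows the same multiscale strategy used for Theorem~\ref{th:A}, with the scaling adapted to the step-harmonic shape of $W$ at $r = R$. Setting $h = 1/m$, $E = k^2/m^2$, and $E_0 = 1/(R^2 n_0^2)$, the hypothesis $\kb = 0$ gives $W'(R^-) = 0$ together with
\begin{equation*}
    W(r) = E_0 + \frac{\mb}{R^4 n_0^2}(R-r)^2 + \Oc\bigl((R-r)^3\bigr), \qquad r \nearrow R,
\end{equation*}
whereas $W(R^+) = 1/R^2 > E_0$, so the interior well bottom is quadratic and the exterior sits strictly above. The natural stretched interior variable is therefore $s = (R-r)/\sqrt{h}$, and the plan is to look for a quasi-pair of the form
\begin{equation*}
    \ku_{p\pv j}(m) = m\, \Kr_{p\pv j}(h^{1/2}), \qquad \uu_{p\pv j}(m;r) = \chi(r)\, \sum_{\ell \geq 0} h^{\ell/2}\, \phi_\ell(s) + w^{\mathrm{ext}}(r),
\end{equation*}
where $\chi$ is a cut-off near $r = R$ and $w^{\mathrm{ext}}$ is proportional to $\Hs_m^{(1)}(\ku r)$.

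Plugging into equation~\eqref{eq:Pprada} and writing $E = E_0 + h\lambda_2 + h^{3/2}\lambda_3 + \cdots$, collecting equal powers of $h^{1/2}$ produces a hierarchy of problems on $s \geq 0$. The leading-order equation reads
\begin{equation*}
    \Bigl(-\partial_s^2 + \frac{\mb}{R^4}\, s^2\Bigr) \phi_0 = n_0^2\, \lambda_2\, \phi_0, \qquad s > 0,
\end{equation*}
together with $\phi_0(0) = 0$ inherited from the transmission conditions~\eqref{eq:Ppradb}: since $\ku R \approx m/n_0 \ll m$ falls well below the Debye threshold of $\Hs_m^{(1)}$, the exterior Hankel contribution is exponentially small on compact neighborhoods of $R$, which reduces the matching at $s = 0$ to a Dirichlet condition modulo $\Oc(h^\infty)$. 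Because $\mb > 0$, this half-line harmonic oscillator with Dirichlet condition at $0$ has discrete spectrum formed of the odd-indexed eigenvalues of the full-line oscillator, namely $(4j+3)\sqrt{\mb}/(R^2 n_0^2)$ for $j \in \N$, with eigenfunctions equal to the restrictions of the odd Hermite functions. Expanding $\sqrt{E_0 + h \lambda_2}$ then yields the three coefficients in~\eqref{eq:thBKl}. Higher-order terms are obtained inductively: at step $\ell$ the profile equation $(L - n_0^2 \lambda_2)\phi_\ell = f_\ell$, with $L$ the half-line oscillator, has a right-hand side $f_\ell$ depending on $\phi_0,\dots,\phi_{\ell-1}$ and on $\lambda_{\ell+2}$; the Fredholm solvability condition against $\phi_0$ determines $\lambda_{\ell+2}$, and hence $\Kr^\ell_{p\pv j}$, uniquely. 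The parameter $p \in \{\pm 1\}$ enters only through the second transmission condition $[n^{p-1} w'] = 0$, and affects $\Kr^\ell_{p\pv j}$ only for $\ell \geq 3$.

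Once the formal series is built up to an arbitrary order $N$, the Borel summation construction of section~\ref{sec:quasi} promotes the truncation into smooth functions $\Kr_{p\pv j} \in \Cc^\infty([0,1])$ and a true quasi-pair with residual $\Oc(m^{-\infty})$; the black-box scattering argument of section~\ref{sec:proxy} then delivers a genuine resonance of~\eqref{eq:Pp} within $\Oc(m^{-\infty})$ of $\ku_{p\pv j}(m)$. The main obstacle is to justify rigorously the reduction of the transmission at $r = R$ to a Dirichlet condition at $s = 0$ for $\phi_0$, i.e.\ to control the exterior Hankel contribution uniformly in $m$ and to show that the jump conditions in the rescaled variable degenerate into vanishing of $\phi_0$ at the origin; this requires quantitative asymptotics for $\Hs_m^{(1)}(\ku r)$ in the Debye barrier regime $\ku R < m$. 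Apart from this point, the scheme is identical to case~(a), with the half-line Airy operator (whose eigenvalues are the zeros $\as_j$ of $\mathsf{Ai}(-\cdot)$) replaced by the half-line harmonic oscillator (whose eigenvalues $(4j+3)\sqrt{\mb}/R^2$ are equispaced with gap $2\sqrt{\mb}/R^2$, accounting for the regular asymptotic repartition of resonances announced in the introduction).
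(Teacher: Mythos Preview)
Your interior analysis is essentially the paper's: the half-line harmonic oscillator with Dirichlet at the origin, eigenvalues $(4j+3)\sqrt{\mb}$, odd Hermite profiles, and the recursive hierarchy determining $\lambda_\ell$ by Fredholm alternative. The leading coefficients \eqref{eq:thBKl} follow exactly as you say.

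The real difference, and the place where your argument has a gap you yourself flag, is the treatment of the exterior $r>R$. You propose to keep the exact Hankel function $\Hs_m^{(1)}(\ku r)$ and argue, via Debye asymptotics in the forbidden region $\ku R<m$, that it is exponentially small near $R$ so that the transmission conditions collapse to $\phi_0(0)=0$ modulo $\Oc(h^\infty)$. This is heuristically right but hard to make into a clean quasi-mode construction: the Hankel function is not in $\Lr^2$, cutting it off produces a residual whose size must be tied to the same Debye estimates, and the matching at all orders becomes awkward. The paper avoids this entirely by introducing a \emph{second} stretched variable on the exterior, $\rho=m(r/R-1)$, and expanding the exterior profile as $\psi(\rho)=\sum_q h^{q/2}\psi_q(\rho)$ with $\psi_q\in\Sc(\R_+)$. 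The leading exterior operator is then $\bA_0^+=-n_0^2\partial_\rho^2+(n_0^2-1)$. Because the interior and exterior scales differ ($h^{1/2}$ versus $h$), the derivative jump $[n^{p-1}w']=0$ reads $n_0^{p-1}h^{1/2}\partial_\sigma\vp(0)=\partial_\rho\psi(0)$, so at order~$0$ one gets a \emph{Neumann} condition $\psi_0'(0)=0$; since $\bA_0^+\psi_0=0$ with Neumann has no nonzero decaying solution, $\psi_0=0$, and continuity $\vp_0(0)=\psi_0(0)$ then gives the Dirichlet condition for free. At higher orders the $\psi_q$ are explicit: polynomials times $\exp(-\rho\sqrt{1-n_0^{-2}})$, solved by elementary algebra (Lemma~\ref{lem:exp}). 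This two-scale matching is what makes the construction systematic and the residual estimate $\Oc(m^{-\infty})$ straightforward after cutoff, and it is the piece your outline is missing.
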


We refer to Section \ref{ss:statB} for more details.
Note that, in contrast to the case $\kb>0$ , the coefficients $\Kr^\ell_{p\pv j}$ are not calculable (except the first four of them) in the sense that their determination needs the inversion of infinite dimensional matrices.

\medskip

Unlike the two previous cases for which the quasi-modes are localized near the interface $r=R$, in the third case the quasi-modes are localized near an internal circle $r=R_0$ with some $R_0<R$.

\begin{theorem}
    \label{th:C}
    Assume that the radial function $n$ satisfies Assumption \emph{\ref{as:n}} and that $\kb < 0$. Let $R_0\in(0,R)$ such that $1 + \frac{R_0 n'(R_0)}{n(R_0)}=0$ and assume further that
    \begin{equation}
        \label{eq:C}
        \mb_0 \coloneqq R_0^2 \lp \frac{2}{R_0^2} - \frac{n''(R_0)}{n(R_0)} \rp >0.
    \end{equation}
    Then a similar statement as in Theorem \emph{\ref{th:B}} holds.  The first coefficients of the Taylor expansion of $\Kr_{p\pv j}$ at $t=0$ are now, instead of \eqref{eq:thBKl},
    \begin{equation}
        \label{eq:thCKl}
        \Kr^0_{p\pv j} = \frac{1}{R_0n(R_0)} \,,\quad \Kr^1_{p\pv j} = 0,\quad
        \Kr^2_{p\pv j} = \frac{1}{R_0n(R_0)} \,\frac{(2j+3)\sqrt{\mb_0}}{2}.
    \end{equation}
\end{theorem}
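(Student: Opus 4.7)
The strategy parallels the construction used for Theorem \ref{th:B}, with the essential simplification that the potential well is now strictly interior to the cavity, so no transmission problem needs to be solved across $r=R$.

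\textbf{Setup.} Starting from the radial equation \eqref{eq:Pprada}, I set $h=m^{-1}$ and $E=k^2/m^2$ and rewrite it in the Schr\"odinger-like form
\begin{equation*}
   -\frac{h^2}{n(r)^2}\bigl(\partial_r^2+\tfrac{1}{r}\partial_r\bigr)w + W(r)\,w = E\,w,
   \qquad W(r)=\frac{1}{r^2 n(r)^2},
\end{equation*}
with the TE case differing only by a drift term of order $h^2$. Under the hypotheses $\kb<0$ and $\mb_0>0$, the effective potential $W$ admits an interior strict minimum at $R_0\in(0,R)$, with $W(R_0)=1/(R_0^2 n(R_0)^2)$ and $W''(R_0)=2\mb_0/(R_0^4 n(R_0)^2)$.

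\textbf{Multiscale construction.} I introduce the boundary-layer variable $s=m^{1/2}(r-R_0)$ and look for a formal solution
\begin{equation*}
   w(r)=\sum_{\ell\ge0} m^{-\ell/2}\phi_\ell(s),
   \qquad E=W(R_0)+\sum_{\ell\ge2} m^{-\ell/2}E_\ell.
\end{equation*}
Taylor expanding $W$, $n^{-2}$ and $1/r$ at $R_0$ and collecting equal powers of $m^{-1/2}$ yields a triangular recursion. At leading order the balance is the harmonic oscillator eigenproblem on $\R$,
\begin{equation*}
   \Hs_0\,\phi_0 \coloneqq -\frac{1}{n(R_0)^2}\phi_0'' + \frac{W''(R_0)}{2}\,s^2\,\phi_0 = E_2\,\phi_0,
\end{equation*}
whose Hermite-type spectrum labels the admissible leading eigenvalues $E_{2,j}$ and, through $k=m\sqrt{E}$, the value of $\Kr^2_{p\pv j}$ in \eqref{eq:thCKl}. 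At each subsequent order $\ell\ge3$ the recursion takes the Fredholm form $(\Hs_0-E_{2,j})\phi_\ell = F_\ell$, where $F_\ell$ is a polynomial in lower-order $\phi_{\ell'}$ and $E_{\ell'}$; the solvability condition $\langle F_\ell,\phi_{0,j}\rangle_{L^2(\R)}=0$ fixes $E_\ell$, and $\phi_\ell$ is obtained by orthogonal inversion on $\phi_{0,j}^\perp$ together with a normalization choice. A parity argument kills all odd-$\ell$ coefficients $E_\ell$, so the expansion is effectively in integer powers of $m^{-1}$ and $\Kr_{p\pv j}$ is real.

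\textbf{Truncation and passage to true resonances.} Because each $\phi_\ell\in\Sc(\R)$, multiplication by a cutoff $\chi\in\Cc^\infty_c((0,R))$ equal to $1$ in a neighborhood of $R_0$ produces
$\uu^{(N)}_{p\pv j}(m)(r,\theta)=\chi(r)\,\e^{\ic m\theta}\sum_{\ell=0}^{N}m^{-\ell/2}\phi_\ell(s)$,
with cutoff error exponentially small in $m$ (the Hermite functions are Schwartz and $s$ is of order $m^{1/2}$ outside a fixed neighborhood of $R_0$) and truncation residue $\Oc(m^{-(N+1)/2})$. Because $\chi$ vanishes near $r=R$, the transmission conditions \eqref{eq:Ppradb} and the radiation condition \eqref{eq:Ppradd} are trivially satisfied by extending the quasi-mode by zero for $r\ge R$. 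A diagonal extraction in $N$ then yields quasi-pairs $(\ku_{p\pv j}(m),\uu_{p\pv j}(m))$ with $\Oc(m^{-\infty})$ residue and $\ku_{p\pv j}(m)=m\Kr_{p\pv j}(m^{-1/2})$ for a smooth real $\Kr_{p\pv j}$ realizing \eqref{eq:thCKl}. The black-box scattering argument of Section \ref{sec:proxy}, applied to the natural self-adjoint transmission realization on a ball slightly larger than $\Omega$, promotes these quasi-resonances to true resonances of $\Rc_p[n,R]$ modulo $\Oc(m^{-\infty})$.

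\textbf{Main obstacle.} The delicate point is the well-posedness of the recursion at every order: the Fredholm operator $\Hs_0-E_{2,j}$ has a one-dimensional kernel, so at each step one must verify that the $\phi_{0,j}$-component of $F_\ell$ can be absorbed into $E_\ell$ and that the remaining component admits a Schwartz-class inverse; organizing the recursion so that TE/TM corrections (which appear only from order $h^{3/2}$ onwards) do not spoil the parity argument requires some care. Unlike case \textsc{(a)}, no finite-dimensional matrix system is available (the recursion is genuinely infinite-dimensional, as in case \textsc{(b)}), and unlike case \textsc{(b)} there is no jump interface at the well bottom, so the full-line Hermite calculus suffices.
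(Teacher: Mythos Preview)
Your proposal is essentially correct and follows the same route as the paper: localize at the interior well $R_0$, scale by $m^{1/2}$, solve a full-line harmonic oscillator at leading order, run a Fredholm recursion, cut off strictly inside $\Omega$ so that the transmission and radiation conditions are trivial, and finish with the black-box scattering argument of Section~\ref{sec:proxy}.

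One point is factually wrong, however. You write that ``no finite-dimensional matrix system is available (the recursion is genuinely infinite-dimensional, as in case \textsc{(b)})''. The opposite is true, and this is in fact one of the main distinguishing features of case \textsc{(c)}. Because the oscillator lives on the whole line $\R$, the recurrence relations \eqref{eq:GHrelation} imply that any polynomial times a Gauss--Hermite function is a \emph{finite} linear combination of Gauss--Hermite functions, all of which are eigenfunctions of $\bA_0$. The paper proves (Proposition~\ref{pro:solQ_C}) that each $\vp_q$ is a finite sum $\sum_{i=0}^{j+3q} b_q^i\,\Psi_i^{\GH}(\mb_0^{1/4}\cdot)$, so the recursion is genuinely finite-dimensional and all coefficients $\Kr^\ell_{p\pv j}$ are calculable in the sense of Theorem~\ref{th:A}. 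The infinite-dimensional obstruction in case \textsc{(b)} comes from the Dirichlet half-line: there the right-hand side contains even Hermite functions which must be re-expanded on the odd basis of $\Lr^2(\R_-)$, and that projection is infinite. So your ``main obstacle'' paragraph should be revised.

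Your parity claim (all odd $E_\ell$ vanish) is stronger than what the paper states explicitly---the paper only records $\kr^1_{p\pv j}=\kr^3_{p\pv j}=0$---but it is correct: each $\bA_q$ is a monomial-coefficient operator with parity $(-1)^q$ under $\sigma\mapsto-\sigma$, and an induction shows $\vp_q$ has parity $(-1)^{j+q}$, forcing $\lambda_q=0$ for odd $q$.
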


We refer to Section \ref{ss:statC} for more details.
Note that in this case the coefficients $\Kr^\ell_{p\pv j}$ are all calculable in the sense introduced in Theorem \ref{th:A}.

\begin{example}
    The parametric profile \eqref{eq:Ilc} illustrates the three above cases:
    Depending on the parameter $\delta$ the profile fits the assumptions of the three theorems \ref{th:A}, \ref{th:B}, and \ref{th:C}.
    Here $\kb = 1-\frac{\delta R}{2}$.
    \begin{itemize}
        \item[\textsc{(a)}] If $\delta < \frac{2}{R}$ then $\kb > 0$, hence Theorem \ref{th:A} applies.
        \item[\textsc{(b)}] If $\delta = \frac{2}{R}$ then $\kb = 0$ and $\mb = 3$, hence Theorem \ref{th:B} applies.
        \item[\textsc{(c)}] If $\delta > \frac{2}{R}$ then $\kb < 0$, $R_0 = \frac{2(1 + \delta R)}{3\tau}<R$, and $\mb_0 = 3$, hence Theorem \ref{th:C} applies.
    \end{itemize}
\end{example}

\begin{remark}
\label{eq:multwell}
Theorems \ref{th:A}, \ref{th:B}, \ref{th:C} state that there exist true resonances that are super-algebraically close to our constructed quasi-resonances. This does not mean that all resonances are described by this construction. On the one hand, the optical index $n$ may generate an effective potential with multiple wells: For example the same index $n$ can generate quasi-resonances of type \textsc{(a)} and of type \textsc{(c)}. Likewise our analysis still applies if the index $n$ is piecewise smooth with a finite number of jumps. Then we may have several half-triangular wells of type \textsc{(a)} and the related quasi-resonances. On the other hand, the whole landscape of resonances is wide: It contains families of outer resonances and inner resonances that are not of WGM type, see Fig.\ \ref{fig:Bes_all_m} and also \cite{Ste06}.
\end{remark}

\section{Families of resonance quasi-pairs}
\label{sec:quasi}

Inspired by quasi-pair constructions used to investigate ground states in semiclassical analysis of Schr\"odinger operators (see \textsc{Simon} \cite{Sim83} for instance), we are going to construct families of resonance quasi-pairs for problem \eqref{eq:Pp}.
Here appears a fundamental difference:
The quasi-pair construction in semiclassical analysis consists in building approximate eigenpairs $(\lambda_h,u_h)$ that solve $A_hu_h=\lambda_hu_h$ with increasingly small error as $h \to 0$ where $A_h$ is for instance the operator $-h^2\Delta + V$. In our case, we do not have any given semiclassical parameter $h$.
However, the term $\frac{m^2}{r^2}$ in equation of \eqref{eq:Pprada} may play the role of a confining potential in a semiclassical framework if we set
\[
    h = \frac{1}{|m|}\,.
\]
This means that an internal frequency parameter $m$ can be viewed as a driving parameter for an asymptotic study. This leads to the next definition for quasi-pairs, adapted to our problem.

\begin{definition}
    \label{def:quasi}
    Choose $p\in\{\pm1\}$.
    A \emph{family of resonance quasi-pairs} $\Fg_p$ for problem \eqref{eq:Pp} is formed by a sequence $\Kg_p = (\ku(m))_{m\ge1}$ of real numbers called quasi-resonances and a sequence $\Ug_p=(\uu(m))_{m\ge1}$ of complex valued functions called quasi-modes, where for each $m\ge1$, the couple $(\ku(m),\uu(m))$ is a quasi-pair for problem \eqref{eq:Pp} with an error in $\Oo\lp m^{-\infty}\rp$ when $m\to\infty$.
    More precisely, we mean that
    \begin{enumerate}
        \item For any $m\ge 1$, the functions $\uu(m)$ belong to the domain of the operator and are normalized,
              \begin{align*}
                  \uu(m)\in\Hr_p^{2}(\R^2,\Omega) \quad \text{and} \quad
                  \lo \uu(m)\ro_{\Lr^2(\R^2)} = 1
              \end{align*}
              where
              \begin{align}
                  \label{eq:dom}
                  \Hr_p^{2}(\R^2,\Omega) = \bigl\{u\in \Lr^2(\R^2) \ \bigm\vert\
                   & u\on{\Omega}\in \Hr^2(\Omega),\
                  u\on{\R^2\setminus\overline\Omega}\in \Hr^2(\R^2\setminus\overline\Omega),
                  \nonumber                                                                                          \\
                   & [u]_{\partial\Omega} = 0,\ \texte{and}\ [n^{p-1} \,\partial_\nu u]_{\partial\Omega} = 0\bigr\}.
              \end{align}
        \item We have the following quasi-pair estimate as $m \to +\infty$,
              \begin{align}\label{eq:minfty}
                  \lo -\div\lp n^{p-1}\, \nabla \uu(m)\rp - \ku(m)^2\, n^{p+1}\, \uu(m) \ro_{\Lr^2(\R^2)} =
                  \Oo\lp m^{-\infty}\rp .
              \end{align}
        \item\label{it:Xg}
              Uniform localization: There exists a function $\Xg\in \Cc^\infty_0(\R^2)$, $0\le\Xg\le1$, such that
              \begin{align*}
                  \lo \Xg\uu(m)\ro_{\Lr^2(\R^2)}\ge\tfrac12 \quad\mbox{and\quad \eqref{eq:minfty} holds with
                      $\Xg\uu(m)$ replacing $\uu(m)$}.
              \end{align*}
        \item Regularity with respect to $m$: There exist a positive real number $\beta$ and a smooth function $\Kr\in \Cc^\infty([0,1]):t\mapsto\Kr(t)$ such that
              \begin{equation}
                  \label{eq:contk}
                  \frac{\ku(m)}{m} = \Kr(m^{-\beta})\quad \forall m\ge1.
              \end{equation}
    \end{enumerate}
    If the cut-off function $\Xg$ in item \eqref{it:Xg} can be taken as any function that is $\equiv1$ in a neighborhood of $\partial\Omega$ for $m$ large enough, we say that the family $\Fg_p$ is a family of \emph{whispering gallery type}.
\end{definition}

\begin{remark}
    \label{rem:qua1}
    By Taylor expansion of the function $\Kr$ at $t=0$, we obtain that a consequence of \eqref{eq:contk} is the existence of coefficients $\Kr_\ell$, $\ell\in\N$, and constants $C_N$ such that
    \begin{equation}
        \label{eq:contkN}
        \forall N\ge1,\qquad
        \left|\frac{\ku(m)}{m} - \sum_{\ell=0}^{N-1} \,\Kr_\ell\, m^{-\ell\beta} \right|
        \le C_N\, m^{-N\beta}.
    \end{equation}
    Note that the asymptotics \eqref{eq:1421} satisfies such an estimate with $\beta=\frac{1}{3}$ and $N= 6$.
\end{remark}

\begin{remark}
    \label{rem:qua4}
    The estimate \eqref{eq:minfty} implies a bound from below for the resolvent of the underlying operator, compare with \cite[\S6]{MoiSpe19}: At quasi-resonances, we have a \emph{blow up of the resolvent}.
\end{remark}

\section{Classification of the three typical behaviors by a Schr\"odinger analogy}
\label{sec:3types}

In our way to prove Theorems \ref{th:A}--\ref{th:C}, we transform the family of problems \eqref{eq:Pprad} when $m$ spans $\N^*$ into a family of 1-dimensional Schr\"odinger operators depending on the semiclassical parameter
$h = \frac{1}{m}$.

Namely, choosing a polar mode index $m\in\N^*$ and coming back to the ODE contained in problem \eqref{eq:Pprad} divided by $n^{p+1}$, we obtain the equation:
\begin{equation}
    \label{eq:1730a}
    -\frac{1}{r n^{p+1}}\partial_r(n^{p-1}r\partial_r w) +
    \frac{m^2}{r^2 n^2} w - k^2\, w = 0
\end{equation}
As a start, we write a quasi-resonance as (compare with \eqref{eq:1421})
\[
    \ku(m)^2 = m^2 E 
\]
where the energy $E$ depends on $h$ and has to be found.
Multiplying \eqref{eq:1730a} by $h^2=1/m^2$, we find that \eqref{eq:1730a} takes the form of a one dimensional semiclassical Schr\"odinger modal equation
\begin{align}\label{eq:schr}
    -h^2\, \Hc w + W w =  E w ,
\end{align}
where $\Hc$ is the second order differential operator
\begin{equation}
    \label{eq:Hc}
    \Hc = \frac{1}{r n^{p+1}}\partial_r(n^{p-1}r\partial_r)
\end{equation}
and $W$ is the potential
\begin{equation}
    \label{eq:W}
    W(r) = \lp\frac{1}{r\,n(r)}\rp^2 \,.
\end{equation}
The operator $-h^2\, \Hc + W$ is self-adjoint on $\Lr^2(\R_+,n^{p+1}r\dd r)$.
We note that
\begin{equation}
    \label{eq:VR}
    \lim_{r\nearrow R} W(r) = \lp\frac{1}{R\,n_0}\rp^2 \quad\mbox{and}\quad
    \lim_{r\searrow R} W(r) = \lp\frac{1}{R}\rp^2\,.
\end{equation}
Since $n_0>1$, we have a potential barrier at $r=R$. The first and second derivatives of $W$ on $(0,R]$ are given by
\begin{subequations}
    \begin{align}
        W'(r)  & = -2\lp\frac{1}{r\,n(r)}\rp^2 \lc \frac{1}{r} + \frac{n'(r)}{n(r)} \rc ,                                                        \\
        W''(r) & = 2\lp\frac{1}{r\,n(r)}\rp^2 \lc 3\lp\frac{1}{r} + \frac{n'(r)}{n(r)}\rp^2 - \frac{2 n'(r)}{r n(r)} - \frac{n''(r)}{n(r)} \rc .
    \end{align}
\end{subequations}
The local minima (potential wells) of $W$ cause the existence of resonances near these energy levels and their asymptotic structure as $h\to0$ is determined by the Taylor expansion of $W$ at its local minima. Let us recall that
$\kb = R\big(\frac{1}{R} + \frac{n_1}{n_0}\big)$ using notation \eqref{eq:n0}.
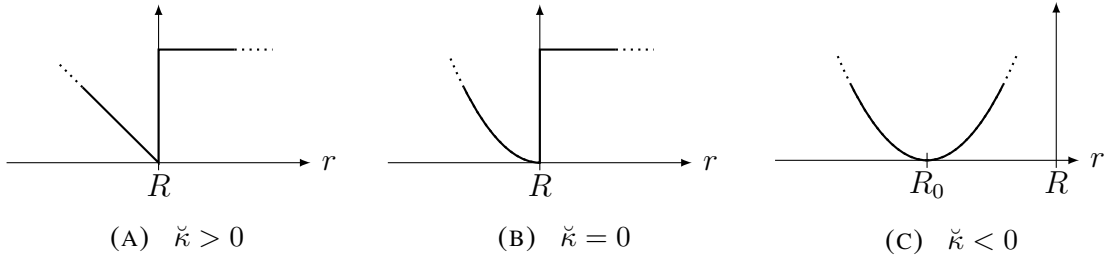
\begin{figure}[t]
    \begin{subfigure}[ht]{0.3\linewidth}
        \centering
        \begin{tikzpicture}
            \draw[>=latex,->] (-2,0) -- (2,0);
            \draw[>=latex,->] (0,-0.1) -- (0,2.1);
            \draw (0,0) node[below] {$R$};
            \draw (2,0) node[right] {$r$};
            \draw[thick] (-1,1) -- (0,0) -- (0,1.5) -- (1,1.5);
            \draw[thick,dotted] (1,1.5) -- (1.5,1.5);
            \draw[thick,dotted] (-1.3,1.3) -- (-1,1);
        \end{tikzpicture}
        \caption{\;\;$\kb > 0$}
    \end{subfigure}
    \;\;
    \begin{subfigure}[ht]{0.3\linewidth}
        \begin{tikzpicture}
            \draw[>=latex,->] (-2,0) -- (2,0);
            \draw[>=latex,->] (0,-0.1) -- (0,2.1);
            \draw (0,0) node[below] {$R$};
            \draw (2,0) node[right] {$r$};
            \draw[thick,dotted] (0,0) parabola (-1.2,1.44);
            \draw[thick] (1,1.5) -- (0,1.5) -- (0,0) parabola (-1,1);
            \draw[thick,dotted] (1,1.5) -- (1.5,1.5);
        \end{tikzpicture}
        \caption{\;\;$\kb = 0$}
    \end{subfigure}
    \;\;
    \begin{subfigure}[ht]{0.3\linewidth}
        \begin{tikzpicture}
            \draw[>=latex,->] (-2,0) -- (2,0);
            \draw[>=latex,->] (1.7,-0.1) -- (1.7,2.1);
            \draw (0,-0.1) -- (0,0.1);
            \draw (1.7,0) node[below] {$R$};
            \draw (0,0) node[below] {$R_0$};
            \draw (2,0) node[right] {$r$};
            \draw[thick,dotted] (0,0) parabola (1.2,1.44);
            \draw[thick,dotted] (0,0) parabola (-1.2,1.44);
            \draw[thick] (0,0) parabola (1,1);
            \draw[thick] (0,0) parabola (-1,1);
        \end{tikzpicture}
        \caption{\;\;$\kb < 0$}
    \end{subfigure}
    \caption{The three typical local behaviors of the potential $W$: half-triangular potential well ($\kb>0$),  half-quadratic potential well ($\kb=0$)
        and quadratic potential well ($\kb<0$).}
    \label{fig:locModPot}
\end{figure}
The sign of $\kb$ (if it is positive, zero, or negative) discriminates three typical behaviors in which case we can construct families of resonance quasi-pairs (see Theorems \ref{th:A}, \ref{th:B}, \ref{th:C}):
\begin{itemize}
  \item[\sc (a)] $\kb>0$. Then $W$ is decreasing on a left neighborhood of $R$ and has a local minimum at $R$. In a two-sided neighborhood of $R$, $W$ is tangent to a \emph{half-triangular potential well}, see Fig.\ \ref{fig:locModPot} \textsc{(a)}.
  \item[\sc (b)] $\kb=0$. In this case, we assume that $W''(R)>0$, which is ensured by the condition
        \begin{equation}
            \label{eq:(b)}
            \frac{2}{R^2} - \frac{n_2}{n_0} >0
            \quad\mbox{while}\quad \frac{1}{R} + \frac{n_1}{n_0} = 0 .
        \end{equation}
        Then $W$ has a local minimum at $R$. In a two-sided neighborhood of $R$, $W$ is tangent to a \emph{half-quadratic potential well}, see Fig.\ \ref{fig:locModPot} \textsc{(b)}.
  \item[\sc (c)] $\kb<0$. Then $W$ has no local minimum at $R$. But, since $\lim_{r \to 0^+} W(r) = +\infty$, it has at least one local interior minimum $R_0$ over $(0,R)$. Now we assume that $W''(R_0)>0$, which is ensured by the condition
        \begin{equation}
            \label{eq:(c)}
            \frac{2}{R_0^2} - \frac{n''(R_0)}{n(R_0)} >0
            \quad\mbox{while}\quad \frac{1}{R_0} + \frac{n'(R_0)}{n(R_0)} = 0 .
        \end{equation}
        Then $W$ has a local non-degenerate minimum at $R_0$ where it is tangent to a \emph{quadratic potential well}, see Fig.\ \ref{fig:locModPot} \textsc{(c)}.
\end{itemize}

\section{Case \textsc{(a)} \ Half-triangular potential well}
\label{sec:(a)}
The case $\kb>0$ is in a certain sense the most canonical one, since it includes constant optical indices $n\equiv n_0$ inside $\Omega$.
In this section, after stating the result, we perform the details of construction of families of resonance quasi-pairs.

\subsection{Statements}
\label{ss:statA}
Recall that Assumption \ref{as:n} is supposed to hold and that $n_0$, $\kb$ and $\mb$ are defined in Notation \ref{no:n}. We give now, in the case when $\kb$ is positive, the complete description of the quasi-pairs that we construct in the rest of this section.
This statement has to be combined with Theorem \ref{th:QuasiRes} to imply Theorem \ref{th:A}.

\begin{theorem}
    \label{th:(a)}
    Choose $p\in\{\pm1\}$.
    If $\kb>0$, there exists for each natural integer $j$, a family of resonance quasi-pairs $\Fg_{p\pv j}=(\Kg_{p\pv j},\Ug_{p\pv j})$ of whispering gallery type ({\em cf.}  Definition \emph{\ref{def:quasi}}) for which the sequence of numbers $\Kg_{p\pv j} = (\ku_{p\pv j}(m))_{m\ge1}$ and the sequence of functions $\Ug_{p\pv j} = (\uu_{p\pv j}(m))_{m\ge1}$ have the following properties: \smallskip

    \medskip
    (i) The regularity property \eqref{eq:contk}--\eqref{eq:contkN} with respect to $m$ holds with $\beta=\frac{1}{3}$: There exist coefficients $\Kr_{p\pv j}^{\,\ell}$ for any $\ell\in\N$, and constants $C_N$ such that
    \begin{equation}
        \label{eq:kNa}
        \forall N\ge1,\qquad
        \left|\frac{\ku_{p\pv j}(m)}{m} -
        \sum_{\ell=0}^{N-1} \,\Kr_{p\pv j}^{\,\ell}\, m^{-\frac{\ell}{3}} \right|
        \le C_N\, m^{-N/3}.
    \end{equation}
    The coefficients $\Kr_{p\pv j}^{\,0}$ (degree $0$) are all equal to $\frac{1}{Rn_0}$, the coefficients of degree $1$ are zero, and the coefficients of degree $2$ are all distinct with $j$, see \eqref{eq:0915a}.

    \medskip
    (ii) The functions $\uu_{p\pv j}(m)$ forming the sequence $\Ug_{p\pv j}$ have the form
    \begin{equation}
        \label{eq:ua}
        \uu_{p\pv j}(m\pv x,y) = \wu_{p\pv j}(m\pv r)\, \e^{\ic m \theta}
    \end{equation}
    where the radial functions $\wu_{p\pv j}(m)$ have a boundary layer structure around $r=R$ with different scaled variables $\sigma$  as $r<R$ and $\rho$ as $r>R$:
    \begin{equation}
        \label{eq:scaA}
        \sigma=m^{\frac{2}{3}}\lp\frac{r}{R}-1\rp \ \ \texte{if}\ \ r<R \quad\mbox{ and }\quad
        \rho=m\lp\frac{r}{R}-1\rp \ \ \texte{if}\ \ r>R.
    \end{equation}
    This means that there exist smooth functions $\Phi_{p\pv j}\in \Cc^\infty([0,1],\Sc(\R_-)): (t,\sigma)\mapsto \Phi_{p\pv j}(t,\sigma)$ and $\Psi_{p\pv j}\in \Cc^\infty([0,1],\Sc(\R_+)): (t,\rho)\mapsto \Psi_{p\pv j}(t,\rho)$ such that
    \begin{equation}
        \label{eq:va}
        \begin{aligned}
            \wu_{p\pv j}(m\pv r) = \Xg(r)\Big( & \II_{r<R}(r)\,\Phi_{p\pv j}(m^{-\frac{1}{3}},\sigma) +
            \II_{r>R}(r)\, \Psi_{p\pv j}(m^{-\frac{1}{3}},\rho ) \Big)
        \end{aligned}
    \end{equation}
    where $\Xg\in\Cc^\infty_0(\R_+)$,  $\Xg\equiv 1$ in a neighborhood of $R$.
\end{theorem}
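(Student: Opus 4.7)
The plan is to construct the quasi-pairs by a double multiscale expansion for the one-dimensional semiclassical Schrödinger problem \eqref{eq:schr}--\eqref{eq:W} with $h = 1/m$, exploiting that $\kb > 0$ makes the potential $W$ tangent to a half-triangular well at $r = R$. I introduce the rescaled variables $\sigma = m^{\frac{2}{3}}(r/R - 1)$ for $r < R$ and $\rho = m(r/R - 1)$ for $r > R$: these scales are dictated respectively by the Airy scale of a linear well and by the exponential scale of the barrier $W(R^+) - E_0 = (n_0^2 - 1)/(R^2 n_0^2) > 0$. I posit formal series
\begin{equation*}
    \Phi(t, \sigma) = \sum_{\ell \ge 0} t^\ell\,\Phi_\ell(\sigma),\quad
    \Psi(t, \rho) = \sum_{\ell \ge 0} t^\ell\,\Psi_\ell(\rho),\quad
    \frac{\ku(m)^2}{m^2} = E_0 + \sum_{\ell \ge 1} t^\ell E_\ell,
\end{equation*}
with $t = m^{-\frac{1}{3}}$, and insert them into \eqref{eq:Pprad} after Taylor-expanding $n(r)$ and $W(r)$ at $r = R^\pm$. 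Collecting powers of $t$ yields a cascade of one-dimensional ODEs for $(\Phi_\ell, \Psi_\ell)$, coupled at $r = R$ by the transmission conditions \eqref{eq:Ppradb} transported to the rescaled variables.

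At leading order one obtains $E_0 = 1/(R^2 n_0^2)$; $\Psi_0$ solves $-\Psi_0'' + (1 - n_0^{-2})\,\Psi_0 = 0$ on $\R_+$ with decay at infinity, and $\Phi_0$ solves the Airy-type equation $-\Phi_0'' - 2\kb\sigma\,\Phi_0 = R^2 n_0^2 E_2\,\Phi_0$ on $\R_-$ with decay at $-\infty$. Because the two boundary-layer scales differ by a factor $m^{\frac{1}{3}} = t^{-1}$, the derivative-jump condition $[n^{p-1} w'] = 0$ reshuffles between neighbouring orders; at leading order it imposes $\partial_\rho \Psi_0(0) = 0$, which is incompatible with a decaying exponential unless $\Psi_0 \equiv 0$. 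Value matching then forces the Dirichlet condition $\Phi_0(0) = 0$. The resulting eigenvalue problem is the standard Dirichlet--Airy problem on $\R_-$, with eigenvalues $R^2 n_0^2 E_2 = (2\kb)^{\frac{2}{3}}\as_j$ and eigenfunctions $\Phi_{0,j} \propto \mathsf{Ai}\bigl(-(2\kb)^{\frac{1}{3}}\sigma - \as_j\bigr) \in \Sc(\R_-)$. This delivers $\Kr^0_{p\pv j} = 1/(Rn_0)$ and, via $E_2 = 2\Kr^0\Kr^2$, the value of $\Kr^2_{p\pv j}$ announced in \eqref{eq:thAKl}; the absence of any odd-power contribution at this stage explains $\Kr^1_{p\pv j} = 0$.

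At each higher order $\ell$, the interior equation reads $(L - R^2 n_0^2 E_2)\Phi_\ell = F_\ell\bigl[\Phi_{<\ell}, \Psi_{<\ell}, E_{<\ell};\,p,n\bigr]$ with $L = -\partial_\sigma^2 - 2\kb\sigma$. Since $L$ restricted to $\Sc(\R_-)$ with Dirichlet trace at $0$ is Fredholm with one-dimensional kernel $\C\,\Phi_{0,j}$, the Fredholm alternative determines $E_\ell$ uniquely, and then $\Phi_\ell$ up to a component along $\Phi_{0,j}$ that is fixed by normalization; the outer profile $\Psi_\ell$ is subsequently adjusted to fulfil the transmission conditions at order $\ell$, staying in $\Sc(\R_+)$ because only the decaying exponential channel is activated. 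All right-hand sides being polynomials in $\sigma$ (resp.\ $\rho$) times Airy (resp.\ exponential) factors, the construction is explicit and reduces, at each finite truncation, to matrix products and inversions in finite dimension. A Borel summation then upgrades the formal series into smooth functions $\Kr_{p\pv j}, \Phi_{p\pv j}, \Psi_{p\pv j}$ on $[0,1]$. Finally, multiplying by the cut-off $\Xg \equiv 1$ near $r = R$ produces the localized quasi-mode \eqref{eq:va}; a direct estimate of the residual obtained by truncation at order $N$ shows that the left-hand side of \eqref{eq:minfty} is $\Oc(m^{-N/3})$ for every $N$, hence $\Oc(m^{-\infty})$, while the whispering gallery localization follows from the choice of $\Xg$. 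I expect the main obstacle to be the careful bookkeeping of the cascade: tracking how the jet of $n$ at $R$ and the parity $p$ enter the right-hand sides $F_\ell$, and verifying that the compatibility conditions at the interface close up consistently despite the shuffle of orders induced by the ratio $m^{\frac{1}{3}}$ between the two boundary-layer scales.
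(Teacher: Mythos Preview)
Your overall architecture matches the paper's: localize at $r=R$, rescale with distinct scales on the two sides, expand in powers of $m^{-1/3}$, solve a recursive cascade, Borel-sum, cut off, and estimate residuals. The leading-order analysis is correct. However, the mechanism you propose for determining the higher energy corrections is wrong, and it already fails at the first nontrivial step.

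You write the interior equation as $(L - R^2 n_0^2 E_2)\Phi_\ell = F_\ell[\Phi_{<\ell},\Psi_{<\ell},E_{<\ell}]$ on $\R_-$ with Dirichlet trace at $0$, and invoke the Fredholm alternative to fix the new energy coefficient; only afterwards do you adjust $\Psi_\ell$. This cannot close. At order $q=1$ the inner source $S_1^\vp$ vanishes (because $\bA_1^- = 0$), so Fredholm orthogonality against $\vp_0$ would force $\lambda_1 = 0$; the correct value is $\lambda_1 = -n_0^p\,2\kb/\sqrt{n_0^2-1}\neq 0$. The point is that the inner boundary condition is not homogeneous Dirichlet but the inhomogeneous value-matching $\vp_q(0) = \psi_q(0)$, and $\psi_q(0)$ is generically nonzero. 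The derivative-matching condition at order $q$ reads $\psi_q'(0) = n_0^{p-1}\vp_{q-1}'(0)$ and, together with the outer equation and decay, determines $\psi_q$ (hence $\psi_q(0)$) \emph{before} the inner problem at order $q$ is addressed. The paper therefore proceeds in the order: (i) solve the outer problem for $\psi_q$ using this Neumann datum; (ii) find a particular inner solution $\wt\vp_q$; (iii) determine $\lambda_q$ by enforcing $\vp_q(0) = \psi_q(0)$, using that the decaying function $g(\sigma) = \As'\big(\as_j + (2\kb)^{1/3}\sigma\big)$ satisfies $(\bA_0^- - \lambda_0)g = (2\kb)^{2/3}\vp_0$ with $g(0) = \As'(\as_j)\neq 0$, so that adding a multiple of $g$ simultaneously shifts $\lambda_q$ and $\vp_q(0)$. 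Your proposed order (inner-Dirichlet first, then outer) over-determines $\Psi_\ell$, which would have to satisfy both $\Psi_\ell(0) = 0$ and $\Psi_\ell'(0) = n_0^{p-1}\Phi_{\ell-1}'(0)$ with only one free constant available among the decaying outer solutions.

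A smaller omission: the Borel-summed profiles satisfy the transmission conditions only modulo $\Oc(m^{-\infty})$, so the raw construction does not lie in the domain $\Hr_p^2(\R^2,\Omega)$. The paper adds an explicit corrector lifting these residual jumps before normalizing; you should include this step as well.
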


The first terms of the expansions of the quasi-pairs $(\ku_{p\pv j}(m),\wu_{p\pv j}(m))$ in powers of $m^{\frac{1}{3}}$ are given below.

\subsubsection{Resonances}
\label{sss:resA}
The asymptotics of $\ku_{p\pv j}(m)$ starts as
\begin{multline}\label{eq:0915a}
    \ku_{p\pv j}(m) = {\frac{m}{R{n_0}}} \lc 1 +
    \frac{\as_j}{2} {\lp\frac{2\kb}{m}\rp^{\frac{2}{3}}} -
    \frac{n_0^p}{2\sqrt{n_0^2-1}}{\lp\frac{2\kb}{m}\rp} \right.\\ + \left.
    \kr_{p\pv j}^4 \lp\frac{2\kb}{m}\rp^{\frac{4}{3}} +
    \kr_{p\pv j}^5 \lp\frac{2\kb}{m}\rp^{\frac{5}{3}} +
    \Oo\lp {m^{-2}} \rp \rc
\end{multline}
where, as before, the $\as_j$ are the successive roots of the flipped Airy function and the coefficients $\kr_{p\pv j}^4$ and $\kr_{p\pv j}^5$
are given by
\begin{align*}
    \kr_{p\pv j}^4 & = \frac{\as_j^2}{15}\lp\frac{17}{8} - \frac{3}{\kb} + 
    \frac{\mb}{\kb^2} 
    \rp,                                         \\[1.5ex]
    \kr_{p\pv j}^5 & = -\frac{\as_j\, n_0^p}{12\sqrt{n_0^2-1}}\lp\frac{3n_0^2-2n_0^{2p}}{n_0^2-1}+2-
    \frac{6}{\kb}+\frac{2\mb}{\kb^2} 
    \rp .
\end{align*}

\begin{remark}
    \label{rem:(a)1}
    Note that the second term of \eqref{eq:0915a} separates the families $\Fg_{p\pv j}$, while the third term distinguishes the TM ($p=1$) and TE ($p=-1$) modes. In the case of Dirichlet conditions, the first two terms are the same, whereas the third one is zero, \cite[Sec.\ 7.4]{BabBul61}.
\end{remark}

\subsubsection{Modes}
The asymptotic expansions of the radial part of the quasi-modes $\wu_{p\pv j}(m)$ in~\eqref{eq:ua} starts as
\begin{equation}\label{eq:A_va01}
    \wu_{p\pv j}(m\pv r) =
    \Xg(r) \lp \Vs_{p\pv j}^{\,0}(m;r) +
    \lp\frac{1}{m}\rp^{\frac{1}{3}} \Vs_{p\pv j}^{\,1}(m;r)\rp
    + \Oo\lp m^{-\frac{2}{3}}\rp ,
\end{equation}
where, using the scaled variables $\sigma=m^{\frac{2}{3}}(\frac{r}{R}-1)$ and $\rho=m(\frac{r}{R}-1)$
\begin{equation}\label{eq:A_V0}
    \Vs_{p\pv j}^{\,0}(m;r) = \left\{\begin{array}{ll}
        \As\lp \as_j + (2\kb)^{\frac{1}{3}}\sigma\rp & \texte{if } r < R,   \\
        0                                            & \texte{if } r \ge R,
    \end{array}\right.
\end{equation}
and
\begin{equation}\label{eq:A_V1}
    \Vs_{p\pv j}^{\,1}(m;r) = \frac{-n_0^p\, (2\kb)^{\frac{1}{3}}}{\sqrt{n_0^2-1}}
    \left\{\begin{array}{ll}
        \As'\big( \as_j + (2\kb)^{\frac{1}{3}} \sigma\big)         & \texte{if }r < R,      \\
        \As'(\as_j)\exp\big(-\frac{\sqrt{n_0^2-1}}{n_0}\,\rho\big) & \texte{if } r \ge R\,.
    \end{array}\right.
\end{equation}

\subsubsection{Special case of a constant optical index}

In the constant index case $n\equiv n_0>1$, we have $\kb=1$ and we find the following $8$-term expansion of the resonances for the circular cavity. For an improved readability, we distinguish the TM and TE case and we denote $\ku_{p\pv j}$ by $\ku^{\TM}_{j}$ if $p=1$ and by $\ku^{\TE}_{j}$ if $p=-1$. We have
\begin{align}\label{eq:kjTM}\hskip-2ex
    k^{\TM}_j(m) = \frac{m}{R n_0} \Bigg[ 1
     & + \frac{\as_j}{2}\lp\frac{2}{m}\rp^{\frac{2}{3}}
        - \frac{n_0}{2(n_0^2-1)^{\frac{1}{2}}}\lp\frac{2}{m}\rp
    + \frac{3\,\as_j^2}{40}\lp\frac{2}{m}\rp^{\frac{4}{3}} \nonumber                                                                \\
     & - \frac{\as_j\, n_0^3}{12(n_0^2-1)^{\frac{3}{2}}}\lp\frac{2}{m}\rp^{\frac{5}{3}}
        + \frac{10-\as_j^3}{2800}\lp\frac{2}{m}\rp^2
    + \frac{\as_j^2\, n_0^2\, (n_0^2-4)}{80(n_0^2-1)^{\frac{5}{2}}}\lp\frac{2}{m}\rp^{\frac{7}{3}} \nonumber                        \\
     & - \frac{\as_j}{144}\lp\frac{1}{175}+\frac{479\,\as_j^3}{7000} +\frac{2 n_0^6}{(n_0^2-1)^3}\rp\lp\frac{2}{m}\rp^{\frac{8}{3}}
        + \Oo\lp m^{-3}\rp \Bigg]
\end{align}
and
\begin{align}\label{eq:kjTE}\hskip-2ex
    k^{\TE}_j(m) = \frac{m}{R n_0} \Bigg[ 1
     & + \frac{\as_j}{2}\lp\frac{2}{m}\rp^{\frac{2}{3}}
        - \frac{1}{2n_0(n_0^2-1)^{\frac{1}{2}}}\lp\frac{2}{m}\rp
    + \frac{3\,\as_j^2}{40}\lp\frac{2}{m}\rp^{\frac{4}{3}} \nonumber                                                                                                          \\
     & - \frac{\as_j\, (3n_0^2-2)}{12n_0^3(n_0^2-1)^{\frac{3}{2}}}\lp\frac{2}{m}\rp^{\frac{5}{3}}
    + \frac{1}{8}\lp\frac{1}{35} -\frac{\as_j^3}{350} +\frac{1}{n_0^4(n_0^2-1)^2}\rp\lp\frac{2}{m}\rp^2 \nonumber                                                             \\
     & + \frac{\as_j^2\, (3n_0^8+12n_0^6-12n_0^4-8n_0^2+8)}{80n_0^5(n_0^2-1)^{\frac{5}{2}}}\lp\frac{2}{m}\rp^{\frac{7}{3}} \nonumber                                          \\
     & - \frac{\as_j}{144}\lp\frac{1}{175}+\frac{479\,\as_j^3}{7000} +\frac{18n_0^8-45n_0^6+12n_0^4+45n_0^2-28}{n_0^6(n_0^2-1)^3}\rp\lp\frac{2}{m}\rp^{\frac{8}{3}} \nonumber \\
     & + \Oo\lp m^{-3}\rp \Bigg]
\end{align}

\subsection{Proof: General concepts}\label{subsec:genConcepts}

As explained in Sect.\ \ref{sec:3types}, the problem under consideration has the form \eqref{eq:schr} of the semi-classical Schr\"odinger equation $-h^2\, \Hc w + W w =  E w$, where $\Hc$ is a modified Laplacian, $W$ is a potential, discontinuous at the interface $r=R$, and $h = \frac{1}{m}$ is the semiclassical parameter. Recall that in both cases \textsc{(a)} and \textsc{(b)}, the potential $W$ has a local minimum at $R$, with the distinctive feature that for $r<R$, the shape of $W$ is triangular in case \textsc{(a)}, and quadratic in case \textsc{(b)}. The rationale of the quasi-resonance construction is to localize equation around the well bottom $r=R$ and to scale variables appropriately so that equation \eqref{eq:schr} can be solved by a multiscale power expansion.
In this section, we describe the general concepts of the proof, common to the two cases \textsc{(a)} and \textsc{(b)}.

\subsubsection{Localization around the interface}
The localization starts with the introduction of the dimensionless variable
$\xi = \frac{r}{R}-1 \in (-1,+\infty)$ for which the disk boundary is translated to the origin.
Accordingly, we denote by $\nx$ the optical index function in this new variable, viz
\[
    \nx : \xi \mapsto n\big(R(1+\xi)\big)
\]
and, for all $q\in\N$, we set  $\nx_q= \nx^{(q)}(0)$, the $q$-th derivative of $\nx$ at $0$. Referring to Notation~\ref{no:n}, we have $\nx_q = R^q n_q$ and ({\em cf.} \eqref{eq:kappa} and \eqref{eq:B})
\begin{align}\label{eq:kb_mb}
    \kb = 1 + \frac{\nx_1}{\nx_0} \quad\mbox{and}\quad
    \mb = 2 - \frac{\nx_2}{\nx_0}.
\end{align}
Since $\nx_0=\no_0$, we will most often use the notation $\no_0$.

The minimum of $W$ at $r=R$ is its left limit $W_0 \coloneqq \lim_{r\nearrow R}W(r)= (R\no_0)^{-2}$. Using the change of variables $r\mapsto\xi$, we set
\begin{align*}
    \Lc(\xi,\partial_\xi) = R^2 \no_0^2 \Hc(r,\partial_r), &  &
    V(\xi) = R^2 \no_0^2 (W(r) - W_0 ),                    &  &
    \text{and}                                             &  &
    \wt{ E} = R^2 \no_0^2 ( E - W_0),
\end{align*}
so that equation \eqref{eq:schr} is transformed into
\begin{subequations}
    \begin{align}\label{eq:schr_loc}
        -h^2\, \Lc v + V v = \wt{ E} v ,
    \end{align}
    with the new unknown function $v(\xi) = w(R(1+\xi))$. We have
    \begin{align*}
        \Lc(\xi,\partial_\xi) & = \frac{\no_0^2}{\nx(\xi)^2}\, \partial_\xi^2 +
        \no_0^2 \lp\frac{1}{(1+\xi)\,\nx(\xi)^2}+(p-1)\frac{\nx'(\xi)}{\nx(\xi)^3}\rp\partial_\xi \\[1ex]
        V(\xi)                & = \lp\frac{\no_0}{(1+\xi)\,\nx(\xi)}\rp^2 - 1.
    \end{align*}
    Note that the potential $V$ has $0$ as local minimum at $\xi=0$ (well bottom).

    The unknown function $v$ satisfies furthermore the following jump condition at $\xi=0$ deduced from \eqref{eq:Ppradb}
    \begin{equation}\label{eq:jumps_loc}
        \lc v\rc_{\{0\}} = 0 \quad \text{and} \quad \lc \nx^{p-1}\, \partial_\xi v\rc_{\{0\}} = 0
    \end{equation}
    and the decay conditions in Schwarz spaces when $\xi\to\pm\infty$:
    \begin{equation}
        \label{eq:schr_lo}
        v^- \coloneqq v\on{\R_-} \in \Sc(\R_-)\quad\mbox{and}\quad
        v^+ \coloneqq v\on{\R_+} \in \Sc(\R_+).
    \end{equation}
\end{subequations}
Our concern now is to construct quasi-resonances and quasi-modes localized around the well bottom $\xi=0$, solutions to \eqref{eq:schr_loc}--\eqref{eq:schr_lo} in an asymptotic sense.

\subsubsection{Principal part of the Schr\"odinger modal equation}
The structure of quasi-pairs is determined by the {\em principal part} of problem \eqref{eq:schr_loc}--\eqref{eq:schr_lo} defined as:
\begin{subequations}
    \begin{equation}
        \label{eq:schr0}
        -h^2\, \Lcu_0 v  + \Vu_0 v = \Lu v
    \end{equation}
    where
    \begin{enumerate}
        \item The operator $\Lcu_0=(\Lcu_0^-,\Lcu_0^+)=(\partial_\xi^2,\no_0^2\partial_\xi^2)$ is the principal part of $\Lc$ frozen at $\xi=0$ on the left and on the right,

        \item The associated jump conditions are
              \begin{equation}
                  \label{eq:jump0}
                  v^-(0) = v^+(0) \qquad \text{and} \qquad \no^{p-1}_0\, \partial_\xi v^-(0) = \partial_\xi v^+(0)
              \end{equation}
              and the decay condition is the same as above
              \begin{equation}
                  \label{eq:schr_lo0}
                  v^- \in \Sc(\R_-)\quad\mbox{and}\quad
                  v^+  \in \Sc(\R_+).
              \end{equation}

        \item The potential $\Vu_0=(\Vu_0^-,\Vu_0^+)$ is the first nonzero term in the left and right Taylor expansions of $V$ at $\xi=0$. In any of the cases \textsc{(a)} and \textsc{(b)}, $\Vu_0^+={\no^2_0}-1>0$, whereas
              \[
                  \Vu_0^-(\xi) = \begin{cases}
                      -2\kb\,\xi             & \mbox{in case \textsc{(a)}}  \\
                      \phantom{-2}\mb\,\xi^2 & \mbox{in case \textsc{(b)}}.
                  \end{cases}
              \]
              In order to cover both cases \textsc{(a)} and \textsc{(b)} in a unified way, we will assume more generally that
              \begin{equation}
                  \label{eq:varkappa}
                  \Vu_0^-(\xi) = \gamma |\xi|^\varkappa,\quad \xi<0,\quad\mbox{with}\quad \gamma>0,\ \varkappa>0.
              \end{equation}
    \end{enumerate}
\end{subequations}

The system \eqref{eq:schr0}--\eqref{eq:schr_lo0} can be solved by a formal series expansion according to the following procedure:
\begin{enumerate}
    \item[(i)] Scale the variable $\xi$ differently on the left and on the right of the origin, introducing
          \begin{equation}
              \label{eq:sr}
              \sigma = \xi/h^\alpha \mbox{ for }\xi<0 \quad\mbox{and}\quad
              \rho = \xi/h^{\alpha'} \mbox{ for }\xi>0
          \end{equation}
          with $\alpha$ and $\alpha'>0$ chosen in order to homogenize the operators $-h^2\Lcu_0^- + \Vu_0^-$ and $-h^2\Lcu_0^+ + \Vu_0^+$. We find that $-h^2\Lcu_0^- + \Vu_0^-$ becomes $-h^{2-2\alpha}\partial^2_\sigma + \gamma h^{\varkappa\alpha}|\sigma|^\varkappa$, and $-h^2\Lcu_0^+ + \Vu_0^+$ becomes $-h^{2-2\alpha'}\partial^2_\rho + n_0^2-1$, implying to choose
          \begin{equation}
              \label{eq:alpha}
              \alpha = \frac{2}{2+\varkappa} \quad\mbox{and}\quad \alpha' = 1.
          \end{equation}
    \item[(ii)] Expand the new functions
          \begin{equation}
              \label{eq:ansatz_sol}
              \vp(\sigma) \coloneqq v(\xi) \mbox{ for } \xi<0 \quad\mbox{and}\quad
              \psi(\rho) \coloneqq v(\xi) \mbox{ for } \xi>0,
          \end{equation}
          and $\Lu$ in series of type $\sum_{q\in\N} a_q h^{q\beta}$ for some suitable $\beta>0$. The jump condition~\eqref{eq:jumps_loc}
          being transformed into the following matching condition at $\sigma = \rho = 0$
          \begin{equation}
              \label{eq:jumpsr}
              \vp(0) = \psi(0) \quad\text{and}\quad
              \no_0^{p-1}\, h^{-\alpha}\partial_\sigma\vp(0) = h^{-\alpha'}\partial_\rho\psi(0) ,
          \end{equation}
          we find that $\alpha$, $\alpha'$, and $\alpha'-\alpha$ should be integer multiples of $\beta$.
\end{enumerate}

\subsubsection{Back to the full Schr\"odinger modal equation}
Now, we take advantage of the choices made in (i)--(ii) to treat the system \eqref{eq:schr_loc}--\eqref{eq:schr_lo} in its general form.
Hence we know that $\alpha'=1$ and leave $\alpha$ in equations for further determination.
By the change of variables~\eqref{eq:sr}--\eqref{eq:alpha} and the change of functions \eqref{eq:ansatz_sol}, the equation \eqref{eq:schr_loc} is transformed into the following two equations set on each side of the interface $\sigma = \rho = 0$
\begin{subequations}
    \begin{align}\label{eq:schr_LamTil}
        \left\{\begin{array}{rcl@{\qquad}l}
            h^{2-2\alpha}(-\Lc_h^-\vp + V_h^-\vp) & = & \wt{ E}\vp,  & \sigma\in (-\infty,0) \\[0.5ex]
            -\Lc_h^+\psi + V_h^+\psi\phantom{)}   & = & \wt{ E}\psi, & \rho\in (0,+\infty)
        \end{array}\right.
    \end{align}
    with the matching condition
    \begin{equation}
        \label{eq:jumpsra}
        \vp(0) = \psi(0) \quad\text{and}\quad
        \no_0^{p-1}\, h^{1-\alpha}\partial_\sigma\vp(0) = \partial_\rho\psi(0) ,
    \end{equation}
    the decay condition
    \begin{equation}
        \label{eq:lora}
        \vp \in \Sc(\R_-)\quad\mbox{and}\quad
        \psi  \in \Sc(\R_+),
    \end{equation}
\end{subequations}
and where the operators $\Lc_h^-$ and $\Lc_h^+$ are defined by
\begin{equation}
    \label{eq:Lh}
    \begin{aligned}
        \Lc_h^- & = \frac{\no_0^2}{\nx(h^\alpha\sigma)^2}\partial_\sigma^2
        + h^\alpha \no_0^2 \lp\frac{1}{(1+h^\alpha\sigma)\,\nx(h^\alpha\sigma)^2}
        +(p-1)\frac{\nx'(h^\alpha\sigma)}{\nx(h^\alpha\sigma)^3}\rp\partial_\sigma, \\
        \Lc_h^+ & = \no_0^2 \lp\partial_\rho^2 + \frac{h}{1+h\rho}\partial_\rho\rp,
    \end{aligned}
\end{equation}
and the potentials $V_h^-$ and $V_h^+$ are given by
\begin{equation}
    \label{eq:Vh}
    \begin{aligned}
        V_h^-(\sigma) & = h^{2\alpha-2}\lp\lp\frac{\no_0}{(1+h^\alpha\sigma)\,\nx(h^\alpha\sigma)}\rp^2 - 1\rp , \\
        V_h^+(\rho)   & = \frac{\no_0^2}{(1+h\rho)^2} - 1.
    \end{aligned}
\end{equation}

\subsubsection{Formal series of operators} The next step is to associate a formal series of operators to the system \eqref{eq:schr_LamTil}--\eqref{eq:lora}, using a Taylor expansion at $\sigma=\rho=0$ of their coefficients: For any smooth coefficient $f$, this association reads
\[
    f(h^\alpha\sigma) \sim \sum_{\ell \in\N} h^{\alpha\ell}\; \frac{f^{(\ell)}(0)}{\ell!}\, \sigma^\ell
    \quad\mbox{and}\quad
    f(h\rho) \sim \sum_{\ell \in\N} h^{\ell}\; \frac{f^{(\ell)}(0)}{\ell!}\, \rho^\ell.
\]
This defines a formal series of operators in terms of powers of $h^\beta$
\begin{equation}
    \label{eq:Apm}
    -\Lc_h^\pm + V_h^\pm \sim \sum_{q\in\N} h^{q\beta}\,\bA^\pm_q
\end{equation}
inviting to look for $\vp$, $\psi$, and $\wt{ E}$ in the form of the formal series
\begin{equation}\label{eq:1223}
    \vp(\sigma) = \sum_{q\in\N} h^{q\beta}\,\vp_q(\sigma), \quad
    \psi(\rho) = \sum_{q\in\N} h^{q\beta}\,\psi_q(\rho), \quad
    \text{and} \quad
    \wt{ E} = \sum_{q\in\N} h^{q\beta}\,\wt{ E}_q.
\end{equation}
Note that in the general framework \eqref{eq:varkappa} we have
\begin{equation}
    \label{eq:A0}
    \bA^-_0 = -\partial^2_\sigma + \gamma|\sigma|^\varkappa \quad\mbox{and}\quad
    \bA^+_0 = -\no_0^2\partial^2_\rho + \no_0^2 - 1 .
\end{equation}
Finally, since we want to construct quasi-modes with a whispering gallery structure, we give priority in \eqref{eq:schr_LamTil} to the equation in $\R_-$, which means that we look for an expansion of $\wt{ E}$ starting as $h^{2-2\alpha}$. This motivates the introduction of
\begin{equation}
    \label{eq:lam}
    \lambda = h^{2\alpha-2}\wt{ E}\sim \sum_{q\in\N} h^{q\beta}\lambda_q\,,
\end{equation}
so that equations \eqref{eq:schr_LamTil} read
\begin{align}\label{eq:schr_lam}
    \left\{\begin{array}{rcll}
        -\Lc_h^-\vp + V_h^-\vp   & = & \lambda\vp, \quad               & \sigma\in (-\infty,0) \\[0.5ex]
        -\Lc_h^+\psi + V_h^+\psi & = & h^{2-2\alpha}\lambda\psi, \quad & \rho\in (0,+\infty)
    \end{array}\right.
\end{align}
still coupled with the matching condition \eqref{eq:jumpsra} and the decay condition \eqref{eq:lora}.

\subsection{Proof: Specifics in case \textsc{(a)}}\label{sec:43}
In case \textsc{(a)}, $\kb$ is positive and the above general framework applies with the quantities
\[
    \varkappa = 1,\quad \gamma=2\kb,\quad \alpha = \tfrac{2}{3},\quad \alpha'=1,\quad \beta=\tfrac{1}{3}.
\]
This case is very close to the ``toy model'' considered in \cite[Sec.\ III]{DauRay12}.
From expressions \eqref{eq:Lh}--\eqref{eq:Apm}, we find that the first terms of the operator series $\bA_q^\pm$ are as follows
\begin{empheq}[left={\hspace*{-3ex}\empheqlbrace}]{align}
    \label{eq:A012-}
    \bA^-_0 &= -\partial^2_\sigma + 2\kb |\sigma|, \quad
    \bA^-_1 = 0, \quad
    \bA^-_2 = 2\tfrac{\nx_1}{n_0}\sigma \, \partial^2_\sigma
    - \lp 1 + (p-1)\tfrac{\nx_1}{n_0}\rp \,\partial_\sigma + c^-_2\sigma^2,\\
    \label{eq:A012+}
    \bA^+_0 &= -\no_0^2\partial^2_\rho + \no_0^2 - 1, \quad
    \bA^+_1 = 0, \quad
    \bA^+_2 = 0,\ \quad
    \bA^+_3 = -\no^2_0(\partial_\rho + 2\rho),
\end{empheq}
where $c^-_2 = 3+4\frac{\nx_1}{n_0}+3\frac{\nx_1^2}{n_0^2}-\frac{\nx_2}{n_0}$.
For a comprehensive description of the general terms $\bA^\pm_q$ we need the introduction of polynomial spaces.
\begin{notation}
    For $q\in\N$, let $\P^q$ denote the space of polynomials in one variable with degree $\le q$ and $\P^q_{\!*}$ the subspace of $\P^q$ formed by polynomials $P$ such that $P(0)=0$.
\end{notation}

A Taylor expansion at $\sigma=\rho=0$ of the coefficients of $\Lc_h^\pm$ and of $V_h^\pm$ allows to prove that
\begin{lemma}
    \label{lem:Aqpm}
    For any integer $q\ge1$, there holds
    \begin{align*}
        \bA^-_q & = A^-_q(\sigma) \,\partial^2_\sigma + B^-_q(\sigma)\,\partial_\sigma + C^-_q(\sigma)
                &                                                                                      & \text{with}\ \ A^-_q \in \P^{[\frac{q}{2}]},\ \ B^-_q \in \P^{[\frac{q}{2}]-1},\ \ C^-_q \in \P^{[\frac{q}{2}]+1} \\
        \bA^+_q & = B^+_q(\rho)\,\partial_\rho + C^+_q(\rho)
                &                                                                                      & \text{with}\ \ B^+_q \in \P^{[\frac{q}{3}]-1},\ \ C^+_q \in \P^{[\frac{q}{3}]}.
    \end{align*}
\end{lemma}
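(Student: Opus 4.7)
The plan is to read off the polynomial structure directly from the Taylor expansions at $\xi = 0$ of the smooth coefficients appearing in the definitions \eqref{eq:Lh}--\eqref{eq:Vh}. The key elementary observation is that for any smooth function $f$ on a neighborhood of $0$,
\[
f(h^\alpha\sigma) \sim \sum_{\ell\in\N} \frac{f^{(\ell)}(0)}{\ell!}\,h^{\ell\alpha}\sigma^\ell
\qquad\text{and}\qquad
f(h\rho) \sim \sum_{\ell\in\N} \frac{f^{(\ell)}(0)}{\ell!}\,h^\ell\rho^\ell,
\]
so the coefficient of $h^{\ell\alpha}$, resp.\ $h^\ell$, is a polynomial in the tangential variable of degree at most $\ell$. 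All that remains is degree bookkeeping, term by term in \eqref{eq:Lh}--\eqref{eq:Vh}, keeping track of the multiplicative prefactors in $h$.

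For $\bA^-_q$, one uses $\alpha = \tfrac{2}{3} = 2\beta$, so only \emph{even} multiples of $\beta$ can occur in the Taylor series of $\Lc_h^-$ and $V_h^-$; in particular $\bA^-_q = 0$ for odd $q$, which is compatible with the claimed bounds. The $\partial_\sigma^2$-coefficient $\no_0^2/\nx(h^\alpha\sigma)^2$ starts at $h^0$ and contributes a polynomial of degree $\ell$ to the $h^{2\ell\beta}$-term, giving $A^-_{2\ell}\in\P^\ell = \P^{[q/2]}$. The $\partial_\sigma$-coefficient in $\Lc_h^-$ carries an extra factor $h^\alpha = h^{2\beta}$ in front of a smooth expansion, so at $h^{2\ell\beta}$ it produces a polynomial of degree at most $\ell - 1$, giving $B^-_{2\ell}\in\P^{[q/2]-1}$. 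Finally, $V_h^- = h^{-2\beta}\bigl((\no_0/((1+h^\alpha\sigma)\nx(h^\alpha\sigma)))^2-1\bigr)$; the bracketed analytic function vanishes at $\xi=0$, so its Taylor series starts at $h^\alpha$, and after the $h^{-2\beta}$ prefactor the coefficient at $h^{2\ell\beta}$ is a polynomial of degree at most $\ell+1$ in $\sigma$, which yields $C^-_{2\ell}\in\P^{[q/2]+1}$.

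For $\bA^+_q$, the analysis is even more rigid since $\alpha' = 1 = 3\beta$ forces $\bA^+_q = 0$ whenever $3\nmid q$. The $\partial_\rho^2$-coefficient of $\Lc_h^+$ is the constant $\no_0^2$, so there is no surviving $\partial_\rho^2$ term for $q\ge 1$; this matches the stated form of $\bA^+_q$. The $\partial_\rho$-coefficient $\no_0^2 h/(1+h\rho)$ has an $h = h^{3\beta}$ prefactor, so the coefficient of $h^{3\ell\beta}$ is a polynomial of degree $\ell-1$ in $\rho$, giving $B^+_{3\ell}\in\P^{[q/3]-1}$, while $V_h^+ = \no_0^2/(1+h\rho)^2 - 1$ contributes polynomials of degree at most $\ell$ to the $h^{3\ell\beta}$-term, hence $C^+_{3\ell}\in\P^{[q/3]}$.

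There is no genuine obstacle here: the only subtlety is to faithfully track the $h$-prefactors $h^\alpha$ in the first-order part of $\Lc_h^-$ and $h^{2\alpha-2}$ in front of $V_h^-$, which produce the $\mp 1$ shifts in the polynomial degrees of $B^-_q$ and $C^-_q$ relative to $A^-_q$. Once these shifts are identified, all stated inclusions follow from the degree count above.
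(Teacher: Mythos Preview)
Your proof sketch is correct and follows exactly the approach the paper indicates: the paper only states that ``a Taylor expansion at $\sigma=\rho=0$ of the coefficients of $\Lc_h^\pm$ and of $V_h^\pm$ allows to prove'' the lemma, without spelling out the degree bookkeeping you have carried out. Your tracking of the $h^\alpha$ and $h^{2\alpha-2}$ prefactors (producing the $\pm 1$ shifts in the degrees of $B_q^-$ and $C_q^-$) and of the parity/divisibility constraints on $q$ is precisely the content the paper leaves implicit.
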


By the identifications \eqref{eq:Apm}--\eqref{eq:1223}, the system \eqref{eq:schr_lam} with jump conditions \eqref{eq:jumpsra} is associated with the formal series system of equations
\begin{equation}
    \label{eq:formal}
    \left\{\begin{array}{rcll}
        \lp \sum_{\ell\in\N} \ee h^{\frac{\ell}{3}} \ee\bA^-_\ell\rp
        \lp \sum_{\ell\in\N} \ee h^{\frac{\ell}{3}} \ee\vp_\ell\rp
                                                                      & \!=\!              &
        \lp \sum_{\ell\in\N} \ee h^{\frac{\ell}{3}} \ee\lambda_\ell\rp
        \lp \sum_{\ell\in\N} \ee h^{\frac{\ell}{3}} \ee\vp_\ell\rp \  & \mbox{in}\ \ \R_-
        \\[1ex]
        \lp \sum_{\ell\in\N} \ee h^{\frac{\ell}{3}} \ee\bA^+_\ell\rp
        \lp \sum_{\ell\in\N} \ee h^{\frac{\ell}{3}} \ee\psi_\ell\rp
                                                                      & \!=\!              &
        h^{\frac{2}{3}}
        \lp \sum_{\ell\in\N} \ee h^{\frac{\ell}{3}} \ee\lambda_\ell\rp
        \lp \sum_{\ell\in\N} \ee h^{\frac{\ell}{3}} \ee\psi_\ell\rp   & \mbox{in}\ \ \R_+
        \\[1ex]
        \lp \sum_{\ell\in\N} \ee h^{\frac{\ell}{3}} \ee\vp_\ell\rp
                                                                      & \!=\!              &
        \lp \sum_{\ell\in\N} \ee h^{\frac{\ell}{3}} \ee\psi_\ell\rp   & \mbox{at}\ \ \{0\}
        \\[1ex]
        \no_0^{p-1}\, h^{\frac{1}{3}} \lp \sum_{\ell\in\N} \ee h^{\frac{\ell}{3}} \ee\vp'_\ell\rp
                                                                      & \!=\!              &
        \lp \sum_{\ell\in\N} \ee h^{\frac{\ell}{3}} \ee\psi'_\ell\rp  & \mbox{at}\ \ \{0\}
    \end{array}\right.
\end{equation}
This system is equivalent to an infinite collection of systems obtained by equating the series coefficients: Namely, for $q$ spanning $\N$,
\begin{equation}
    \label{eq:formalq}
    \left\{\begin{array}{rcll}
        \sum_{\ell=0}^q \bA^-_\ell\,  \vp_{q-\ell}
                                                            & \!=\!             &
        \sum_{\ell=0}^q \lambda_\ell\,  \vp_{q-\ell}   \    & \mbox{in}\ \ \R_-
        \\[0.5ex]
        \sum_{\ell=0}^q \bA^+_\ell\,  \psi_{q-\ell}
                                                            & \!=\!             &
        \sum_{\ell=2}^{q} \lambda_{\ell-2}\,  \psi_{q-\ell} & \mbox{in}\ \ \R_+
        \\
        \vp_q(0)
                                                            & \!=\!             &
        \psi_q(0)
        \\
        \psi'_q(0)
                                                            & \!=\!             &
        \no_0^{p-1}\, \vp'_{q-1}(0)
    \end{array}\right.
\end{equation}
where we agree that the right hand side of the second line is $0$ when $q=0$ or $q=1$, and, likewise, the right hand side of the fourth line is $0$ when $q=0$.

\subsubsection{Initialization stage}
For $q=0$, the system \eqref{eq:formalq} reads
\begin{equation}
    \label{eq:formal0}
    \left\{\begin{array}{rcll}
        \bA^-_0\,  \vp_{0}
                               & \!=\!             &
        \lambda_0 \vp_{0}   \  & \mbox{in}\ \ \R_-
        \\
        \bA^+_0\,  \psi_{0}
                               & \!=\!             &
        0                      & \mbox{in}\ \ \R_+
        \\
        \vp_0(0)
                               & \!=\!             &
        \psi_0(0)
        \\
        \psi'_0(0)
                               & \!=\!             &
        0
    \end{array}\right.
\end{equation}
for which we look for solutions $\vp_0\in\Sc(\R_-)$ and $\psi_0\in\Sc(\R_+)$.

Since the equation $\bA^+_0\, \psi_{0} = 0$ with the Neumann condition at $0$ has no non-zero solution in $\Sc(\R_+)$, it is natural to take $\psi_0=0$ in \eqref{eq:formal0}. Then we are left with the following Airy eigen-problem on $\R_-$ for $\vp_0$
\[
    -\vp_0''(\sigma) - 2\kb \,\sigma\vp_0(\sigma) = \lambda_0\vp_0(\sigma) \ \mbox{ for }\ \sigma\in(-\infty,0),
    \qquad\mbox{and}\qquad \vp_0(0)=0
\]
whose decaying solutions can be expressed in terms of the mirror Airy function $\As$. Recall that $\as_j$ for $j\in\N$, denote the successive roots of $\As$. We obtain immediately:

\begin{lemma}\label{lem:lambda0_A}
    Let $j\in\N$. The couple of functions $(\vp_0,\psi_0)$ and the number $\lambda_0$ defined by
    \[
        \vp_0(\sigma) = \As\big(\as_j+(2\kb)^{\frac{1}{3}}\sigma\big), \quad
        \psi_0(\rho) = 0,\quad\mbox{and}\quad
        \lambda_0 = \as_j (2\kb)^{\frac{2}{3}}
    \]
    solve \eqref{eq:formal0} in $\Sc(\R_-)\times\Sc(\R_+)$.
\end{lemma}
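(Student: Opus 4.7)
The proof is essentially a direct verification, since the lemma provides an explicit candidate triple and it suffices to check the four lines of system \eqref{eq:formal0}. The plan is to split the verification into the two half-line problems, handling the exterior first (which motivates the choice $\psi_0=0$) and then reducing the interior problem to the standard Airy ODE by a linear rescaling.

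First I would address the exterior equation $\bA^+_0 \psi_0 = 0$ on $\R_+$ with $\bA^+_0 = -\no_0^2\partial_\rho^2 + (\no_0^2 - 1)$. Its general classical solution is $\psi_0(\rho) = A\e^{-c\rho} + B\e^{+c\rho}$ with $c = \sqrt{\no_0^2-1}/\no_0 > 0$, which is a strictly positive real number because $\no_0 > 1$ by Assumption \ref{as:n}. Schwartz decay at $+\infty$ forces $B=0$, after which the Neumann matching $\psi_0'(0) = 0$ (the fourth line of \eqref{eq:formal0} at order $q=0$) forces $A=0$. Hence $\psi_0 \equiv 0$ is the only element of $\Sc(\R_+)$ compatible with the exterior data at this order, which is exactly what the lemma proposes.

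With $\psi_0 = 0$ fixed, the matching $\vp_0(0) = \psi_0(0)$ reduces to the Dirichlet condition $\vp_0(0)=0$, and the interior problem becomes: find $\lambda_0$ and $\vp_0 \in \Sc(\R_-)$ solving
\[
   -\vp_0''(\sigma) - 2\kb\,\sigma\,\vp_0(\sigma) = \lambda_0 \vp_0(\sigma), \qquad \sigma<0, \qquad \vp_0(0)=0,
\]
using $|\sigma|=-\sigma$ on $\R_-$. I would introduce the affine change of variable $t = \as_j + (2\kb)^{1/3}\sigma$ and set $\vp_0(\sigma) = f(t)$; then $\vp_0''(\sigma) = (2\kb)^{2/3} f''(t)$ and the ODE rewrites as
\[
   -f''(t) - t\,f(t) = \bigl(\lambda_0 (2\kb)^{-2/3} - \as_j\bigr) f(t).
\]
The identity $\As''(t) = -t\,\As(t)$, which follows from $\mathsf{Ai}''(z) = z\,\mathsf{Ai}(z)$ evaluated at $z=-t$, shows that $f = \As$ makes the left-hand side vanish identically. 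This forces $\lambda_0 = \as_j(2\kb)^{2/3}$, confirming the claimed formula. The Dirichlet condition at $\sigma=0$ reads $\As(\as_j)=0$, which holds by the very definition of $\as_j$; and the Schwartz decay $\vp_0 \in \Sc(\R_-)$ follows from the super-exponential decay of $\mathsf{Ai}$ at $+\infty$, since $\sigma\to -\infty$ corresponds to $-t \to +\infty$.

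I expect no real obstacle: the lemma is pure verification once the rescaling is in place, and the rescaling has been engineered into the definitions of $\bA^-_0$ and of the exponent $\alpha = 2/3$. The only conceptual point worth underlining is the rigidity of the exterior problem—the coercive zero-order term $\no_0^2-1>0$ together with the Neumann condition leaves no nontrivial Schwartz solution at this order, which is exactly what legitimizes the whispering gallery ansatz $\psi_0=0$ and allows the subsequent orders $q\ge 1$ to propagate nontrivial boundary data into $\psi_q$ via the matching relation $\psi'_q(0)=\no_0^{p-1}\vp'_{q-1}(0)$.
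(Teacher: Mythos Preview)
Your proposal is correct and follows essentially the same approach as the paper, which presents the lemma as an immediate consequence after observing that the exterior problem with Neumann data forces $\psi_0=0$ and that the interior problem reduces to the Airy eigen-equation on $\R_-$ with Dirichlet condition. Your write-up simply spells out the verification steps (the exponential solutions on $\R_+$, the affine rescaling to the standard Airy ODE, and the decay check) that the paper leaves implicit.
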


\begin{remark}
    \label{rem:expdec}
    The quasi-mode construction requires a cut-off at infinity at some stage. Such a cut-off will be harmless to the satisfied equations if the functions $(\vp_0,\psi_0)$, and more generally $(\vp_q,\psi_q)$, are exponentially decreasing when $\sigma\to-\infty$ and $\rho\to+\infty$. It is easy to see that any such solution of \eqref{eq:formal0} is proportional to one of the solutions given in Lemma \ref{lem:lambda0_A}.
\end{remark}

\subsubsection{Sequence of nested problems and recurrence}

Reordering the terms in the system~\eqref{eq:formalq} of rank $q$, we can write it in the following form
\begin{subequations}    \label{eq:sysQ_A}
    \begin{empheq}[left={ \hspace*{-10mm}(\mathcal{R}^{\text{\textsc{(a)}}}_q)\quad     \empheqlbrace}]{align}
        \hspace*{6mm}   -\vp_q''(\sigma) - (2\kb\sigma +\lambda_0)\vp_q(\sigma) &= \lambda_q\,\vp_0(\sigma) + S_q^\vp(\sigma)
        & \sigma \in \R_-
        \label{eq:sysQ_Aa} \\
        \hspace*{6mm}   -\no_0^{2}\psi_q''(\rho) + \lp \no_0^{2}-1\rp\psi_q(\rho) &= S_q^\psi(\rho) & \rho \in \R_+
        \label{eq:sysQ_Ab} \\
        \vp_q(0) &= \psi_q(0) &
        \label{eq:sysQ_Ac} \\
        \psi_q'(0) &= \no_0^{p-1}\vp_{q-1}'(0) &
        \label{eq:sysQ_Ad}
    \end{empheq}
\end{subequations}
with right hand terms $S_q^\vp$ and $S_q^\psi$ defined as (recall that $\bA^+_1=0$)
\begin{equation}
    \label{eq:Sq_A}
    S_q^\vp = - \bA^-_q \,\vp_{0} + \sum_{\ell=1}^{q-1} (\lambda_\ell - \bA^-_\ell) \,  \vp_{q-\ell}
    \quad\mbox{and}\quad
    S_q^\psi = \sum_{\ell=2}^{q} (\lambda_{\ell-2} - \bA^+_\ell) \, \psi_{q-\ell} .
\end{equation}

\begin{proposition}\label{pro:solQ_A}
    Choose $j\in\N$ and define $(\vp_0,\psi_0,\lambda_0)$ according to Lemma \ref{lem:lambda0_A}.
    Then there exist, for any $q\ge1$,
    \begin{itemize}
        \item a unique $\lambda_q\in\R$
        \item unique polynomials $P_q^\vp\in\P^q_{\!*}$, $Q_q^\vp\in\P^{q-1}$, and $P_q^\psi\in\P^{q-1}$
    \end{itemize}
    such that setting
    \begin{equation}
        \label{eq:PQ_A}
        \begin{aligned}
            \vp_q(\sigma) & = P_q^\vp(\sigma)\As\big(\as_j+(2\kb)^{\frac{1}{3}}\sigma\big)
            + Q_q^\vp(\sigma)\As'\big(\as_j+(2\kb)^{\frac{1}{3}}\sigma\big)
                          & \forall\sigma\in\R_-                                           \\[0.5ex]
            \psi_q(\rho)  & = \te P_q^\psi(\rho) \exp\big(-\rho\sqrt{1-\no_0^{-2}}\,\big)
                          & \forall\rho\in\R_+\end{aligned}
    \end{equation}
    the collection $(\vp_0,\ldots,\vp_q,\psi_0,\ldots,\psi_q,\lambda_0,\ldots,\lambda_q)$ solves the sequence of problems $(\mathcal{R}^{\text{\textsc{(a)}}}_\ell)$ introduced in \eqref{eq:sysQ_A} for $\ell = 0,\ldots,q$.
\end{proposition}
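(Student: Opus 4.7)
I would proceed by induction on $q$. The base case $q=0$ is Lemma~\ref{lem:lambda0_A} (with $P_0^\vp=1$, $Q_0^\vp=0$). For $q\ge 1$, assume the claim for all $\ell<q$; I would construct $(\vp_q,\psi_q,\lambda_q)$ in four interlocking steps. First, using the inductive hypothesis together with Lemma~\ref{lem:Aqpm}, each $\vp_{q-\ell}$ is a polynomial combination of $\As(z)$ and $\As'(z)$, and since $\bA^-_\ell$ has polynomial coefficients together with the identity $\As''(z)=-z\As(z)$, it follows that $S_q^\vp(\sigma)=A(\sigma)\As(z)+B(\sigma)\As'(z)$ for polynomials $A,B$ of controlled degree. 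Similarly, each $\psi_{q-\ell}=P^\psi_{q-\ell}(\rho)\e^{-\rho\mu}$ with $\mu=\sqrt{1-n_0^{-2}}$, and since $\bA^+_\ell$ preserves this form, $S_q^\psi(\rho)=P(\rho)\e^{-\rho\mu}$.

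Next, I would solve the right equation~\eqref{eq:sysQ_Ab} under the ansatz $\psi_q=P_q^\psi(\rho)\e^{-\rho\mu}$. Substitution reduces it to the linear ODE $P_q^{\psi\prime\prime}-2\mu P_q^{\psi\prime}=-P/n_0^2$, which admits a unique polynomial solution modulo an additive constant; that constant is then fixed by the Neumann datum~\eqref{eq:sysQ_Ad}, and degree bookkeeping using Lemma~\ref{lem:Aqpm} and the inductive bounds on $\deg P^\psi_{q-\ell}$ yields $\deg P_q^\psi\le q-1$. To determine $\lambda_q$, I would test~\eqref{eq:sysQ_Aa} against $\vp_0$ on $(-\infty,0)$: two integrations by parts combined with $\bA^-_0\vp_0=\lambda_0\vp_0$ and $\vp_0(0)=\As(\as_j)=0$ yield the Fredholm compatibility condition
\[
\vp_q(0)\,\vp_0'(0)=\lambda_q\,\|\vp_0\|_{\Lr^2(\R_-)}^2+\langle S_q^\vp,\vp_0\rangle_{\Lr^2(\R_-)}.
\]
Since $\vp_q(0)=\psi_q(0)$ is now known from the previous step and $\vp_0'(0)=(2\kb)^{1/3}\As'(\as_j)\neq 0$, this identity determines $\lambda_q\in\R$ uniquely.

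Finally, with $\lambda_q$ fixed, I would insert $\vp_q=P_q^\vp(\sigma)\As(z)+Q_q^\vp(\sigma)\As'(z)$ with $z=\as_j+\omega\sigma$, $\omega=(2\kb)^{1/3}$, into~\eqref{eq:sysQ_Aa}. Using $\As''=-z\As$ and equating the coefficients of $\As$ and $\As'$ separately, the ODE decouples into the polynomial system
\begin{align*}
-P_q^{\vp\prime\prime}+\omega^2 Q_q^\vp+2\omega z\,Q_q^{\vp\prime} &= A(\sigma)+\lambda_q,\\
-2\omega P_q^{\vp\prime}-Q_q^{\vp\prime\prime} &= B(\sigma).
\end{align*}
Matching coefficients from the highest degree downwards, the second identity recursively determines the coefficients of $P_q^\vp$ (starting from the leading coefficient of $B$) and the first identity those of $Q_q^\vp$; the constant term of $Q_q^\vp$ is set by the matching condition $Q_q^\vp(0)\As'(\as_j)=\psi_q(0)$, while the constant term of the first identity reduces precisely to the Fredholm condition above, ensuring consistency. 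The normalization $P_q^\vp(0)=0$ then removes the only residual freedom, namely the addition of a multiple of $\vp_0$, which corresponds to shifting the constant term of $P_q^\vp$, thereby securing uniqueness. The principal obstacle is the polynomial degree bookkeeping: verifying the bounds $\deg P_q^\vp\le q$, $\deg Q_q^\vp\le q-1$, $\deg P_q^\psi\le q-1$ requires careful tracking of how Lemma~\ref{lem:Aqpm} interacts with the inductive bounds at each recurrence level.
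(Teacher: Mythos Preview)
Your inductive scheme is sound and lands on the same objects as the paper, but the execution differs in two places. First, for $\lambda_q$: you extract it via a Fredholm-type identity obtained by pairing \eqref{eq:sysQ_Aa} with $\vp_0$. The paper instead solves $L\wt\vp_q=S_q^\vp$ first (ignoring the $\lambda_q\vp_0$ term), observes that $\omega^{-2}\As'(\as_j+\omega\sigma)$ is a particular solution of $L(\cdot)=\vp_0$, and then fixes $\lambda_q$ directly from the continuity condition $\vp_q(0)=\psi_q(0)$, namely $\lambda_q=\omega^2\As'(\as_j)^{-1}(\psi_q(0)-\wt\vp_q(0))$. Your Fredholm identity is equivalent but less explicit; the paper's route avoids the integral $\langle S_q^\vp,\vp_0\rangle$ entirely.

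Second, and more substantively, for the ODE solves you set up the coupled polynomial system and propose to triangularize it by hand. The paper packages exactly this triangularization into two appendix lemmas: Lemma~\ref{lem:exp} (the operator $-\partial_z^2+1$ is a bijection $\Vect(\Ec_d\setminus\{\gamma_0\})\to\Vect(\Ec_{d-1})$) and Lemma~\ref{lem:airy} (the operator $-\partial_z^2-z$ is a bijection $\Vect(\Ac_d\setminus\{\alpha_0\})\to\Vect(\Ac_d\setminus\{\beta_d\})$). Invoking these gives existence, uniqueness, and the degree bounds $P_q^\vp\in\P^q_*$, $Q_q^\vp\in\P^{q-1}$, $P_q^\psi\in\P^{q-1}$ in one stroke, which is precisely the ``principal obstacle'' you flag. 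Your description of the recursion (``the second identity determines $P_q^\vp$, the first determines $Q_q^\vp$'') is not quite accurate since the two identities are coupled through $P''$ and $zQ'$; the correct statement is that the combined system is upper-triangular in the ordered basis $(\beta_0,\alpha_1,\beta_1,\ldots,\alpha_d,\beta_d)$, which is the content of Lemma~\ref{lem:airy}. So your plan works, but routing through those two lemmas would make the degree bookkeeping rigorous rather than asserted.
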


\begin{proof}
    We proceed by induction on $q$. For $q=0$,  lemma \ref{lem:lambda0_A} provides $\lambda_0$, $\vp_0$, and $\psi_0$  solutions to  $(\mathcal{R}^{\text{\textsc{(a)}}}_0)$ and we readily obtain the polynomials $P_0^\vp = 1$, $Q_0^\vp = 0$, and $P_0^\psi = 0$.
    Let $q\ge 1$ and suppose that $(\lambda_\ell)_{0\le\ell\le q-1}$, $(\vp_\ell)_{0\le\ell\le q-1}$, and $(\psi_\ell)_{0\le\ell\le q-1}$  are solutions to problems $(\mathcal{R}^{\text{\textsc{(a)}}}_\ell)$ for $\ell=0,\ldots,q-1$, and satisfy \eqref{eq:PQ_A}.

    Using the expression \eqref{eq:Sq_A} of $S_q^\psi$ combined with Lemma \ref{lem:Aqpm}, we deduce from the induction assumption that
    there exists a polynomial $E_q^\psi\in\P^{q-2}$ such that
    \begin{align*}
        S_q^\psi(\rho) = E_q^\psi(\rho) \te \exp\big(-\rho\sqrt{1-\no_0^{-2}}\,\big) .
    \end{align*}
    From  Lemma \ref{lem:exp}  in Appendix, there exists a unique polynomial $\wt{P}_q^\psi\in\P^{q-1}_{\!*}$
    such that the function $\wt{\psi}$ defined by  $\wt{\psi}_q(\rho) = \wt{P}_q^\psi(\rho) \exp(-\rho\sqrt{1-\no_0^{-2}}\ee)$
    is solution to \eqref{eq:sysQ_Ab}. It follows that the sought function $\psi_q$ is given by
    \begin{align*}
        \psi_q(\rho) = \big( a_0 + \wt{P}_q^\psi(\rho)\big) \te\exp\big( -\rho\sqrt{1-\no_0^{-2}}\,\big)
    \end{align*}
    where the constant $a_0$ is determined from Neumann condition \eqref{eq:sysQ_Ad}. This defines the polynomial $P_q^\psi$ as $a_0 + \wt{P}_q^\psi$ and hence $P_q^\psi\in \P^{q-1}$ as desired.

    Let us now consider equation \eqref{eq:sysQ_Aa}. Using Lemma \ref{lem:Aqpm} combined with the relation $\As''(z) = -z\As(z)$, we deduce from the expression \eqref{eq:Sq_A} of $S_q^\vp$
    and the induction assumption that there exist polynomials $R_q^\psi\in\P^{q}$ and $T_{q}^\vp\in\P^{q-1}$ such that
    \begin{align*}
        S_q^\vp(\sigma) = R_{q}^\vp(\sigma) \As\big(\as_j+(2\kb)^{\frac{1}{3}}\sigma\big)
        + T_{q}^\vp(\sigma)\As'\big(\as_j + (2\kb)^{\frac{1}{3}}\sigma\big) .
    \end{align*}
    From Lemma \ref{lem:airy} there exist unique polynomials $P_q^\vp\in\P^{q}_{\!*}$ and $\wt{Q}_q^\vp\in\P^{q-1}$
    such that the function given by
    \[
        \wt{\vp}_q(\sigma) = P_q^\vp(\sigma) \As\big(\as_j+(2\kb)^{\frac{1}{3}}\sigma\big)
        + \wt{Q}_q^\vp(\sigma) \As'\big(\as_j+(2\kb)^{\frac{1}{3}}\sigma\big)
    \]
    is a solution to the ODE (compare it with \eqref{eq:sysQ_Aa})
    \[
        -\wt{\vp}_q''(\sigma) - (2\kb\sigma +\lambda_0)\wt{\vp}_q(\sigma) = S_q^\vp(\sigma) \,.
    \]
    If we define $\vp_q$ as follows:
    \begin{align*}
        \vp_q(\sigma) =
        \frac{\lambda_q}{(2\kb)^{\frac{2}{3}}}\As'\big( \as_j+(2\kb)^{\frac{1}{3}}\sigma\big) + \wt{\vp}_q(\sigma)
        \quad\mbox{with}\quad
        \lambda_q = \frac{(2\kb)^{\frac{2}{3}}}{\As'(\as_j)} \lp\psi_q(0) - \wt{\vp}_q(0)\rp ,
    \end{align*}
    then $\vp_q$  solves  \eqref{eq:sysQ_Aa} and the continuity condition \eqref{eq:sysQ_Ac}. (Note that we have $\As'(\as_j) \neq 0$  for all $j\in\N$, see \cite[Sect.\ 9.9(ii)]{Nist}.)
    Finally, we set $Q_q^\vp = {\lambda_q}{(2\kb)^{-\frac{2}{3}}} + \wt{Q}_q^\vp \in\P^{q-1}$.
\end{proof}

Calculating for $q=1$ in Proposition \ref{pro:solQ_A} provides $P^\vp_1=0$ and explicit values for $Q^\vp_1$, $P^\psi_1$ and $\lambda_1$, from which we deduce
\begin{lemma}
    We have $\lambda_1 = \dfrac{-n_0^p\, 2\kb}{\sqrt{n_0^2-1}}$ and, for $(\sigma,\rho) \in \R_- \times \R_+$,
    \begin{align*}
        \vp_1(\sigma) = -\frac{n_0^p\, (2\kb)^{\frac{1}{3}}}{\sqrt{n_0^2-1}}\, \As'(\as_j+(2\kb)^{\frac{1}{3}}\sigma), &  &
        \psi_1(\rho) = -\frac{n_0^p\, (2\kb)^{\frac{1}{3}}\, \As'(\as_j)}{\sqrt{n_0^2-1}}\, \exp\lp -\rho\sqrt{1-n_0^{-2}}\rp.
    \end{align*}
\end{lemma}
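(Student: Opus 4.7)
The plan is to specialize Proposition~\ref{pro:solQ_A} to rank $q=1$ and to solve the resulting system $(\mathcal{R}^{\text{\textsc{(a)}}}_1)$ explicitly; the three announced quantities will fall out of the two matching conditions at $\sigma=\rho=0$.

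First I would evaluate the source terms in \eqref{eq:Sq_A}. From the expressions \eqref{eq:A012-}--\eqref{eq:A012+} we have $\bA^-_1 = 0$ and $\bA^+_1 = 0$, so $S_1^\vp = -\bA^-_1\,\vp_0 = 0$; moreover the sum defining $S_1^\psi$ runs over the empty set $\{2,\dots,1\}$, hence $S_1^\psi = 0$ as well. The system $(\mathcal{R}^{\text{\textsc{(a)}}}_1)$ thus reduces to two uncoupled linear ODEs on $\R_-$ and $\R_+$, linked only through the matching relations \eqref{eq:sysQ_Ac}--\eqref{eq:sysQ_Ad}.

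Next I would solve the exterior equation. The ODE $-n_0^2\psi_1'' + (n_0^2 - 1)\psi_1 = 0$ on $\R_+$ has a one-dimensional space of Schwartz-class solutions, namely $\psi_1(\rho) = A\exp(-\rho\sqrt{1 - n_0^{-2}})$ for a single constant $A$. Since $\vp_0'(0) = (2\kb)^{\frac{1}{3}}\As'(\as_j)$, the Neumann matching $\psi_1'(0) = n_0^{p-1}\vp_0'(0)$ pins down $A = -n_0^p\,(2\kb)^{\frac{1}{3}}\As'(\as_j)/\sqrt{n_0^2-1}$, which is already the announced formula for $\psi_1$.

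For the interior equation I would use the ansatz prescribed by Proposition~\ref{pro:solQ_A}, namely $\vp_1(\sigma) = c_1\,\sigma\,\As(\as_j+(2\kb)^{\frac{1}{3}}\sigma) + c_0\,\As'(\as_j+(2\kb)^{\frac{1}{3}}\sigma)$. A direct computation using the Airy identity $\As''(y) = -y\As(y)$ and its derivative shows that $-\partial_\sigma^2 - (2\kb\sigma + \lambda_0)$ applied to $\As'(\as_j+(2\kb)^{\frac{1}{3}}\sigma)$ produces $(2\kb)^{\frac{2}{3}}\As(\as_j+(2\kb)^{\frac{1}{3}}\sigma)$, while applied to $\sigma\,\As(\as_j+(2\kb)^{\frac{1}{3}}\sigma)$ it produces $-2(2\kb)^{\frac{1}{3}}\As'(\as_j+(2\kb)^{\frac{1}{3}}\sigma)$. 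Requiring the ansatz to solve \eqref{eq:sysQ_Aa} with right-hand side $\lambda_1\vp_0 = \lambda_1\As$ therefore forces $c_1 = 0$ and $c_0 = \lambda_1/(2\kb)^{\frac{2}{3}}$. Finally, since $\As(\as_j) = 0$, the continuity condition $\vp_1(0) = \psi_1(0)$ reduces to $c_0\As'(\as_j) = A$; because $\As'(\as_j)\neq 0$, this determines $\lambda_1 = -2\kb\,n_0^p/\sqrt{n_0^2-1}$ and the expression for $\vp_1$ follows immediately. The only step that requires genuine care is the computation of the reduced operator acting on $\sigma\,\As$, since it is this $-2(2\kb)^{\frac{1}{3}}\As'$ term that forces $c_1=0$ and thereby confirms $P_1^\vp = 0$; everything else is direct bookkeeping.
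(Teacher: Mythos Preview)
Your proof is correct and follows exactly the computation the paper alludes to when it says ``Calculating for $q=1$ in Proposition~\ref{pro:solQ_A}''. The order of steps---solving the exterior ODE, fixing its constant via the Neumann condition \eqref{eq:sysQ_Ad}, then determining $\lambda_1$ from continuity \eqref{eq:sysQ_Ac}---matches the recursive procedure in the proof of Proposition~\ref{pro:solQ_A}, and your explicit verification that the $\sigma\As$ term must vanish is a useful spell-out of why $P_1^\vp=0$.
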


\begin{remark}\label{rem:Algo_A}
    From the proof of Proposition \ref{pro:solQ_A}, we see that, for each chosen $j$ and $q\ge1$, the four operators
    \[
        \begin{aligned}
            \{\lambda_0,\ldots,\lambda_{q-1},P^\psi_0,\ldots,P^\psi_{q-1}\} & \longmapsto E^\psi_q                         \\
            \{E^\psi_q,P^\vp_{q-1},Q^\vp_{q-1}\}                            & \longmapsto P^\psi_q                         \\
            \{\lambda_0,\ldots,\lambda_{q-1},P^\vp_0,\ldots,P^\vp_{q-1},Q^\vp_0,\ldots,Q^\vp_{q-1}\}
                                                                            & \longmapsto \{R^\vp_q, T^\vp_q\}             \\
            \{R^\vp_q,T^\vp_q,P^\psi_q\}                                    & \longmapsto \{ P^\vp_q, Q^\vp_q,\lambda_q \}
        \end{aligned}
    \]
    act between finite dimensional spaces and can be identified to matrices.
    They result into an algorithm that can be derived and implemented in a computer algebra system to obtain the expression of $\lambda_q, \vp_q, \psi_q$ for $q\ge 2$, see \cite[Annexe D]{MoitierPhD}.
    The coefficients of the polynomials $P_q^\vp, Q_q^\vp, P_q^\vp$ are \emph{rational functions} of the quantities $(2\kb)^{\frac{1}{3}}$, $\sqrt{n_0^2-1}$, $\as_j$, $\As'(\as_j)$, $n_0^p$, and $n_\ell$ for all $\ell \in \{0,\ldots,q\}$.
\end{remark}

\subsubsection{Convergence}\label{sec:ccls_A}
Choose $p\in\{\pm1\}$ and a natural integer $j$.
In a last stage, we have to prove that the formal series
\begin{equation}
    \label{eq:formals}
    \sum_{q\in\N} \lambda_q h^{\frac{q}{3}},\qquad
    \sum_{q\in\N} \vp_q h^{\frac{q}{3}},  \qquad\mbox{and}\qquad
    \sum_{q\in\N} \psi_q h^{\frac{q}{3}},
\end{equation}
obtained from Proposition \ref{pro:solQ_A} give rise to a family of resonance quasi-pairs in the sense of Definition \ref{def:quasi}.
Note that, by construction, the functions $\vp_q$ and $\psi_q$ are exponentially decreasing at infinity, thus belong to  $\Sc(\R_-)$ and $\Sc(\R_+)$, respectively.
Relying on Borel's theorem \cite[Thm.\ 1.2.6]{HorI} and its variant given in Lemma \ref{th:Borel} in Appendix, we obtain the existence of smooth functions having $(\lambda_q)_q$, $(\vp_q)_q$ and $(\psi_q)_q$ as Taylor terms at $0$.
Combined with Lemma \ref{lem:Taylor}, this yields the following results for the remainders of truncated series expansions of formal series \eqref{eq:formals}.

\begin{lemma}\label{lem:Borel_A}
    Let $(\lambda_q)_{q\in\N}$, $(\vp_q)_{q\in\N}$ and $(\psi_q)_{q\in\N}$ given by Proposition \ref{pro:solQ_A}.
    There exist smooth functions $\lu_{\pj}\in\Cc^\infty([0,1])$, $\Phi_{\pj}\in\Cc^\infty([0,1],\Sc(\R_-))$ and
    $\Psi_{\pj}\in\Cc^\infty([0,1],\Sc(\R_+))$ such that for all $(h,\sigma,\rho)\in [0,1]\times\R_-\times\R_+$ and for all integer $N\ge0$, we have the following finite expansions with remainders
    \begin{subequations}
        \begin{align}
            \label{eq:Bora}
            \lu_{\pj}(h^{\frac{1}{3}})         & =
            \sum_{q=0}^{N-1} h^{\frac{q}{3}}\, \lambda_q + h^{\frac{N}{3}} R_N^\lambda(h^{\frac{1}{3}}),
                                               &   & \mbox{with}\quad R_N^\lambda\in\Cc^\infty([0,1])        \\
            \label{eq:Borb}
            \Phi_{\pj}(h^{\frac{1}{3}};\sigma) & =
            \sum_{q=0}^{N-1} h^{\frac{q}{3}}\, \vp_q(\sigma) + h^{\frac{N}{3}} R_N^\vp(h^{\frac{1}{3}};\sigma),
                                               &   & \mbox{with}\quad R_N^\vp\in\Cc^\infty([0,1],\Sc(\R_-))  \\
            \label{eq:Borc}
            \Psi_{\pj}(h^{\frac{1}{3}};\rho)   & =
            \sum_{q=0}^{N-1} h^{\frac{q}{3}}\, \psi_q(\rho) + h^{\frac{N}{3}} R_N^\psi(h^{\frac{1}{3}};\rho)
                                               &   & \mbox{with}\quad R_N^\psi\in\Cc^\infty([0,1],\Sc(\R_+))
        \end{align}
    \end{subequations}
\end{lemma}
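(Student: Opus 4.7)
The plan is to reduce the statement to two off-the-shelf ingredients advertised just before the lemma: Borel's theorem in its scalar form \cite[Thm.~1.2.6]{HorI} for the sequence $(\lambda_q)_q$, and the variant for sequences with values in Fréchet spaces (Lemma~\ref{th:Borel}) for the Schwartz-valued sequences $(\vp_q)_q$ and $(\psi_q)_q$. Once smooth realisations $\lu_{\pj}$, $\Phi_{\pj}$, $\Psi_{\pj}$ of these Taylor data are in hand, the remainders \eqref{eq:Bora}--\eqref{eq:Borc} are produced by a single, uniform application of the Taylor-with-remainder result \ref{lem:Taylor}, used once with scalar target and twice with Schwartz target.

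First, I would set the change of variable $t=h^{1/3}\in[0,1]$ and forget the radical dependence: it suffices to build smooth functions of $t$. By Proposition~\ref{pro:solQ_A}, $\vp_q$ has the explicit form $P_q^\vp \As(\cdot) + Q_q^\vp \As'(\cdot)$ where $P_q^\vp\in\P^q_{\!*}$, $Q_q^\vp\in\P^{q-1}$ and the flipped Airy function is super-exponentially decaying on $\R_-$; similarly $\psi_q(\rho)=P_q^\psi(\rho)\exp(-\rho\sqrt{1-\no_0^{-2}})$ with $P_q^\psi\in\P^{q-1}$. In particular, for every seminorm $|\cdot|_s$ of the Schwartz space, the quantities $|\vp_q|_s$ and $|\psi_q|_s$ are finite. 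Lemma~\ref{th:Borel} then produces $\Phi_{\pj}\in\Cc^\infty([0,1],\Sc(\R_-))$ and $\Psi_{\pj}\in\Cc^\infty([0,1],\Sc(\R_+))$ whose $q$-th derivatives at $t=0$ equal $q!\,\vp_q$ and $q!\,\psi_q$ respectively; the standard Borel construction based on a cut-off sum $\sum_q \chi(t/\varepsilon_q)\,t^q\vp_q/q!$ with $\varepsilon_q\downarrow 0$ chosen fast enough to control every Schwartz seminorm of every derivative in $t$ delivers such a function. The scalar Borel theorem applied to $(\lambda_q)_q$ gives $\lu_{\pj}\in\Cc^\infty([0,1])$ with Taylor series $\sum_q \lambda_q\, t^q$.

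Finally, for any smooth $F$-valued function $f$ on $[0,1]$ (with $F=\R$ or a Fréchet space of Schwartz type), Taylor's formula with integral remainder gives
\[
    f(t) = \sum_{q=0}^{N-1} \frac{t^q}{q!}\, f^{(q)}(0) + t^N R_N(t),
    \qquad R_N(t) = \frac{1}{(N-1)!}\int_0^1 (1-s)^{N-1} f^{(N)}(st)\dd s,
\]
and $R_N\in\Cc^\infty([0,1],F)$ by differentiation under the integral; this is precisely the content of Lemma~\ref{lem:Taylor}. Applying this to $\lu_{\pj}$, $\Phi_{\pj}(\cdot;\sigma)$ and $\Psi_{\pj}(\cdot;\rho)$ and reinserting $t=h^{1/3}$ yields the three identities \eqref{eq:Bora}--\eqref{eq:Borc} with remainders of the required regularity.

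The only real subtlety is the Schwartz-valued Borel summation, since $\Sc(\R_\pm)$ is a Fréchet (not Banach) space and the cut-off radii $\varepsilon_q$ must be chosen to dominate a countable family of seminorms simultaneously. This is handled exactly as in the scalar proof by a diagonal choice; since the lemma is invoked as an already-proven ingredient, I would simply cite it and check that our sequences $(\vp_q)_q,(\psi_q)_q$ satisfy its hypotheses thanks to the explicit polynomial-times-exponential / polynomial-times-Airy form coming from Proposition~\ref{pro:solQ_A}.
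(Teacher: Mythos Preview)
Your proposal is correct and follows essentially the same approach as the paper: the paper's proof is the short paragraph immediately preceding the lemma, which invokes the scalar Borel theorem for $(\lambda_q)_q$, the Schwartz-valued variant (Lemma~\ref{th:Borel}) for $(\vp_q)_q$ and $(\psi_q)_q$, and then Lemma~\ref{lem:Taylor} for the remainder expansions. You have simply fleshed out the details, including the harmless rescaling by $q!$ needed to match the convention of Lemma~\ref{th:Borel} with the desired expansion $\sum t^q\vp_q$.
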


Note that the remainders at rank $N=0$ simply coincide with the original function.

\begin{definition}\label{def:uu_A}
    Choose a real number $\delta\in (0,\frac{1}{2})$ and a smooth cut-off function $\chi$, $0\le \chi\le1$, such that $\chi(\xi) = 1$ for $|\xi| \le \delta$ and $\chi(\xi) = 0$ for $|\xi| \ge 2\delta$.
    We define for any integer $m \ge 1$ with the notation $h=m^{-1}$, the quantities:
    \begin{align*}
        \ku_{\pj}(m)          & = \frac{m}{R\no_0}
        \sqrt{1+h^{\frac{2}{3}}\ \lu_{\pj}(h^{\frac{1}{3}})},                                                                                                  \\
        \vu_{\pj}(m;\xi)      & = \chi(\xi)\left\lbrace\begin{array}{ll}
            \Phi_{\pj}(h^{\frac{1}{3}};h^{-\frac{2}{3}}\xi) , & \xi \le 0 \\
            \Psi_{\pj}(h^{\frac{1}{3}};h^{-1}\xi),            & \xi > 0
        \end{array}\right. &                                              & \xi\in (-1,+\infty) \\
        \uu_{\pj}(m;r,\theta) & = \vu_{\pj}\lp m;\tfrac{r}{R}-1\rp\, \e^{\ic m \theta}
                              &                                                           & (r,\theta) \in (0,+\infty)\times\R / 2\pi\Z.
    \end{align*}
\end{definition}

We now show that the sequence $(\ku_{\pj}(m),\uu_{\pj}(m))_{m\ge 1}$  is a family of ``almost'' quasi-pairs in the sense of the following lemma. A further correction will have to be made to transform this family into a true family of resonance quasi-pairs in the sense of Definition \ref{def:quasi}.

\begin{lemma}\label{lem:QM_A}
    The sequence $(\ku_{\pj}(m),\uu_{\pj}(m))_{m\ge 1}$ defined above has the following properties:
    \begin{itemize}
        \item[(i)] For all $m$, the function $\uu_{\pj}(m)$ is supported in an annulus around the interface $r=R$
              \begin{align*}
                  \supp(\uu_{\pj}(m)) \subset B(0,R(1+2\delta)) \setminus B(0,R(1-2\delta)) .
              \end{align*}
        \item[(ii)] For all $m$, the function $\uu_{\pj}(m)$ is piece-wise smooth up to the interface $r=R$:
              \begin{align*}
                  \uu_{\pj}(m)\on{\wb{\Omega}} \in \Cc^\infty\lp\wb{\Omega}\rp
                  \quad\mbox{and}\quad
                  \uu_{\pj}(m)\on{\R^2\setminus\Omega} \in \Cc^\infty\lp\R^2\setminus\Omega\rp .
              \end{align*}
        \item[(iii)] We have the following estimates for the jumps across the interface when $m \to +\infty$
              \begin{align*}
                  \lc \uu_{\pj}(m) \rc_{\partial\Omega} = \Oo\lp m^{-\infty}\rp
                  \quad\text{and}\quad
                  \lc n^{p-1}\, \partial_\nu \uu_{\pj}(m) \rc_{\partial\Omega} = \Oo\lp m^{-\infty}\rp .
              \end{align*}
        \item[(iv)] Defining the residuals
              \begin{equation}\label{eq:1723}
                  \resid(m) \coloneqq \div\lp n^{p-1}\nabla \uu_{\pj}(m)\rp + \ku_{\pj}^2(m)\, n^{p+1}\, \uu_{\pj}(m)
              \end{equation}
              we have the following estimates in $\Omega$ and $\R^2\setminus\overline\Omega$ when $m \to +\infty$
              \begin{align*}
                  \frac{\lo\resid(m) \ro_{\Lr^2(\Omega)}
                      + \lo\resid(m) \ro_{\Lr^2(\R^2\setminus\overline\Omega)} }
                  {\lo\uu_{\pj}(m)\ro_{\Lr^2(\R^2)}}
                  = \Oo\lp m^{-\infty}\rp .
              \end{align*}
    \end{itemize}
\end{lemma}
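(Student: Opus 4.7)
\medskip\noindent
\textbf{Proof plan for Lemma \ref{lem:QM_A}.}

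Properties (i) and (ii) follow immediately from Definition \ref{def:uu_A}. Indeed, the support of $\vu_{\pj}(m;\cdot)$ is contained in $\{|\xi|\le2\delta\}$ since $\chi$ has support in $[-2\delta,2\delta]$, which translates to the claimed annulus in the original radial variable $r=R(1+\xi)$; and for each fixed $m$ the piecewise expressions
$\chi(\xi)\Phi_{\pj}(h^{\frac{1}{3}};h^{-\frac{2}{3}}\xi)$ for $\xi\le0$ and $\chi(\xi)\Psi_{\pj}(h^{\frac{1}{3}};h^{-1}\xi)$ for $\xi\ge0$ are smooth compositions of the smooth (Schwartz-valued) functions from Lemma \ref{lem:Borel_A} with smooth affine scalings, hence smooth up to $\xi=0$ on each side.

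For (iii), I would use the matching conditions built into the formal construction. From Proposition \ref{pro:solQ_A} (equations \eqref{eq:sysQ_Ac}--\eqref{eq:sysQ_Ad}) we have $\vp_q(0)=\psi_q(0)$ for every $q\ge0$ and $\psi_q'(0)=\no_0^{p-1}\vp'_{q-1}(0)$ for every $q\ge1$ (with $\psi_0'(0)=0$). Since $\chi\equiv1$ in a neighborhood of $\xi=0$, the jump of $\uu_{\pj}(m)$ at $r=R$ equals $\Psi_{\pj}(h^{\frac{1}{3}};0)-\Phi_{\pj}(h^{\frac{1}{3}};0)$. Truncating the Taylor expansions \eqref{eq:Borb}--\eqref{eq:Borc} at an arbitrary order $N$ and invoking the matching of the $\psi_q(0)$ and $\vp_q(0)$, the difference collapses to $h^{\frac{N}{3}}(R_N^\psi-R_N^\vp)\big|_0$, which is $\Oo(h^{\frac{N}{3}})$. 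Since $N$ is arbitrary, this is $\Oo(m^{-\infty})$. For the flux jump $\lc n^{p-1}\partial_\nu\uu_{\pj}(m)\rc_{\partial\Omega}$ the same argument applies after unscaling: $\partial_r$ gives factors $h^{-\frac{2}{3}}\partial_\sigma$ and $h^{-1}\partial_\rho$ on the two sides, producing the combination $h^{-1}\bigl(\no_0^{p-1}h^{\frac{1}{3}}\partial_\sigma\Phi_{\pj}(h^{\frac{1}{3}};0)-\partial_\rho\Psi_{\pj}(h^{\frac{1}{3}};0)\bigr)$, which, after the matching $\psi_q'(0)=\no_0^{p-1}\vp'_{q-1}(0)$ and reindexation, also reduces to an arbitrarily small power of~$h$.

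For (iv), I would unfold the 2D Helmholtz operator on $\uu_{\pj}(m)=\vu_{\pj}(m)\e^{\ic m\theta}$: the angular factor $e^{\ic m\theta}$ contributes $-m^2$, and after multiplication by $h^2/n^{p+1}$, the residual equation becomes exactly the scaled system \eqref{eq:schr_lam} in the variables $\sigma$ and $\rho$ of \eqref{eq:scaA}. By Proposition \ref{pro:solQ_A} the associated formal series of equations \eqref{eq:sysQ_A} is satisfied termwise, hence plugging the $N$-truncated expansions from Lemma \ref{lem:Borel_A} into \eqref{eq:schr_lam} leaves a remainder that is $\Oo(h^{\frac{N}{3}})$ pointwise and in any Schwartz seminorm, on each side of $\xi=0$. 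Unscaling back to $r$ introduces only bounded factors. Finally, the cut-off $\chi$ contributes a commutator $[\chi,\Delta]$-type term supported in $\{\delta\le|\xi|\le2\delta\}$; on this set the arguments $h^{-\frac{2}{3}}\xi$ and $h^{-1}\xi$ diverge as $h\to0$, and since $\Phi_{\pj}(h^{\frac{1}{3}};\cdot)$ and $\Psi_{\pj}(h^{\frac{1}{3}};\cdot)$ together with their derivatives are Schwartz uniformly in $h\in[0,1]$, this cut-off contribution is $\Oo(m^{-\infty})$. For the normalization, the leading quasi-mode has $\|\vu_{\pj}(m)\|_{\Lr^2(r\dd r)}\sim h^{\frac{1}{3}}\|\vp_0\|_{\Lr^2(\R_-)}$, so the denominator $\|\uu_{\pj}(m)\|_{\Lr^2(\R^2)}$ is bounded below by $c\,m^{-\frac{1}{3}}$, which is swallowed by the super-algebraically small numerator.

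The main obstacle is the bookkeeping in step (iv): carefully checking that unscaling the system \eqref{eq:schr_lam} and the truncation remainder \eqref{eq:Bora}--\eqref{eq:Borc} does not spoil the $\Oo(h^{N/3})$ estimate, and controlling the cut-off commutator by the Schwartz tails at scales $|\sigma|\sim h^{-\frac{2}{3}}\delta$ and $|\rho|\sim h^{-1}\delta$. A small additional subtlety lies in translating the estimate from the radial ODE in the measure $r\dd r$ to the planar $\Lr^2(\R^2)$ norm, but this only involves a bounded factor near $r=R$.
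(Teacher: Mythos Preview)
Your proposal is correct and follows essentially the same approach as the paper: (i)--(ii) are immediate from the definition, (iii) uses the built-in matching conditions \eqref{eq:sysQ_Ac}--\eqref{eq:sysQ_Ad} together with the Borel remainders \eqref{eq:Borb}--\eqref{eq:Borc}, and (iv) splits the residual into a main part controlled by the termwise identities \eqref{eq:formalq} and a commutator part killed by Schwartz decay at scales $|\sigma|\gtrsim h^{-2/3}$, $|\rho|\gtrsim h^{-1}$ (the paper packages this via the weighted norms $\|\cdot\|_{\Lr^2[t]}$ and Lemma~\ref{lem:dec}). One small wording caveat: ``unscaling back to $r$ introduces only bounded factors'' is not literally true---undoing the $h^2$ rescaling produces an $m^2$ prefactor on the residual (cf.\ \eqref{eq:res-}--\eqref{eq:res+})---but since your Schr\"odinger-side remainder is $\Oo(h^{N/3})$ for arbitrary $N$, this polynomial loss is harmless, as you implicitly acknowledge in your ``bookkeeping'' remark.
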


\begin{proof}
    (i) and (ii) are obvious consequences of the definition of $\uu_{\pj}(m)$.
    \medskip\par\noindent
    (iii) From Definition \ref{def:uu_A}, we have, for all $\theta\in\R / 2\pi\Z$, and with $h=\frac{1}{m}$
    \begin{align*}
        \lc \uu_{\pj}(m)\rc_{\partial\Omega}(\theta) = \lc \vu_{\pj}(m;\xi)\rc_{\{\xi=0\}}\e^{\ic m\theta}
        = \lp\Psi_{\pj}(h^{\frac{1}{3}};0) - \Phi_{\pj}(h^{\frac{1}{3}};0)\rp\e^{\ic m\theta}.
    \end{align*}
    Let $N\ge1$. From \eqref{eq:Borb}--\eqref{eq:Borc}, we deduce that
    \begin{equation}\label{eq:1612}
        \Psi_{\pj}(h^{\frac{1}{3}};0) - \Phi_{\pj}(h^{\frac{1}{3}};0) =
        \sum_{q=0}^{N-1} \big( \psi_q(0) - \vp_q(0) \big) h^{\frac{q}{3}} +
        h^{\frac{N}{3}} \lp R_N^\psi(h^{\frac{1}{3}};0) - R_N^\vp(h^{\frac{1}{3}};0)\rp .
    \end{equation}
    Since by construction $\psi_q(0) - \vp_q(0) = 0$ for any $q\in\N$, {\em cf.} \eqref{eq:sysQ_A}, we deduce from \eqref{eq:1612} that
    $\Psi_{\pj}(h^{\frac{1}{3}};0) - \Phi_{\pj}(h^{\frac{1}{3}};0)= \Oc(h^{\frac{N}{3}})$.
    This statement is true for any $N\ge1$, hence $\lc \uu_{\pj}(m) \rc_{\partial\Omega} =\Oc(m^{-\infty})$.

    We proceed in a similar way for the second jump condition. We have
    \begin{align*}
        \lc n^{p-1} \partial_\nu \uu_{\pj}(m)\rc_{\partial\Omega}(\theta) =
        R^{-1}\lc \no^{p-1} \partial_\xi v_{\pj}(m,\xi)\rc_{\{\xi=0\}}\e^{\ic m\theta}
    \end{align*}
    and
    \begin{align*}
        \lc \no^{p-1} \partial_\xi v_{\pj}(m,\xi)\rc_{\{\xi= 0\}}
         & = h^{-1} \lp\partial_\rho \Psi_{\pj}(h^{\frac{1}{3}};0)
        - \no_0^{p-1}h^{\frac{1}{3}}\partial_\sigma \Phi_{\pj}(h^{\frac{1}{3}};0)\rp
        \\
         & = \sum_{q=1}^{N-1}
        \big( \psi'_q(0) - \no_0^{p-1}\vp'_{q-1}(0) \big) h^{\frac{q}{3}-1} \\
         & + \
        h^{\frac{N}{3}-1} \lp - \no_0^{p-1}\vp'_{N-1}(0)
        + \partial_\rho R_N^\psi(h^{\frac{1}{3}};0)
        - \no_0^{p-1} h^{\frac{1}{3}}\partial_\sigma R_N^\vp(h^{\frac{1}{3}};0) \rp.
    \end{align*}
    From the jump relation \eqref{eq:sysQ_Ad} $\psi'_q(0) - \no_0^{p-1}\vp'_{q-1}(0)$  for any $q\geq 1$,  we deduce that the above quantity is a $\Oc(h^{\frac{N}{3}-1})$ for any $N\ge1$, hence a $\Oc(m^{-\infty})$.

    \bigskip\noindent
    (iv) In order to prove the estimates on the residuals, it is enough to prove that the $\Lr^2$ norm of the residual $\resid(m)$ on $\Omega$ and on $\R^2\setminus\overline\Omega$ is $\Oc(m^{-\infty})$ and that
    \begin{align}
        \label{eq:norm_A}
        \lo\uu_{\pj}(m)\ro_{\Lr^2(\R^2)} =
        \gamma\, m^{-\frac{1}{3}} + \Oo( m^{-\frac{2}{3}})
    \end{align}
    for some positive constant $\gamma$.
    Given a parameter $t>0$, we introduce the following weighted $\Lr^2$ (semi) norm on any interval $I\subset\R$:
    \begin{align*}
        \lo w \ro_{\Lr^2[t](I)}^2 =
        \int_{I\,\cap\,(-2\delta/t\ee,\ee\infty)} |w(\tau)|^2 \,t (1+\tau t) \dd\tau.
    \end{align*}

    \medskip\noindent
    Let us first prove \eqref{eq:norm_A}.
    We readily obtain from Definition \ref{def:uu_A}, having set $L \coloneqq 2\pi R$,
    \begin{equation}
        \label{eq:L2}
        \lo\uu_{\pj}(m)\ro_{\Lr^2(\R^2)}^2 =
        L \lp \lo \chi(\cdot\ee h^{\frac{2}{3}})\Phi_{\pj}(h^{\frac{1}{3}};\cdot)
        \ro_{\Lr^2[h^{\frac{2}{3}}](\R_-)}^2 +
        \lo \chi(\cdot\ee h)\Psi_{\pj}(h^{\frac{1}{3}};\cdot) \ro_{\Lr^2[h](\R_+)}^2\rp.
    \end{equation}
    From \eqref{eq:Borb} and \eqref{eq:Borc} considered with $N=1$,
    we have
    \begin{align*}
        \Phi_{\pj}(h^{\frac{1}{3}};\sigma) & = \As(\as_j+(2\kb)^{\frac{1}{3}}\sigma)
        + h^{\frac{1}{3}} R_1^\vp(h^{\frac{1}{3}};\sigma)                                     \\
        \Psi_{\pj}(h^{\frac{1}{3}};\rho)   & = h^{\frac{1}{3}} R_1^\psi(h^{\frac{1}{3}};\rho)
    \end{align*}
    where $R_1^\vp \in \Cc^\infty([0,1],\Sc(\R_-))$ and $R_1^\psi \in \Cc^\infty([0,1],\Sc(\R_+))$. We deduce that
    \[
        \Big| \lo\uu_{\pj}(m)\ro_{\Lr^2(\R^2)} - \sqrt{L}\,
        \big\| \chi(\cdot\ee h^{\frac{2}{3}})\As(\as_j+(2\kb)^{\frac{1}{3}}\cdot) \big\|_{\Lr^2[h^{\frac{2}{3}}](\R_-)}
        \Big| \le C_1h^{\frac{1}{3}}(h^{\frac{1}{3}} + h^{\frac{1}{2}})
        \le C'_1h^{\frac{2}{3}}
    \]
    for some constants $C_1$ and $C'_1$.
    We now have to estimate the quantity
    \[
        \big\| \chi(\cdot\ee h^{\frac{2}{3}})
        \As(\as_j+(2\kb)^{\frac{1}{3}}\cdot) \big\|_{\Lr^2[h^{\frac{2}{3}}](\R_-)}^2 =
        \int_{\R_-} \lv \chi(\sigma h^{\frac{2}{3}})\As(\as_j+(2\kb)^{\frac{1}{3}}\sigma) |\rv^2\,
        h^{\frac{2}{3}} (1+\sigma h^{\frac{2}{3}}) \dd\sigma\,.
    \]
    We split the integral according to $I_1-I_2-I_3$ with the three positive integrals
    \begin{align*}
        I_1 & = h^{\frac{2}{3}} \int_{\R_-}
        \lv\As(\as_j+(2\kb)^{\frac{1}{3}}\sigma)\rv^2\,\dd\sigma
        \\
        I_2 & = h^{\frac{2}{3}} \int_{\R_-} \lp 1-\chi\big(\sigma h^{\frac{2}{3}}\big)^2\rp
        \lv\As(\as_j+(2\kb)^{\frac{1}{3}}\sigma) \rv^2\,\dd\sigma
        \\
        I_3 & = h^{\frac{4}{3}} \int_{\R_-}
        \lv \chi(\sigma h^{\frac{2}{3}})\As(\as_j+(2\kb)^{\frac{1}{3}}\sigma) \rv^2\,
        |\sigma| \dd\sigma\,.
    \end{align*}
    Since a primitive function of $\As^2$ is $x \mapsto \As'(x)^2 + x\As(x)^2$, we find that
    \[
        I_1 = h^{\frac{2}{3}} \,(2\kb)^{-\frac{1}{3}}\, \As'(\as_j)^2.
    \]
    Moreover, since $\As$ is exponentially decreasing over $\R_-$, we find that $I_3\le C_3 h^{\frac{4}{3}}$.
    Finally, Lemma \ref{lem:dec} in Appendix shows that $I_2=\Oc(h^{\infty})$.

    \medskip\noindent
    Let us now show that the $\Lr^2$ norm on $\Omega$ and on $\R^2\setminus\overline\Omega$  of the residual $\resid(m) $ defined in \eqref{eq:1723}  is $\Oc(m^{-\infty})$.
    Revisiting all variable changes and problem reformulations, we find
    \begin{subequations}
        \begin{align}
             & \big\lVert \resid(m)  \big\rVert_{\Lr^2(\Omega)}
            =
            m^2\sqrt{L}\, \lo h^{\frac{2}{3}} \lp -\Lc_h^- + V_h^- - \lu(h^{\frac{1}{3}}) \rp
            \lp \chi(\cdot\ee h^{\frac{2}{3}})\Phi_{\pj}(h^{\frac{1}{3}};\cdot) \rp \ro_{\Lr^2[h^{\frac{2}{3}}](\R_-)}  \label{eq:res-}
            \\
             & \big\lVert \resid(m) \big\rVert_{\Lr^2(\R^2\setminus\overline\Omega)}
            =
            m^2\sqrt{L}\, \lo \lp -\Lc_h^+ + V_h^+ - h^{\frac{2}{3}} \lu(h^{\frac{1}{3}}) \rp
            \lp \chi(\cdot\ee h)\Psi_{\pj}(h^{\frac{1}{3}};\cdot) \rp \ro_{\Lr^2[h](\R_+)} \label{eq:res+}
        \end{align}
    \end{subequations}
    Introducing the commutators $\lc\Lc_h^-,\chi(\cdot\ee h^{\frac{2}{3}})\rc$ and $\lc\Lc_h^+,\chi(\cdot\ee h)\rc$ of the differential operators $\Lc_h^\pm$ with scaled cut-off functions, we deduce from \eqref{eq:res-}--\eqref{eq:res+} the inequalities
    \begin{subequations}
        \begin{align}
            \label{eq:res-b}
            \big\lVert \resid(m)  \big\rVert_{\Lr^2(\Omega)}
             & \le
            \sqrt{L} \; h^{-\frac{4}{3}} (\Nc_\vp + \Nc'_\vp)
            \\
            \label{eq:res+b}
            \big\lVert \resid(m) \big\rVert_{\Lr^2(\R^2\setminus\overline\Omega)}
             & \le
            \sqrt{L}\; h^{-2}  (\Nc_\psi + \Nc'_\psi)
        \end{align}
    \end{subequations}
    where
    \begin{subequations}\label{eq:1519}
        \begin{align}
            \Nc_\vp   & = \lo\chi(h^{\frac{2}{3}}\ee\cdot) \lp -\Lc_h^-
            + V_h^- - \lu_{\pj} \rp \Phi_{\pj}(h^{\frac{1}{3}};\cdot)\ro_{\Lr^2[h^{\frac{2}{3}}](\R_-)}
            \label{eq:1519a}                                                    \\
            \Nc_\vp'  & = \lo\lc \Lc_h^-,\chi(h^{\frac{2}{3}}\ee\cdot)\rc
            \Phi_{\pj}(h^{\frac{1}{3}};\cdot)\ro_{\Lr^2[h^{\frac{2}{3}}](\R_-)} \\
            \Nc_\psi  & = \lo\chi(h\ee\cdot) \lp -\Lc_h^+
            + V_h^+ - h^{\frac{2}{3}}\lu_{\pj} \rp \Psi_{\pj}(h^{\frac{1}{3}};\cdot) \ro_{\Lr^2[h](\R_+)}
            \label{eq:1519c}                                                    \\
            \Nc_\psi' & = \lo\lc \Lc_h^+,\chi(h\ee\cdot)\rc
            \Psi_{\pj}(h^{\frac{1}{3}};\cdot)\ro_{\Lr^2[h](\R_+)}. \label{eq:1519d}
        \end{align}
    \end{subequations}
    Both operators $\chi(h^{\frac{2}{3}}\ee\cdot) \lp -\Lc_h^- + V_h^-\rp$ and $\chi(h\ee\cdot) \lp -\Lc_h^+ + V_h^+\rp$ are differential operators in the form $a^\pm_2\partial^2+a^\pm_1\partial+a^\pm_0$ with coefficients $a^\pm_i(h^{\frac{1}{3}},\cdot)$ belonging to $\Cc^\infty_{\sf bounded}([0,1]\times\R_\pm)$, see~\hbox {\eqref{eq:Lh}--\eqref{eq:Vh}.} Hence, the formal series \eqref{eq:Apm} gives rise to the following sequences of finite expansions with remainders: For any $N\ge1$,
    \begin{equation}
        \label{eq:Apmf}
        -\Lc_h^\pm + V_h^\pm = \sum_{q=0}^{N-1} h^{\frac{q}{3}}\, \bA^\pm_q
        + h^{\frac{N}{3}} \bR^\pm_N(h^{\frac{1}{3}};\cdot)
    \end{equation}
    where the remainders $\bR^\pm_N$ are differential operators of order $2$ such that $\chi(h^{\frac{2}{3}}\ee\cdot) \bR^-_N(h^{\frac{1}{3}};\cdot)$ and $\chi(h\ee\cdot) \bR^+_N(h^{\frac{1}{3}};\cdot)$ have coefficients belonging to $\Cc^\infty_{\sf bounded}([0,1]\times\R_\pm)$.
    It follows that for any given $N,N'\in \N$
    \begin{align}
         & \lp -\Lc_h^- + V_h^- - \lu_{\pj} \rp \Phi_{\pj}(h^{\frac{1}{3}};\cdot)  \nonumber \\
         & = \lp \sum_{q=0}^{N-1} h^{\frac{q}{3}}\, (\bA^-_q - \lambda_q)
        + h^{\frac{N}{3}} \lp \bR^-_N(h^{\frac{1}{3}};\cdot) - R^\lambda_N(h^{\frac{1}{3}}) \rp \rp
        \lp \sum_{q=0}^{N'-1} h^{\frac{q}{3}}\, \vp_q + h^{\frac{N'}{3}} R^\vp_{N'}(h^{\frac{1}{3}};\cdot) \rp
        \nonumber                                                                            \\
         & =
        h^{\frac{N}{3}} \lp \sum_{q=0}^{N-1} (\bA^-_q - \lambda_q) R^\vp_{N-q}(h^{\frac{1}{3}};\cdot)
        +  \lp \bR^-_N(h^{\frac{1}{3}};\cdot) - R^\lambda_N(h^{\frac{1}{3}}) \rp R^\vp_{0}(h^{\frac{1}{3}};\cdot)\rp
        \label{eq:1749}
    \end{align}
    where the second equality is obtained from the relation
    $
        \sum_{\ell=0}^q  (\bA^-_\ell - \lambda_\ell) \vp_{q-\ell} = 0
    $
    for all $q\in\N$ deduced from  \eqref{eq:formalq}.
    In a similar way, we show that
    \begin{align}
        \big( -\Lc_h^+ + V_h^+ - h^{\frac{2}{3}}\lu_{\pj} \big) \Psi_{\pj}(h^{\frac{1}{3}};\cdot)
        =
        h^{\frac{N}{3}} \Bigg( & \sum_{q=0}^{N-1} (\bA^+_q - \lambda_{q-2})
        R^\psi_{N-q}(h^{\frac{1}{3}};\cdot)
        \nonumber                                                           \\ &
        + \lp \bR^+_N(h^{\frac{1}{3}};\cdot) - R^\lambda_{N-2}(h^{\frac{1}{3}}) \rp R^\psi_{0}(h^{\frac{1}{3}};\cdot)\Bigg)
        \label{eq:1750}
    \end{align}
    where $\lambda_{-1} = \lambda_{-2} = 0$.
    We deduce from from \eqref{eq:1749} and \eqref{eq:1750} that
    \[
        \Nc_\vp + \Nc_\psi \le h^{\frac{N}{3}} \sum_{q=0}^N
        \lp \lo F^\vp_q(h^{\frac{1}{3}};\cdot) \ro_{\Lr^2[h^{\frac{2}{3}}](\R_-)}
        + \lo F^\psi_q(h^{\frac{1}{3}};\cdot) \ro_{\Lr^2[h](\R_+)} \rp
    \]
    with  $F^\vp_q\in\Cc^\infty([0,1],\Sc(\R_-))$ and
    $F^\psi_q\in\Cc^\infty([0,1],\Sc(\R_+))$,
    and finally that
    \begin{equation}\label{eq:1756}
        \Nc_\vp + \Nc_\psi \le C_N h^{\frac{N}{3}}
    \end{equation}
    for some constant $C_N$ independent of $h$.

    Let us consider now the two commutators norms $\Nc'_\vp$ and $\Nc'_\psi$.
    We observe that the coefficients of the operators $\big[ \Lc_h^-,\chi(h^{\frac{2}{3}}\ee\cdot)\big]$ and $\lc \Lc_h^+,\chi(h\ee\cdot)\rc$ are zero in the regions defined by $-\delta h^{-\frac{2}{3}} \le \sigma \le 0$ and $0 \le \rho \le \delta h^{-1}$, respectively. This allows us to deduce that
    \[
        \Nc'_\vp + \Nc'_\psi \le
        \lp \lo G^\vp(h^{\frac{1}{3}};\cdot) \ro_{\Lr^2[h^{\frac{2}{3}}](-\infty,-\delta h^{-\frac{2}{3}})}
        + \lo G^\psi(h^{\frac{1}{3}};\cdot) \ro_{\Lr^2[h](\delta h^{-1},+\infty)} \rp
    \]
    with functions $G^\vp\in\Cc^\infty([0,1],\Sc(\R_-))$ and
    $G^\psi\in\Cc^\infty([0,1],\Sc(\R_+))$.  Lemma \ref{lem:dec} shows that $ \Nc'_\vp + \Nc'_\psi  = \Oc(h^\infty)$. Combined with \eqref{eq:1756} true for all $N\in\N$, this complete
    the proof of part (iv) of the Lemma.
\end{proof}

\subsubsection{Proof of Theorem \ref{th:(a)}}\label{sec:ProofA}
Choose $p\in\{\pm 1\}$ and $j\in\N$.
In order to meet all the requirements listed in Definition \ref{def:quasi}, we modify the sequence of functions $(\uu_{\pj}(m))_{m\ge 1}$ constructed in Definition \ref{def:uu_A}, so that each such function satisfies the jump conditions in \eqref{eq:dom}. To lift the jumps of $\uu_{\pj}(m)$, we define the ``radial'' function
\begin{align}\label{eq:1555}
    v^*(m;\xi) = \tfrac{1}{2}\chi(\xi) \left\lbrace\begin{array}{ll}
        -\lc\vu_{\pj}(m)\rc_{\xi=0}
        - n_0^{1-p}\xi\lc n^{p-1}\, \partial_r \vu_{\pj}(m) \rc_{\xi=0}                                & \xi\le 0 \\[1ex]
        \phantom{-}\lc\vu_{\pj}(m)\rc_{\xi=0} + \xi\lc n^{p-1}\, \partial_\nu \vu_{\pj}(m) \rc_{\xi=0} & \xi > 0
    \end{array}\right.
\end{align}
where $\chi$ can be taken as the same cut-off function used in Definition \ref{def:uu_A}.
We set
\[
    \uu_{p;j}(m;r,\theta) \coloneqq \lp\vu\lp m;\tfrac{r}{R}-1\rp-v^*\lp m;\tfrac{r}{R}-1\rp\rp \ \e^{\ic m \theta}
\]
and $\ku_{p;j}(m) \coloneqq \ku(m)$.
Using \eqref{eq:norm_A}, we can normalize the function $\uu_{p;j}(m)$ in the $\Lr^2$ norm.
Relying on Lemmas \ref{lem:Borel_A} and \ref{lem:QM_A} it is easy to check that the family $(\Kg_{p;j},\Ug_{p;j})$ where $\Kg_{p;j} = (\ku_{p;j}(m))_{m\ge 1}$ and $\Ug_{p;j} = (\uu_{p;j}(m))_{m\ge 1}$ satisfies the four conditions of Definition \ref{def:quasi}.

\section{Case \textsc{(b)} \ Half-quadratic potential well}
\label{sec:(b)}
Our concern is now the case when $\kb=0$ and $\mb >0$.
According to the same plan as before, we start with the complete description of the quasi-pairs that are constructed in the rest of the section.
The corresponding statement has to be combined with Theorem \ref{th:QuasiRes} to imply Theorem \ref{th:B}.
As mentioned earlier,  case  \textsc{(a)} and case \textsc{(b)} share general concepts in the way the asymptotic expansion of quasi-pairs is obtained.
Hence,  we do not provide a comprehensive proof of Theorem \ref{th:(b)}
but instead highlight the differences with the proof of  Theorem \ref{th:(a)}.

\subsection{Statements}
\label{ss:statB}

\begin{theorem}
    \label{th:(b)}
    Choose $p\in\{\pm1\}$.
    Let Assumptions \ref{as:n} be verified and, using Notation \ref{no:n}, assume that $\kb = 0$ and $\mb >0$.
    Then there exists for each $j\in\N$, a family of resonance quasi-pairs $\Fg_{p\pv j}=(\Kg_{p\pv j},\Ug_{p\pv j})$ of whispering gallery type ({\em cf.}  Definition \emph{\ref{def:quasi}})  with $\Kg_{p\pv j} = (\ku_{p\pv j}(m))_{m\ge1}$ and $\Ug_{p\pv j} = (\uu_{p\pv j}(m))_{m\ge1}$.

    \medskip
    (i) The regularity properties \eqref{eq:contk}--\eqref{eq:contkN} with respect to $m$ holds with $\beta = \frac{1}{2}$: There exist coefficients $\Kr_{p\pv j}^{\,\ell}$ for any $\ell\in\N$, and constants $C_N$ so that
    \begin{equation}
        \label{eq:kNb}
        \forall N\in\N,\qquad
        \left|\frac{\ku_{p\pv j}(m)}{m} -
        \sum_{\ell=0}^{N-1} \,\Kr_{p\pv j}^{\,\ell}\, m^{-\ell/2} \right|
        \le C_N\, m^{-N/2}.
    \end{equation}
    The coefficients $\Kr_{p\pv j}^{\,0}$ are all equal to $\lp R\,n(R)\rp^{-1}$, the coefficients of degree $1$ are zero, and the coefficients of degree $2$ are all distinct with $j$, see \eqref{eq:0915b}.

    \medskip
    (ii) The functions $\uu_{p\pv j}(m)$ still have the form \eqref{eq:ua}
    with radial functions $\wu_{p\pv j}(m)$ that have a boundary layer structure around $r=R$ with the different scaled variables $\sigma$  as $r<R$ and $\rho$ as $r>R$:
    \[
        \sigma=m^{\frac{1}{2}}\lp\tfrac{r}{R}-1\rp \ \ \texte{if}\ \ r<R
        \qquad\mbox{and}\qquad
        \rho=m\lp\tfrac{r}{R}-1\rp \ \ \texte{if}\ \ r>R.
    \]
    There exist smooth functions $\Phi_{p\pv j} \in \Cc^\infty([0,1],\Sc(\R_-)) : (t,\sigma) \mapsto \Phi_{p\pv j}(t,\sigma)$ and $\Psi_{p\pv j} \in \Cc^\infty([0,1],\Sc(\R_+)) : (t,\rho) \mapsto \Psi_{p\pv j}(t,\rho)$ such that
    \begin{equation}\label{eq:vb}
        \wu_{p\pv j}(m\pv r) =
        \Xg(r)\Big( \II_{r<R}(r)\,\Phi_{p\pv j}(m^{-\frac{1}{2}},\sigma) +
        \II_{r>R}(r)\, \Psi_{p\pv j}(m^{-\frac{1}{2}},\rho) \Big)
    \end{equation}
    where $\Xg\in\Cc^\infty_0(\R_+)$,  $\Xg\equiv 1$ in a neighborhood of $R$.
\end{theorem}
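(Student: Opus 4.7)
\medskip
\noindent\textbf{Proof plan for Theorem \ref{th:(b)}.}
The plan is to follow exactly the scheme of Sections~\ref{subsec:genConcepts}--\ref{sec:43}, specialising the general framework \eqref{eq:varkappa}--\eqref{eq:lam} to the half-quadratic case. Since we are in the regime $\kb=0$ with $\Vu_0^-(\xi)=\mb\xi^2$, we take
\[
   \varkappa = 2, \quad \gamma = \mb, \quad \alpha = \tfrac{1}{2}, \quad \alpha' = 1, \quad \beta = \tfrac{1}{2}.
\]
With these values, the principal left operator becomes the harmonic oscillator $\bA^-_0 = -\partial_\sigma^2 + \mb\sigma^2$ on $\R_-$, while $\bA^+_0 = -\no_0^2\partial_\rho^2 + \no_0^2-1$ is unchanged. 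Expanding $\Lc_h^\pm$ and $V_h^\pm$ in Taylor series at the origin with respect to $h^{1/2}$ yields a formal series of differential operators $\sum_q h^{q/2} \bA^\pm_q$ of the same type as \eqref{eq:Apm}, and the analogue of system \eqref{eq:formalq} is obtained by identification of powers of $h^{1/2}$.

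The initialization stage ($q=0$) reduces to a half-oscillator Dirichlet eigenvalue problem:
\begin{equation*}
   -\vp_0'' + \mb\sigma^2\vp_0 = \lambda_0\vp_0 \quad\text{on }\R_-, \qquad \vp_0(0)=0, \qquad \vp_0\in\Sc(\R_-),
\end{equation*}
whose eigenvalues are exactly the odd levels of the full oscillator, $\lambda_0^{(j)} = (4j+3)\sqrt{\mb}$, with eigenfunctions proportional to the odd Hermite functions $h_{2j+1}(\mb^{1/4}\sigma)$. The right branch must be taken as $\psi_0=0$, since the Neumann problem $\bA^+_0\psi_0=0$ has no non-zero Schwartz solution. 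The compatibility condition $\vp_0(0)=\psi_0(0)$ is then satisfied by the Dirichlet condition. This already produces $\lambda_0 = (4j+3)\sqrt{\mb}$, which upon substitution in the relation $\ku(m)/m = (R\no_0)^{-1}\sqrt{1+h\lambda}$ yields the claimed coefficient $\Kr^2_{p\pv j} = \frac{(4j+3)\sqrt{\mb}}{2Rn_0}$.

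For the recursion, one reorganises the $q$-th equation as the analogue of $(\mathcal R_q^{\textsc{(a)}})$: on the right, a first-order exponentially exponentially weighted inhomogeneous ODE to be solved in $\Sc(\R_+)$; on the left, a non-homogeneous half-oscillator equation $(\bA^-_0 - \lambda_0)\vp_q = \lambda_q \vp_0 + S_q^\vp$ in $\Sc(\R_-)$ with Dirichlet condition prescribed by matching $\psi_q(0)$. This is where the main obstacle lies, and where the situation diverges from case~\textsc{(a)}: the Airy-polynomial algebra that made $P_q^\vp, Q_q^\vp$ live in finite-dimensional polynomial spaces is no longer available. Instead, one must work in the full Schwartz space $\Sc(\R_-)$ and invoke the Fredholm alternative for $\bA^-_0 - \lambda_0^{(j)}$ on $\Sc(\R_-)$ with the Dirichlet condition at $0$. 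Since this operator is self-adjoint with simple eigenvalue $\lambda_0^{(j)}$ and one-dimensional kernel spanned by $\vp_0$, solvability forces
\begin{equation*}
   \lambda_q = -\frac{\la S_q^\vp,\vp_0\ra_{\Lr^2(\R_-)} + \lambda_0^{(j)}\, \psi_q(0)\,\vp_0'(0)\cdot 0}{\|\vp_0\|^2_{\Lr^2(\R_-)}} + (\text{correction from the Dirichlet lift}),
\end{equation*}
i.e.\ $\lambda_q$ is determined by a scalar product involving the infinite-dimensional inversion $(\bA^-_0 - \lambda_0^{(j)})^{-1}$ on the orthogonal complement of $\vp_0$. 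This explains the statement in Theorem~\ref{th:B} that only the first few coefficients are calculable in closed form. Existence of $\vp_q$ and $\psi_q$ in the Schwartz classes then follows inductively, using that $\bA^-_q$ are polynomial-coefficient operators acting continuously on $\Sc(\R_-)$ (so products with Schwartz functions remain Schwartz) and that the inhomogeneous exponential ODE on $\R_+$ is solvable in $\Sc(\R_+)$ with exponential decay rate $\sqrt{1-n_0^{-2}}$ exactly as in Lemma~\ref{lem:exp}.

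The concluding steps are entirely parallel to Section~\ref{sec:ccls_A}: Borel's theorem (in the form of Lemma~\ref{th:Borel}) supplies smooth functions $\lu_{p\pv j}\in\Cc^\infty([0,1])$, $\Phi_{p\pv j}\in\Cc^\infty([0,1],\Sc(\R_-))$, $\Psi_{p\pv j}\in\Cc^\infty([0,1],\Sc(\R_+))$ whose Taylor series at $0$ in the variable $h^{1/2}$ coincide with the formal series just constructed. One defines $\ku_{p\pv j}(m) = \frac{m}{R\no_0}\sqrt{1+h\lu_{p\pv j}(h^{1/2})}$ and the candidate quasi-mode exactly as in Definition~\ref{def:uu_A}, with the only change that the inner scaling is now $\sigma = h^{-1/2}\xi$. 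Repeating the commutator/cut-off argument of Lemma~\ref{lem:QM_A} with the new scaling (the weights become $h^{1/2}$ on the left and $h$ on the right) gives the $\Lr^2$-norm lower bound of order $h^{1/4}$ and super-algebraically small residual and jump estimates. A final adjustment by the lift $v^*(m)$ of \eqref{eq:1555} enforces exact matching of the jumps, yielding a family of resonance quasi-pairs of whispering gallery type satisfying all four conditions of Definition~\ref{def:quasi}. The quasi-resonances thus constructed are, by Theorem~\ref{th:QuasiRes}, $\Oc(m^{-\infty})$-close to true resonances, which concludes the proof.
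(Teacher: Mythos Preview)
Your plan matches the paper's approach closely: same choice of $\varkappa=2$, $\alpha=\tfrac12$, $\beta=\tfrac12$, same half-oscillator initialisation with odd Hermite functions, same nested system $(\mathcal R_q^{\textsc{(b)}})$, and the same Borel/cut-off/jump-lift endgame. The structure is correct.

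The one place where your argument is incomplete is precisely where the paper concentrates its technical effort: establishing that each $\vp_q$ lies in $\Sc(\R_-)$. Your appeal to a ``Fredholm alternative for $\bA_0^--\lambda_0$ on $\Sc(\R_-)$'' is not an off-the-shelf statement, and the awkward displayed formula for $\lambda_q$ (with a term multiplied by $0$ and an unspecified ``correction from the Dirichlet lift'') signals that you have not pinned down how the non-homogeneous boundary value $\vp_q(0)=\psi_q(0)$ interacts with the solvability condition. The paper handles this concretely in three steps: (i) it lifts the Dirichlet datum explicitly by subtracting $c_q\Psi_0^{\GH}(\mb^{1/4}\cdot)$ with $c_q=\psi_q(0)/\Psi_0^{\GH}(0)$, reducing to a homogeneous Dirichlet problem for $\wt\vp_q$; (ii) it solves that problem not in $\Sc(\R_-)$ directly but in the weighted Sobolev scale $\Hr_0^1(\R_-)\cap\Hr^2(\R_-,\omega_{-q})$ via an Agmon-type energy estimate (Lemma~\ref{lem:GHsolPoids}), which is what guarantees exponential decay of $\vp_q$ despite the right-hand side $S_q^\vp$ losing decay at each step due to the polynomial coefficients of $\bA_\ell^-$; (iii) it then bootstraps from $\Hr^2$ with exponential weight to full $\Sc(\R_-)$ by rewriting higher derivatives of $\vp_q$ through the ODE (Lemma~\ref{cor:deriveePoids}). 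Without step (ii) in particular, you cannot conclude that the $\vp_q$ decay fast enough for the cut-off commutator estimates of Lemma~\ref{lem:QM_A} to yield $\Oc(m^{-\infty})$ residuals.
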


The first terms of the expansions of the quasi-resonances $\ku_{p\pv j}(m)$ in powers of $m^{-{1}/{2}}$ as $m\to\infty$ are as follows:
\begin{equation}\label{eq:0915b}
    \ku_{p\pv j}(m) = {\frac{m}{R{n_0}}} \lc 1 + \sum_{\ell=1}^3 \kr_{p\pv j}^\ell
    \lp\frac{\sqrt{\mb}}{m}\rp^{\frac{\ell}{2}} +
    \Oo\lp {m^{-2}} \rp \rc
\end{equation}
where $\kr_{p\pv j}^1 = 0$,
\begin{align*}
    \kr_{p\pv j}^2 = 2j+\frac{3}{2} &  &
    \text{and}                       &  &
    \kr_{p\pv j}^3 = {\Psi_{2j+1}^{\GH}}'(0)^2\, \lc\frac{-n_0^p}{\sqrt{n_0^2-1}} + \frac{4j+3}{9\, \mb^{\frac{3}{2}}}\lp 6+\frac{R^3 n_3}{n_0} - 6\mb\rp\rc.
\end{align*}
Here, $\Psi^{\GH}_\ell$ denotes the Gauss-Hermite function of order $\ell$, see \cite{AbrSte64,Olv97}.

The asymptotic expansions of the radial part of the quasi-modes $\wu_{p\pv j}(m)$ in  \eqref{eq:vb} starts as
\begin{equation}\label{eq:B_va01}
    \wu_{p\pv j}(m\pv r) =
    \Xg(r) \lp \Vs_{p\pv j}^{\,0}(m;r) +
    \lp\frac{1}{m}\rp^{\frac{1}{2}} \Vs_{p\pv j}^{\,1}(m;r)\rp
    + \Oo\lp m^{-1}\rp ,
\end{equation}
where, using the scaled variables $\sigma=m^{\frac{1}{2}}(\frac{r}{R}-1)$ and $\rho=m(\frac{r}{R}-1)$, we have
\begin{equation}\label{eq:B_V0}
    \Vs_{p\pv j}^{\,0}(m;r) = \left\{\begin{array}{ll}
        \Psi^{\GH}_{2j+1} \big(\mb^{\frac{1}{4}}\sigma\big) & \texte{if } r < R,   \\
        0                                                   & \texte{if } r \ge R,
    \end{array}\right.
\end{equation}
and
\begin{equation}\label{eq:B_V1}
    \Vs_{p\pv j}^{\,1}(m;r) = \frac{-n_0^p\, \mb^{\frac{1}{4}}\, ( {\Psi_{2j+1}^{\GH}})'(0)}{\sqrt{n_0^2-1}}
    \left\{\begin{array}{ll}
        \frac{1}{\Psi_0^{\GH}(0)} \Psi_0^{\GH}\big(\mb^{\frac{1}{4}}\sigma\big) + \wt{\vp}_1(\sigma) & \texte{if }r < R,    \\[1ex]
        \exp\big(-\frac{\sqrt{n_0^2-1}}{n_0}\,\rho\big)                                              & \texte{if } r \ge R,
    \end{array}\right.
\end{equation}
where  $\wt{\vp}_1 \in \Hr_0^1(\R_-) \cap \Sc(\R_-)$
is the unique solution to a Dirichlet problem posed on $\R_-$, see~\eqref{eq:ode_vp}. In contrast with case \textsc{(a)}, the expression of $\Vs_{p\pv j}^{\,1}$, though determined, is not explicit. This is the reason why we gave only two terms in the asymptotic expansions \eqref{eq:0915b}.

\begin{remark}\label{rem:gapB}
    When $m\to\infty$ and $j$ stays bounded, the joint quasi-spectrum $(m,\ku_{p\pv j}(m))$ tends to the rectangular lattice
\[
   (m,j) \longmapsto \lp m, \tfrac{1}{R{n_0}} \big( m + (2j+\tfrac{3}{2})\sqrt{\mb} \big) \rp.
\]
    Indeed, the gaps between two resonances with consecutive indices satisfy
\[
\begin{aligned}
    \ku_{p\pv j}(m+1) - \ku_{p\pv j}(m) &=
    \frac{1}{Rn_0}  + \Oo\lp {m^{-\frac{3}{2}}} \rp \\
    \ku_{p\pv j+1}(m) - \ku_{p\pv j}(m) &=
    \frac{2\sqrt{\mb}}{R n_0}  + \Oo\lp {m^{-\frac{1}{2}}} \rp .
\end{aligned}
\]
\end{remark}

\subsection{Proof}

The general outline of the proof of Theorem \ref{th:(b)} is similar to the one of Theorem~\ref{th:(a)} in case \textsc{(a)}.
In particular, the general framework of section \ref{subsec:genConcepts} applies with
\[
    \varkappa = 2, \quad \gamma=\mb, \quad \alpha = \tfrac{1}{2}, \quad \alpha'=1, \quad \beta=\tfrac{1}{2}.
\]
We provide now the part of the proof of Theorem \ref{th:(b)} specific to  case \textsc{(b)}.

From general expressions \eqref{eq:Lh}--\eqref{eq:Apm}, we find that
\begin{empheq}[left={\hspace*{-3ex}\empheqlbrace\ }]{align}
    \label{eq:B012-}
    \bA^-_0 &= -\partial^2_\sigma + \mb \sigma^2, \quad
    \bA^-_1 = -2\partial_\sigma^2 + (p-2)\partial_\sigma - \sigma^3 \lp\frac{\nx_2}{n_0} + \frac{\nx_3}{3n_0}\rp,\\
    \label{eq:B012+}
    \bA^+_0 &= -\no_0^2\partial^2_\rho + \no_0^2 - 1, \quad
    \bA^+_1 = 0, \quad
    \bA^+_2 = -\no^2_0(\partial_\rho + 2\rho).
\end{empheq}

The analog of Lemma \ref{lem:Aqpm} describing the coefficients of the formal series of operator~ \eqref{eq:Apm}  in terms of powers of $h^\beta = h^\frac{1}{2}$ reads as follows.
\begin{lemma}
    \label{lem:Bqpm}
    For any integer $q\ge1$,
    \begin{align*}
        \bA^-_q & = A^-_q(\sigma) \,\partial^2_\sigma + B^-_q(\sigma)\,\partial_\sigma + C^-_q(\sigma)
                &                                                                                      & \text{with}\ \ A^-_q \in \P^{q},\ \ B^-_q \in \P^{q-1},\ \ C^-_q \in \P^{q+1}   \\
        \bA^+_q & = B^+_q(\rho)\,\partial_\rho + C^+_q(\rho)
                &                                                                                      & \text{with}\ \ B^+_q \in \P^{[\frac{q}{2}]-1},\ \ C^+_q \in \P^{[\frac{q}{2}]}.
    \end{align*}
\end{lemma}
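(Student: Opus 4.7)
The proof is a direct Taylor expansion argument analogous to the proof of Lemma~\ref{lem:Aqpm} in case \textsc{(a)}, with the specifics of case \textsc{(b)} encoded in the scaling exponents $\alpha=\tfrac12$, $\alpha'=1$, $\beta=\tfrac12$ together with the identity $g(0)=g'(0)=0$ for the smooth function $g(\xi)\coloneqq\bigl(\no_0/((1{+}\xi)\nx(\xi))\bigr)^2-1$, which is exactly the translation of the hypothesis $\kb=0$. The plan is to treat the two sides separately and read off the polynomial degree of each coefficient of $\bA^\pm_q$ by matching powers of $h^{1/2}$ in the Taylor expansions of the coefficients displayed in \eqref{eq:Lh}--\eqref{eq:Vh}.

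For the minus side, every coefficient of $-\Lc_h^-$ is a smooth function of $h^{1/2}\sigma$, possibly multiplied by an overall prefactor $h^{1/2}$, and a generic Taylor expansion gives $f(h^{1/2}\sigma)=\sum_{\ell\ge 0}\frac{f^{(\ell)}(0)}{\ell!}h^{\ell/2}\sigma^\ell$. The coefficient of $\partial_\sigma^2$, equal to $-\no_0^2/\nx(h^{1/2}\sigma)^2$, therefore contributes a monomial of degree $q$ in $\sigma$ at order $h^{q/2}$, so $A^-_q\in\P^q$; the coefficient of $\partial_\sigma$ carries the extra $h^{1/2}$, shifting the index and producing a monomial of degree $q-1$, hence $B^-_q\in\P^{q-1}$ (with $B^-_0=0$). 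For the potential, thanks to $g(0)=g'(0)=0$, I would write $V_h^-(\sigma)=h^{-1}g(h^{1/2}\sigma)=\sum_{\ell\ge 2}\frac{g^{(\ell)}(0)}{\ell!}h^{(\ell-2)/2}\sigma^\ell$, so its $h^{q/2}$-coefficient is a monomial of degree $q+2$, giving the bound on $C^-_q$.

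For the plus side, the natural scaling is $h\rho$, and since $\alpha'=1=2\beta$, any Taylor expansion in $h\rho$ produces only even powers of $h^\beta$; in particular $\bA^+_q=0$ whenever $q$ is odd. For even $q=2\ell$: the constant $\partial_\rho^2$ coefficient $\no_0^2$ contributes only at $q=0$, so no second-order term appears in $\bA^+_q$ for $q\ge 1$; the $\partial_\rho$ coefficient equals $\no_0^2 h/(1+h\rho)=\no_0^2\sum_{\ell\ge 0}(-1)^\ell h^{\ell+1}\rho^\ell$, contributing at order $h^{q/2}$ a monomial of degree $\ell-1=[q/2]-1$, so $B^+_q\in\P^{[q/2]-1}$; and $V_h^+(\rho)=\no_0^2/(1+h\rho)^2-1$ contributes at order $h^{q/2}$ a monomial of degree $[q/2]$, so $C^+_q\in\P^{[q/2]}$.

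There is essentially no obstacle in the argument: the content is pure bookkeeping of the exponents $\alpha,\alpha',\beta$ and of the prefactors $h^{2\alpha-2}=h^{-1}$ and $h^\alpha=h^{1/2}$ appearing in \eqref{eq:Lh}--\eqref{eq:Vh}. The only case-\textsc{(b)}-specific input is that $\kb=0$ forces $g$ to vanish to order two at the origin, which cancels the divergence $h^{-1}$ in $V_h^-$ and leaves the quadratic leading term $\mb\sigma^2$ responsible for the harmonic oscillator structure of $\bA^-_0$; the only mild subtlety is distinguishing odd and even orders on the plus side, a consequence of the relation $\alpha'=2\beta$.
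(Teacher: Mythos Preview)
Your approach is exactly the one the paper has in mind: the lemma is stated without proof, and the corresponding sentence before the analogous Lemma~\ref{lem:Aqpm} says only that ``a Taylor expansion at $\sigma=\rho=0$ of the coefficients of $\Lc_h^\pm$ and of $V_h^\pm$ allows to prove'' it. Your bookkeeping of the exponents $\alpha=\tfrac12$, $\alpha'=1$, $\beta=\tfrac12$ and the use of $\kb=0$ to force $g(0)=g'(0)=0$ is precisely the intended content.

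One point deserves flagging rather than glossing over. Your own computation gives the $h^{q/2}$-coefficient of $V_h^-$ as a monomial of degree $q+2$, so $C^-_q\in\P^{q+2}$, \emph{not} $\P^{q+1}$ as the lemma states. This is in fact a typo in the paper: the explicit formula~\eqref{eq:B012-} displays $C^-_1=-\sigma^3\bigl(\tfrac{\nx_2}{n_0}+\tfrac{\nx_3}{3n_0}\bigr)$, of degree $3=1+2$, confirming your count. The downstream uses of the lemma (Propositions~\ref{pro:solQ_B} and~\ref{pro:solQ_C}) are unaffected, since the Gauss--Hermite index bound $j+q+2$ quoted in the proof of Proposition~\ref{pro:solQ_C} is already attained by $A^-_q\in\P^q$ acting through $\partial_\sigma^2$. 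Still, you should state the correct degree and note the discrepancy rather than writing ``giving the bound on $C^-_q$'' as though the two matched.
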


We proceed as in Section \ref{sec:43}, associating to the system \eqref{eq:schr_lam} a formal series system of equations like \eqref{eq:formal}, in which the powers of $h$ are modified according to the values of $\alpha$, $\alpha'$, $\beta$, and $\varkappa$.
As a matter of fact, equating the series coefficients, we obtain in case \textsc{(b)} exactly the same infinite collection of systems \eqref{eq:formalq} as in case \textsc{(a)}, but with the new expressions of operators $\bA^\pm_q$.
The coefficients of the formal series expansions \eqref{eq:1223} are obtained by solving~\eqref{eq:formalq} for $q$ spanning $\N$.

\subsubsection{Initialization stage}
For $q=0$,  the couple of functions $(\vp_0,\psi_0)$ and the number $\lambda_0$ are obtained by solving \eqref{eq:formal0} with   $\bA^-_0$ and $  \bA^+_0$   given in \eqref{eq:B012-}--\eqref{eq:B012+}.
Since the equation $\bA^+_0\, \psi_{0} = 0$ with the Neumann condition at $0$ has no non-zero solution in $\Sc(\R_+)$, it is natural to take $\psi_0=0$. Then,  we are left with the following harmonic oscillator problem on $\R_-$  
\[
    -\vp_0''(\sigma) - \mb \,\sigma^2\vp_0(\sigma) = \lambda_0\vp_0(\sigma) \ \mbox{ for }\ \sigma\in(-\infty,0),
    \qquad\mbox{and}\qquad \vp_0(0)=0
\]
whose bounded solutions are generated by the odd Gauss-Hermite functions $\big\{\Psi^{\GH}_{2j+1}\big\}_{j\in\N}$.

\begin{lemma}\label{lem:lambda0_B}
    Let $j\in\N$. The couple of functions $(\vp_0,\psi_0)$ and the number $\lambda_0$ defined by
    \[
        \vp_0(\sigma) = \Psi^{\GH}_{2j+1}\lp\mb^{\frac{1}{4}} \sigma\rp, \quad
        \psi_0(\rho) = 0,\quad\mbox{and}\quad
        \lambda_0 = (4j+3) \sqrt{\mb}
    \]
    solve \eqref{eq:formal0} for $\bA^-_0$ and $\bA^+_0$ given in \eqref{eq:B012-}--\eqref{eq:B012+}.
\end{lemma}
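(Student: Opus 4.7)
The proof is a direct verification that decouples, through the boundary conditions at zero, into an exterior and an interior one-dimensional eigenvalue problem. My plan is as follows.

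First, I would analyze the exterior equation $\bA^+_0\psi_0 = 0$ together with the Neumann condition $\psi_0'(0)=0$ and Schwartz decay at $+\infty$. The general solution of $-\no_0^2\psi_0''+(\no_0^2-1)\psi_0=0$ on $\R_+$ is a linear combination of $\exp(\pm\rho\sqrt{1-\no_0^{-2}})$. Schwartz decay at $+\infty$ forces the exponentially growing mode to vanish, and the Neumann condition then forces the remaining decaying mode to vanish as well. Hence $\psi_0\equiv 0$, and the continuity condition in \eqref{eq:formal0} reduces the interior problem to the Dirichlet condition $\vp_0(0)=0$.

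Next, for the interior equation $-\vp_0''(\sigma)+\mb\,\sigma^2\vp_0(\sigma)=\lambda_0\vp_0(\sigma)$ on $\R_-$, I would rescale by $x=\mb^{1/4}\sigma$ to transform it into the standard harmonic-oscillator eigenequation
\[
  -\vp''(x)+x^2\vp(x)=\frac{\lambda_0}{\sqrt{\mb}}\,\vp(x) \quad\text{on } \R_-,
\]
still with Dirichlet condition at $x=0$ and Schwartz decay at $-\infty$. The key observation is then that any Schwartz-class Dirichlet eigenfunction on the half-line extends by odd reflection to a Schwartz eigenfunction of $-\partial_x^2+x^2$ on the whole line, whose spectrum is $\{2\ell+1:\ell\in\N\}$ with eigenfunctions the Gauss--Hermite functions $\Psi^{\GH}_\ell$. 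Since only the odd indices $\ell=2j+1$ give odd eigenfunctions, the Dirichlet half-line spectrum is $\{4j+3:j\in\N\}$, with eigenfunctions $x\mapsto\Psi^{\GH}_{2j+1}(x)|_{\R_-}$. Undoing the rescaling yields $\lambda_0=(4j+3)\sqrt{\mb}$ and $\vp_0(\sigma)=\Psi^{\GH}_{2j+1}(\mb^{1/4}\sigma)$, which belongs to $\Sc(\R_-)$ thanks to the Gaussian decay of Gauss--Hermite functions.

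There is essentially no substantial obstacle: the lemma is a verification together with a uniqueness-of-admissible-eigenfunction statement, and every ingredient is classical. The only point that deserves explicit mention in the written proof is the odd-reflection argument, which simultaneously explains why precisely the odd Gauss--Hermite functions appear and pins down the multiplicative factor $\sqrt{\mb}$ in the eigenvalue.
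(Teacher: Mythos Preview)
Your proposal is correct and follows essentially the same approach as the paper: the paper also argues that the exterior Neumann problem in $\Sc(\R_+)$ forces $\psi_0=0$, reducing the interior equation to the half-line Dirichlet harmonic oscillator whose bounded solutions are the odd Gauss--Hermite functions. Your inclusion of the explicit rescaling and the odd-reflection argument makes the derivation of the factor $\sqrt{\mb}$ and the restriction to odd indices more transparent than the paper's terse statement, but the underlying logic is identical.
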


\subsubsection{Sequence of nested problems and recurrence}
As in case \textsc{(a)},
reordering the terms in the system \eqref{eq:formalq} taking into account Lemma \ref{lem:Bqpm}, we obtain that
the couple of functions $(\vp_q,\psi_q)$ and the number $\lambda_q$ for $q\geq 1$ are solutions to
\begin{subequations}    \label{eq:sysQ_B}
    \begin{empheq}[left={ \hspace*{-10mm}(\mathcal{R}^{\text{\textsc{(b)}}}_q)\quad     \empheqlbrace}]{align}
        \hspace*{6mm}   -\vp_q''(\sigma) + (\mb\sigma^2 - \lambda_0)\vp_q(\sigma) &= \lambda_q\,\vp_0(\sigma) + S_q^\vp(\sigma)
        & \sigma \in \R_-
        \label{eq:sysQ_Ba} \\
        \hspace*{6mm}   -\no_0^{2}\psi_q''(\rho) + \lp \no_0^{2}-1\rp\psi_q(\rho) &= S_q^\psi(\rho) & \rho \in \R_+
        \label{eq:sysQ_Bb} \\
        \vp_q(0) &= \psi_q(0) &
        \label{eq:sysQ_Bc} \\
        \psi_q'(0) &= \no_0^{p-1}\vp_{q-1}'(0) &
        \label{eq:sysQ_Bd}
    \end{empheq}
\end{subequations}
with right hand side terms $S_q^\vp$ and $S_q^\psi$ defined as (recall that $\bA^+_1=0$)
\begin{equation}
    \label{eq:Sq_B}
    S_q^\vp = - \bA^-_q \,\vp_{0} + \sum_{\ell=1}^{q-1} (\lambda_\ell - \bA^-_\ell) \,  \vp_{q-\ell}
    \quad\mbox{and}\quad
    S_q^\psi = \sum_{\ell=2}^{q} (\lambda_{\ell-2} - \bA^+_\ell) \, \psi_{q-\ell} .
\end{equation}
\begin{notation}\label{no:weightedSobolev}
    For a real number $t$ let $\omega_t : \sigma \mapsto \exp\lp 2^t |\sigma|\rp$.
    We denote by $\Lr^2(\R_-,\omega_t)$ and $\Hr^\ell(\R_-,\omega_t)$, the weighted Lebesgue and Sobolev spaces with measure $\omega_t(\sigma) \dd{\sigma}$.
\end{notation}

\begin{proposition}\label{pro:solQ_B}
    Choose $j\in\N$ and take $\vp_0,\psi_0,\lambda_0 $ as given in  Lemma \ref{lem:lambda0_B}.
    For any $q\ge1$, there exist
    \begin{itemize}
        \item a unique $\lambda_q\in\R$
        \item a unique real number $c_q$, a unique real sequence $m \in \N \mapsto b_q^i$ such that $b_q^j = 0$, and a unique polynomial $P_q^\psi\in\P^{q-1}$
    \end{itemize}
    such that setting
    \begin{equation}
        \label{eq:PQ_B}
        \begin{aligned}
            \vp_q(\sigma) & = c_q\Psi_0^{\GH}\big(\mb^{\frac{1}{4}}\sigma\big)
            + \wt{\vp}_q(\sigma) \quad\mbox{with}\quad
            \wt{\vp}_q(\sigma) = \sum_{i\in\N}
            b_q^i \Psi_{2i+1}^{\GH}\big(\mb^{\frac{1}{4}}\sigma\big)
                          & \forall\sigma\in\R_-                                          \\[-1ex]
            \psi_q(\rho)  & = \te P_q^\psi(\rho) \exp\big(-\rho\sqrt{1-\no_0^{-2}}\,\big)
                          & \forall\rho\in\R_+\end{aligned}
    \end{equation}
    the collection $(\vp_0,\ldots,\vp_q,\psi_0,\ldots,\psi_q,\lambda_0,\ldots,\lambda_q)$ solves the sequence of problems $(\mathcal{R}^{\text{\textsc{(b)}}}_\ell)$ for $\ell = 0,\ldots,q$. Moreover $\wt{\vp}_q \in \Hr_0^1(\R_-) \cap \Hr^2(\R_-,\omega_{-q})$
\end{proposition}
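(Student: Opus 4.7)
The proof proceeds by induction on $q$, parallel to the proof of Proposition~\ref{pro:solQ_A} but with two genuinely new features: the kernel of the left-hand ODE is now an eigenfunction of a Dirichlet harmonic oscillator (so Fredholm alternative is required to fix $\lambda_q$), and the particular solution must be expanded as an \emph{infinite} Hermite series rather than as a finite linear combination of two explicit functions. The base case $q=0$ is Lemma~\ref{lem:lambda0_B}, with $c_0=0$, $b_0^i=\delta_{ij}$, $P_0^\psi=0$. Suppose all quantities have been constructed up to rank $q-1$ and obey the claimed structure; I will now describe the construction at rank $q$ in the order the quantities are determined.

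\textbf{Step 1 (solve for $\psi_q$).} By the inductive form of $\psi_{q-\ell}$ and by Lemma~\ref{lem:Bqpm}, the source $S_q^\psi$ in \eqref{eq:Sq_B} has the form $E_q^\psi(\rho)\exp(-\rho\sqrt{1-n_0^{-2}})$ for some polynomial $E_q^\psi\in\P^{q-2}$. Lemma~\ref{lem:exp} in the Appendix then yields a unique $\wt{P}_q^\psi\in\P^{q-1}_{\!*}$ such that $\wt{P}_q^\psi(\rho)\exp(-\rho\sqrt{1-n_0^{-2}})$ is a particular solution of \eqref{eq:sysQ_Bb}. The remaining constant in the $\Sc(\R_+)$-homogeneous solution is fixed by the Neumann jump condition \eqref{eq:sysQ_Bd}, whose right-hand side is known by induction, and this defines $P_q^\psi\in\P^{q-1}$.

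\textbf{Step 2 (decomposition of $\vp_q$).} Write $\vp_q=c_q\Psi_0^{\GH}(\mb^{1/4}\sigma)+\wt{\vp}_q$ where $\wt{\vp}_q$ is to satisfy the Dirichlet condition $\wt{\vp}_q(0)=0$. The continuity condition \eqref{eq:sysQ_Bc} combined with $\psi_q(0)=P_q^\psi(0)$ forces
\[
c_q = \frac{\psi_q(0)}{\Psi_0^{\GH}(0)}.
\]
Since $\Psi_0^{\GH}(\mb^{1/4}\sigma)$ is the ground state of the full-line harmonic oscillator with eigenvalue $\sqrt{\mb}$, the function $\wt{\vp}_q$ solves the Dirichlet problem on $\R_-$
\begin{equation}\label{eq:ode_vp_plan}
-\wt{\vp}_q''+(\mb\sigma^2-\lambda_0)\wt{\vp}_q=\lambda_q\vp_0+\wt{S}_q^\vp,\qquad \wt{\vp}_q(0)=0,
\end{equation}
where $\wt{S}_q^\vp=S_q^\vp-c_q(\sqrt{\mb}-\lambda_0)\Psi_0^{\GH}(\mb^{1/4}\sigma)$.

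\textbf{Step 3 (Fredholm alternative determines $\lambda_q$).} The Dirichlet realization on $\R_-$ of $-\partial_\sigma^2+\mb\sigma^2$ is self-adjoint with pure point spectrum $\{(4i+3)\sqrt{\mb}\}_{i\in\N}$ and normalized eigenfunctions $\sigma\mapsto\mb^{1/8}\sqrt{2}\,\Psi_{2i+1}^{\GH}(\mb^{1/4}\sigma)$ (the odd Gauss--Hermite functions restricted to $\R_-$ form an orthonormal basis). Since $\lambda_0=(4j+3)\sqrt{\mb}$, the kernel is one-dimensional, spanned by $\vp_0$, and solvability of \eqref{eq:ode_vp_plan} in the Dirichlet form domain requires
\[
\lambda_q\,\|\vp_0\|_{\Lr^2(\R_-)}^2 = -\la \wt{S}_q^\vp,\vp_0\ra_{\Lr^2(\R_-)},
\]
which uniquely determines the real number $\lambda_q$.

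\textbf{Step 4 (Hermite expansion of $\wt{\vp}_q$).} With $\lambda_q$ so chosen, the Fredholm alternative gives a one-parameter family of Dirichlet solutions to \eqref{eq:ode_vp_plan} in the form domain, parametrized by adding multiples of $\vp_0$. Expanding $\wt{\vp}_q$ in the orthonormal basis above, the coefficients are
\[
b_q^i = \frac{\la \wt{S}_q^\vp,\Psi_{2i+1}^{\GH}(\mb^{1/4}\cdot)\ra_{\Lr^2(\R_-)}}{(4i+3)\sqrt{\mb}-\lambda_0},\quad i\neq j,
\]
and the normalization $b_q^j=0$ singles out the unique representative.

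\textbf{Step 5 (regularity in $\Hr_0^1\cap\Hr^2(\R_-,\omega_{-q})$).} Here lies the main technical obstacle, and the reason the weight is indexed by $q$. By Lemma~\ref{lem:Bqpm}, each $\bA^-_\ell$ has polynomial coefficients of degree $\oo(\ell)$; applying it to $\vp_{q-\ell}$, whose inductive form combines Gauss--Hermite factors with polynomial amplifications, produces a source $\wt{S}_q^\vp$ that retains the Gaussian core $e^{-\sqrt{\mb}\,\sigma^2/2}$ multiplied by sums of Hermite functions but with slightly degraded weighted norms. Choosing the weight $\omega_{-q}$ (which decreases in strength as $q$ grows) absorbs the successive losses: one shows inductively that $\wt{S}_q^\vp\in\Lr^2(\R_-,\omega_{-q})$ using that multiplication by $\sigma^k$ maps $\Lr^2(\R_-,\omega_{-(q-1)})$ into $\Lr^2(\R_-,\omega_{-q})$ for the Gaussian-dominated functions at hand. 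Standard elliptic a priori estimates for the self-adjoint Dirichlet operator then upgrade the $\Lr^2$-control to the claimed $\Hr^2(\R_-,\omega_{-q})$-control; the fact that the quotient $b_q^i$ is divided by $(4i+3)\sqrt{\mb}-\lambda_0\sim 4i\sqrt{\mb}$ gives the extra decay needed for the Hermite series to converge in this weighted Sobolev space.

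The hardest point is Step~5: tracking how the polynomial degrees arising from $\bA^-_\ell$ interact with the Gaussian decay of Hermite functions, and propagating a weighted Sobolev bound whose weight must be allowed to weaken with $q$. The rest of the argument is a direct transcription of the scheme used for case~\textsc{(a)}, with Airy functions replaced by odd Gauss--Hermite functions and the finite-dimensional polynomial spaces $\P^q$ of Proposition~\ref{pro:solQ_A} replaced by the (infinite-dimensional) orthogonal decomposition in Hermite modes, which is why the coefficients $\Kr^\ell_{p\pv j}$ in Theorem~\ref{th:B} fail to be calculable in finite dimension beyond the first few.
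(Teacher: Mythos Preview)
Your proposal is correct and follows essentially the same route as the paper: induction on $q$, solve for $\psi_q$ via Lemma~\ref{lem:exp}, split off $c_q\Psi_0^{\GH}(\mb^{1/4}\cdot)$ to reduce to a Dirichlet problem, fix $\lambda_q$ by the Fredholm alternative, and expand $\wt{\vp}_q$ in the odd Gauss--Hermite basis with $b_q^j=0$. The only difference is packaging: for Step~5 the paper isolates the weighted-Sobolev solvability and regularity into a dedicated appendix lemma (Lemma~\ref{lem:GHsolPoids}), proving $\wt{\vp}_q\in\Hr_0^1(\R_-)\cap\Hr^2(\R_-,\omega_{-q})$ once $\wt{S}_q^\vp\in\Lr^2(\R_-,\omega_{1/2-q})$ is established, rather than invoking generic elliptic estimates as you do.
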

\begin{proof}
    The proof is quite similar to the one of Proposition \ref{pro:solQ_A} and we will focus on the main differences.
    We proceed by induction on $q$.
    For $q=0$, Lemma \ref{lem:lambda0_B} provides $\lambda_0$, $\vp_0$, and $\psi_0$ solutions to $(\mathcal{R}^{\text{\textsc{(b)}}}_0)$ and we readily obtain $c_0 = 0$, $\wt{\vp}_0 = \Psi_{2j+1}^{\GH}(\mb^{\frac{1}{4}}\,\cdot) \in \Hr_0^1(\R_-)$, and $P_0 = 0$.
    Moreover, $\vp_0$ belongs to  $\Hr^2(\R_-,\omega_0)$ because $\Psi_{2j+1}^{\GH}$ is defined as the product of the $(2j+1)$-th order Hermite polynomial of degree $2j+1$ by $x \mapsto \exp(-\frac{x^2}{2})$.

    Let $q \ge 1$ and suppose that $(\lambda_\ell)_{0\le\ell\le q-1}$, $(\vp_\ell)_{0\le\ell\le q-1}$, and $(\psi_\ell)_{0\le\ell\le q-1}$ are solutions to problems $(\mathcal{R}^{\text{\textsc{(b)}}}_\ell)$ for $\ell=0,\ldots,q-1$, and satisfy \eqref{eq:PQ_B}.
    Solving equation \eqref{eq:sysQ_Bb} for $\psi_q$ proceed in a way very similar to \eqref{eq:sysQ_Ab} in the proof of Proposition \ref{pro:solQ_A} to show that
    there exists $P_q^\psi \in \P^{q-1}$ such that
    \begin{align*}
        \psi_q(\rho) = P_q^\psi(\rho) \te\exp\big( -\rho\sqrt{1-\no_0^{-2}}\,\big).
    \end{align*}

    Let us now consider equation \eqref{eq:sysQ_Bb} for   $\vp_q$.
    First of all, we obtain by induction that $\vp_\ell \in \Hr^2(\R_-,\omega_{1-q})$ for all $\ell \in \{0,\ldots,q-1\}$.
    Then, using Lemma \ref{lem:Bqpm} and \eqref{eq:Sq_B}, it follows that  $S_q^\vp \in \Lr^2(\R_-,\omega_{1/2-q})$.
    Note that the value of the constant $q-1$  in the exponential weight is reduced by $\frac{1}{2}$  to $\frac{1}{2}-q$ in order to absorb the polynomials behavior.
    To solve equation~\eqref{eq:sysQ_Ba} with the non-homogeneous boundary condition  \eqref{eq:sysQ_Bc} we introduce as new unknown $\wt{\vp}_q = \vp_q - c_q\Psi_0^{\GH}(\mb^{\frac{1}{4}}\cdot)$ where $c_q = \frac{\psi_q(0)}{\Psi_0^{\GH}(0)}$.
    It belongs to $ \Hr^2(\R_-,\omega_{1-q})$ and the Dirichlet problem \eqref{eq:sysQ_Ba}, \eqref{eq:sysQ_Bc} becomes
    \begin{subequations}   \label{eq:ode_vp}
        \begin{empheq}[left={ \empheqlbrace}]{align}
            -\wt{\vp}_q'' + (\mb\sigma^2 - \lambda_0) \wt{\vp}_q &=
            \lambda_q\Psi_{2j+1}^{\GH}(\mb^{\frac{1}{4}}\cdot) + \wt{S}_q^\vp
            \quad \forall \sigma\in (-\infty,0) \\
            \wt{\vp}_q(0) &= 0
        \end{empheq}
    \end{subequations}
    where $\wt{S}_q^\vp = S_q^\vp + 2(2j+1)\sqrt{\mb}\, c_q\Psi_0^{\GH}(\mb^{\frac{1}{4}}\cdot)$.
    Problem \eqref{eq:ode_vp} has a solution only when the left hand side function $\lambda_q\Psi_{2j+1}^{\GH}(\mb^{\frac{1}{4}}\cdot) + \wt{S}_q^\vp$ is orthogonal to $\Psi_{2j+1}^{\GH}(\mb^{\frac{1}{4}}\cdot)$.
    It follows that we must have
    \begin{align}
        \label{eq:lamq}
        \lambda_q = -2\mb^{\frac{1}{4}}\, \int_{-\infty}^0
        \Psi_{2j+1}^{\GH}(\mb^{\frac{1}{4}}\sigma)\, \wt{S}_q^\vp(\sigma) \dd{\sigma} .
    \end{align}
    Finally, from Lemma \ref{lem:GHsolPoids}, there exists a unique $\wt{\vp}_q$ in $\Hr_0^1(\R_-) \cap \Hr^2(\R_-,\omega_{-q})$ and orthogonal to $\Psi_{2j+1}^{\GH}$ solution to \eqref{eq:ode_vp}.
    The formula giving $\wt{\vp}_q$ is
    \begin{align}\label{eq:1155}
        \wt{\vp}_q = \sum_{i=0,\ i\neq j}^{+\infty}
        \frac{1}{4(i-j)}\,
        \Big( \wt{S}_q^\vp,\, \varsigma_i\Psi_{2i+1}^{\GH}\Big)_{\Lr^2(\R_-)}
        \, \varsigma_i\Psi_{2i+1}^{\GH}
    \end{align}
    where $\varsigma_i = \Vert\Psi_{2i+1}^{\GH}\Vert_{\Lr^2(\R_-)}^{-1}$.
\end{proof}

\begin{remark}\label{rem:Algo_B}
    In contrast to case \textsc{(a)}, we cannot deduce from Proposition \ref{pro:solQ_B} a finite algorithm to compute the terms of the sequence $(\vp_q)_{q\in\N}$.
    The reason is that for $q\geq 1$, the sum of the series \eqref{eq:1155} cannot be computed explicitly.
    However, a few terms are explicit: We know $(\psi_0,\vp_0,\lambda_0)$ so we can compute, first  $P_1^\psi$, then $c_1$, and, after this, $S_1^\vp$.
    With these latter quantities, we can deduce an explicit expression of $\wt{S}_1^\vp$ as a finite sum of polynomials times Gauss-Hermite functions.
    Now, from the definition of the Gauss-Hermite functions \cite[eq.\ 1.3.8]{Hel13} and recurrence relations on  Hermite polynomials \cite[Sect.\ 18.9(i)]{Nist}, we deduce the following recurrence relations
    for $i \ge 0$ and $z \in \R$,
    \begin{subequations}\label{eq:GHrelation}
        \begin{align}
            \partial_z\Psi_i^{\GH}(z) & = \left(\tfrac{i}{2}\right)^\frac{1}{2} \Psi_{i-1}^{\GH}(z) - \left(\tfrac{i+1}{2}\right)^\frac{1}{2} \Psi_{i+1}^{\GH}(z),
            \label{eq:GHrelationa}                                                                                                                                                        \\
            z\Psi_i^{\GH}(z)          & = \left(\tfrac{i}{2}\right)^\frac{1}{2} \Psi_{i-1}^{\GH}(z) + \left(\tfrac{i+1}{2}\right)^\frac{1}{2} \Psi_{i+1}^{\GH}(z). \label{eq:GHrelationb}
        \end{align}
    \end{subequations}
    Hence we can rewrite $\wt{S}_1^\vp$ as a finite sum of Gauss-Hermite functions and with this we can compute explicitly $\lambda_1$ given by \eqref{eq:lamq}. Nevertheless $\wt{\vp}_1$ will be an infinite sum of Gauss-Hermite functions so, for $q \ge 2$, $\lambda_q$ does not have a closed form.
\end{remark}

\begin{lemma}\label{cor:deriveePoids}
    For all $q \in \N$, we have $\vp_q \in \Sc(\R_-)$ and $\psi_q\in\Sc(\R_+)$.
\end{lemma}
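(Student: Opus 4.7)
The plan is to argue by induction on $q$, simultaneously establishing $\vp_q \in \Sc(\R_-)$ and $\psi_q \in \Sc(\R_+)$. The base case $q=0$ is immediate: Lemma \ref{lem:lambda0_B} gives $\vp_0 = \Psi^{\GH}_{2j+1}(\mb^{\frac{1}{4}}\cdot)$, a rescaled Gauss--Hermite function that belongs to $\Sc(\R)\subset\Sc(\R_-)$, while $\psi_0 = 0 \in \Sc(\R_+)$ trivially.

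For the inductive step, assume the conclusion for all ranks below $q$. For $\psi_q$, the explicit form in \eqref{eq:PQ_B} as a polynomial multiplied by $\exp(-\rho\sqrt{1-\no_0^{-2}})$ makes the Schwartz property obvious. For $\vp_q = c_q\Psi^{\GH}_0(\mb^{\frac{1}{4}}\cdot) + \wt{\vp}_q$ the first summand is Schwartz, so everything reduces to proving $\wt{\vp}_q \in \Sc(\R_-)$. By Lemma \ref{lem:Bqpm} each operator $\bA^-_\ell$ has order two with polynomial coefficients; since $\Sc(\R_-)$ is stable under differentiation and under multiplication by polynomials, the induction hypothesis yields $\bA^-_\ell \vp_{q-\ell} \in \Sc(\R_-)$ for $1\le\ell\le q$. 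Inserting this into \eqref{eq:Sq_B} forces $S_q^\vp \in \Sc(\R_-)$, so that the full source $F_q \coloneqq \lambda_q\Psi^{\GH}_{2j+1}(\mb^{\frac{1}{4}}\cdot) + \wt{S}_q^\vp$ of the Dirichlet problem \eqref{eq:ode_vp} belongs to $\Sc(\R_-)$.

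The heart of the matter is then to upgrade $\wt{\vp}_q$, which Proposition \ref{pro:solQ_B} places in $\Hr_0^1(\R_-)\cap\Hr^2(\R_-,\omega_{-q})$, to a Schwartz function, using only that it solves $-\wt{\vp}_q'' + (\mb\sigma^2 - \lambda_0)\wt{\vp}_q = F_q$ on $\R_-$ with $\wt{\vp}_q(0)=0$ and $F_q\in\Sc(\R_-)$. I would do this by a weighted-energy bootstrap: multiplying the ODE by $\sigma^{2N}\wt{\vp}_q$ and integrating by parts on $(-\infty, 0)$, where the boundary contributions at $0$ vanish thanks to the Dirichlet condition, yields the a priori estimate
\[
    \mb \int_{-\infty}^{0}\! \sigma^{2N+2}|\wt{\vp}_q|^2\dd\sigma \le
    C_N\lp \int_{-\infty}^{0}\! \sigma^{2N}|F_q|^2\dd\sigma + \int_{-\infty}^{0}\!\sigma^{2N}|\wt{\vp}_q|^2\dd\sigma + \int_{-\infty}^{0}\!\sigma^{2N-2}|\wt{\vp}_q|^2\dd\sigma\rp
\]
for every $N\in\N$. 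Starting from the $\omega_{-q}$-integrability already known, a straightforward induction on $N$ propagates polynomial $\Lr^2$-decay of $\wt{\vp}_q$ at every order; the ODE then expresses $\wt{\vp}_q''$ algebraically in terms of $\wt{\vp}_q$ and $F_q$, and repeated differentiations transfer this decay to every higher derivative, giving $\wt{\vp}_q\in\Sc(\R_-)$ and closing the induction.

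The main obstacle is the decay-bootstrap step: one must use the confining potential $\mb\sigma^2$ to absorb both the $\lambda_0\wt{\vp}_q$ term and the cross-terms produced by the integration by parts, and one has to check that all boundary contributions at $\sigma=0$ are controlled (this is precisely where the Dirichlet condition $\wt{\vp}_q(0)=0$ enters). A conceptually cleaner alternative would be to expand $F_q$ in the orthonormal basis $\{\varsigma_i\Psi^{\GH}_{2i+1}(\mb^{\frac{1}{4}}\cdot)\}_i$ of $\Lr^2(\R_-)$ diagonalizing the Dirichlet harmonic oscillator on $\R_-$ (with eigenvalues $E_i=(4i+3)\sqrt{\mb}$) and to transfer rapid decay of the coefficients through the series \eqref{eq:1155} using $|E_i-E_j|\ge 4\sqrt{\mb}$ for $i\ne j$; however, the characterization of $\Sc(\R_-)$ by such coefficient decay is slightly more delicate on a half-line than on the full line, which is why I favor the direct bootstrap.
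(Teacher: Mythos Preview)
Your proposal is correct, but it works harder than necessary in one place and thereby takes a somewhat different route from the paper.

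For $\psi_q$ you and the paper agree: the explicit form $P_q^\psi(\rho)\exp(-\rho\sqrt{1-n_0^{-2}})$ is manifestly Schwartz. For $\vp_q$, the paper's argument is shorter than yours because it does not run any polynomial-weight bootstrap. Proposition~\ref{pro:solQ_B} already places $\vp_q$ in $\Hr^2(\R_-,\omega_{-q})$ with the \emph{exponential} weight $\omega_{-q}(\sigma)=\exp(2^{-q}|\sigma|)$, so $\vp_q$, $\vp_q'$, $\vp_q''$ automatically lie in every polynomially weighted $\Lr^2$ space (and, by a one-dimensional Sobolev embedding applied to $e^{c|\sigma|}\vp_q$, decay exponentially pointwise). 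Your weighted-energy estimate multiplying by $\sigma^{2N}\wt{\vp}_q$ is therefore reproving something that is already available for free; it is valid, but redundant. The paper then handles higher derivatives exactly as you do in your last sentence: from the ODE $\vp_q''=(\mb\sigma^2-\lambda_0)\vp_q-\lambda_q\vp_0-S_q^\vp$ together with the polynomial-coefficient structure of $\bA^-_\ell$ (Lemma~\ref{lem:Bqpm}), repeated differentiation and substitution of the ODEs at ranks $\le q$ yield an explicit representation $\vp_q^{(i)}=\sum_{\ell=0}^{q}\big(P_{q,i}^\ell\,\vp_\ell+Q_{q,i}^\ell\,\vp_\ell'\big)$ with polynomial $P_{q,i}^\ell,Q_{q,i}^\ell$, from which the Schwartz property follows at once.

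In short: your induction scheme and your treatment of the higher derivatives coincide with the paper's; the difference is that you insert a superfluous decay bootstrap where the paper simply reads off the exponential weighted $\Hr^2$ regularity already provided by Proposition~\ref{pro:solQ_B}. Your alternative via Hermite-coefficient decay would also work but, as you note, is less convenient on a half-line.
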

\begin{proof}
    From the expression \eqref{eq:PQ_B} of $\psi_q$, it is obvious that it belongs to $\Sc(\R_+)$.

    From Proposition \ref{pro:solQ_B}, we know that $\vp_q$ and its derivatives of order $\le2$ are exponentially decaying as $\sigma\to\infty$.
    Concerning higher order derivatives $\vp_q^{(i)}$, from the identity $\vp_q'' = (\mb\sigma^2 -\lambda_0)\vp_q - \lambda_0 \vp_0 -  S_q^\vp$ deduced from \eqref{eq:sysQ_Ba}, from \eqref{eq:Sq_B} and Lemma \ref{lem:Bqpm}, we find that there exist families of polynomials $P_{q,i}^\ell, Q_{q,i}^\ell$ such that
    \begin{equation}\label{eq:1505}
        \vp_q^{(i)} = \sum_{\ell = 0}^q \left( P_{q,i}^\ell\, \vp_\ell + Q_{q,i}^\ell\, \vp_\ell' \right).
    \end{equation}
    Hence $\vp_q^{(i)}$ is exponentially decaying too, and we have proved that $\vp_q$ belongs to $\Sc(\R_-)$.
\end{proof}

\subsubsection{Convergence}\label{sec:ccls_B}

The proof that the formal series
\begin{equation}
    \label{eq:formalsb}
    \sum_{q\in\N} \lambda_q h^{\frac{q}{2}},\qquad
    \sum_{q\in\N} \vp_q h^{\frac{q}{2}},  \qquad\mbox{and}\qquad
    \sum_{q\in\N} \psi_q h^{\frac{q}{2}},
\end{equation}
obtained from Proposition \ref{pro:solQ_B} give rise to a family of resonance quasi-pairs in the sense of Definition \ref{def:quasi} can be achieved exactly as in Section \ref{sec:ccls_A} for case \textsc{(a)}.
Namely, Lemma \ref{lem:Borel_A} and Definition \ref{def:uu_A} are respectively replaced by the following Lemma \ref{lem:Borel_B}  and Definition \ref{def:uu_B}.
\begin{lemma}\label{lem:Borel_B}
    Let $(\lambda_q)_{q\in\N}$, $(\vp_q)_{q\in\N}$ and $(\psi_q)_{q\in\N}$ given by Proposition \ref{pro:solQ_B}.
    There exist smooth functions $\lu_{\pj}\in\Cc^\infty([0,1])$, $\Phi_{\pj}\in\Cc^\infty([0,1],\Sc(\R_-))$ and
    $\Psi_{\pj}\in\Cc^\infty([0,1],\Sc(\R_+))$ such that for all $(h,\sigma,\rho)\in [0,1]\times\R_-\times\R_+$ and for all integer $N\ge0$, we have the following finite expansions with remainders
    \begin{subequations}
        \begin{align}
            \lu_{\pj}(h^{\frac{1}{2}})         & =
            \sum_{q=0}^{N-1} h^{\frac{q}{2}}\, \lambda_q + h^{\frac{N}{2}} R_N^\lambda(h^{\frac{1}{2}}),
                                               &   & \mbox{with}\quad R_N^\lambda\in\Cc^\infty([0,1])        \\
            \Phi_{\pj}(h^{\frac{1}{2}};\sigma) & =
            \sum_{q=0}^{N-1} h^{\frac{q}{2}}\, \vp_q(\sigma) + h^{\frac{N}{2}} R_N^\vp(h^{\frac{1}{2}};\sigma),
                                               &   & \mbox{with}\quad R_N^\vp\in\Cc^\infty([0,1],\Sc(\R_-))  \\
            \Psi_{\pj}(h^{\frac{1}{2}};\rho)   & =
            \sum_{q=0}^{N-1} h^{\frac{q}{2}}\, \psi_q(\rho) + h^{\frac{N}{2}} R_N^\psi(h^{\frac{1}{2}};\rho)
                                               &   & \mbox{with}\quad R_N^\psi\in\Cc^\infty([0,1],\Sc(\R_+))
        \end{align}
    \end{subequations}
\end{lemma}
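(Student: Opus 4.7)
The plan is to mirror verbatim the argument used in case \textsc{(a)} for Lemma \ref{lem:Borel_A}: once Proposition \ref{pro:solQ_B} and Lemma \ref{cor:deriveePoids} are in hand, the statement reduces to a direct application of Borel's summation theorem in a suitable Fréchet-space setting, followed by Taylor's theorem with integral remainder. The only bookkeeping difference with case \textsc{(a)} is that the small parameter is now $h^{\frac{1}{2}}$ instead of $h^{\frac{1}{3}}$; neither the target function spaces nor the abstract tools change.

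First I would observe that the required ingredients already belong to the right spaces: by Proposition \ref{pro:solQ_B} we have $\lambda_q\in\R$ for every $q\in\N$, and by Lemma \ref{cor:deriveePoids} we have $\vp_q\in\Sc(\R_-)$ and $\psi_q\in\Sc(\R_+)$ for every $q\in\N$. Recall that $\Sc(\R_\pm)$ equipped with the usual countable family of Schwartz seminorms is a Fréchet space, so these sequences make sense as data for a Fréchet-space-valued Borel construction.

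Next I would apply two versions of Borel's theorem. For the scalar sequence $(\lambda_q)_{q\in\N}$, the classical Borel theorem \cite[Thm.\ 1.2.6]{HorI} produces a function $\lu_{\pj}\in\Cc^\infty([0,1])$ whose Taylor expansion at $0$ in the variable $t=h^{\frac{1}{2}}$ has $\lambda_q$ as coefficient of $t^q$. For the two sequences $(\vp_q)_{q\in\N}\subset\Sc(\R_-)$ and $(\psi_q)_{q\in\N}\subset\Sc(\R_+)$, I would invoke the parametric Borel lemma (Lemma \ref{th:Borel} in Appendix), which is precisely the extension of the classical statement to sequences valued in a Fréchet space; this yields $\Phi_{\pj}\in\Cc^\infty([0,1],\Sc(\R_-))$ and $\Psi_{\pj}\in\Cc^\infty([0,1],\Sc(\R_+))$ whose Taylor jets at $t=0$ match $(\vp_q)$ and $(\psi_q)$ respectively.

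Finally, the finite expansions with remainders are obtained by applying Taylor's theorem with integral remainder at order $N$ to $\lu_{\pj}$, $\Phi_{\pj}$, and $\Psi_{\pj}$, which is exactly the content of Lemma \ref{lem:Taylor}; the remainder terms $R_N^\lambda$, $R_N^\vp$, $R_N^\psi$ inherit the smoothness and the Schwartz-valuedness of the functions they come from, giving the three claimed membership statements. No step presents any genuine obstacle: the work has already been absorbed in producing $(\lambda_q,\vp_q,\psi_q)$ within the correct spaces in Proposition \ref{pro:solQ_B} and Lemma \ref{cor:deriveePoids}. The only small care to take is to check that $\Sc(\R_\pm)$-valuedness is preserved when Borel-summing — which is guaranteed because the Borel summation proceeds seminorm by seminorm with the classical Borel cut-off trick, and each Schwartz seminorm is controlled by the construction.
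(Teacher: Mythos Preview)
Your proposal is correct and matches the paper's approach exactly: the paper does not give a separate proof for this lemma but simply states that the argument of Lemma \ref{lem:Borel_A} carries over verbatim, which amounts precisely to invoking Borel's theorem (classical for the scalars, Lemma \ref{th:Borel} for the Schwartz-valued sequences) together with Lemma \ref{lem:Taylor}. Your explicit citation of Lemma \ref{cor:deriveePoids} to secure $\vp_q\in\Sc(\R_-)$ is in fact a helpful clarification, since Proposition \ref{pro:solQ_B} alone only yields weighted Sobolev regularity.
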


\begin{definition}\label{def:uu_B}
    Choose a real number $\delta\in (0,\frac{1}{2})$ and a smooth cut-off function $\chi$, $0\le \chi\le1$, such that $\chi(\xi) = 1$ for $|\xi| \le \delta$ and $\chi(\xi) = 0$ for $|\xi| \ge 2\delta$.
    We define for any integer $m \ge 1$ with the notation $h=m^{-1}$, the quantities:
    \begin{align*}
        \ku_{\pj}(m)          & = \frac{m}{R\no_0}
        \sqrt{1+h\ \lu_{\pj}(h^{\frac{1}{2}})},                                                                                                                \\
        \vu_{\pj}(m;\xi)      & = \chi(\xi)\left\lbrace\begin{array}{ll}
            \Phi_{\pj}(h^{\frac{1}{2}};h^{-1}\xi) , & \xi \le 0 \\
            \Psi_{\pj}(h^{\frac{1}{2}};h^{-1}\xi),  & \xi > 0
        \end{array}\right. &                                              & \xi\in (-1,+\infty) \\
        \uu_{\pj}(m;r,\theta) & = \vu_{\pj}\lp m;\tfrac{r}{R}-1\rp\, \e^{\ic m \theta}
                              &                                                           & (r,\theta) \in (0,+\infty)\times\R / 2\pi\Z.
    \end{align*}
\end{definition}

One can show that the sequence $(\ku_{\pj}(m),\uu_{\pj}(m))_{m\ge 1}$  is a family of ``almost'' quasi-pairs in the sense of Lemma \ref{lem:QM_A}.
The main difference with case \textsc{(a)} in proving Lemma \ref{lem:QM_A} for the sequence $(\ku_{\pj}(m),\uu_{\pj}(m))_{m\ge 1}$ introduced in Definition \ref{def:uu_B} is that we do not have anymore an explicit expression for $\vp_q$ but this does not prevent to obtain the same estimates as in case~\textsc{(a)}.
We refer to \cite{MoitierPhD} for details.

\subsubsection{Proof of Theorem \ref{th:(b)}}\label{sec:ProofB}

A further correction will have to be made to transform
the sequence of functions $(\uu_{\pj}(m))_{m\ge 1}$ constructed in Definition \ref{def:uu_B} into a true family of resonance quasi-modes in the sense of Definition \ref{def:quasi}.
We set
\[
    \uu_{p;j}(m;r,\theta) \coloneqq \lp\vu\lp m;\tfrac{r}{R}-1\rp-v^*\lp m;\tfrac{r}{R}-1\rp\rp \ \e^{\ic m \theta}
\]
where $v^*$ is defined as in \eqref{eq:1555}
and $\ku_{p;j}(m) \coloneqq \ku(m)$.
Relying on Lemmas \ref{lem:Borel_B} and the analogous of \ref{lem:QM_A} for case \textsc{(b)}, one can check that the family $(\Kg_{p;j},\Ug_{p;j})$ where $\Kg_{p;j} = (\ku_{p;j}(m))_{m\ge 1}$ and $\Ug_{p;j} = (\uu_{p;j}(m))_{m\ge 1}$ satisfies the four conditions of Definition \ref{def:quasi}.

\section{Case \textsc{(c)} \ Quadratic potential well}
\label{sec:(c)}
We are now under Assumption \eqref{eq:(c)}.
We recall that case \textsc{(c)} corresponds to a situation where $\kb<0$ and
the potential $W$ has no local minimum at $R$ but has at least one local inner minimum $R_0$ over $(0,R)$.
Here, we construct explicit families of resonance quasi-pairs $\Fg_{p\pv j}$ localized around the circle $r=R_0$  inside the cavity $\Omega$.
Note that strictly speaking, these families of resonance quasi-pairs are not of whispering gallery type.

\subsection{Statements}
\label{ss:statC}

\begin{theorem}
    \label{th:(c)}
    Choose $p\in\{\pm1\}$.
    Let Assumptions \ref{as:n} be satisfied and
    assume $\kb < 0$.
    Let $R_0 \in (0,R)$ such that $1+\frac{R_0\, n'(R_0)}{n(R_0)} = 0$ and $\mb_0 \coloneqq 2-\frac{R_0^2\, n''(R_0)}{n(R_0)} > 0$, {\em cf.} \eqref{eq:C}.
    Then,  for each $j\in\N$, there exists a family of resonance quasi-pairs $\Fg_{p\pv j}=(\Kg_{p\pv j},\Ug_{p\pv j})$ with $\Kg_{p\pv j} = (\ku_{p\pv j}(m))_{m\ge1}$ and $\Ug_{p\pv j} = (\uu_{p\pv j}(m))_{m\ge1}$.

    \medskip
    (i) The regularity property \eqref{eq:contk}--\eqref{eq:contkN} with respect to $m$ holds with $\beta = \frac{1}{2}$, see \eqref{eq:kNb}.
    The coefficients $\Kr_{p\pv j}^{\,0}$ are all equal to $\lp R_0\,n(R_0)\rp^{-1}$, the coefficients of degree $1$ are zero, and the coefficients of degree $2$ are all distinct with $j$, see \eqref{eq:0915c}.

    \medskip
    (ii)  The functions $\uu_{p\pv j}(m)$ still have the form \eqref{eq:ua}
    with radial functions $\wu_{p\pv j}(m)$ that are smooth in the scaled variables $\sigma=m^{\frac{1}{2}}(r / R_0-1)$.
    There exists a smooth function $\Phi_{p\pv j} \in \Cc^\infty([0,1],\Sc(\R)) : (t,\sigma) \mapsto \Phi_{p\pv j}(t,\sigma)$ such that
    \begin{equation}\label{eq:vc}
        \wu_{p\pv j}(m\pv r) =
        \Xg(r)\, \Phi_{p\pv j}(m^{-\frac{1}{2}},\sigma)
    \end{equation}
    where $\Xg\in\Cc^\infty_0(\R_+)$,  $\Xg\equiv 1$ in a neighborhood of $R_0$.
\end{theorem}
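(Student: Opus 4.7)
The plan is to follow the scheme of Sections \ref{sec:(a)} and \ref{sec:(b)}, adapted to the case where the potential well sits at the interior point $R_0\in(0,R)$. Introduce the dimensionless variable $\xi = r/R_0 - 1$, which converts the radial equation into a semiclassical Schr\"odinger problem of the form \eqref{eq:schr_loc} posed on the interval $(-1,+\infty)$, but now with \emph{smooth} coefficients across $\xi=0$: no interface and no jump conditions. By assumption \eqref{eq:C}, the effective potential $V$ satisfies $V(0)=0$, $V'(0)=0$ and $V''(0)=2\mb_0>0$, so $\xi=0$ is a non-degenerate quadratic well. The general framework of Section~\ref{subsec:genConcepts} then applies with the single scaling $\sigma = \xi/h^{1/2}$ on both sides of $\xi=0$, the choice of parameters being $\varkappa=2$, $\alpha=\alpha'=\frac12$, $\beta=\frac12$.

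At the principal order, the problem reduces to a standard harmonic oscillator on the whole real line,
\[
   -\vp_0''(\sigma) + \mb_0\,\sigma^2\,\vp_0(\sigma) = \lambda_0\,\vp_0(\sigma),\qquad \sigma\in\R,
\]
whose $\Lr^2(\R)$ eigenvalues are $(2\ell+1)\sqrt{\mb_0}$ with eigenfunctions $\Psi^{\GH}_\ell(\mb_0^{1/4}\sigma)$. For each chosen $j\in\N$ one selects the corresponding eigenpair (with the index shift dictated by the precise form of $V$ at $\xi=0$) and builds the formal series $\vp=\sum_q h^{q/2}\vp_q$, $\lambda=\sum_q h^{q/2}\lambda_q$ of \eqref{eq:1223} around it.

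At each order $q\ge 1$ one obtains an inhomogeneous equation on $\R$,
\[
   -\vp_q'' + (\mb_0\,\sigma^2 - \lambda_0)\,\vp_q = \lambda_q\,\vp_0 + S_q^\vp,
\]
with $S_q^\vp$ determined by the Taylor expansion of the coefficients of $\Lc$ and $V$ at $\xi=0$ and by $\vp_0,\dots,\vp_{q-1}$. Because the underlying operator is self-adjoint on $\Lr^2(\R)$, the Fredholm alternative fixes $\lambda_q$ by projection of $S_q^\vp$ onto $\vp_0$, and then determines $\vp_q$ uniquely in the orthogonal complement. The crucial point, which is the reason case \textsc{(c)} yields a \emph{computable} algorithm in the sense of Theorem \ref{th:A}, is that no Dirichlet condition at $\sigma=0$ appears: using the Hermite recurrences \eqref{eq:GHrelation}, one shows by induction that $S_q^\vp$, and hence $\vp_q$, are finite linear combinations of $P(\sigma)\,\Psi^{\GH}_\ell(\mb_0^{1/4}\sigma)$ with polynomials $P$ of controlled degree. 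The sequence of problems thus closes in finite-dimensional polynomial spaces, giving matrix-like elementary steps as in Remark \ref{rem:Algo_A}, in contrast with the infinite-dimensional expansions of case \textsc{(b)}.

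Finally, one resums the formal series by Borel's theorem (analog of Lemma \ref{lem:Borel_A}) to obtain $\Phi_{p\pv j}\in\Cc^\infty([0,1],\Sc(\R))$, then defines
\[
   \ku_{p\pv j}(m) = \frac{m}{R_0 n(R_0)}\sqrt{1 + h\,\lu_{\pv j}(h^{1/2})},
   \qquad
   \uu_{p\pv j}(m;r,\theta) = \Xg(r)\,\Phi_{p\pv j}\bigl(h^{1/2},\sigma\bigr)\,\e^{\ic m\theta},
\]
with $h=1/m$ and $\Xg$ a smooth cut-off equal to $1$ on a neighborhood of $R_0$ whose support is compactly contained in $(0,R)$. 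This is where case \textsc{(c)} is decidedly simpler than cases \textsc{(a)} and \textsc{(b)}: for $m$ large, the support of $\uu_{p\pv j}(m)$ lies strictly inside $\Omega$, so the transmission jumps \eqref{eq:Ppb} across $\partial\Omega$ vanish identically and no lifting correction analogous to \eqref{eq:1555} is needed. The residual estimate \eqref{eq:minfty} then follows by the same scheme as in Lemma \ref{lem:QM_A}: split the residual into an interior contribution controlled by the formal-series remainders and a commutator with the cut-off, the latter being $\Oc(m^{-\infty})$ by the super-algebraic decay of $\Phi_{p\pv j}(h^{1/2},\cdot)$ in $\Sc(\R)$. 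The main bookkeeping obstacle is checking stability of the polynomial-times-Gauss-Hermite ansatz through the recurrence, but once this is done the remaining arguments mirror those of Sections \ref{sec:(a)}--\ref{sec:(b)}.
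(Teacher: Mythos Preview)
Your proposal is correct and follows essentially the same approach as the paper: localize at $R_0$, use the single scaling $\sigma=\xi/h^{1/2}$ on all of $\R$, solve the full harmonic oscillator at leading order, and exploit the Hermite recurrences to show that each $\vp_q$ is a \emph{finite} combination of Gauss--Hermite functions (this is Proposition~\ref{pro:solQ_C}), yielding a computable algorithm. Two minor clarifications: there is no ``index shift'' at leading order---one simply takes $\vp_0=\Psi^{\GH}_j(\mb_0^{1/4}\sigma)$ with $\lambda_0=(2j+1)\sqrt{\mb_0}$---and the paper's recurrence actually absorbs the polynomial factors entirely, so $\vp_q$ is a finite sum of pure $\Psi^{\GH}_i$ (for $0\le i\le j+3q$) rather than polynomial-times-Hermite terms.
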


These families of resonance quasi-pairs are \emph{not} of whispering gallery type:
The quasi-modes are strictly localized inside the cavity.
The asymptotic expansion of $\ku_{p\pv j}$ starts as:
\begin{equation}\label{eq:0915c}
    \ku_{p\pv j}(m) = \frac{m}{R_0{n(R_0)}}
    \lc 1 + \sum_{\ell'=1}^2 \kr_{p\pv j}^{2\ell'}
    \lp\frac{\sqrt{\mb_0}}{m}\rp^{\ell'} +
    \Oo\lp {m^{-\frac{5}{2}}} \rp \rc
\end{equation}
with $\kr_{p\pv j}^2 = j+\frac{1}{2}$ and
\begin{multline*}
    \kr_{p\pv j}^4 = \frac{1}{64}\biggl[ 13-16p + \frac{8p^2-16p-5}{\mb_0} - \frac{2\eta_3-3\eta_4}{3\mb_0^2} -\frac{7\eta_3^2}{9\mb_0^3}\\
        +(2j+1)^2 \lp 5 - \frac{35}{\mb_0} + \frac{10\eta_3+\eta_4}{\mb_0^2} - \frac{5\eta_3^2}{3\mb_0^3}\rp\biggr] 
\end{multline*}
where
\begin{align*}
    \eta_3 = 6 + \frac{R_0^3\, n^{(3)}(R_0)}{n(R_0)} \quad
    \text{and} \quad
    \eta_4 = 24 - \frac{R_0^4\, n^{(4)}(R_0)}{n(R_0)} .
\end{align*}
Note that the coefficients of odd order $\kr_{p\pv j}^1$ and $\kr_{p\pv j}^3$ are zero.

The asymptotic expansion of the quasi-modes starts with
\begin{equation}
    \uu_{p\pv j}(\pm m\pv x,y) =
    \Xg(r) \Psi^{\GH}_j\lp\mb_0^{\frac{1}{4}} m^{\frac{1}{2}} \lp\tfrac{r}{R_0}-1\rp \rp\e^{\pm\ic m\theta} + \Oo\lp m^{-\frac{1}{2}}\rp .
\end{equation}

\begin{remark}
    As in case  \textsc{(b)},
    the quasi-resonances tend to a rectangular lattice and 
    the gaps between two resonances with consecutive indices satisfy
\[
\begin{aligned}
    \ku_{p\pv j}(m+1) - \ku_{p\pv j}(m) &= \frac{1}{R_0 n(R_0)}  + \Oo\lp {m^{-2}} \rp ,\\
    \ku_{p\pv j+1}(m) - \ku_{p\pv j}(m) &=
    \frac{\sqrt{\mb_0}}{R_0 n(R_0)}  + \Oo\lp {m^{-1}} \rp .
\end{aligned}
\]
\end{remark}

\subsection{Proof}
The proof of Theorem \ref{th:(c)} can be seen as a simpler version of the proof of  Theorem \ref{th:(b)}
since the driving operator $\bA^-_0 =  -\partial_\sigma^2 + \mb_0\sigma^2$ of the asymptotic expansion is the same quadratic oscillator on both side of the potential well location $R_0$, \textsl{i.e.}\ $ \bA^+_0 =\bA^-_0$.
Therefore, we will not detail the entire proof of Theorem \ref{th:(c)} but we will focus on an interesting byproduct of our approach compared to the results of \cite{HelSjo86}, viz a finite algorithm for computing the terms of the asymptotic expansion of the resonance quasi-pairs.

In the framework of the Schr\"odinger analogy introduced in Section \ref{sec:3types},
we start this time by introducing the dimensionless variable $\xi = \frac{r}{R_0}-1$
(instead of $\xi = \frac{r}{R}-1$ as in the two previous cases)
and the unknown $v$ such that   $v(\xi) = w(R_0(1+\xi))$.
This leads to the same equation \eqref{eq:schr_loc} where $\wt{ E} = R_0^2\, \nx(0)^2\, ( E - W_0)$ with $\nx(\xi) =n(R_0(1+\xi))$.
Compared to the general framework introduced in Section \ref{subsec:genConcepts}
for cases \textsc{(a)} and  \textsc{(b)}, the potential $V$ is smooth at its local minimum at $\xi=0$.
As a consequence, it is not anymore necessary to introduce a different scaling on both side of $\xi=0$.
Moreover, it is still possible to take advantage of the framework of Section \ref{subsec:genConcepts}, but taking into account the fact the variable $\sigma = h^{-\alpha}\,\xi $  must be considered over $\R$ and not only over $\R_-$.
This framework applies with the same relevant quantities as in case \textsc{(b)} (the ones affecting $\Lc$ on $\R_-$).
Denoting by $\vp$ the new unknown such that $\vp(\sigma)=v(\xi)$, equation \eqref{eq:schr_loc} becomes $-\Lc_h\vp + V_h\vp = \lambda\vp$, $\sigma \in \R$, where the operator $\Lc_h$ and the potentials $V_h$ have the same expressions than $\Lc_h^-$ in \eqref{eq:Lh} and $V_h^-$ in~\eqref{eq:Vh} with $\nx(\xi) =n(R_0(1+\xi))$.
The decay condition is $\vp \in \Sc(\R)$.

We define a formal series of operators in terms of powers of $h^\frac{1}{2}$, similarly to \eqref{eq:Apm}, as $-\Lc_h + V_h \sim \sum_{q\in\N} h^{\frac{q}{2}}\,\bA_q$ and we look for a function $\vp$ and a scalar $\lambda$ in the form of the formal series $\vp = \sum_{q\in\N} h^{\frac{q}{2}}\, \vp_q$ and $\lambda = \sum_{q\in\N} h^{\frac{q}{2}}\, \lambda_q$.
One can show that the coefficients $\bA_q$, $q\in\N$, satisfy Lemma \ref{lem:Bqpm} (the statement on $\bA_q^-$).
Then, by the same arguments as in cases \textsc{(a)} and \textsc{(b)} that can equally apply here, we obtain that $ (\vp_0,\lambda_0)$ is solutions to the full harmonic oscillator equation (in opposition to the half harmonic oscillator of  case \textsc{(b)})
\begin{equation}\label{eq:1134}
    -\vp_0''(\sigma) + \mb_0\, \sigma^2\vp_0(\sigma) = \lambda_0\vp_0, \qquad
    \sigma \in \R, \qquad \vp_0 \in \Sc(\R),
\end{equation}
and that for $q\geq 1$, $ (\vp_q,\lambda_q)$ are solutions to the sequence of problems
\begin{subequations}
    \begin{empheq}[left={ \hspace*{-10mm}(\mathcal{R}^{\text{\textsc{(c)}}}_q)\quad     \empheqlbrace}]{align}
        \hspace*{6mm}   -\vp_q''(\sigma) + (\mb_0\sigma^2 - \lambda_0)\vp_q(\sigma) &= \lambda_q\,\vp_0(\sigma) + S_q^\vp(\sigma)
        & \sigma \in \R \label{eq:sysQ_C}\\
        \vp_q &\in \Sc(\R) &
    \end{empheq}
\end{subequations}
with the right hand side term $S_q^\vp$ defined as $S_q^\vp = - \bA_q \,\vp_{0} + \sum_{\ell=1}^{q-1} (\lambda_\ell - \bA_\ell) \, \vp_{q-\ell}$.

Solutions to the full harmonic oscillator equation \eqref{eq:1134} are
\begin{equation}\label{eq:lambda0_C}
    \vp_0(\sigma) = \Psi^{\GH}_j\lp\mb_0^{\frac{1}{4}} \sigma\rp \quad
    \text{and} \quad
    \lambda_0 = (2j+1) \sqrt{\mb_0}\quad  (j\in\N).
\end{equation}

For $q\geq 1$, the features of the solution $(\vp_q, \lambda_q)$ to problem $(\mathcal{R}^{\text{\textsc{(c)}}}_q)$ are detailed in the following proposition.
Its proof below also provides an algorithm to compute $\vp_q$ and $\lambda_q$.

\begin{proposition}\label{pro:solQ_C}
    Let $j\in\N$ and let $(\vp_0,\lambda_0)$ given by \eqref{eq:lambda0_C}.
    Then there exist, for any $q\ge1$, a unique $\lambda_q\in\R$ and a unique $(b_q^i)_{i \in \{0, \ldots, j+3q\}} \in \R^{j+3q+1}$ with $b_q^j = 0$ such that by setting
    \begin{equation}\label{eq:PQ_C}
        \vp_q(\sigma) = \sum_{i = 0}^{j+3q} b_q^i\, \Psi_i^{\GH}\lp\mb_0^{\frac{1}{4}}\sigma\rp, \quad \forall \sigma \in \R,
    \end{equation}
    the collection $(\vp_0,\ldots,\vp_q,\lambda_0,\ldots,\lambda_q)$ solves the sequence of problems $(\mathcal{R}^{\text{\textsc{(c)}}}_\ell)_{\ell = 0,\ldots,q}$.
\end{proposition}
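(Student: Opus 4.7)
The plan is induction on $q \geq 1$. Write $\Psi_i$ as shorthand for $\Psi_i^{\GH}(\mb_0^{\frac{1}{4}}\,\cdot)$; these functions form an orthogonal basis of $\Lr^2(\R)$ and are eigenfunctions of the full-line harmonic oscillator $-\partial_\sigma^2 + \mb_0\sigma^2$ with eigenvalue $(2i+1)\sqrt{\mb_0}$. The operator on the left-hand side of \eqref{eq:sysQ_C} is thus diagonalized in this basis, with $\Psi_j$ spanning its one-dimensional kernel.

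First, I would show that $S_q^\vp$ is itself a finite linear combination $\sum_{i=0}^{j+3q} s_q^i\,\Psi_i$ of rescaled Gauss--Hermite functions. The inductive hypothesis gives this structure for each $\vp_{q-\ell}$, with top index $j+3(q-\ell)$ when $\ell\geq 1$, and just $j$ when $\ell = 0$. By Lemma \ref{lem:Bqpm} --- which applies here verbatim since the operators $\bA_q$ coincide with the $\bA^-_q$ of case \textsc{(b)} --- each $\bA_\ell$ is a second-order operator whose polynomial coefficients have degrees bounded linearly in $\ell$. The Gauss--Hermite recurrences \eqref{eq:GHrelation} say that multiplication by $\sigma$ and differentiation each raise the maximal index by at most one; hence $\bA_\ell$ raises it by at most $\ell+2$. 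Checking the contributions: $\bA_q\vp_0$ reaches index at most $j+q+2$, and for $1 \le \ell \le q-1$ the term $\bA_\ell\vp_{q-\ell}$ reaches index at most $j+3(q-\ell)+\ell+2 = j+3q-2\ell+2$; the terms $\lambda_\ell\vp_{q-\ell}$ raise no index at all. For every $q\geq 1$ all these bounds are $\leq j+3q$.

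Second, insert the ansatz $\vp_q = \sum_{i=0}^{j+3q} b_q^i \Psi_i$ into \eqref{eq:sysQ_C}. Using $\vp_0 = \Psi_j$ and the eigen-equations for $\Psi_i$, the ODE decouples, by orthogonality, into the scalar system
\begin{equation*}
   2(i-j)\sqrt{\mb_0}\, b_q^i \;=\; \lambda_q\, \delta_{ij} + s_q^i, \qquad 0 \leq i \leq j+3q.
\end{equation*}
The line $i = j$ is the Fredholm solvability condition and fixes $\lambda_q = -s_q^j$ uniquely. The lines $i\neq j$ fix $b_q^i = s_q^i\big/\bigl(2(i-j)\sqrt{\mb_0}\bigr)$ uniquely. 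The kernel coefficient $b_q^j$ is free, and the prescribed normalization $b_q^j = 0$ pins it down. The resulting $\vp_q$, being a finite sum of Schwartz functions, lies in $\Sc(\R)$ automatically.

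The main point to check carefully is the degree bookkeeping that produces the sharp index bound $j+3q$. Beyond this, there is no real obstacle: unlike the half-line case \textsc{(b)}, the full-line harmonic oscillator admits a complete Gauss--Hermite diagonalization on which multiplication by $\sigma$ and $\partial_\sigma$ act as tridiagonal --- hence \emph{finite} --- operators, so the inversion of $-\partial_\sigma^2 + \mb_0\sigma^2 - \lambda_0$ on the orthogonal complement of $\Psi_j$ preserves finite Gauss--Hermite expansions. This algebraic closure is precisely what makes the scheme for case \textsc{(c)} a \emph{finite} algorithm, in contrast with the infinite series \eqref{eq:1155} arising in case \textsc{(b)}.
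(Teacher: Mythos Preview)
Your proof is correct and follows essentially the same approach as the paper: both proceed by induction, use Lemma~\ref{lem:Bqpm} together with the Gauss--Hermite recurrences~\eqref{eq:GHrelation} to show $S_q^\vp$ is a finite combination of $\Psi_i$ with the same index bounds $j+q+2$ and $j+3q-2\ell+2$, and then diagonalize in the Gauss--Hermite basis to read off $\lambda_q$ and the $b_q^i$ directly. Your formula $b_q^i = s_q^i\big/\bigl(2(i-j)\sqrt{\mb_0}\bigr)$ correctly includes the $\sqrt{\mb_0}$ factor coming from the rescaled eigenvalues.
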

\begin{proof}
    Relations \eqref{eq:GHrelation} combined with Lemma \ref{lem:Bqpm} show that there exists  $(d_q^i)_{i \in \{0, \ldots, j+3q\}} \in \R^{j+3q+1}$ such that
    \begin{equation}\label{eq:1214}
        S_q^\vp(\sigma) = \sum_{i = 0}^{j+3q} d_q^i\, \Psi_i^{\GH}\lp\mb_0^{\frac{1}{4}}\sigma\rp \quad \forall \sigma \in \R.
    \end{equation}
    Namely, on the one hand,  relations \eqref{eq:GHrelation} indicate that the $\ell$-th derivative of $\Psi_i^{\GH}$ and the function obtained by multiplying $\Psi_i^{\GH}$ by $z^\ell$ can be expressed as a linear combination of Gauss-Hermite functions up to order $i + \ell$.
    On the other hand, Lemma \ref{lem:Bqpm} indicates that $\bA_q^-\vp_0$ and $(\lambda_\ell - \bA_\ell^-)\vp_{q-\ell}$, for $1 \le \ell \le q - 1$ can respectively be expressed as a linear combination of Gauss-Hermite function up to order $j+q+2$ and $j+3q-2\ell+2$, these two numbers being bounded by $j+3q$.

    Equation \eqref{eq:sysQ_C} has a solution in $\Lr^2(\R)$ if, and only if, $\lambda_q\,\vp_0 + S_q^\vp$ is orthogonal to $\Psi_j^{\GH}(\mb_0^{\frac{1}{4}}\cdot)$; This implies that $\lambda_q = -d_q^j$.
    Moreover, since the operator $-\partial_\sigma^2 + \mb_0\, \sigma^2 - \hbox{$ (2j+1)$}\sqrt{\mb_0}$ is diagonalizable and invertible on $\Vect(\Psi_i^{\GH}(\mb_0\,\cdot) \mid i \ge 0, \ i \neq j)$, we get $b_q^i = \frac{d_q^i}{2(i-j)}$ for $i \in \{0,\ldots,j+3q\} \setminus \{j\}$ and $b_q^j = 0$.
\end{proof}

\begin{remark}
    From the proof of Proposition \ref{pro:solQ_C} we can deduce a finite algorithm for the computation of the terms in the asymptotic expansion of the resonance quasi-pairs because the expression of $S_q^\vp$ in \eqref{eq:1214} involves a finite sum and because the computation of the solution $(\lambda_q,\vp_q)$ is explicit form the coefficients of $S_q^\vp$.
\end{remark}

The proof that the formal series  $
    \sum_{q\in\N} \lambda_q h^{\frac{q}{2}} $
and
$
    \sum_{q\in\N} \vp_q h^{\frac{q}{2}}$
obtained from Proposition \ref{pro:solQ_C} give rise to a family of resonance quasi-pairs in the sense of Definition \ref{def:quasi} can be achieved exactly as in Section \ref{sec:ccls_B} for case \textsc{(b)}.
Note that in order to use  Borel's Theorem and to obtain the required estimates, we have to show that $\vp_q \in \Sc(\R) \cap \Hr^2(\R,\e^{|\sigma|}\dd{\sigma})$.
This properties can be deduced directly from equation \eqref{eq:PQ_C}.

Finally, we can conclude with the proof of Theorem  \ref{th:(c)} in a way very similar to the one of Theorem  \ref{th:(b)}  as detailed in Section \ref{sec:ProofB}.

\section{Proximity between quasi-resonances and true resonances}
\label{sec:proxy}

\subsection{Separation of quasi-resonances, quasi-orthogonality of quasi-modes}
For the three cases \textsc{(a)}, \textsc{(b)}, and \textsc{(c)}, {\em cf.} Theorems  \ref{th:(a)}, \ref{th:(b)}, and \ref{th:(c)}, we have exhibited families of resonance quasi-pairs in the sense of Definition \ref{def:quasi}.
Namely, for each $j \ge 0$ and $m\ge1$, we have constructed a quasi-pair $(\ku_{p\pv j}(m),\uu_{p\pv j}(m))$ where $\ku_{p\pv j}(m) \in \R_+$ is a quasi-resonance and $\uu_{p\pv j}(m) \in \Hr_p^2(\R^2,\Omega)$ is a compactly supported quasi-mode.
Actually, to each quasi-resonance $\ku_{p\pv j}(m)$, we can associate two quasi-modes: $\uu_{p\pv j}(m)$ and its conjugate.
These quasi-modes are quasi-orthogonal with respect to $j$ and $m$, as stated in the next lemma.

We consider the Hilbert space $\Lr^2(\R^2,n(x)^{p+1}\dd{x})$ and denote its scalar product by
\begin{align*}
    \big\langle f,g\big\rangle 
    = \int_{\R^2} \overline{f(x)}\, g(x)\, n(x)^{p+1}\dd{x} \qquad
    \text{for } f,g \in \Lr^2(\R^2,n(x)^{p+1}\dd{x}).
\end{align*}

\begin{lemma}\label{lem:quasiPS}
    For all the three cases \textsc{(a)}, \textsc{(b)}, and \textsc{(c)}, and for all $i,j \ge 0$ and $m,m' \ge 1$, we have $\big\langle \overline{\uu_{p\pv i}(m)}, \uu_{p\pv j}(m')\big\rangle
        = 0$ and
    \begin{align*}
        \big\langle \uu_{p\pv i}(m), \uu_{p\pv j}(m')\big\rangle 
        =
        \begin{dcases}
            1                     & \text{if } m = m' \text{ and } i = j ,   \\
            0                     & \text{if } m \neq m',                    \\
            \Oo\lp m^{-\infty}\rp & \text{if } m = m' \text{ and } i \neq j.
        \end{dcases}
    \end{align*}
    For any $m\ge1$ and $i,j \ge 0$, we have the separation property
    \begin{align}
        \label{eq:sep}
        \ku_{p\pv i}(m)^2 - \ku_{p\pv j}(m)^2 =
        \begin{dcases}
            C^{\textsc{(a)}}_{ij} m^{\frac{4}{3}} + \Oc(m) &
            \text{in case \textsc{(a)} },                    \\
            C^{\textsc{(x)}}_{ij} m + \Oc(m^{\frac{1}{2}}) &
            \text{in cases \textsc{(b)},\textsc{(c)} },      \\
        \end{dcases}
    \end{align}
    with $C^{\textsc{(x)}}_{ij}\neq0$ if $i\neq j$.
\end{lemma}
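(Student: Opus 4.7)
The plan is to reduce everything to equal polar indices by angular Fourier analysis, then prove the separation \eqref{eq:sep} by a direct asymptotic computation, and finally deduce quasi-orthogonality from a commutator identity for a symmetric operator. Since each quasi-mode has the form $\uu_{p\pv j}(m;r,\theta)=\wu_{p\pv j}(m;r)\,\e^{\ic m\theta}$, the integrand $\uu_{p\pv i}(m)\,\uu_{p\pv j}(m')$ carries $\e^{\ic(m+m')\theta}$ while $\overline{\uu_{p\pv i}(m)}\,\uu_{p\pv j}(m')$ carries $\e^{\ic(m'-m)\theta}$. Using $m,m'\ge 1$, the first exponent is never zero, so $\langle\overline{\uu_{p\pv i}(m)},\uu_{p\pv j}(m')\rangle=0$ for all indices; and the second exponent is nonzero whenever $m\ne m'$, so $\langle\uu_{p\pv i}(m),\uu_{p\pv j}(m')\rangle=0$ in that case. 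The diagonal $m=m'$, $i=j$ is the normalization built into Definition \ref{def:quasi}. Only the case $m=m'$, $i\ne j$ requires further work.

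For the separation \eqref{eq:sep}, I would square the expansions of Theorems \ref{th:(a)}--\ref{th:(c)}: the zeroth-order coefficients coincide, the degree-$1$ coefficients vanish since $\Kr^1_{p\pv j}=0$, and the degree-$2$ coefficients produce, after subtraction,
\begin{equation*}
C^{\textsc{(a)}}_{ij}=\frac{(\as_i-\as_j)(2\kb)^{\frac{2}{3}}}{(Rn_0)^2},\quad
C^{\textsc{(b)}}_{ij}=\frac{4(i-j)\sqrt{\mb}}{(Rn_0)^2},\quad
C^{\textsc{(c)}}_{ij}=\frac{2(i-j)\sqrt{\mb_0}}{(R_0\,n(R_0))^2},
\end{equation*}
each nonzero for $i\ne j$ since the Airy zeros $\as_i$ are simple and distinct. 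The remainders lie at the orders stated in \eqref{eq:sep}.

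To handle the off-diagonal inner products I would introduce the operator $\Ac=-n^{-(p+1)}\,\div(n^{p-1}\nabla\cdot)$, symmetric on compactly supported elements of $\Hr_p^2(\R^2,\Omega)$: integrating by parts separately on $\Omega$ and $\R^2\setminus\overline\Omega$, the interface contributions assemble into $\int_{\partial\Omega}\bar u\,[n^{p-1}\partial_\nu v]$, which vanishes by the transmission conditions in \eqref{eq:dom}. Setting $r_i\coloneqq \Ac\uu_{p\pv i}(m)-\ku_{p\pv i}(m)^2\uu_{p\pv i}(m)$, the estimate \eqref{eq:minfty} combined with the two-sided bound $1\le n\le n_0$ yields $\|r_i\|_{\Lr^2(\R^2,n^{p+1}\dd x)}=\Oc(m^{-\infty})$. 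The classical pairing identity then reads
\begin{equation*}
\lp\ku_{p\pv j}(m)^2-\ku_{p\pv i}(m)^2\rp\big\langle\uu_{p\pv i}(m),\uu_{p\pv j}(m)\big\rangle=\langle r_i,\uu_{p\pv j}(m)\rangle-\langle\uu_{p\pv i}(m),r_j\rangle,
\end{equation*}
whose right-hand side is $\Oc(m^{-\infty})$ by Cauchy-Schwarz and unit normalization. Dividing by the lower bound from \eqref{eq:sep} absorbs a single polynomial factor of $m^{4/3}$ (case \textsc{(a)}) or $m$ (cases \textsc{(b)},\textsc{(c)}) inside the super-algebraic decay. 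The main obstacle is cosmetic rather than conceptual: one must verify the symmetry of $\Ac$ on the transmission domain, and check that the weight $n^{p+1}$ in the scalar product is compatible with \eqref{eq:minfty} — both follow from the jump conditions and the pointwise bounds on $n$.
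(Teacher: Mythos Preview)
Your proof is correct and follows essentially the same approach as the paper: angular Fourier orthogonality for the easy cases, explicit asymptotic computation of the separation constants from the expansions of Theorems~\ref{th:(a)}--\ref{th:(c)}, and the symmetric-operator pairing identity combined with Cauchy--Schwarz and \eqref{eq:sep} for the off-diagonal estimate. Your version is slightly more explicit than the paper's in that you compute the actual constants $C^{\textsc{(x)}}_{ij}$ and spell out the symmetry of $\Ac$ on $\Hr_p^2(\R^2,\Omega)$, whereas the paper integrates by parts directly and leaves the separation constants implicit.
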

\begin{proof}
    The relation $\langle \uu_{p\pv i}(m), \uu_{p\pv j}(m')\rangle = 1$, for all $j \ge 0$ and $m \ge 1$, comes from the normalization of the quasi-mode in Definition \ref{def:quasi}.
    The relations $\langle \overline{\uu_{p\pv i}(m)}, \uu_{p\pv j}(m')\rangle = 0$, for all $i,j \ge 0$ and $m,m' \ge 1$, and $\langle \uu_{p\pv i}(m), \uu_{p\pv j}(m')\rangle = 0$, for all $i,j \ge 0$ and $m \neq m'$, $m,m' \ge 1$, are deduced from the identity $\int_0^{2\pi} \e^{\ic q \theta}\dd{\theta} = 0$ for all integer $q \neq 0$.

    For the last estimate, we consider $i \neq j$, $i,j \ge 0$, and $m \ge 1$.
    By construction, there exists $R_q \in \Lr^2(\R^2)$, for $q \in \{i,j\}$, such that $\Vert R_q\Vert_{\Lr^2(\R^2)} = \Oo(m^{-\infty})$ and
    \begin{align}
        \label{eq:Rq}
        \ku_{p\pv q}(m)^2\, n^{p+1}\, \uu_{p\pv q}(m)
        = -\div\lp n^{p-1}\, \nabla\uu_{p\pv q}(m)\rp - R_q.
    \end{align}
    Using the conjugate of this identity for $q=i$, we deduce:
    \begin{multline*}
        \ku_{p\pv i}(m)^2
        \int_{\R^2} \overline{\uu_{p\pv i}(m)}\, \uu_{p\pv j}(m)\, n^{p+1}\dd{x} =\\
        -\int_{\R^2} \div\lp n^{p-1}\, \nabla\overline{\uu_{p\pv i}(m)}\rp \uu_{p\pv j}(m) \dd{x} - \int_{\R^2} \overline{R_i}\, \uu_{p\pv j}(m)\dd{x}.
    \end{multline*}
    Integrating by parts and using again \eqref{eq:Rq}, we get
    \begin{align*}
        \lp\ku_{p\pv i}(m)^2 - \ku_{p\pv j}(m)^2\rp
        \big\langle\uu_{p\pv i}(m), \uu_{p\pv j}(m)\big\rangle_{\Lr^2(\R^2)}
        = \int_{\R^2}
        \lp\overline{\uu_{p\pv i}(m)}\, R_j - \overline{R_i}\, \uu_{p\pv j}(m)\rp\dd{x}.
    \end{align*}
    Taking the modulus and using Cauchy-Schwarz inequality, we obtain
    \begin{align*}
        \lv\ku_{p\pv i}(m)^2 - \ku_{p\pv j}(m)^2\rv\
        \lv\big\langle\uu_{p\pv i}(m), \uu_{p\pv j}(m)\big\rangle_{\Lr^2(\R^2)}\rv
         & \le \Vert R_i\Vert_{\Lr^2(\R^2)} + \Vert R_j\Vert_{\Lr^2(\R^2)} \\
         & = \Oo\lp m^{-\infty}\rp .
    \end{align*}
    Then, we use the separation property \eqref{eq:sep} (which is an obvious consequence of the asymptotic formulas for $\ku_{p\pv j}(m)$ obtained in each case) and
    finally get the estimate $\langle \uu_{p\pv i}(m), \uu_{p\pv j}(m')\rangle  = \Oo(m^{-\infty})$.
\end{proof}

\subsection{Spectral-like theorems for resonances}
We have constructed well separated quasi-pairs for the operator
\[
    P \coloneqq -n^{-p-1}\div(n^{p-1}\, \nabla \cdot)
\]
with domain $\Hr_p^2(\R^2,\Omega)$ on the Hilbert space $\Lr^2(\R^2,n(x)^{p+1}\dd{x})$.
The operator $P$ is self-adjoint and its spectrum $\Sigma(P)$ reduces to its essential spectrum, equal to $[0,+\infty)$.
If we apply the spectral theorem \cite[Theorem 5.9]{HisSig96} to our quasi-resonances for the operator $P$, we get that for each quasi-resonance $\ku_{p\pv j}(m)$ there exists an interval $I$ of length $\Oo(m^{-\infty})$ such that the intersection $\Sigma(P)\cap I$ is non empty, which is useless, since we know already that $\Sigma(P)=[0,+\infty)$.

If the operator $P$ has been defined as the Dirichlet realization of $-n^{-p-1}\div(n^{p-1}\, \nabla \cdot)$ on a bounded open set containing $\overline{\Omega}$, then its spectrum would have been discrete. In such case, the application of the spectral theorem would be more significant. Nevertheless, this procedure of cut-off would not inform us about resonances.

That is why we need to use a spectral-like theorem for resonances. Two statements are available in the literature: one from \textsc{Tang} and \textsc{Zworski} \cite{TanZwo98}, and another from~\textsc{Stefanov} \cite{Ste99}.
Those theorems lie in the \emph{black box scattering} framework. We are going to present the main assumptions and results of these papers in a simplified way, convenient for our application.

In dimension $2$, the main ingredients are
\begin{itemize}
    \item A complex Hilbert space $\Hc$ with orthogonal decomposition (with positive $\varrho_0$)
          \[
              \Hc = \Hc_{\varrho_0} \oplus \Lr^2(\R^2\setminus B(0,\varrho_0))
          \]
    \item A family of unbounded selfadjoint operators $h\mapsto P(h)$ on $\Hc$ with domain independent of $h$, whose projection onto $\Lr^2(\R^2\setminus B(0,\varrho_0))$ coincides with $\Hr^2(\R^2\setminus B(0,\varrho_0))$.
\end{itemize}
We introduce the following assumptions
\begin{equation}
    \label{eq:H1}
    \II_{B(0,\varrho_0)} (P(h)-\ic)^{-1}\quad\mbox{compact}\quad
    \Hc\to\Hc
    \tag{H1}
\end{equation}
and
\begin{equation}
    \label{eq:H2}
    \II_{\R^2\setminus B(0,\varrho_0)} P(h) u
    = - h^2\Delta u\on{\R^2\setminus B(0,\varrho_0)}.
    \tag{H2}
\end{equation}
Then, we choose $\varrho\gg\varrho_0$ and periodize $P(h)$ outside $B(0,\varrho_0)$, obtaining an operator $P^\sharp(h)$ on the Hilbert space
\[
    \Hc^\sharp = \Hc_{\varrho_0} \oplus \Lr^2(M\setminus B(0,\varrho_0))
    \quad\mbox{with}\quad M = (\R/\varrho\Z)^2
\]
Denoting by $N(P^\sharp(h),I)$ the number of eigenvalues in $I$, we write the third assumption as
\begin{equation}
    \label{eq:H3}
    N(P^\sharp(h),[-\lambda,\lambda])
    =  \Oc\lp (\lambda/h^2)^{n^\sharp/2}\rp
    \quad \lambda\to\infty,\quad\mbox{for some}\quad n^\sharp\ge 2.
    \tag{H3}
\end{equation}
Let us denote by $\Zc(P(h))$ the set of poles of the resolvent $z\mapsto(P(h)-z)^{-1}$. In dimension~$2$, this set is a subset of the Riemann logarithmic surface and its elements satisfy $\arg z<0$ with our convention for the definition of resonances.

Now we can state a simplified version of the main result of \cite{TanZwo98}:
\begin{theorem}[\cite{TanZwo98}]
    \label{th:TZ}
    Let $P(h)$ satisfy hypotheses \eqref{eq:H1}, \eqref{eq:H2}, and \eqref{eq:H3}. Assume that there exists for any $h\in(0,h_0]$ a quasi-pair $(E(h),u(h))$ with $E(h)\subset[E_0-h,E_0+h]$ for some real $E_0$, and with $u(h)$ normalized in $\Hc$ and compactly supported independently of $h$. The quasi-pairs are supposed to satisfy the residue estimate
    \[
        \| \lp P(h) - E(h) \rp u(h) \|_\Hc = \Oc(h^\infty).
    \]
    Then, for any $h\in(0,h'_0]$ with a positive $h'_0$ small enough, there exists a resonance pole $z(h)\in\Zc(P(h))$ such that
    \[
        |E(h)-z(h)| = \Oc(h^\infty).
    \]
\end{theorem}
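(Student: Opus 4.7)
The plan is a contradiction argument combining a quasi-mode lower bound on a truncated resolvent with an upper bound from the semiclassical maximum principle under the black-box hypotheses. I would fix $\chi \in \Cc^\infty_0(\R^2)$ equal to $1$ on $\supp u(h)$ for every $h \in (0,h_0]$, and consider the truncated resolvent $R_\chi(z) := \chi(P(h)-z)^{-1}\chi$; hypotheses \eqref{eq:H1}--\eqref{eq:H2} allow its meromorphic continuation from the upper half-plane to (a sheet of) the logarithmic Riemann surface, with poles exactly $\Zc(P(h))$.

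First I would establish a quasi-mode lower bound. Setting $r(h) := (P(h) - E(h))u(h)$, the residue hypothesis gives $\|r(h)\|_\Hc = \Oc(h^\infty)$. Assuming $E(h) \notin \Zc(P(h))$ and using $u(h) = \chi u(h)$, the identity $u(h) = R_\chi(E(h))\, r(h)$ (up to commutator terms supported away from $\supp u(h)$) yields $1 = \|u(h)\|_\Hc \le \|R_\chi(E(h))\| \cdot \|r(h)\|_\Hc$, whence $\|R_\chi(E(h))\| \ge C_N\, h^{-N}$ for every $N \in \N$.

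Next I would extract an upper bound from the absence of nearby resonances. Fix an integer $N$ and suppose by contradiction that no pole of $R_\chi$ lies in $D_N := D(E(h), h^N)$. The Tang--Zworski semiclassical maximum principle then provides $\|R_\chi(E(h))\| \le h^{-M(N, n^\sharp)}$ for some $M$: on the upper half of a slightly larger disk the trivial bound $\|R_\chi(z)\| \le |\Im z|^{-1}$ holds, while hypothesis \eqref{eq:H3} applied to the periodized self-adjoint operator $P^\sharp(h)$ controls, via a Jensen counting argument, a Blaschke-type factor accounting for resonances in that larger disk; combining these two estimates on the boundary and applying the maximum principle to $\log\|R_\chi(z)\|$ propagates the polynomial bound down to $E(h)$. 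Choosing $N > M(N, n^\sharp)$ contradicts the lower bound, so $D_N$ must contain a pole $z_N(h)$, and a diagonal extraction over $N \to \infty$ delivers $z(h) \in \Zc(P(h))$ with $|E(h) - z(h)| = \Oc(h^\infty)$.

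The main obstacle is the semiclassical maximum principle underlying the upper bound: one must convert the Weyl-type counting bound \eqref{eq:H3} for the auxiliary periodization $P^\sharp(h)$ into an effective estimate for the number of poles of $R_\chi$ in a small disk near the real axis, and then run a delicate Carleman/three-lines argument with carefully tuned radii. This is precisely the content of \cite{TanZwo98}; in the intended application it is therefore enough to invoke their theorem as a black box.
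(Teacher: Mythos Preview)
The paper does not prove this theorem: it is stated as a simplified version of the main result of \cite{TanZwo98} and invoked as a black box in Section~\ref{sec:proxy}. Your sketch correctly outlines the Tang--Zworski strategy (quasi-mode lower bound on the truncated resolvent versus a semiclassical maximum-principle upper bound, combined by contradiction), and you rightly conclude that in this paper's context the theorem is simply cited rather than reproved.
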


The result in \cite{Ste99} is more precise but requires one more hypothesis, according to which the number of resonance poles is not too large: For some positive integers $N$ and $N'$
\begin{equation}
    \label{eq:H4}
    \Card \left\{ z\in\Zc(P(h)),\;\; a_0\le |z|\le b_0,\;\; -\Im z < h^N \right\}
    \le C_{a_0,b_0} h^{N'}.
    \tag{H4}
\end{equation}

Our simplified version of the main result of \cite{Ste99} follows:
\begin{theorem}[\cite{Ste99}]
    \label{th:Ste}
    Let $\Hg$ be a infinite subset of $(0,1]$ with accumulation point at $0$.
    Let $P(h)$ satisfy hypotheses \eqref{eq:H1}, \eqref{eq:H2}, \eqref{eq:H3}, and \eqref{eq:H4}. Assume that, for any $h\in(0,h_0]\cap\Hg$, there exists  $d$ quasi-pair $(E_\ell(h),u_\ell(h))$ with $E_1(h)=\ldots=E_d(h)\in[a_0,b_0]$, and with $u_\ell(h)$ compactly supported independently of $h$, and almost orthonormal: $|\langle u_i(h),u_\ell(h)\rangle_{\Hc}-\delta_{i\ell}|=\Oc(h^\infty)$. The quasi-pairs are supposed to satisfy the residue estimate
    \[
        \| \lp P(h) - E_\ell(h) \rp u_\ell(h) \|_\Hc = \Oc(h^\infty),\quad \ell=1,\ldots,d.
    \]
    Then for any $h\in(0,h'_0]\cap\Hg$ with a positive $h'_0$ small enough, there exists $d$ resonance poles $z_\ell(h)\in\Zc(P(h))$ with repetition according to multiplicity, such that
    \[
        |E_\ell(h)-z_\ell(h)| = \Oc(h^\infty),\quad \ell=1,\ldots,d.
    \]
\end{theorem}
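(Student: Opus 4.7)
The plan is to follow Stefanov's strategy from \cite{Ste99}, encoding the resonances as zeros of a holomorphic Fredholm determinant and then forcing $d$ zeros via a quantitative minimum-modulus and counting argument.

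First, I would set up the determinant. Pick a smooth cut-off $\chi\in \Cc^\infty_0(\R^2)$ with $\chi\equiv 1$ on $B(0,\varrho_0)$ and supported in $B(0,\varrho/2)$. Using \eqref{eq:H2} and a Birman--Schwinger computation, $(P(h)-z)^{-1}$ can be written in terms of $(-h^2\Delta-z)^{-1}$ on $\R^2$ and of $(P^\sharp(h)-z)^{-1}$ modulo a correction $K(z,h)$ that is compactly supported and, by \eqref{eq:H1}, trace-class. This produces the meromorphic continuation from $\{\Im z>0\}$ to a sector of the logarithmic surface, and $D(z,h):=\det(I+K(z,h))$ is a holomorphic function whose zeros, counted with multiplicity, are exactly $\Zc(P(h))$ in the region considered. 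The Weyl-type bound from \eqref{eq:H3} yields $\|K(z,h)\|_1 = \Oc(h^{-n^\sharp})$ locally uniformly off the positive real axis, whence the \emph{upper} bound $\log|D(z,h)|\le C h^{-n^\sharp}$.

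Second, I would use the $d$ quasi-pairs to obtain a \emph{lower} bound reflecting the required multiplicity. If $(P^\sharp(h)-z)^{-1}$ were analytic on a disk of radius $\varepsilon(h)\ge h^N$ around $E_0$, then the spectral theorem applied to each $u_\ell(h)$, together with the residue estimate $\|(P(h)-E_\ell)u_\ell\|_\Hc = \Oc(h^\infty)$, would give $\|u_\ell\|_\Hc \le \Oc(h^\infty/\varepsilon(h))$, contradicting normalization; propagating this through the Gram matrix of $(u_\ell)$ and exploiting the near-orthonormality $|\langle u_i,u_\ell\rangle-\delta_{i\ell}|=\Oc(h^\infty)$ forces at least $d$ real eigenvalues of $P^\sharp(h)$ within $\Oc(h^\infty)$ of $E_0$. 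Transferring through the determinantal identity and the free-resolvent correction translates this into a quantitative lower bound on $|D(z,h)|$ on a suitable contour around $E_0$.

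Third, I would close the argument with a Jensen/Cartan-type inequality on a small disk $B(E_0,r_0)$: the upper bound on $\log|D|$, the lower bound on the contour, and hypothesis \eqref{eq:H4} which caps the number of resonances within $h^N$ of the real axis jointly force $D(\cdot,h)$ to have at least $d$ zeros, counted with multiplicity, in a disk of radius $\Oc(h^\infty)$ about $E_0$; these are the sought resonances $z_\ell(h)$. The main obstacle will be this counting step. In dimension two the free resolvent carries a logarithmic branch point at $z=0$, so $D$ lives on the logarithmic Riemann surface, complicating the maximum principle; and the $d$-with-multiplicity statement is a genuinely matrix-valued refinement of the scalar Jensen estimate that already suffices for the single-resonance Theorem \ref{th:TZ}. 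Hypothesis \eqref{eq:H4} is exactly what prevents resonance accumulations in a thin strip below the real axis that would otherwise destroy the balance between the upper and lower bounds on $D$ and reduce the conclusion from $d$ resonances down to one.
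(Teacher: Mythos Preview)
The paper does not prove this theorem; it is stated as a citation of the main result of \textsc{Stefanov}~\cite{Ste99} (a ``simplified version'' of it), and is then applied, not established, in the subsequent subsection. So there is no proof in the paper to compare against.

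That said, your sketch is a reasonable outline of the strategy in \cite{Ste99}, but the second step is somewhat garbled. The key bridge you need is that the quasi-modes $u_\ell(h)$, being compactly supported in $B(0,\varrho_0)$-independent regions, are quasi-modes for the \emph{reference} operator $P^\sharp(h)$ as well; the spectral theorem for the self-adjoint $P^\sharp(h)$ then gives $d$ genuine eigenvalues of $P^\sharp(h)$ within $\Oc(h^\infty)$ of $E_0$ (this is where the near-orthonormality and the Gram matrix enter, to guarantee $d$ distinct eigenvalues counted with multiplicity). The passage from eigenvalues of $P^\sharp(h)$ to resonances of $P(h)$ is then handled in \cite{Ste99} via a holomorphic function $g(z,h)$ built from the scattering determinant and the characteristic polynomial of the relevant $P^\sharp$-eigenvalues, to which the quantitative minimum-modulus (Cartan-type) estimate is applied; hypothesis \eqref{eq:H4} is used precisely to control the order of growth so that the zero-counting works. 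Your phrasing ``if $(P^\sharp(h)-z)^{-1}$ were analytic on a disk\ldots would give $\|u_\ell\|\le \Oc(h^\infty/\varepsilon(h))$'' conflates $P$ and $P^\sharp$ and inverts the logic: you do not argue by contradiction on analyticity of the $P^\sharp$-resolvent, you directly produce the $P^\sharp$-eigenvalues and then compare them with resonances.
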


The distinction between the two above theorems is the consideration of multiplicity in Theorem \ref{th:Ste}. The multiplicity of a resonance pole $z_0$ is understood as the rank of the operator
\[
    \frac{1}{2\ic\pi} \int_{|z-z_0|=\varepsilon} (P(h)-z)^{-1} \dd z
\]
for $\varepsilon>0$ small enough to isolate the pole $z_0$ and $(P(h)-z)^{-1}$ is the meromorphic extension of the resolvent \cite[Definition 4.6]{DyaZwo19}.

\subsection{Application of the spectral-like theorems to disks with radially varying index}
We apply the above theorems to our situation. We set
\[
    P(h) = h^2P \quad\mbox{with}\quad
    P=-n^{-p-1}\div(n^{p-1}\, \nabla \cdot) \quad\mbox{on}\quad
    \Hc = \Lr^2(\R^2,n(x)^{p+1}\dd{x}).
\]
The subset $\Hg$ is $\{h=\frac{1}{m},\ m\in\N^*\}$.
Hypotheses \eqref{eq:H1} and \eqref{eq:H2} are easy to check.
Concerning \eqref{eq:H3}, by using the max--min principle for eigenvalues \cite[Theorem 11.12]{Hel13} and comparing the eigenvalues of $P^\sharp(h)$ with $(-h^2\Delta)^\sharp$ on a large torus $M = (\R / \varrho\Z)^2$ for $\varrho \gg R$, we get that the counting function $N(P^\sharp(h),[-\lambda,\lambda]) = \Oo(\lambda/h^2)$ for $\lambda \to \infty$, which yields $n^\sharp = 2$.

Concerning \eqref{eq:H4}, we simply have to use the main theorem in \cite{Vodev94} (with $\phi(t)=t^2$ and $a=1+\varepsilon$).

Then to bridge our families of resonance quasi-pairs with the formalism of \cite{Ste99}, we set, for any chosen $p\in\{\pm1\}$ and any chosen $j\in\N$:
\[
    E_\ell(h) = h^2\,\ku_{p\pv j}(\tfrac{1}{h})^2,\quad h\in\Hg,\quad\ell=1,2
\]
with
\[
    u_1(h) = \uu_{p\pv j}(\tfrac{1}{h}) \quad\mbox{and}\quad
    u_2(h) = \overline{\uu_{p\pv j}(\tfrac{1}{h})}, \quad h\in\Hg.
\]
Then, as $h$ tends to $0$, the energy $E_\ell(h)$ converges to $1/(Rn(R))^2$ in cases \textsc{(a)} and \textsc{(b)}, and to $1/(R_0n(R_0))^2$ in case \textsc{(c)}. Applying Theorem \ref{th:Ste} and coming back to resonances by the formula
\[
    k_m = m\sqrt{z_1(\tfrac{1}{m})} \quad\mbox{and}\quad
    k'_m = m\sqrt{z_2(\tfrac{1}{m})} ,\quad m\ge1
\]
we have proved:

\begin{theorem}\label{th:QuasiRes}
    For $p \in \{\pm 1\}$, $j \in \N$, and $m$ large enough, there exist two resonances $k_m$ and $k'_m$ (counted with multiplicity) such that, as $m \rightarrow +\infty$, we have
    \begin{align*}
        \max \lp \lv\ku_{p\pv j}(m) - k_m\rv, \lv\ku_{p\pv j}(m) - k'_m\rv\rp = \Oo\lp m^{-\infty}\rp .
    \end{align*}
\end{theorem}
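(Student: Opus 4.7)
The plan is to apply Stefanov's spectral-like theorem (Theorem~\ref{th:Ste}) to the semiclassical operator
\[
   P(h) \coloneqq h^2 P = -h^2\, n^{-p-1}\,\div\lp n^{p-1}\, \nabla \cdot\rp
   \quad\mbox{on}\quad \Hc \coloneqq \Lr^2(\R^2, n^{p+1}\dd x),
\]
restricted to the discrete set $\Hg = \{1/m : m\in\N^*\}$. The quasi-pairs for our original resonance problem, once rescaled by $h$, furnish the input data required by the theorem, and the conclusion will produce the two sought true resonances $k_m, k'_m$ near the quasi-resonance $\ku_{p\pv j}(m)$.

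First, I would verify the four hypotheses of the black box framework with $\varrho_0$ chosen so that $\overline\Omega \subset B(0,\varrho_0)$. Hypothesis \eqref{eq:H1} follows from Rellich–Kondrachov compactness on the bounded domain $B(0,\varrho_0)$ together with the fact that $P(h)$ has smooth bounded coefficients there. Hypothesis \eqref{eq:H2} is immediate since $n\equiv 1$ outside $\overline\Omega$, so $P(h) = -h^2\Delta$ on $\R^2\setminus B(0,\varrho_0)$. For \eqref{eq:H3}, as indicated in the text just before the theorem, the periodized operator $P^\sharp(h)$ can be compared via the min-max principle to $-h^2\Delta$ on a torus $M = (\R/\varrho\Z)^2$, yielding $N(P^\sharp(h),[-\lambda,\lambda]) = \Oo(\lambda/h^2)$, hence $n^\sharp = 2$. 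For \eqref{eq:H4}, Vodev's polynomial bound on the density of resonances applies to our setting (compactly supported perturbation of the free Laplacian with bounded coefficients).

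Next, for each fixed $p\in\{\pm1\}$ and $j\in\N$, I set $E_\ell(h) = h^2\, \ku_{p\pv j}(1/h)^2$ for $\ell=1,2$, with the two distinct quasi-modes
\[
   u_1(h) = \uu_{p\pv j}(1/h),\qquad u_2(h) = \overline{\uu_{p\pv j}(1/h)}.
\]
Since the operator $P$ has real coefficients, $u_2(h)$ is also a quasi-mode for $P(h)$ at the same energy $E(h)$. The compact support of $\uu_{p\pv j}(m)$ (Lemma~\ref{lem:QM_A}~(i)) and the $\Lr^2$-normalization (Definition~\ref{def:quasi}) are uniform in $m$. Multiplying \eqref{eq:minfty} by $h^2/n^{p+1}$ and using that $n^{p+1}$ is bounded below by $1$ yields the residue estimate
\[
   \| (P(h)-E(h))\, u_\ell(h) \|_\Hc = \Oc(h^\infty),\quad \ell=1,2.
\]
The almost-orthonormality $|\langle u_i(h),u_\ell(h)\rangle_\Hc - \delta_{i\ell}| = \Oc(h^\infty)$ is exactly the content of Lemma~\ref{lem:quasiPS} (the cross-term $\langle u_1,u_2\rangle = \langle \overline{\uu_{p\pv j}},\uu_{p\pv j}\rangle$ vanishes identically by the $\int_0^{2\pi} e^{2im\theta}\dd\theta = 0$ argument). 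Moreover, $E(h)$ lies in a small neighborhood of the limiting value $(Rn_0)^{-2}$ (cases \textsc{(a)}, \textsc{(b)}) or $(R_0 n(R_0))^{-2}$ (case \textsc{(c)}), so Theorem~\ref{th:Ste} applies with $d=2$.

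The theorem then yields two resonance poles $z_1(h), z_2(h) \in \Zc(P(h))$, counted with multiplicity, satisfying $|E(h)-z_\ell(h)| = \Oc(h^\infty)$. Converting back with $k_m = m\sqrt{z_1(1/m)}$ and $k'_m = m\sqrt{z_2(1/m)}$, and using that the complex square root is smooth in a neighborhood of the positive real number $(Rn_0)^{-2}$ (resp.\ $(R_0 n(R_0))^{-2}$), the $\Oc(h^\infty)$ estimate transfers to $|\ku_{p\pv j}(m) - k_m| + |\ku_{p\pv j}(m) - k'_m| = \Oc(m^{-\infty})$. The main subtlety, and the reason we get \emph{two} resonances, is the rotational degeneracy: the quasi-mode $\uu_{p\pv j}$ and its complex conjugate correspond to the two opposite polar indices $\pm m$ and are genuinely independent (their scalar product vanishes identically), so the multiplicity counting in Theorem~\ref{th:Ste} produces two distinct poles. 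The only mildly delicate point is checking that the counting hypothesis \eqref{eq:H4} of Stefanov is indeed available from Vodev's result in our geometry; this has to be invoked carefully but is standard for compactly supported smooth perturbations of $-\Delta$.
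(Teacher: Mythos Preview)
Your proposal is correct and follows essentially the same approach as the paper: you apply Theorem~\ref{th:Ste} to $P(h)=h^2P$ on $\Hc=\Lr^2(\R^2,n^{p+1}\dd x)$ with $\Hg=\{1/m\}$, verify hypotheses \eqref{eq:H1}--\eqref{eq:H4} in the same way (min--max comparison for \eqref{eq:H3}, Vodev's bound for \eqref{eq:H4}), take the pair $(\uu_{p\pv j}(m),\overline{\uu_{p\pv j}(m)})$ as quasi-modes at the common energy $E(h)=h^2\ku_{p\pv j}(1/h)^2$, and convert the poles $z_\ell(h)$ back via $k_m=m\sqrt{z_\ell(1/m)}$. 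Your added justifications (real coefficients making the conjugate a quasi-mode, smoothness of the square root near the limiting positive energy) are accurate refinements of steps the paper leaves implicit.
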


\begin{remark}
    \emph{(i)} It is plausible that modes associated with the true resonances $k_m$ and $k'_m$ have $m$ as polar mode index.
    The proof of this would require to apply a spectral theorem to the family of one dimensional resonance problems \eqref{eq:Pprad}, which seemingly does not enter the general framework of \cite{TanZwo98} or \cite{Ste99}.
    Nevertheless, finite element computations performed with perfectly matched layers displayed numerical modes complying with the structure of quasi-modes (see \cite[Chapter 7]{MoitierPhD}).

    \emph{(ii)} Throughout the paper we have assumed that $p \in \{\pm 1\}$, because of the physical motivation, but without any change, everything is true for $p \in \R$.
    
    \emph{(iii)} The assumption that the optical index $n$ of the cavity is real is necessary to deduce Theorem \ref{th:QuasiRes} from the black box scattering theory since the latter requires that the operator $u \mapsto -n^{-p-1}\, \div(n^{p-1}\, \nabla u)$ is self-adjoint.
    However, the statements Theorem \ref{th:(a)}, \ref{th:(b)}, and \ref{th:(c)} still hold if we allow complex optical indices according to the following relaxed assumptions:
    For Theorem \ref{th:(a)}, we can weaken the hypotheses on the optical index to $\Re\sqrt{1 - n_0^{-2}} > 0$ and $\Re\sqrt{\kb} > 0$, and for Theorem \ref{th:(b)}, and \ref{th:(c)}, we can replace the condition on effective  hessian by $\Re\sqrt{\mb} > 0$.
\end{remark}

\appendix

\makeatletter
\renewcommand\thetheorem{\thesection.\@arabic\c@theorem}
\makeatother

\section{Technical lemmas}\label{App:A}

\subsection{Explicit solutions to some differential equations}

\begin{lemma}\label{lem:exp}
    For all $\ell\in\N$, let denote by $\gamma_\ell$ the mapping $ z\in\R \mapsto z^\ell\e^{-z}$
    and for  $d\ge 1$ by $\Ec_d$ the set $\{\gamma_\ell ; \ell=0,\ldots,d\}$.
    The operator $-\partial_z^2 + 1$ is a bijection from the vector-space $\Vect(\Ec_d \setminus \{\gamma_0\})$ to the vector-space $\Vect(\Ec_{d-1})$.
\end{lemma}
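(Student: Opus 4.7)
The proof is essentially a direct computation followed by a triangular-matrix argument. The plan is as follows.

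First, I would compute $L\gamma_\ell$ explicitly, where $L = -\partial_z^2 + 1$. A direct differentiation of $\gamma_\ell(z) = z^\ell e^{-z}$ gives
\[
\gamma_\ell''(z) = \bigl(\ell(\ell-1)z^{\ell-2} - 2\ell z^{\ell-1} + z^\ell\bigr)e^{-z},
\]
so that, after the cancellation of the $z^\ell e^{-z}$ terms,
\[
L\gamma_\ell \;=\; 2\ell\,\gamma_{\ell-1} \;-\; \ell(\ell-1)\,\gamma_{\ell-2},
\]
with the convention $\gamma_{-1}\equiv 0$. In particular $L\gamma_1 = 2\gamma_0$, and for $\ell \ge 2$ both terms are present.

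From this identity it is immediate that for every $\ell\in\{1,\dots,d\}$ the image $L\gamma_\ell$ lies in $\Vect(\Ec_{d-1})$, so $L$ is well-defined as a linear map from $\Vect(\Ec_d\setminus\{\gamma_0\})$ into $\Vect(\Ec_{d-1})$. Both of these spaces have dimension $d$, since the family $(\gamma_\ell)_{\ell\in\N}$ is linearly independent (the $\gamma_\ell$ have distinct polynomial degrees).

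To conclude the bijectivity, I would order the bases as $(\gamma_1,\dots,\gamma_d)$ at the source and $(\gamma_0,\dots,\gamma_{d-1})$ at the target, and observe that the matrix of $L$ in these bases is upper triangular: the only nonzero entries in the column indexed by $\gamma_\ell$ are $2\ell$ on the diagonal (row $\gamma_{\ell-1}$) and $-\ell(\ell-1)$ one step above for $\ell\ge 2$. Since all diagonal entries $2\ell$ with $\ell=1,\dots,d$ are nonzero, the matrix is invertible, hence $L$ is a bijection from $\Vect(\Ec_d\setminus\{\gamma_0\})$ onto $\Vect(\Ec_{d-1})$.

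There is no real obstacle here; the only mild subtlety is to keep careful track of the index shift (the source is indexed from $1$ to $d$, the target from $0$ to $d-1$) and to note that $\gamma_0$ is excluded from the source precisely because $L$ applied to any element of $\Vect(\Ec_d\setminus\{\gamma_0\})$ produces a term proportional to $\gamma_0$ (via the $\ell=1$ case), which is the mechanism that makes the map surjective onto $\Vect(\Ec_{d-1})$.
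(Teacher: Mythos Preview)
Your proof is correct and follows exactly the same approach as the paper: compute $(-\partial_z^2+1)\gamma_\ell$ explicitly, then observe that the matrix of the operator in the bases $(\gamma_1,\dots,\gamma_d)$ and $(\gamma_0,\dots,\gamma_{d-1})$ is upper triangular with nonzero diagonal entries $2\ell$, hence invertible. (In fact your signs are the correct ones; the paper records $\ell(\ell-1)\gamma_{\ell-2}-2\ell\gamma_{\ell-1}$ and determinant $(-2)^d d!$, which is a harmless typo for $-\ell(\ell-1)\gamma_{\ell-2}+2\ell\gamma_{\ell-1}$ and $2^d d!$.)
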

\begin{proof}
    For all $\ell\ge 0$, we readily obtain
    $
        (-\partial_z^2 + 1)\gamma_\ell = \ell(\ell-1)\, \gamma_{\ell-2} - 2\ell\, \gamma_{\ell-1}.
    $
    Considering the vector basis $(\gamma_1,\ldots,\gamma_d)$ and $(\gamma_0,\ldots,\gamma_{d-1})$  of $\Vect(\Ec_d \setminus \{\gamma_0\})$ and $\Vect(\Ec_{d-1})$ respectively,
    the matrix of $-\partial_z^2 + 1$ from $\Vect(\Ec_d \setminus \{\gamma_0\})$ to $\Vect(\Ec_{d-1})$   is an upper triangular matrix with a determinant equal to $(-2)^d d! \neq 0$.
\end{proof}

\begin{lemma}\label{lem:airy}
    For all $\ell\in\N$, let denote by $\alpha_\ell$ the mapping $ z\in\R \mapsto z^\ell\As(z)$ and by $\beta_\ell$ the mapping $ z\in\R \mapsto z^\ell\As'(z)$ where $\As$ is the mirror Airy's function.
    For all $d\in\N$, let $\Ac_d$ be the set $\{\alpha_\ell, \beta_\ell ; \ell=0,\ldots,d\}$.
    The operator $-\partial_z^2 - z$ is a bijection from the vector-space $\Vect(\Ac_d \setminus \{\alpha_0\})$ to the vector-space $\Vect(\Ac_d \setminus \{\beta_d\})$.
\end{lemma}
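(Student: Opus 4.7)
My plan is to mirror the approach used in Lemma~\ref{lem:exp} for the exponential case: first compute the action of $L \coloneqq -\partial_z^2 - z$ on the generating basis elements using the Airy ODE $\As''(z) = -z\,\As(z)$, and then exhibit a triangular structure in a suitable ordering of the two finite-dimensional spaces.

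A direct application of Leibniz's rule, after substituting $\As''(z) = -z\,\As(z)$ and cancelling the resulting terms in $z^{\ell+1}\As$ and $z^{\ell+1}\As'$, will yield the two recursion formulas
\[
    L\alpha_\ell = -\ell(\ell-1)\,\alpha_{\ell-2} - 2\ell\,\beta_{\ell-1}
    \qquad\text{and}\qquad
    L\beta_\ell = (2\ell+1)\,\alpha_\ell - \ell(\ell-1)\,\beta_{\ell-2}.
\]
These formulas immediately confirm that $L$ maps $\Vect(\Ac_d \setminus \{\alpha_0\}) = \Vect\{\alpha_1,\ldots,\alpha_d,\beta_0,\ldots,\beta_d\}$ into $\Vect(\Ac_d \setminus \{\beta_d\}) = \Vect\{\alpha_0,\ldots,\alpha_d,\beta_0,\ldots,\beta_{d-1}\}$: the image of $\alpha_\ell$ with $1 \le \ell \le d$ only involves $\alpha_{\ell-2}$ (for $\ell \ge 2$) and $\beta_{\ell-1}$ with index $\ell-1 \le d-1$, while the image of $\beta_\ell$ with $0 \le \ell \le d$ only involves $\alpha_\ell$ and $\beta_{\ell-2}$ (for $\ell \ge 2$).

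Since both spaces have the same dimension $2d+1$, it remains only to prove injectivity. To this end I would introduce the weight $w(\alpha_\ell) \coloneqq 2\ell$, $w(\beta_\ell) \coloneqq 2\ell+1$ and order both bases by increasing weight: in the domain, $\beta_0, \alpha_1, \beta_1, \ldots, \alpha_d, \beta_d$ with weights $1,2,\ldots,2d+1$, and in the codomain, $\alpha_0, \beta_0, \alpha_1, \ldots, \beta_{d-1}, \alpha_d$ with weights $0,1,\ldots,2d$. Reading off the recursions, each basis element $e$ of weight $w$ in the domain satisfies $L e = c_e\,\widetilde{e} + (\text{strictly lower-weight terms in the codomain})$, where $\widetilde{e}$ is the unique codomain basis element of weight $w-1$ and the leading coefficient $c_e$ equals $-2\ell$ when $e = \alpha_\ell$ (with $\ell \ge 1$) or $2\ell+1$ when $e = \beta_\ell$; in both cases $c_e \ne 0$ in the relevant range. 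Consequently, the matrix of $L$ in these ordered bases is upper triangular with nonzero diagonal entries, hence invertible.

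The main obstacle is essentially organizational: spotting the grading in which $L$ exhibits a triangular shape despite coupling the two families $(\alpha_\ell)$ and $(\beta_\ell)$. Once the weight function is in place, the non-vanishing of every diagonal entry is immediate from the two recursion formulas, and no further analysis is needed.
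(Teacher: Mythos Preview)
Your proof is correct and takes essentially the same approach as the paper: both compute the same two recursion formulas from $\As''=-z\As$ and then observe that, in the ordered bases $(\beta_0,\alpha_1,\beta_1,\ldots,\alpha_d,\beta_d)$ and $(\alpha_0,\beta_0,\alpha_1,\ldots,\beta_{d-1},\alpha_d)$, the matrix of $L$ is upper triangular with nonzero diagonal entries $-2\ell$ and $2\ell+1$. Your introduction of the weight $w(\alpha_\ell)=2\ell$, $w(\beta_\ell)=2\ell+1$ is a clean way to motivate this ordering, but the argument is otherwise identical to the paper's.
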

\begin{proof}
    From the definition of Airy's function, we have $\As'' = -z\As$ and therefore, for $\ell \ge 0$,
    \begin{align*}
         & (-\partial_z^2 - z)\alpha_\ell = -\ell(\ell-1)\, \alpha_{\ell-2}- 2\ell\, \beta_{\ell-1},
        \\
         & (-\partial_z^2 - z)\beta_\ell = -\ell(\ell-1)\, \beta_{\ell-2} + (2\ell+1)\, \alpha_\ell.
    \end{align*}
    Considering the vector basis $(\beta_0,\alpha_1,\beta_1,\ldots,\alpha_d,\beta_d)$ of $\Vect(\Ac_d \setminus \{\alpha_0\})$ and the vector basis $(\alpha_0,\beta_0,\alpha_1,\beta_1,\ldots,\alpha_d)$ of $\Vect(\Ac_d \setminus \{\beta_d\})$, the matrix of $-\partial_z^2 - z$ considered from $\Vect\lp\Ac_d \setminus \{\alpha_0\}\rp$ to $\Vect\lp\Ac_d \setminus \{\beta_d\}\rp$ is an upper triangular matrix with a determinant equal to $(-1)^d(2d+1)! \neq 0$.
\end{proof}

\subsection{Half harmonic oscillator}
We recall that we denote by $\Lr^2(\R_-,\omega_x)$ and $\Hr^\ell(\R_-,\omega_x)$ the weighted Sobolev spaces with measure $\omega_x(\sigma) \dd{\sigma}$ where $\omega_x : \sigma \mapsto \exp\lp 2^x |\sigma|\rp$ for $x$ real
and that $\Psi_{2j+1}^{\GH}$ refers to the Gauss-Hermite function of order $2j+1$, see \cite{AbrSte64, Olv97}.

\begin{lemma}\label{lem:GHsolPoids}
    Let $\beta \in \R$, $\theta > 0$, and $j \in \N$.
    For any $S \in \Lr^2(\R_-,\omega_\beta) \cap \Vect(\Psi_{2j+1}^{\GH})^\perp$ there exists a unique solution to the problem: Find $w \in \Hr^2(\R_-) \cap \Vect(\Psi_{2j+1}^{\GH})^\perp$ such that
    \begin{align}\label{eq:HHO}
        \begin{cases}
            -w''(x) + (x^2 - 4j - 3)w(x) = S(x) & \forall x\in (-\infty,0) \\
            w(0) = 0
        \end{cases}.
    \end{align}
    Moreover, this solution belongs to  $\Hr_0^1(\R_-,\omega_\beta) \cap \Hr^2(\R_-,\omega_{\beta-\theta})$.
\end{lemma}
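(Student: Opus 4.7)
\textbf{Proof plan for Lemma~\ref{lem:GHsolPoids}.}

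\emph{Step 1: Reduction to a self-adjoint operator with compact resolvent.} The plan is to first solve the problem in the unweighted setting by viewing it spectrally. Consider the quadratic form $q(w) = \int_{\R_-}(|w'|^2 + x^2|w|^2)\dd x$ on $\Cc^\infty_0(\R_-^*)$ and let $H$ denote its Friedrichs extension on $\Lr^2(\R_-)$. Odd extension across $0$ identifies $H$ isometrically with the restriction of the full harmonic oscillator $-\partial_x^2 + x^2$ on $\Lr^2(\R)$ to odd functions, whose spectrum is $\{4k+3\,:\,k\in\N\}$ with one-dimensional eigenspaces spanned by the odd Gauss-Hermite functions. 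Hence $H$ has compact resolvent, simple eigenvalues $\{4k+3\}_{k\in\N}$, and normalized eigenfunctions proportional to $\Psi_{2k+1}^{\GH}\on{\R_-}$. In particular $\ker(H - (4j+3))=\Vect(\Psi_{2j+1}^{\GH})$.

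\emph{Step 2: Existence and uniqueness via Fredholm alternative.} Writing \eqref{eq:HHO} as $(H-(4j+3))w = S$, the Fredholm alternative applied to the self-adjoint operator with compact resolvent yields a unique solution $w$ in the form domain of $H$ satisfying $w\perp \Psi_{2j+1}^{\GH}$, under the solvability condition $S\perp\Psi_{2j+1}^{\GH}$. Elliptic regularity for the ODE then gives $w\in\Hr^2(\R_-)$ with $w(0)=0$.

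\emph{Step 3: Exponential decay via conjugation by the weight (main technical step).} To upgrade to the weighted norms, set $\rho(x)=2^\beta|x|$ on $\R_-$ and let $v = e^{\rho/2}w$. A direct computation using $\rho'(x)=-2^\beta$ and $\rho''(x)=0$ on $\R_-^*$ shows that
\begin{equation*}
    -v''(x) - 2^\beta v'(x) + \lp x^2 - (4j+3) - 2^{2\beta-2}\rp v(x) = e^{\rho(x)/2} S(x), \qquad x<0.
\end{equation*}
Testing against $v$ and integrating by parts, the advection term $-2^\beta v'v = -2^{\beta-1}(v^2)'$ contributes only boundary terms (which vanish at $0$ since $v(0)=w(0)=0$, and at $-\infty$ by an approximation argument using that $w$ lies in the form domain, so $xw\in\Lr^2$). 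The remaining identity
\begin{equation*}
    \int_{\R_-} |v'|^2 \dd x + \int_{\R_-}\lp x^2 - (4j+3) - 2^{2\beta-2}\rp v^2 \dd x = \int_{\R_-} e^{\rho/2} S\, v \dd x
\end{equation*}
combined with coercivity of $x^2$ at infinity (absorbing the bounded negative region into the Hilbertian norm of $v$ via a compactness/low-energy estimate) and Cauchy--Schwarz plus Young on the right, yields $\|v\|_{\Hr^1_0(\R_-)} \le C\|e^{\rho/2}S\|_{\Lr^2(\R_-)} = C\|S\|_{\Lr^2(\R_-,\omega_\beta)}$. Translating back through $w = e^{-\rho/2}v$ gives $w\in\Hr^1_0(\R_-,\omega_\beta)$.

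\emph{Step 4: Second-order regularity with loss of weight.} From the ODE, $w'' = (x^2-(4j+3))w - S$, so it remains to bound $x^2 w$ in $\Lr^2(\R_-,\omega_{\beta-\theta})$. Since $x^4 e^{-(2^\beta - 2^{\beta-\theta})|x|}$ is bounded on $\R_-$ for $\theta>0$, the estimate
\begin{equation*}
    \int_{\R_-}|x^2 w|^2\,\omega_{\beta-\theta}\dd x \le \sup_{x<0}\lp x^4 e^{-(2^\beta-2^{\beta-\theta})|x|}\rp \int_{\R_-}|w|^2\,\omega_\beta \dd x
\end{equation*}
is finite, whence $w''\in\Lr^2(\R_-,\omega_{\beta-\theta})$ and $w\in\Hr^2(\R_-,\omega_{\beta-\theta})$. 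The main obstacle is Step~3: the coercivity and integration-by-parts argument must be performed with care on the bounded region where $x^2-(4j+3)-2^{2\beta-2}$ can be negative; this is handled by splitting the integral at some large $R>0$, using positivity of $x^2/2$ beyond $R$ to absorb the constant part, and controlling the compact region $(-R,0)$ by the $L^2$ norm of $v$, itself controlled through the $L^2$ norm of $w$ already provided by Step~2.
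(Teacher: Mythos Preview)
Your overall strategy coincides with the paper's: spectral resolution via the odd Gauss--Hermite basis for existence/uniqueness, then a weighted energy estimate obtained by conjugating with $e^{2^{\beta-1}|x|}$, and finally the $\Hr^2$ bound with loss of weight by reading off $w''$ from the equation. Steps~1, 2 and~4 are fine.

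The gap is in Step~3. You test the conjugated equation against $v=e^{\rho/2}w$ and integrate by parts, but a priori you do not know that $v\in\Lr^2(\R_-)$, let alone that $\int|v'|^2$ or $\int x^2v^2$ is finite: membership of $w$ in the form domain only gives $xw\in\Lr^2$, a polynomial weight, which says nothing about $e^{2^{\beta-1}|x|}w$. The sentence ``at $-\infty$ by an approximation argument using that $w$ lies in the form domain'' therefore does not justify the identity you write down. Once you try to repair this with a cutoff $\chi_a$, new commutator terms of the form $\int \chi_a'\,v(\cdots)$ appear, supported near $x=-a$, and these are \emph{not} controlled by the unweighted $\Lr^2$ norm of $w$ (the weight $e^{\rho}$ blows up there).

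The paper handles exactly this point: it multiplies the original equation by $\chi_a\,w\,\omega_\beta$ with a carefully designed cutoff $\chi_a$ whose derivative satisfies $|\chi_a'|\le (1+2^\beta)^{-1}$ and whose support lies in the region $\{x\le -J\}$ where the effective potential $x^2-(4j+3)-2^{2\beta-2}$ is $\ge 1$. The smallness of $|\chi_a'|$ lets the commutator term be absorbed by one half of the main coercive term, yielding a bound on $\Nc_w(a)=\big(\int\chi_a(|\hat w'|^2+\hat w^2)\big)^{1/2}$ uniform in $a$; only then does one let $a\to\infty$. Your splitting idea for the compact region $(-R,0)$ is correct once the integrals are known to be finite, but it does not replace this cutoff step.
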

\begin{proof}
    Existence and unicity rely on the fact that the family $(\Psi_{2\ell+1}^{\GH})_{\ell\in\N}$ is a Hilbert basis of $\Lr^2(\R_-)$ and that the half harmonic oscillator operator is diagonalizable on $\Vect(\Psi_{2\ell+1}^{\GH} \mid \ell\in\N)$.
    The solution to problem \eqref{eq:HHO} can be written as
    \begin{align}\label{eq:GHseriesSolution}
        w = \sum_{\ell=0,\ \ell\neq j}^{+\infty} \frac{1}{4(\ell-j)}\, (S,\, \varsigma_\ell\Psi_{2\ell+1}^{\GH})_{\Lr^2(\R_-)}\, \varsigma_\ell\Psi_{2\ell+1}^{\GH}
    \end{align}
    where $\varsigma_\ell = \Vert\Psi_{2\ell+1}^{\GH}\Vert_{\Lr^2(\R_-)}^{-1}$ and we clearly have $w \in \Hr^2(\R_-) \cap \Vect(\Psi_{2j+1}^{\GH})^\perp$.

    We set $J \coloneqq \sqrt{4(j+1)+2^{2\beta-2}}$ so that, for all $x \le -J$, we have $V(x) \coloneqq x^2-4j-3-2^{2\beta-2} \ge 1$.
    Let $\phi \in \Cc^\infty(\R)$ such that $0 \le \phi \le 1$, $\phi(x) = 0$ for all $x \le 0$, and $\phi(x) = 1$ for all $x \ge 1$, let $b = (1 + 2^\beta)\, \max_{\R} |\phi'|$ and let $a > J+2b$.
    We define a cut-off function $\chi_a \in \Cc_\comp^\infty(\R_-)$ by
    \begin{align*}
        \chi_a(x) = \phi(b^{-1}(x+a)) \cdot \phi(-b^{-1}(x+J)), \quad
        \forall x \in \R_-.
    \end{align*}
    We also define $\chi(x) = \phi(-b^{-1}(x+J))$ for $x\in \R_-$.
    Note that, for all $a > J+2b$, we have $|\chi_a'| \le C$ where $C = (1 + 2^\beta)^{-1}$.
    Let also  $\wh{w}  \coloneqq w\, \omega_{\beta-1}$ and $\wh{S} \coloneqq  S\, \omega_{\beta-1}$.
    Multiplying both sides of equation \eqref{eq:HHO} by $\chi_a\, w\, \omega_\beta$ and integrating over $\R_-$, yields
    \begin{align*}
        \int_{-\infty}^0 \Big( w' \lp\chi_a\, w\, \omega_\beta\rp' + \chi_a\, \lp x^2-4j-3\rp \wh{w}^2  \Big)\dd{x} = \int_{-\infty}^0 \chi_a\, \wh{S}\, \wh{w} \dd{x}.
    \end{align*}
    Since $w'\, \omega_{\beta-1} = \wh{w}' + 2^{\beta-1}\wh{w}$ and $w'(x) (w(x)\, \omega_\beta)' = {\wh{w}'(x){}}^2 - 2^{2\beta-2}\wh{w}^2(x)$, we deduce that
    \begin{align}\label{eq:estiGH1}
        \int_{-\infty}^0  \chi_a\, {\wh{w}'{}}^2  + \chi_a\,  V\, \wh{w}^2  \dd{x}
        + \int_{-\infty}^0 \chi_a' \wh{w} \lp \wh{w}' + 2^{\beta-1}\wh{w}\rp \dd{x}
        = \int_{-\infty}^0 \chi_a\, \wh{S}\, \wh{w} \dd{x}.
    \end{align}
    For the first term on the left hand side of \eqref{eq:estiGH1}, since $\chi_a\, V \ge \chi_a$, we have
    \begin{align}\label{eq:lGHSP1}
        \int_{-\infty}^0  \chi_a\, {\wh{w}'{}}^2  + \chi_a\, V\, \wh{w}^2 \dd{x}
        \ge \int_{-\infty}^0  \chi_a \lp{\wh{w}'{}}^2  + \wh{w}^2\rp \dd{x}.
    \end{align}
    Then, for the second term on the left hand side of \eqref{eq:estiGH1}, since $\chi_a' \ge -C$, $1 \ge \chi_a$, and $b$ is such that $C (1+2^\beta) = 1$, we have
    \begin{align}
        \int_{-\infty}^0 \chi_a' \wh{w} \lp \wh{w}' + 2^{\beta-1}\wh{w}\rp \dd{x}
         & \ge -C \int_{-\infty}^0 \wh{w}\, \wh{w}' + 2^{\beta-1}\wh{w}^2 \dd{x} \nonumber                    \\
         & \ge -\frac{C}{2} \int_{-\infty}^0 {\wh{w}'{}}^2  + (1+2^\beta)\wh{w}^2 \dd{x} \nonumber            \\
         & \ge -\frac{1}{2} \int_{-\infty}^0 \chi_a \lp{\wh{w}'{}}^2  + \wh{w}^2\rp \dd{x} \label{eq:lGHSP2}.
    \end{align}
    For the last term on the right hand side of \eqref{eq:estiGH1}, since $\chi_a^2 \le \chi_a$, we have
    \begin{align}\label{eq:lGHSP3}
        \int_{-\infty}^0 \chi_a\, \wh{S}\, \wh{w} \dd{x}
        \le \Vert\wh{S}\Vert_{\Lr^2(\R_-)} \lp\int_{-\infty}^0 \chi_a \wh{w}^2 \dd{x}\rp^{\frac{1}{2}}
        \le \Vert\wh{S}\Vert_{\Lr^2(\R_-)}\, \Nc_w(a)
    \end{align}
    where $\Nc_w(a) = \sqrt{\int_{-\infty}^0 \chi_a \lp{\wh{w}'{}}^2  + \wh{w}^2\rp \dd{x}}$.
    Combining the estimates \eqref{eq:lGHSP1}, \eqref{eq:lGHSP2}, and \eqref{eq:lGHSP3} yields
    \begin{align*}
        \Nc_w(a)^2 \le 2\Vert\wh{S}\Vert_{\Lr^2(\R_-)}\, \Nc_w(a).
    \end{align*}
    The function $a \in (J+2,+\infty) \mapsto \Nc_w(a)$ is not negative and not decreasing, so the function is either always zero or positive for $a$ large enough but in any cases we have
    \begin{align*}
        \Nc_w(a) \le 2\Vert\wh{S}\Vert_{\Lr^2(\R_-)}.
    \end{align*}
    By letting $a$ tends towards $+\infty$, we obtain that
    \begin{align*}
        \int_{-\infty}^0 \chi \lp {\wh{w}'{}}^2  + \wh{w}^2\rp \dd{x} \le 4\Vert\wh{S}\Vert_{\Lr^2(\R_-)}^2
    \end{align*}
    which implies that $\wh{w}$ belongs to $\Hr_0^1(\R_-)$.
    It follows that $w$ belongs to $\Lr^2(\R_-,\omega_\beta)$.
    From the relation $w'\, \omega_{\beta-1} = -2^{\beta-1}\wh{w}-\wh{w}'$, we deduce that $w \in \Hr_0^1(\R_-,\omega_\beta)$.

    Finally, using the relation $w'' = (x^2-4j-3)w - S$, we get
    \begin{align*}
        \int_{-\infty}^0 {w''}^2\, \omega_{\beta-\theta}\dd{x} \le C \int_{-\infty}^0 \lp w^2 + S^2\rp\, \omega_\beta\dd{x}
    \end{align*}
    where $C = \max_{x\in\R_-} (x^2-4j-3)^2\, \omega_{\beta+\wt{\theta}}^{-1}(x) < +\infty$ with $\wt{\theta} = \ln(1-2^{-\theta}) / \ln(2)$.
    This shows that $w \in \Hr^2(\R_-,\omega_{\beta-\theta})$.
    Note that the constant $\beta$ is replaced by $\beta-\theta$ by the need to take into account the coefficient $x^2-4j-3$.
\end{proof}

\subsection{Borel's Theorem}
Our construction of quasi-modes requires to find a smooth function given its Taylor expansion.
This can be achieved using a Borel's like theorem on the spaces of Schwartz functions $\Sc(\R_\pm)$.
We denote by $p_{\alpha,\beta}(f) = \sup_{x\in\R_\pm} |x^\alpha \partial_x^\beta f(x)|$,  $\alpha, \beta \in \N$, the usual family of semi-norms over  $\Sc(\R_\pm)$.

\begin{lemma}
    \label{th:Borel}
    Let $(f_q)_{q\in\N}$ be a sequence of functions where  $f_q\in\Sc(\R_\pm)$ for all $q\in\N$.
    There exists $f \in \Cc^\infty([0,1],\Sc(\R_\pm))$ such that
    \begin{align*}
        \partial_t^q f(0,x) = f_q(x), \quad \forall x\in\R_\pm .
    \end{align*}
\end{lemma}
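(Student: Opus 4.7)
The idea is the classical Borel summation argument, adapted to the Fréchet-space setting $\Sc(\R_\pm)$. Fix a cut-off $\chi\in\Cc^\infty(\R)$ with $\chi\equiv 1$ on a neighbourhood of $0$ and $\supp\chi\subset(-1,1)$. I would define the candidate
\begin{equation*}
   f(t,x) \;=\; \sum_{q=0}^{\infty} \chi(\lambda_q t)\,\frac{t^q}{q!}\,f_q(x),
\end{equation*}
where the sequence $\lambda_q>0$, $\lambda_q\to\infty$, will be chosen recursively large enough so that the series converges in the Fréchet space $\Cc^\infty([0,1],\Sc(\R_\pm))$, i.e.\ in every seminorm $\sup_{t\in[0,1]} p_{\alpha,\beta}(\partial_t^N f(t,\cdot))$ for all $\alpha,\beta,N\in\N$.

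For the convergence estimate, fix $(\alpha,\beta,N)$ and apply Leibniz's rule to the $q$-th summand: for $q\ge N$,
\begin{equation*}
   \partial_t^N\!\Big(\chi(\lambda_q t)\,\tfrac{t^q}{q!}\Big)
   \;=\; \frac{1}{q!}\sum_{i=0}^{N}\binom{N}{i}\,\frac{q!}{(q-i)!}\,t^{q-i}\,\lambda_q^{\,N-i}\,\chi^{(N-i)}(\lambda_q t).
\end{equation*}
On the support of $\chi(\lambda_q\,\cdot)$ one has $|t|\le 1/\lambda_q$, hence $|t|^{q-i}\lambda_q^{N-i}\le\lambda_q^{N-q}$, so that
\begin{equation*}
   p_{\alpha,\beta}\!\left(\partial_t^N\!\left[\chi(\lambda_q t)\,\tfrac{t^q}{q!}\,f_q\right]\right)
   \;\le\; \frac{C_{N,\chi}}{(q-N)!}\,\lambda_q^{\,N-q}\,p_{\alpha,\beta}(f_q),
\end{equation*}
with $C_{N,\chi}$ depending only on $N$ and $\chi$. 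Setting
\[
   M_q \;=\; 1 + \max_{\alpha+\beta+N\le q}\,C_{N,\chi}\,p_{\alpha,\beta}(f_q)
\]
and choosing $\lambda_q\ge 2^q M_q$, for any fixed $(\alpha,\beta,N)$ all but finitely many terms of the series are dominated by $2^{-q}$ in the relevant seminorm, uniformly in $t\in[0,1]$. The finitely many remaining terms are smooth in $t$ with values in $\Sc(\R_\pm)$. This yields the convergence of the series together with its termwise $t$-derivatives, so $f\in\Cc^\infty([0,1],\Sc(\R_\pm))$.

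It remains to verify the Taylor data at $t=0$. I would check that $\partial_t^q f(0,x)=f_q(x)$ by examining the $q$-th $t$-derivative at $0$ of the $q'$-th summand: if $q'>q$ every term carries a strictly positive power of $t$, hence vanishes at $t=0$; if $q'<q$ each surviving term carries either a factor $t^{q'-i}$ with $i<q'$ (vanishing) or a factor $\chi^{(N-i)}(0)$ with $N-i\ge 1$ (vanishing since $\chi$ is constant near $0$); the only non-zero contribution comes from $q'=q$, $i=q$, $j=0$, giving $\chi(0)\cdot q!/q!\cdot f_q(x)=f_q(x)$. The main obstacle is purely one of bookkeeping: unlike the scalar Borel theorem \cite[Thm.\ 1.2.6]{HorI}, one must control simultaneously an infinite countable family of seminorms together with arbitrary orders of $t$-differentiation, but the usual diagonal trick of making $\lambda_q$ absorb all seminorms of index $\le q$ resolves this.
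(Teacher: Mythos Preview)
Your proposal is correct and follows essentially the same route as the paper's proof: both build $f$ as $\sum_q \chi(\lambda_q t)\,\tfrac{t^q}{q!}\,f_q$ (the paper writes $\ve_q=\lambda_q^{-1}$), choose the scaling parameters by a diagonal argument so that the $q$-th term is bounded by $2^{-q}$ in every seminorm $p_{\alpha,\beta}(\partial_t^N\cdot)$ with $\alpha+\beta+N<q$, and read off the Taylor data at $t=0$ from $\chi\equiv1$ near the origin. The only cosmetic difference is that the paper packages your Leibniz computation into the single function $G_q(s)=g(s)\,s^q/q!$ and bounds $\sup_s|G_q^{(d)}(s)|$ directly, whereas you expand the Leibniz sum and use $|t|\le\lambda_q^{-1}$ on the support; the resulting choice of $\lambda_q$ (resp.\ $\ve_q$) is the same up to harmless constants.
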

\begin{proof}
    This proof is inspired by the proof of \cite[theorem 1.2.6]{HorI} where smooth functions with compact support are replaced by Schwartz functions.

    Let $g\in\Cc_\comp^\infty(\R)$ be a smooth cut-off function such that $g(t)=1$ for all $t \in [-1,1]$.
    For each $q\in\N$  we introduce the function
    \[
        g_q :  (t,x) \in \R\times\R_\pm \longmapsto g\lp\ve_q^{-1}\, t\rp\, \frac{t^q}{q!}\, f_q(x)
    \]
    for some positive number $\ve_q$ that will be specified later on.
    For all $d, \alpha, \beta \in\N$, we have
    \begin{align}\label{eq:1645}
        x^\alpha\partial_t^d\partial_x^\beta\ee g_q(t,x) =
        \varepsilon_q^{q-d}\, G_q^{(d)}(\ve_q^{-1}\, t)\,
        x^\alpha f_q^{(\beta)}(x), \quad \forall (t,x)\in \R\times\R_\pm ,
    \end{align}
    where $G_q(s) = g(s)\, \frac{s^q}{q!}$.
    It follows that
    $\lv x^\alpha \partial_t^d \partial_x^\beta\ee g_q(t,x)\rv \le C_{q,d}^{\alpha,\beta}\, \ve_q^{q-d}$
    where
    \begin{align*}
        C_{q,d}^{\alpha,\beta} = \sup_{s\in\R} \lv G_q^{(d)}(s)\rv \ p_{\alpha,\beta}(f_q) < +\infty .
    \end{align*}
    By choosing
    $
        \ve_q = \min\lp \ve_{q-1}, \min_{d+\alpha+\beta < q} (2^q\, C_{q,d}^{\alpha,\beta})^{-\frac{1}{q-d}} \rp
    $
    , $q \ge 1$ and $\ve_0 = 1$ we obtain that  $\lv x^\alpha \partial_t^d \partial_x^\beta g_q(t,x)\rv \le 2^{-q}$ for all $d,\alpha,\beta\in\N$ and for all $q > d+\alpha+\beta$.
    Therefore, the sum $$f = \sum_{q\ge 0} g_q$$ is well defined because the series converge absolutely.  Its successive derivatives are equal to the sum of the derivatives of $g_g$; As a consequence, $f\in\Cc^\infty([0,1]\times\R_\pm)$.  Moreover,  from the estimate
    \begin{align*}
        p_{\alpha,\beta}\lp \partial_t^d f(t,\cdot)\rp \le
        \sum_{q=0}^{d+\alpha+\beta} C_{q,d}^{\alpha,\beta}\, \ve_q^{q-d} + 2^{-d-\alpha-\beta},
         &  & \forall t\in [0,1],\  \forall d, \alpha, \beta\in\N
    \end{align*}
    we obtain $f \in \Cc^\infty([0,1],\Sc(\R_\pm))$.
    From \eqref{eq:1645}, we deduce that for all $d\in\N$
    \begin{align*}
        \partial_t^d f(0,x) = \sum_{q=0}^{+\infty} \ve_q^{q-d}\, G_q^{(d)}(0)\, f_q(x), &  & \forall x\in\R_\pm .
    \end{align*}
    Since $g$ is constant equal to $1$ around $t=0$, we have $G_q^{(d)}(0) = \delta_{q,d}$ where $\delta_{q,d}$ is the Kronecker symbol.
    This implies that $\partial_t^d f(0,x) = f_d(x)$.
\end{proof}

By Taylor's formula with integral remainder we deduce immediately the following result.

\begin{lemma}\label{lem:Taylor}
    Let $f$ be a function belonging to $\Cc^\infty([0,1],\Sc(\R_\pm))$. For all integer $N\ge1$ there exists $R_N \in \Cc^\infty([0,1],\Sc(\R_\pm))$ such that
    \begin{align*}
        f(t,x) = \sum_{q=0}^{N-1} \frac{\partial_t^q f(0,x)}{q!} \,t^q
        + t^{N} R_N(t,x), \quad \forall (t,x) \in [0,1] \times \R_\pm .
    \end{align*}
\end{lemma}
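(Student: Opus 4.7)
The plan is to apply the classical one-variable Taylor formula with integral remainder pointwise in $x$, and then verify that the resulting remainder, viewed as a function of $(t,x)$, inherits the Schwartz-valued smoothness from $f$.

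First I would fix $x\in\R_\pm$ and consider $g_x : t\mapsto f(t,x)$, which is $\Cc^\infty([0,1])$ by hypothesis. The classical Taylor formula with integral remainder gives
\[
    g_x(t) = \sum_{q=0}^{N-1} \frac{g_x^{(q)}(0)}{q!}\, t^q
    + \frac{1}{(N-1)!}\int_0^t (t-s)^{N-1} g_x^{(N)}(s)\, ds,
\]
and after the change of variable $s = tu$ the integral becomes $\frac{t^N}{(N-1)!} \int_0^1 (1-u)^{N-1} \partial_t^N f(tu,x)\, du$. This motivates the definition
\[
    R_N(t,x) \coloneqq \frac{1}{(N-1)!}\int_0^1 (1-u)^{N-1}\, \partial_t^N f(tu,x)\, du,
\]
which yields the desired pointwise identity for every $(t,x)\in [0,1]\times\R_\pm$.

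The second step is to check that $R_N\in \Cc^\infty([0,1],\Sc(\R_\pm))$. By assumption, $\partial_t^N f \in \Cc^\infty([0,1],\Sc(\R_\pm))$, so for every pair $(\alpha,\beta)\in\N^2$ the map $u\mapsto p_{\alpha,\beta}(\partial_t^N f(u,\cdot))$ is continuous, hence bounded, on $[0,1]$. The trivial estimate
\[
    \bigl| x^\alpha\partial_x^\beta R_N(t,x) \bigr|
    \le \frac{1}{(N-1)!}\int_0^1 (1-u)^{N-1}\, p_{\alpha,\beta}\bigl(\partial_t^N f(tu,\cdot)\bigr)\, du
\]
then shows that the Schwartz semi-norms $p_{\alpha,\beta}(R_N(t,\cdot))$ are uniformly bounded for $t\in[0,1]$; in particular $R_N(t,\cdot)\in\Sc(\R_\pm)$.

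Finally, to obtain smoothness in $t$ with values in $\Sc(\R_\pm)$, I would differentiate under the integral sign, which gives for every $d\ge 0$
\[
    \partial_t^d R_N(t,x) = \frac{1}{(N-1)!}\int_0^1 (1-u)^{N-1} u^d\, \partial_t^{N+d} f(tu,x)\, du,
\]
and apply the same semi-norm estimate to the integrand $\partial_t^{N+d} f$, which also belongs to $\Cc^\infty([0,1],\Sc(\R_\pm))$. Differentiation under the integral is legitimate because the dominating functions $p_{\alpha,\beta}(\partial_t^{N+d} f(\cdot,\cdot))$ are bounded on $[0,1]$, and joint continuity in $(t,x)$ of each derivative is clear from continuity of $\partial_t^{N+d} f$. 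There is no real obstacle; the statement is essentially a parameter version of Taylor's theorem, the only bookkeeping being the uniform control of Schwartz semi-norms of the integrand, which is built into the hypothesis $f\in\Cc^\infty([0,1],\Sc(\R_\pm))$.
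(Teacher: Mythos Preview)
Your proof is correct and follows exactly the approach the paper indicates: it simply states that the lemma is an immediate consequence of Taylor's formula with integral remainder, and your argument carries out precisely that, including the routine verification that $R_N\in\Cc^\infty([0,1],\Sc(\R_\pm))$.
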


\subsection{Additional result}

\begin{lemma}
    \label{lem:dec}
    Let $F:(t,\sigma)\mapsto F(t,\sigma)$ a function in $\Cc^\infty([0,1],\Sc(\R_-))$. Then
    \[
        \int_{-\infty}^{-\delta/t} |F(t,\tau)|^2 \,\dd\tau = \Oc(t^\infty) \quad\mbox{as}\quad t\to 0.
    \]
    The same result holds with $\R_-$ replaced by $\R_+$ and $(-\infty,-\delta/t)$ replaced by $(\delta/t,\infty)$.
\end{lemma}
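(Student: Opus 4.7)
The key observation is that membership of $F$ in $\Cc^\infty([0,1],\Sc(\R_-))$ gives uniform Schwartz decay in the spatial variable $\tau$ with bounds that are continuous (hence bounded) in the parameter $t\in[0,1]$. The plan is to exploit only the decay seminorms $p_{N,0}$ and then integrate crudely on the tail $(-\infty,-\delta/t)$.

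First, I would recall that for every integer $N\ge 0$, the map $t\mapsto p_{N,0}(F(t,\cdot))= \sup_{\tau\in\R_-}|\tau^N F(t,\tau)|$ is continuous from the compact interval $[0,1]$ into $\R_+$, simply because $F$ is continuous as a map into $\Sc(\R_-)$ and the seminorm $p_{N,0}$ is continuous on $\Sc(\R_-)$. By compactness of $[0,1]$, there exists a constant $C_N>0$ such that
\[
    |F(t,\tau)|\le C_N\,|\tau|^{-N}, \qquad \forall t\in[0,1],\ \forall\tau<0.
\]

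Next, I would apply this bound on the tail interval $(-\infty,-\delta/t)$, where $|\tau|\ge \delta/t$. Picking any integer $N\ge 1$, we obtain
\[
    \int_{-\infty}^{-\delta/t} |F(t,\tau)|^2\,\dd\tau
    \le C_N^2\int_{\delta/t}^{+\infty} s^{-2N}\,\dd s
    = \frac{C_N^2}{2N-1}\lp\frac{t}{\delta}\rp^{2N-1}.
\]
Since $N$ is arbitrary, this yields $\int_{-\infty}^{-\delta/t}|F(t,\tau)|^2\,\dd\tau=\Oc(t^{2N-1})$ for every $N\ge 1$, which is precisely the meaning of $\Oc(t^\infty)$ as $t\to 0$.

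The argument for the $\R_+$ version is identical, with $(-\infty,-\delta/t)$ replaced by $(\delta/t,+\infty)$ and $|\tau|^{-N}$-bounds obtained from the analogous seminorms on $\Sc(\R_+)$. There is no real obstacle here; the only subtle point is justifying the uniformity in $t$ of the Schwartz seminorm bound, which comes for free from continuity into $\Sc(\R_\pm)$ plus compactness of $[0,1]$. No computation specific to the cut-off threshold $\delta$ is needed beyond its positivity.
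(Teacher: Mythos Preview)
Your proof is correct and follows essentially the same approach as the paper: both use the uniform bound $|\tau^N F(t,\tau)|\le C_N$ on $[0,1]\times\R_-$ coming from the Schwartz seminorms, then integrate $|\tau|^{-2N}$ over the tail to obtain a bound of order $(t/\delta)^{2N-1}$. Your version is in fact more carefully written, since you explain why the constant $C_N$ is uniform in $t$ (continuity into $\Sc(\R_-)$ plus compactness of $[0,1]$) and display the integral computation explicitly, whereas the paper's proof simply asserts the bound and the final estimate.
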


\begin{proof}
    It suffices to notice that for any $N\ge 1$, there exists $C_N$ such that
    \[
        |\tau^N F(t,\tau) | \le C_N,\quad \mbox{for all }\ (t,\tau)\in [0,1]\times\R_-.
    \]
    Hence $\int_{-\infty}^{-\delta/t} |F(t,\tau)|^2 \,\dd\tau \le \lp\frac{t}{\delta}\rp^{2N-1}$, which proves the lemma.
\end{proof}


\begin{thebibliography}{10}

\bibitem{AbrSte64}
{\sc M.~Abramowitz and I.~A. Stegun}, {\em Handbook of mathematical functions
  with formulas, graphs, and mathematical tables}, vol.~55 of National Bureau
  of Standards Applied Mathematics Series, Dover Publications, 1964.

\bibitem{Amthong2014}
{\sc A.~Amthong}, {\em {WKB} approximation for abruptly varying potential
  wells}, European Journal of Physics, 35 (2014), p.~065009.

\bibitem{BabBul61}
{\sc V.~M. Babi\v{c} and V.~S. Buldyrev}, {\em Short-wavelength diffraction
  theory}, vol.~4 of Springer Series on Wave Phenomena, Springer-Verlag,
  Berlin, 1991.

\bibitem{BalEtAl20}
{\sc S.~Balac, M.~Dauge, Y.~Dumeige, P.~F\'eron, and Z.~Moitier}, {\em
  Mathematical analysis of whispering gallery modes in graded index optical
  micro-disk resonators}, The European Physical Journal D, 74: 221 (2020).

\bibitem{BogDub08E}
{\sc E.~Bogomolny, R.~Dubertrand, and C.~Schmit}, {\em Trace formula for
  dielectric cavities: General properties}, Phys. Rev. E, 78 (2008), p.~056202.

\bibitem{borselli2005}
{\sc M.~Borselli, T.~J. Johnson, and O.~Painter}, {\em Beyond the {Rayleigh}
  scattering limit in high-{Q} silicon microdisks: theory and experiment},
  Optics Express, 13 (2005), pp.~1515--1530.

\bibitem{ChoKim10}
{\sc J.~Cho, I.~Kim, S.~Rim, G.-S. Yim, and C.-M. Kim}, {\em Outer resonances
  and effective potential analogy in two-dimensional dielectric cavities},
  Physics Letters A, 374 (2010), pp.~1893 -- 1899.

\bibitem{DadKur14}
{\sc K.~Dadashi, H.~Kurt, K.~\"{U}st\"{u}n, and R.~Esen}, {\em Graded index
  optical microresonators: analytical and numerical analyses}, Journal of the
  Optical Society of America B, 31 (2014), pp.~2239--2245.

\bibitem{DauRay12}
{\sc M.~Dauge and N.~Raymond}, {\em Plane waveguides with corners in the small
  angle limit}, J. Math. Phys., 53 (2012), pp.~123529, 34.

\bibitem{BogDub08A}
{\sc R.~Dubertrand, E.~Bogomolny, N.~Djellali, M.~Lebental, and C.~Schmit},
  {\em Circular dielectric cavity and its deformations}, Phys. Rev. A, 77
  (2008), p.~013804.

\bibitem{DyaZwo19}
{\sc S.~Dyatlov and M.~Zworski}, {\em Mathematical theory of scattering
  resonances}, vol.~200 of Graduate Studies in Mathematics, American
  Mathematical Society, 2019.

\bibitem{FernandezGarcia2011}
{\sc N.~Fern\'{a}ndez-Garc\'{i}a and O.~Rosas-Ortiz}, {\em Extended {WKB}
  {Method}, {Resonances} and {Supersymmetric} {Radial} {Barriers}},
  International Journal of Theoretical Physics, 50 (2011), pp.~2057--2066.

\bibitem{Gal19}
{\sc J.~Galkowski}, {\em The quantum {S}abine law for resonances in
  transmission problems}, Pure and Applied Analysis, 1 (2019), pp.~27--100.

\bibitem{GomPer02}
{\sc C.~Gomez-Reino, M.~Perez, and C.~Bao}, {\em Gradient-index optics:
  fundamentals and applications}, Springer, 2002.

\bibitem{Hall2013}
{\sc B.~C. Hall}, {\em Quantum {Theory} for {Mathematicians}}, Graduate {Texts}
  in {Mathematics}, Springer-Verlag, New York, 2013.

\bibitem{HeeGro08}
{\sc J.~Heebner, R.~Grover, and T.~Ibrahim}, {\em Optical microresonators:
  theory, fabrication, and applications}, Optical Sciences 138, Springer-Verlag
  New York, 2008.

\bibitem{Hel13}
{\sc B.~Helffer}, {\em Spectral theory and its applications}, vol.~139 of
  Cambridge Studies in Advanced Mathematics, Cambridge University Press,
  Cambridge, 2013.

\bibitem{HelSjo86}
{\sc B.~Helffer and J.~Sj\"{o}strand}, {\em R\'{e}sonances en limite
  semi-classique}, M\'{e}m. Soc. Math. France (N.S.),  (1986), pp.~iv+228.

\bibitem{HisSig96}
{\sc P.~D. Hislop and I.~M. Sigal}, {\em Introduction to spectral theory, With
  applications to Schr\"{o}dinger operators}, vol.~113 of Applied Mathematical
  Sciences, Springer-Verlag, New York, 1996.

\bibitem{HorI}
{\sc L.~H\"{o}rmander}, {\em The analysis of linear partial differential
  operators. {I}}, vol.~256 of Grundlehren der Mathematischen Wissenschaften
  [Fundamental Principles of Mathematical Sciences], Springer-Verlag, Berlin,
  second~ed., 1990.

\bibitem{Ilc03}
{\sc V.~S. Ilchenko, A.~A. Savchenkov, A.~B. Matsko, and L.~Maleki}, {\em
  Dispersion compensation in whispering-gallery modes}, JOSA A, 20 (2003),
  pp.~157--162.

\bibitem{LamLeu92}
{\sc C.~C. Lam, P.~T. Leung, and K.~Young}, {\em Explicit asymptotic formulas
  for the positions, widths, and strengths of resonances in {M}ie scattering},
  Journal of the Optical Society of America B, 9 (1992), pp.~1585--1592.

\bibitem{McL00}
{\sc W.~McLean}, {\em Strongly elliptic systems and boundary integral
  equations}, Cambridge University Press, Cambridge, 2000.

\bibitem{MoiSpe19}
{\sc A.~Moiola and E.~A. Spence}, {\em Acoustic transmission problems:
  wavenumber-explicit bounds and resonance-free regions}, Mathematical Models
  and Methods in Applied Sciences, 29 (2019), pp.~317--354.

\bibitem{MoitierPhD}
{\sc Z.~Moitier}, {\em {{\'E}tude math{\'e}matique et num{\'e}rique des
  r{\'e}sonances dans une micro-cavit{\'e} optique}}, theses, {Universit{\'e}
  Rennes 1}, 2019.

\bibitem{NajVah16}
{\sc Z.~Najafi, M.~Vahedi, and A.~Behjat}, {\em The role of refractive index
  gradient on sensitivity and limit of detection of microdisk sensors}, Optics
  Communications, 374 (2016), pp.~29 -- 33.

\bibitem{Olv97}
{\sc F.~W.~J. Olver}, {\em Asymptotics and special functions}, AKP Classics, A
  K Peters, Ltd., Wellesley, MA, 1997.

\bibitem{Nist}
{\sc F.~W.~J. Olver, D.~W. Lozier, R.~F. Boisvert, and C.~W. Clark}, eds., {\em
  N{IST} handbook of mathematical functions}, Cambridge University Press, 2010.

\bibitem{cxroots}
{\sc R.~Parini}, {\em {cxroots: A Python module to find all the roots of a
  complex analytic function within a given contour}}, 2018--.

\bibitem{PopVod99a}
{\sc G.~Popov and G.~Vodev}, {\em Resonances near the real axis for transparent
  obstacles}, Communications in Mathematical Physics, 207 (1999), pp.~411--438.

\bibitem{Sim83}
{\sc B.~Simon}, {\em Semiclassical analysis of low lying eigenvalues. {I}.
  {N}ondegenerate minima: asymptotic expansions}, Annales de l'Institut Henri
  Poincar\'{e}. Section A. Physique Th\'{e}orique, 38 (1983), pp.~295--308.

\bibitem{SjoZwo99}
{\sc J.~Sj\"{o}strand and M.~Zworski}, {\em Asymptotic distribution of
  resonances for convex obstacles}, Acta Mathematica, 183 (1999), pp.~191--253.

\bibitem{SmoNos05}
{\sc E.~I. Smotrova, A.~I. Nosich, T.~M. Benson, and P.~Sewell}, {\em
  Cold-cavity thresholds of microdisks with uniform and nonuniform gain:
  quasi-{3-D} modeling with accurate {2-D} analysis}, IEEE Journal of Selected
  Topics in Quantum Electronics, 11 (2005), pp.~1135--1142.

\bibitem{Ste99}
{\sc P.~Stefanov}, {\em Quasimodes and resonances: sharp lower bounds}, Duke
  Mathematical Journal, 99 (1999), pp.~75--92.

\bibitem{Ste06}
\leavevmode\vrule height 2pt depth -1.6pt width 23pt, {\em Sharp upper bounds
  on the number of the scattering poles}, Journal of Functional Analysis, 231
  (2006), pp.~111--142.

\bibitem{SteUng17}
{\sc O.~Steinbach and G.~Unger}, {\em Combined boundary integral equations for
  acoustic scattering-resonance problems}, Mathematical Methods in the Applied
  Sciences, 40 (2017), pp.~1516--1530.

\bibitem{StrCon14}
{\sc M.~C. Strinati and C.~Conti}, {\em Bose-{Einstein} condensation of photons
  with nonlocal nonlinearity in a dye-doped graded-index microcavity}, Physical
  Review A, 90 (2014), p.~043853.

\bibitem{TanZwo98}
{\sc S.-H. Tang and M.~Zworski}, {\em From quasimodes to resonances},
  Mathematical Research Letters, 5 (1998), pp.~261--272.

\bibitem{Vodev94}
{\sc G.~Vodev}, {\em Sharp bounds on the number of scattering poles in the
  two-dimensional case}, Math. Nachr., 170 (1994), pp.~287--297.

\bibitem{ZhuZho12}
{\sc D.~Zhu, Y.~Zhou, X.~Yu, P.~Shum, and F.~Luan}, {\em Radially graded index
  whispering gallery mode resonator for penetration enhancement}, Optics
  Express, 20 (2012), pp.~26285--26291.

\end{thebibliography}
\end{document}